\numberwithin{equation}{section}
\theoremstyle{plain}
\numberwithin{equation}{section}
\long\def\salta#1{\relax}
\newcommand{\dint}{\dyle\int}
\newcommand{\re}{\mathbb{R}}
\newcommand{\ren}{\re^N}
\newcommand{\dyle}{\displaystyle}
\newcommand{\io}{\int\limits_\O}
\newcommand{\W}{\mathbb{W}}
\renewcommand{\a }{\alpha }
\renewcommand{\b }{\beta }
\renewcommand{\d }{\delta }
\newcommand{\D }{\Delta }
\newcommand{\e }{\varepsilon }
\newcommand{\g }{\gamma}
\newcommand{\G }{\Gamma }
\renewcommand{\l }{\lambda }
\newcommand{\n }{\nabla }
\newcommand{\s }{\sigma }
\renewcommand{\O }{\Omega }
\newcommand{\Int}{\displaystyle\int}
\newtheorem{Theorem}{Theorem}[section]
\newtheorem{Corollary}[Theorem]{Corollary}
\newtheorem{Definition}[Theorem]{Definition}
\newtheorem{Lemma}[Theorem]{Lemma}
\newtheorem{Proposition}[Theorem]{Proposition}
\newtheorem{remarks}[Theorem]{Remark}
\newcommand{\cqd}{{\unskip\nobreak\hfil\penalty50
        \hskip2em\hbox{}\nobreak\hfil\mbox{\rule{1ex}{1ex} \qquad}
        \parfillskip=0pt \finalhyphendemerits=0\par\medskip}}
\begin{document}
    \title[]{ Global regularity results for the fractional heat equation and application to a class of non-linear KPZ problem}

    \author[B. Abdellaoui, S. Atmani, K. Biroud, E.-H. Laamri ]{Boumediene Abdellaoui, Somia Atmani, Kheiredine  Biroud, El-Haj Laamri}

    \address{\hbox{\parbox{5.7in}{\medskip\noindent{B. Abdellaoui, S. Atmani: Laboratoire d'Analyse Nonlin\'eaire et Math\'ematiques
                    Appliqu\'ees. \hfill \break\indent Universit\'e AbouBekr Belkaid,
                    Tlemcen, \hfill\break\indent Tlemcen 13000, Algeria. }}}}\email{{
            \tt boumediene.abdellaoui@inv.uam.es, somiaatmani@gmail.com}}

    \address{\hbox{\parbox{5.7in}{\medskip\noindent{K. Biroud: Laboratoire d'Analyse Nonlin\'eaire et Math\'ematiques
                    Appliqu\'ees. \hfill \break\indent Ecole Sup\'rieure de Management
                    de Tlemcen . \hfill \break\indent No. 01, Rue Barka Ahmed
                    Bouhannak Imama, \hfill\break\indent Tlemcen 13000, Algeria.
    }}}}\email{{ \tt kh$_{-}$biroud@yahoo.fr}}

    \address{\hbox{\parbox{5.7in}{\medskip\noindent{E.-H. Laamri, Institut Elie Cartan,\\ Universit\'{e} Lorraine,\\ B. P. 239, 54506 Vand{\oe}uvre l\'es
                    Nancy, France. \\[3pt]
                    \em{E-mail addresses: }\\{\tt el-haj.laamri@univ-lorraine.fr}.}}}}

    \keywords{Fractional heat equation, regularity result in Bessel potential space, Kardar-Parisi-Zhang equation with fractional diffusion, nonlocal gradient term.
        \\
        \indent  {\it Mathematics Subject Classification, 2010: 35B05,
            35K15, 35B40, 35K55, 35K65.} }

   \date{06/06/2025}

    \begin{abstract}

      In the first part of this   paper, we  prove the global
  regularity,  in an adequate parabolic Bessel-Potential space and then in the
corresponding parabolic fractional Sobolev space,  of  the unique solution  to  following  fractional heat equation $ w_t+(-\Delta)^sw=  h\;;\; w(x,t)=0 \text{ in } \;  (\mathbb{R}^N\setminus\Omega)\times(0,T)\;;\;   w(x,0)=w_0(x)  \; \text{in}\;   \Omega$,  where $\Omega$  is  an open bounded  subset of $\mathbb{R}^N$. The proof is based on a new pointwise estimate on the fractional gradient of the corresponding kernel.   Moreover, we establish the compactness of  $(w_0,h)\mapsto w$. As a majeur application,  in the second part , we establish  existence and regularity  of solutions to a class of   Kardar--Parisi--Zhang equations with fractional diffusion and a nonlocal gradient term. Additionally, several
auxiliary results of independent interest are obtained.
\bigbreak

    \end{abstract}
    \maketitle

    \tableofcontents

   \section{Introduction}

    This work is  structured in two main parts.

  --- The \textbf {\em first part} is devoted to a fine study of the regularity of solutions to the following fractional heat equation
    \begin{equation*}
        (FHE)\qquad  \left\{\
        \begin{array}{lllll}
            w_t+(-\Delta)^sw&=&h(x,t) & \text{in}&\Omega_T=\Omega\times (0,T),\vspace{0.2cm}\\
            w(x,t)&=&0&\text{in} & (\mathbb{R}^N\setminus\Omega)\times(0,T),\vspace{0.2cm}\\
            w(x,0)&=&w_0(x)&  \text{in} & \Omega,
        \end{array}
        \right.
    \end{equation*}
    where $h$ and $w_0$ are measurable functions satisfying some integrability assumptions.  We assume here and in the {\em rest of this paper} that   $0<s<1$  and $\Omega$ is a bounded open subset of $\mathbb{R}^N$ with regular boundary. The operator $(-\Delta)^s$ denotes the fractional Laplacian introduced par M. Riesz in \cite{Riesz}  and  defined for any $\phi\in \mathscr{S}(\mathbb{R}^N)$ by:

    \begin{equation}\label{eq:fraccionarioel2}
        (-\Delta)^{s}\phi(x):=a_{N,s}\mbox{ P.V. }\int_{\mathbb{R}^{N}}{\frac{\phi(x)-\phi(y)}{|x-y|^{N+2s}}\, dy},\; s\in(0,1),
    \end{equation}
    where P.V. stands for the Cauchy principal value and
    \begin{equation}\label{aNS}
    a_{N,s}:=\frac{s 2^{2 s} \Gamma\left(\frac{N+2 s}{2}\right)}{\pi^{\frac{N}{2}} \Gamma(1-s)}
    \end{equation}
    is a normalization constant chosen
    so that
    the following pair of identities :
    $$\lim\limits_{s\to 0^+} (-\Delta)^s\phi=\phi\quad\text{and }\quad \lim\limits_{s\to 1^-} (-\Delta)^s\phi=-\Delta \phi$$
    holds.
    For further details, see, {\it e.g.}, \cite[Proposition 4.4]{dine}
    or \cite[Proposition 2.1]{DaouLaam}.
\medbreak
 Prior to stating our results, we briefly review
some previous works that are closely related to the present study.
\smallbreak
\noindent Existence and uniqueness of a weak solution to $(FHE)$, in the
sense of Definition~\ref{veryweak}, were established
in~\cite{LPPS} for data in $ L^1 $. Moreover, by employing
approximation techniques and suitable test functions, the authors
obtained partial regularity of the solution in certain fractional
parabolic Sobolev spaces $ L^\s(0,T; W^{s,\s}_0(\Omega)) $
(defined in Subsection~\ref{sub11}), with $\s \leq 2$. It is worth
noting, however, that such approximation arguments are not
well-suited for proving higher-order fractional regularity in more
general parabolic fractional spaces. Therefore, a novel approach
is needed to address this limitation.
\smallbreak
\noindent
In \cite{BWZ}, the authors considered the transformation $ u = w \phi $, where $ \phi \in \mathcal{C}^\infty_0(\Omega \times (0,T)) $, and derived local (interior) regularity results in a fractional framework as well as in Besov spaces, depending on the smoothness of the source term $ h $. The key idea in \cite{BWZ} was to analyze the equation satisfied by $ u $ rather than $ w $ itself. However, this approach has a purely local nature and does not capture the influence of the initial condition on the global regularity of the solution $ w $. 
\smallbreak
 Further refined regularity results were established in
\cite{Gubb0},\cite{Gubb1}, and \cite{AGubb} for an extended class
of pseudodifferential operators. The approach adopted in these works
combines the calculus of pseudodifferential operators with
interpolation techniques in Triebel-Lizorkin spaces. Under
suitable hypotheses on the data $(h,w_0)$ -- in particular, by imposing $w_0=0$ or prescribing additional regularity
for  $u_0$--, the authors derive regularity
estimates for solutions of the fractional heat equation.

\medbreak

It is worth noting that in the case $ h = 0 $, the authors of
\cite{FerRos} investigated boundary regularity for solutions in
bounded $ \mathcal{C}^{1,1} $ domains. They showed that if the
initial datum belongs to $ L^2(\Omega) $,  the solution
is H\"older continuous for $ t > 0 $, and satisfies $
\dfrac{u(t,\cdot)}{\delta^s} \in \mathcal{C}^{s -
\varepsilon}(\overline{\Omega}) $ for any $\varepsilon\in (0, s)$,
where $ \delta(x) $ denotes the distance from $ x $ to $
\partial\Omega $.
\smallbreak
 Regarding the elliptic case, the first complete global
(up to the boundary) regularity result was recently established in
\cite{AFTY}. The authors used a representation formula along with
precise estimates on the Green's function to derive optimal
regularity in Bessel-Potential spaces $
\mathbb{L}^{s,p}(\mathbb{R}^N) $ (see Subsection~\ref{sub11} for
the definition). For a comprehensive treatment of regularity
theory in the elliptic setting, we refer the reader to
\cite{AFTY}.
\medbreak
 The {\em first goal } of the present work is to extend these results to the parabolic setting by establishing \textbf{global regularity} for the solution $ w $. Depending on the regularity of the data $ (h, w_0) $, we show that the solution belongs to appropriate parabolic Bessel-Potential spaces $ L^{p}(0,T;\mathbb{L}^{s,p}(\mathbb{R}^N)) $, as well as to the corresponding parabolic fractional Sobolev spaces $ L^{p}(0,T;W^{s,p}(\mathbb{R}^N)) $, defined in Subsection~\ref{sub11}.
\smallbreak

Our methodology extends the recent framework introduced in \cite{AFTY}. Let \( P_\Omega \) denote the heat kernel associated with the homogeneous exterior Dirichlet problem for the fractional heat operator $\partial_t + (-\Delta)^s $. A central component of our analysis is the derivation of pointwise estimates for the fractional gradient of $P_\Omega$. These kernel estimates are then used to establish pointwise bounds for the fractional gradient of the solution to the Dirichlet problem \((\mathrm{FHE})\), even when the forcing term and initial datum are merely integrable with respect to the distance to the boundary. \\
To the best of our knowledge, these gradient estimates are entirely novel. They lead to refined regularity results formulated within the appropriate functional framework.

\smallbreak
\noindent Compared to the elliptic case, addressing time
regularity introduces additional challenges, particularly near $ t
= 0 $ and when $ x $ approaches the boundary of $ \Omega $. As we
will see, another critical difficulty arises as $ t \to 0 $,
especially when the fractional parameter $ s $ is small. In
particular, obtaining uniform integral estimates over $ \Omega_T $
requires imposing an additional constraint on $ s $.

\smallbreak
$\bullet$ We begin this first part with the following main result :

 \begin{Theorem}\label{Thm1.1IN}
           Assume that $\rho \in [s, \max\{1,2s\})$. Then, there exists a positive constant $C$ such that for any $(x,y)\in\Omega\times \Omega $
        with $x\neq y$ and $t>0$,
        we have:
        \begin{enumerate}
            \item[{\em 1.}] If $s\le \frac 12$, then
            \begin{equation*}
                \begin{array}{lll}
                    & &|(-\Delta) ^{\frac{\rho}{2}}P_\O(x,y,t)|\leq\dfrac{C
                        (\frac{\d^s(y)}{\sqrt{t}}\wedge 1)}{
                        (t^{\frac{1}{2s}}+|x-y|)^{N+2s+\rho-1}} \\
                    &\times &
\bigg(\frac{\d^{s-\rho}(x)}{(t^{\frac{1}{2s}}+|x-y|)^{1-s-\rho}}+t^{\frac{2s-1}{2s}}\log\frac{D}{|x-y|}+t^{\frac{s+\rho-1}{2s}}\d^{s-\rho}(x)+
t^{\frac{2s-1}{2s}}|\log(\d(x))|+|x-y|^{2s-1}\bigg).
                \end{array}
            \end{equation*}

             \item[{\em 2.}] If $s>\frac 12$, then
            \begin{equation*}
                \begin{array}{lll}
                    & & |(-\Delta) ^{\frac{\rho}{2}}P_\O(x,y,t)|\leq \dfrac{C
                        (\frac{\d^s(y)}{\sqrt{t}}\wedge 1)}{
                        (t^{\frac{1}{2s}}+|x-y|)^{N+\rho}}\\
                    &\times & \bigg( \frac{\d^{s-\rho}(x)}{
(t^{\frac{1}{2s}}+|x-y|)^{s-\rho}}+\frac{t^{\frac{2s-1}{2s}}}{|x-y|^{2s-1}}+t^{\frac{\rho-s}{2s}}\d^{s-\rho}(x)+
                    \log\frac{D}{|x-y|}+|\log(\d(x))|\bigg),
                \end{array}
            \end{equation*}
        \end{enumerate}
        where $D>>\text{diam}(\O):=\max\{|x-y| \mbox{ ;
        }x,y\in \overline{\O}\}$ and $\delta (x):=\text{dist}(x,\partial\Omega)$.
    \end{Theorem}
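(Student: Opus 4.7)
\medskip

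The plan is to prove this by directly computing $(-\Delta)^{\rho/2}_x P_\Omega(x,y,t)$ from the singular-integral definition \eqref{eq:fraccionarioel2} in the $x$-variable, and then splitting the integration domain into several zones tailored to the geometry of the point $x$ with respect to $y$ and $\partial\Omega$. The input for the zone-by-zone estimates is the by-now-standard two-sided sharp bound on $P_\Omega$ of Chen--Kim--Song type,
\[
P_\Omega(x,y,t)\;\lesssim\; \Bigl(\tfrac{\d^s(x)}{\sqrt{t}}\wedge 1\Bigr)\Bigl(\tfrac{\d^s(y)}{\sqrt{t}}\wedge 1\Bigr)\,\frac{t}{\bigl(t^{1/(2s)}+|x-y|\bigr)^{N+2s}},
\]
together with the associated first-order spatial gradient estimate for $\nabla_x P_\Omega$, which scales as an additional factor of $1/(t^{1/(2s)}+|x-y|)$ in the interior and is controlled by $\d(x)^{s-1}$ near the boundary. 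These ingredients are the parabolic analogues of the Green-function bounds used in \cite{AFTY} for the elliptic case.

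The splitting I would use is
\[
(-\Delta)^{\rho/2}_x P_\Omega(x,y,t) = a_{N,\rho}\,\mathrm{P.V.}\!\int_{\RR^N}\frac{P_\Omega(x,y,t)-P_\Omega(z,y,t)}{|x-z|^{N+\rho}}\,dz
\]
written as $I_1+I_2+I_3+I_4$ on the four regions
\[
\mathcal A_1=\bigl\{|x-z|\le r\bigr\},\quad \mathcal A_2=\bigl\{r<|x-z|\le \tfrac12|x-y|\bigr\},\quad \mathcal A_3=\bigl\{\tfrac12|x-y|<|x-z|,\ z\in\Omega\bigr\},\quad \mathcal A_4=\RR^N\setminus\Omega,
\]
with the cutoff $r:=\tfrac{1}{2}\bigl(\d(x)\wedge|x-y|\wedge t^{1/(2s)}\bigr)$. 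In $\mathcal A_1$ I use the gradient bound $|P_\Omega(x,y,t)-P_\Omega(z,y,t)|\le |x-z|\,\sup|\nabla_x P_\Omega|$, which integrates $|x-z|^{1-N-\rho}$ and produces the $r^{1-\rho}\sim(t^{1/(2s)}+|x-y|)^{1-\rho}$ or $\d^{1-\rho}(x)$ factors. In $\mathcal A_2$ the cancellation is lost in size but the kernel $P_\Omega(z,y,t)$ is still comparable to $P_\Omega(x,y,t)$ up to an error controlled by the gradient; integrating $|x-z|^{-N-\rho}$ between $r$ and $|x-y|/2$ is where the two logarithmic terms $\log(D/|x-y|)$ and $|\log\d(x)|$ appear, via $\int_r^{|x-y|/2} dr/r$. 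In $\mathcal A_3$ and $\mathcal A_4$ one bounds the numerator by $|P_\Omega(x,y,t)|+|P_\Omega(z,y,t)|$; in $\mathcal A_4$, $P_\Omega(z,y,t)=0$ and the resulting piece is $P_\Omega(x,y,t)\int_{\{z\notin\Omega\}}|x-z|^{-N-\rho}dz\lesssim \d(x)^{-\rho}P_\Omega(x,y,t)$, which contributes the factor $\d^{s-\rho}(x)$ after inserting the sharp bound on $P_\Omega$.

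The case distinction $s\le\tfrac12$ versus $s>\tfrac12$ appears naturally in $\mathcal A_3$. There, after localizing and rescaling, I reduce the integral $\int_{|z-y|\gtrsim |x-y|}|x-z|^{-N-\rho}P_\Omega(z,y,t)\,dz$ to a convolution of the free heat kernel bound with $|\cdot|^{-N-\rho}$; by the exponents that govern the resulting decay, the outcome scales like $(t^{1/(2s)}+|x-y|)^{-(N+2s+\rho-1)}$ when $s\le\tfrac12$ (where the gradient of the free kernel still has $|x-y|^{2s-1}$ as its singular factor) and like $(t^{1/(2s)}+|x-y|)^{-(N+\rho)}$ when $s>\tfrac12$ (where the ``extra'' $t^{(2s-1)/(2s)}|x-y|^{1-2s}$ term replaces the cleaner interior decay). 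Throughout, the factor $\bigl(\d^s(y)/\sqrt t\wedge 1\bigr)$ is kept unchanged because the operation $(-\Delta)^{\rho/2}$ is performed only in~$x$.

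The step I expect to be hardest is the bookkeeping in $\mathcal A_2$ and the boundary-layer portion of $\mathcal A_4$, where the three small scales $r,\d(x),|x-y|,t^{1/(2s)}$ can interchange roles: controlling the log factors with the correct weights $t^{(2s-1)/(2s)}$, $t^{(s+\rho-1)/(2s)}\d^{s-\rho}(x)$, etc., requires a careful case analysis on which of $\d(x)$, $|x-y|$, $t^{1/(2s)}$ is smallest, and then matching the resulting expressions to the announced unified bound. The condition $\rho<\max\{1,2s\}$ is used precisely to ensure that the near-$x$ integral $\int_0^r|x-z|^{1-N-\rho}dz$ (respectively the outside-$\Omega$ one) converges and produces the stated positive powers of $r$ or $\d(x)$.
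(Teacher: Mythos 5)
Your overall strategy --- exterior piece plus a zone decomposition of the interior integral around $x$, fed by the sharp bound \eqref{third_prop} and the gradient estimates \eqref{second_prop}--\eqref{lllll} --- is a legitimate alternative to what the paper does, and the exterior region $\mathcal A_4$ and the near region $\mathcal A_1$ are set up correctly in spirit. The paper instead multiplies the interior integral by the weight $(t^{1/(2s)}+|x-y|)^{N+\s}$, splits off a commutator term, and treats the remainder as the hyper-singular integral of the single function $\Theta_{(y,t)}(x)=(t^{1/(2s)}+|x-y|)^{N+\s}P_\O(x,y,t)$, which lies in $\W^{1,p}_0(\O)$; a line-integral representation plus Lemmas \ref{Gr} and \ref{Tobias} then produces all the terms, and --- crucially --- the two cases of the theorem come from choosing the free exponent $\s=2s+\rho-1$ (for $s\le\frac12$) or $\s=\rho$ (for $s>\frac12$) only at the very end. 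Your scheme has no analogue of this free parameter, so the entire burden of producing the two different prefactors $(t^{1/(2s)}+|x-y|)^{-(N+2s+\rho-1)}$ versus $(t^{1/(2s)}+|x-y|)^{-(N+\rho)}$, and of attaching the correct weights $t^{\frac{2s-1}{2s}}$, $t^{\frac{s+\rho-1}{2s}}\d^{s-\rho}(x)$, $t^{\frac{\rho-s}{2s}}\d^{s-\rho}(x)$ to the right terms, falls on the ``matching'' step that you explicitly defer. Moreover that dichotomy is not generated in $\mathcal A_3$ alone: it is driven by the alternative in \eqref{lllll} ($\sqrt t\,\d^{s-1}(x)$ when $\d(x)<t^{1/(2s)}$ versus $t^{\frac{2s-1}{2s}}$ when $\d(x)\ge t^{1/(2s)}$), which enters every region where you invoke the gradient, i.e.\ $\mathcal A_1$ and $\mathcal A_2$ as well. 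As written, the plan stops exactly where the proof begins.

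Two concrete points would fail as stated. First, the claim that $\rho<\max\{1,2s\}$ is ``precisely'' what makes $\int_{|x-z|\le r}|x-z|^{1-N-\rho}\,dz$ converge is wrong: that integral equals $c\,r^{1-\rho}$ only for $\rho<1$, whereas the stated range allows $\rho\in[1,2s)$ when $s>\frac12$; there a first-order Taylor bound in $\mathcal A_1$ is useless and one would need a second-order expansion exploiting the symmetry of $|x-z|^{-N-\rho}$. (Note that the paper's own proof, Theorem \ref{regulaP}, is in fact only carried out for $\rho<\min\{1,2s\}$, i.e.\ $\rho<1$, which is the condition actually used throughout.) Second, in $\mathcal A_2$ the kernel $P_\O(z,y,t)$ is \emph{not} comparable to $P_\O(x,y,t)$ up to a gradient error: for $r<|x-z|\le\frac12|x-y|$ the interior factor of \eqref{third_prop} is indeed comparable, but the boundary factor $(\d^s(z)/\sqrt t\wedge1)$ bears no relation to $(\d^s(x)/\sqrt t\wedge1)$, so you must bound the two terms of the difference separately and control integrals of the form $\int_\O\d^{-a}(z)|x-z|^{\lambda-N}dz$ --- this is exactly Lemma \ref{Tobias}, and it, not $\int_r^{|x-y|/2}d\varrho/\varrho$, is what produces the $\d^{s-\rho}(x)$ and $|\log\d(x)|$ contributions.
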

    \begin{remarks}
    It should be noted that  the constant $C$  in the previous theorem is  independent of $x,y$ and $t$.
      \end{remarks}

\noindent The proof of Theorem \ref{Thm1.1IN} is presented
in Section \ref{Estimate_heat_Kernel}.
\smallbreak

As a byproduct, integrating the preceding estimate in time yields the following result, recently established in \cite{AFTY}, concerning the fractional regularity of the Green function, at least when $s>\frac{1}{4}$.

    \begin{Corollary}\label{Corol1.3}
        Assume that $s>\frac 14$. Then,  for any $ \rho \in [s, \max\{1,2s\})$,  
        we have $$
        \begin{array}{lll}
|(-\Delta)_x^{\frac{\rho}{2}}\mathcal{G}_s(x,y)|&\leq \dfrac{C}{
|x-y|)^{N-(2s-\rho)}}\left(\dfrac{\d^{s-\rho}(x)}{|x-y|^{s-\rho}}+\log(\frac{D}{|x-y|})
            +|\log(\delta(x))|\right).
        \end{array}
        $$
    \end{Corollary}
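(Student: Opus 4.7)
The plan is to derive the corollary from Theorem \ref{Thm1.1IN} by integration in time, via the standard representation
$$\mathcal{G}_s(x,y)=\int_0^\infty P_\Omega(x,y,t)\,dt$$
of the Dirichlet Green function as a time average of the parabolic kernel. Interchanging $(-\Delta)_x^{\rho/2}$ with the time integral (legitimised by dominated convergence on the basis of the very estimates of Theorem \ref{Thm1.1IN}), the claim reduces to bounding
$$\int_0^\infty |(-\Delta)_x^{\rho/2} P_\Omega(x,y,t)|\,dt,$$
and I would split the analysis according to the two cases $s\leq \tfrac12$ and $s>\tfrac12$ of Theorem \ref{Thm1.1IN}.

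Setting $r:=|x-y|$, discarding the truncation $(\delta^s(y)/\sqrt t)\wedge 1\leq 1$, and performing the substitution $u=t^{1/(2s)}$ (so $dt=2s\,u^{2s-1}du$), each of the five summands inside the brackets of Theorem \ref{Thm1.1IN}, once combined with its outer prefactor $(t^{1/(2s)}+r)^{-(N+2s+\rho-1)}$ (Case~1) or $(t^{1/(2s)}+r)^{-(N+\rho)}$ (Case~2), becomes a one-dimensional integral $\int_0^\infty u^{a}/(u+r)^{b}\,du$ multiplied by an $(x,y)$-factor taken from $\{\delta^{s-\rho}(x),\,\log(D/r),\,|\log\delta(x)|,\,r^{2s-1}\}$. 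The rescaling $u=rv$ produces a pure power $r^{a-b+1}$, and a direct term-by-term matching of exponents shows that each contribution reproduces one of the three summands $\delta^{s-\rho}(x)/r^{N-s}$, $r^{-(N-(2s-\rho))}\log(D/r)$, $r^{-(N-(2s-\rho))}|\log\delta(x)|$ of the corollary. The residual pure-power contribution $r^{-(N-(2s-\rho))}$ coming from $r^{2s-1}$ in Case~1 (and from $t^{(2s-1)/(2s)}/r^{2s-1}$ in Case~2) is absorbed into the logarithmic term, since $\log(D/r)\geq\log(D/\text{diam}(\Omega))>0$ once $D>\text{diam}(\Omega)$ is fixed large enough.

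The main obstacle, and the source of the restriction $s>1/4$, is the integrability at $u=0$. In Case~1 the three summands $t^{(2s-1)/(2s)}\log(D/r)$, $t^{(2s-1)/(2s)}|\log\delta(x)|$, and (in the extremal case $\rho=s$) $t^{(s+\rho-1)/(2s)}\delta^{s-\rho}(x)$ all lead, after the substitution, to an integrand whose factor $u^{a}$ has $a=4s-2$, so integrability near $0$ forces $4s-2>-1$, that is $s>1/4$. All remaining summands yield $a>-1$ for every $s\in(0,1)$, while in Case~2 the standing hypothesis $s>\tfrac12$ already implies $s>1/4$. Integrability at $u=\infty$ is automatic throughout the admissible range $\rho\in[s,\max\{1,2s\})$, since in every case the outer exponent $b$ exceeds $a+1$. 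Once this threshold is identified, the remaining exponent bookkeeping that pairs each parabolic summand with its elliptic counterpart is routine.
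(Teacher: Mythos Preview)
Your proposal is correct and follows essentially the same approach as the paper's own proof: write $\mathcal{G}_s$ as the time integral of $P_\Omega$, push $(-\Delta)_x^{\rho/2}$ inside, apply Theorem~\ref{Thm1.1IN}, discard the factor $(\delta^s(y)/\sqrt{t})\wedge 1\le 1$, and integrate each summand in $t$ via a scaling substitution. The paper uses the change of variable $\hat t = t/|x-y|^{2s}$ while you use $u=t^{1/(2s)}$, but these are equivalent reparametrisations yielding the same Beta-type integrals; your identification of the $s>1/4$ threshold via the exponent $a=4s-2$ in the $t^{(2s-1)/(2s)}$ terms matches the paper's one-line remark that ``since $s>\tfrac14$, the above integrals are converging near the zero'', and your absorption of the pure $r^{-(N-(2s-\rho))}$ contribution into the $\log(D/r)$ term is the natural bookkeeping step the paper leaves implicit.
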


$\bullet$  As a significative  application of the preceding pointwise
estimate on $|(-\D)^{\frac{\rho}{2}}P_\Omega|$, and  using the
    representation formula of the solution $w$ to Problem $(FHE)$, we obtain a complete and \textbf{\em global regularity}
    result for $(-\Delta)^{\frac{\rho}{2}} w(.,t)$ for any $\rho \in  [s, \max\{1,2s\})$.
    A key tool in achieving our objective is  an {$L^p$-estimate} for a class of hyper-singular
    integrals, whose full proof is provided in the Appendix.

    \smallbreak
    To keep the analysis as structured as possible, we treat separately the following two cases:
    \begin{itemize}
    \item[\ding{226}] case : $w_0\neq 0$ and $h=0$ ;
    \item[\ding{226}] case : $w_0=0$ and $h\neq0$.
    \end{itemize}
    \smallbreak
   \noindent\ding{226} First case: $w_0\neq 0$ and $h=0$.
  Our first result is  the following.
    \begin{Theorem}\label{Thm1.4}
        Let $ \rho \in [s, \max\{1,2s\})$. Let $w$ be the unique weak
        solution to { Problem  $(FHE)$} with $w_0\in L^\s(\O)$ where
        $\s\ge 1$ and define $\widehat{\s}\le
        \min\{\s,\frac{1}{\rho-s}\}$. Then
        \begin{enumerate}
            \rm  \item[ {1.}]  If $2s+\rho\ge 1$, for all $\eta>0$ small enough and for all
            $p>\widehat{\s}$, we have
            $$
            \begin{array}{lll}
                & &||(-\Delta)^{\frac{\rho}{2}} w(.,t)||_{L^p(\O)}\leq C(\O)
t^{-\frac{N}{2s}(\frac{1}{\widehat{\s}}-\frac{1}{p})-\frac{1}{2}}\\
                &\times & \bigg( t^{-\frac{N}{2s}(1+\eta)(\rho-s)}+
                t^{-\frac{\rho-s}{2s}}+t^{-\frac{\rho+\eta-s}{2s}}
                +t^{-\frac{N\eta}{2sp(1+\eta)}-\frac{\rho-s}{2s}}
                \bigg)||w_0||_{L^{\widehat{\s}}(\O)}.
            \end{array}
            $$
            \rm   \item[ {2.}]  If $2s+\rho<1$, then setting $\s_0<\min\{\widehat{\s},
            \frac{N}{1-2s-\rho}\}$, for all
            $p>\frac{\s_0N}{N-\s_0(1-2s-\rho)}$, we have
            $$
            \begin{array}{lll}
                & &||(-\Delta)^{\frac{\rho}{2}} w(.,t)||_{L^p(\O)}\leq C(\O)
t^{-\frac{N}{2s}(\frac{1}{\s_0}-\frac{1}{p})-\frac{1}{2}}\\
                &\times & \bigg( t^{-\frac{N}{2s}(1+\eta)(\rho-s)}+
                t^{-\frac{\rho+\eta-s}{2s}}+
                t^{-\frac{N\eta}{2sp(1+\eta)}-\frac{\rho-s}{2s}}
+t^{-\frac{\rho-s}{2s}}\bigg)||w_0||_{L^{\widehat{\s}}(\O)}.
            \end{array}
            $$
        \end{enumerate}
    \end{Theorem}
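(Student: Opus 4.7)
The plan is to exploit the representation formula
\[
w(x,t)=\int_{\Omega}P_{\Omega}(x,y,t)\,w_{0}(y)\,dy
\]
and differentiate under the integral sign so that
\[
(-\Delta)^{\frac{\rho}{2}}w(x,t)=\int_{\Omega}(-\Delta)^{\frac{\rho}{2}}_{x}P_{\Omega}(x,y,t)\,w_{0}(y)\,dy,
\]
then feed in the pointwise bound of Theorem \ref{Thm1.1IN}. Concretely, I would fix one of the two ranges of $s$ (the argument is the same mutatis mutandis) and separately estimate each of the $4$--$5$ additive contributions produced by the bracket in Theorem \ref{Thm1.1IN}. Each contribution has the shape
\[
K_{j}(x,y,t)=\frac{A_{j}(x,t)\,B_{j}(x,y,t)}{(t^{1/2s}+|x-y|)^{N+\alpha_{j}}}\Bigl(\tfrac{\delta^{s}(y)}{\sqrt{t}}\wedge 1\Bigr),
\]
where $A_{j}$ carries the weight in $\delta(x)$ (either $\delta^{s-\rho}(x)$ or a logarithm), $B_{j}$ carries a time factor or a log in $|x-y|$, and $\alpha_{j}$ is an effective homogeneity exponent. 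The point of isolating this structure is that $(t^{1/2s}+|x-y|)^{-(N+\alpha_{j})}$ scales like a heat kernel in self-similar variables, so its $L^{r}(\mathbb{R}^{N})$-norm is an explicit power of $t$.

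The second step is a Young-type convolution estimate: for $p\ge \widehat{\sigma}$, picking $r$ with $1+\tfrac{1}{p}=\tfrac{1}{r}+\tfrac{1}{\widehat{\sigma}}$,
\[
\Bigl\|\int_{\Omega}\frac{|w_{0}(y)|}{(t^{1/2s}+|x-y|)^{N+\alpha_{j}}}\,dy\Bigr\|_{L^{p}_{x}(\Omega)}\le C\,t^{-\frac{N}{2s}(\frac{1}{\widehat{\sigma}}-\frac{1}{p})-\frac{\alpha_{j}}{2s}}\,\|w_{0}\|_{L^{\widehat{\sigma}}(\Omega)}.
\]
The factors $A_{j}(x,t)$ are then absorbed by H\"older in $x$: the constraint $\widehat{\sigma}\le 1/(\rho-s)$ guarantees $\delta^{s-\rho}\in L^{q}(\Omega)$ for some $q$ slightly larger than $1$, while the logarithmic terms $\log D/|x-y|$ and $|\log\delta(x)|$ are handled by losing an arbitrarily small power $\eta>0$ (which is the origin of the parameter $\eta$ in the statement and of the $L^{p}$-estimate for hyper-singular integrals alluded to in the paper and postponed to the Appendix). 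Collecting the resulting time exponents $-\frac{N}{2s}(\frac{1}{\widehat{\sigma}}-\frac{1}{p})-\frac{1}{2}$ (universal, coming from $\alpha=1$ after factoring out the Gaussian-in-$y$ weight $\delta^{s}(y)/\sqrt{t}\wedge 1$) plus $-\frac{\alpha_{j}}{2s}$ per summand reproduces the four terms written in the statement.

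For the dichotomy $2s+\rho\gtrless 1$, the point is the $L^{p}$-integrability of $\delta^{s-\rho}(x)$ combined with the convolution kernel of effective homogeneity $\alpha_{j}$. When $2s+\rho\ge 1$ one can afford to work directly in $L^{p}$ with $p>\widehat{\sigma}$ and only a harmless loss $\eta$; when $2s+\rho<1$, the term involving $|x-y|^{2s-1}\delta^{s-\rho}(x)$ (produced by the last bracket in Theorem \ref{Thm1.1IN}.1) is hyper-singular and is handled by the Hardy--Sobolev-type estimate of the Appendix, which forces the data exponent down from $\widehat{\sigma}$ to $\sigma_{0}<\min\{\widehat{\sigma},\tfrac{N}{1-2s-\rho}\}$ and the target exponent up to $p>\frac{\sigma_{0}N}{N-\sigma_{0}(1-2s-\rho)}$, i.e. the Sobolev conjugate associated with the loss $1-2s-\rho$. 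The main obstacle in carrying this out rigorously is the bookkeeping: each of the summands in Theorem \ref{Thm1.1IN} needs its own choice of exponents in H\"older/Young, and one must check that the four resulting time exponents in the statement are indeed the sharpest ones produced by the worst summand in each range of $s$ and of $2s+\rho$. Once the accounting is in place, summing the four contributions gives exactly the bounds stated in items (1) and (2).
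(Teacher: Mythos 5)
Your proposal follows essentially the same route as the paper: representation of $w$ via $P_\Omega$, insertion of the pointwise kernel bound, term-by-term estimation of each summand by the Young/duality estimate for hyper-singular integrals in the Appendix (Theorem \ref{first_integrals_regularity}), absorption of $\delta^{s-\rho}(x)$ by H\"older using $\widehat{\s}\le \frac{1}{\rho-s}$, and an $\eta$-loss for the logarithmic factors. The only slight imprecision is in the case $2s+\rho<1$: the restriction to $\s_0$ and to $p>\frac{\s_0N}{N-\s_0(1-2s-\rho)}$ comes from the fact that the common prefactor $(t^{1/2s}+|x-y|)^{-(N+2s+\rho-1)}$ then has negative order $\l=2s+\rho-1<0$ in the convolution lemma (affecting all summands, not just the one with $|x-y|^{2s-1}$), but this does not change the argument.
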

  Further regularity results in Bessel potential spaces are obtained for fixed  $t>0$.\\
    \medbreak
    \noindent \ding{226} Case: $w_0=0$ and $h\neq 0$.
     \smallbreak
       In order to  establish  the
    regularity of the solution in $\O_T$, we {need} additional
    assumptions  on $s$ and $\rho$. This is a consequence of the
    presence of the singular term $t^{\frac{2s-1}{2s}}$,  which naturally appears
   in the estimates of heat kernel for small times. More
    precisely, we have:

    \begin{Theorem} 
        Assume that $s>\frac 14$ and  fixed $ \rho \in [s, \max\{1,2s\})$
        such that $\rho-s<\min\{\frac{s}{(N+2s)},
        \frac{4s-1}{(N+2s-1)}\}$.

        Let $h\in L^{m}(\Omega_T)$ where $m\ge 1$ and  $w$ 
         be the
        unique solution to Problem {(FHE)}. Then :
        \begin{enumerate}
            \rm   \item[1. ] Case where $2s+\rho\ge 1$.\\
            --  If $1\leq m\le \frac{N+2s}{2s-\rho}$, then  $(-\Delta)^{\frac{\rho}{2}} w\in {L^{r}(\Omega_T)}$  for all $m<r<\overline{\overline{m}}_{s,\rho}:=
            \frac{m(N+2s)}{(N+2s)(m(\rho-s)+1)-ms}$, moreover, we have
            \begin{equation*}
                ||(-\Delta)^{\frac{\rho}{2}} w||_{L^{r}(\Omega_T)}\leq CT^{\frac
12-\frac{N+2s}{2s}(\frac{1}{m}-\frac{1}{r})}(1+
T^{-\frac{\rho+\eta-s}{2s}}+T^{-\frac{\rho-s}{2s}})||h||_{L^m(\Omega_T)}.
            \end{equation*}
            --   If $m>\frac{N+2s}{2s-\rho}$, then
            $(-\Delta)^{\frac{\rho}{2}} w\in {L^{r}(\Omega_T)}$ for all
            $r<\frac{1}{\rho-s}$ and
            $$
            ||(-\Delta)   ^{\frac{\rho}{2}} w||_ {L^{r}(\Omega_T)}\le
            CT^{\frac
                12-\frac{N+2s}{2sm}}(1+
T^{-\frac{\rho+\eta-s}{2s}}+T^{-\frac{\rho-s}{2s}})||h||_{L^m(\Omega_T)}.
            $$
            \rm   \item[2.]  Case $2s+\rho<1$.\\
         -- If $m\le \frac{N+2s}{4s-1}$, then $(-\Delta)^{\frac{\rho}{2}} w\in {L^{r}(\Omega_T)}$ for all
$m<r<\frac{m(N+2s)}{(N+2s)(m(\rho-s)+1)-m(3s+\rho-1)}$ and
            \begin{equation*}
                \begin{array}{lll}
                    ||(-\Delta)^{\frac{\rho}{2}} w||_{L^{r}(\Omega_T)}&\leq &
C(T^{\frac{1-2s-\rho}{2s}}+1)T^{-\frac{N+2s}{2s}(\frac{1}{m}-\frac{1}{r})}\\
                    &\times & \bigg(T^{\frac 12} + T^{\frac{4s-1-\eta}{2s}}+
                    T^{\frac{3s+\rho-1}{2s}} + T^{\frac{4s-1}{2s}}
                    +T^{\frac{2s-\rho}{2s}}\bigg) ||h||_{L^m(\Omega_T)}.
                \end{array}
            \end{equation*}
            -- If $m>\frac{N+2s}{4s-1}$, then for $(-\Delta)^{\frac{\rho}{2}} w\in {L^{r}(\Omega_T)}$  for all
            $r<\frac{1}{\rho-s}$, and for $\eta>0$ small enough, we have
            \begin{equation*}
                \begin{array}{lll}
                    ||(-\Delta)^{\frac{\rho}{2}} w||_{L^{r}(\Omega_T)}&\leq &
C(T^{\frac{1-2s-\rho}{2s}}+1)T^{\frac{1}{r}-\frac{N+2s}{2sm}}\\
                    &\times & \bigg(T^{\frac 12} + T^{\frac{4s-1-\eta}{2s}}+
                    T^{\frac{3s+\rho-1}{2s}} + T^{\frac{4s-1}{2s}}
                    +T^{\frac{2s-\rho}{2s}}\bigg) ||h||_{L^m(\Omega_T)}.
                \end{array}
            \end{equation*}
        \end{enumerate}
        In any case, we have
        \begin{equation*}
            ||(-\Delta)^{\frac{\rho}{2}} w||_{L^{r}(\Omega_T)}\leq C(\O,
            T)||h||_{L^m(\Omega_T)}.
        \end{equation*}
    \end{Theorem}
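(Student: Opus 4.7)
The plan is to start from the Duhamel representation of the unique weak solution, namely
$$
w(x,t)=\int_0^t\!\!\int_\Omega P_\Omega(x,y,t-\tau)\,h(y,\tau)\,dy\,d\tau,
$$
and move the nonlocal derivative inside the integral to obtain
$$
(-\Delta)^{\rho/2}w(x,t)=\int_0^t\!\!\int_\Omega (-\Delta)_x^{\rho/2}P_\Omega(x,y,t-\tau)\,h(y,\tau)\,dy\,d\tau.
$$
Substituting the pointwise bound from Theorem \ref{Thm1.1IN} reduces the statement to controlling a finite sum of linear integral operators $T_\alpha h$, where each kernel $K_\alpha$ is a product of a Calder\'on--Zygmund type factor $(t^{1/(2s)}+|x-y|)^{-N-\rho}$ (or $(t^{1/(2s)}+|x-y|)^{-N-2s-\rho+1}$, depending on whether $s>\tfrac12$ or $s\le\tfrac12$), the boundary weight $\bigl(\delta^s(y)/\sqrt{t}\bigr)\wedge 1$, and one of the correction factors $\delta^{s-\rho}(x)$, $t^{(2s-1)/(2s)}\log(D/|x-y|)$, $t^{(2s-1)/(2s)}|\log\delta(x)|$, $|x-y|^{2s-1}$, or $t^{(\rho-s)/(2s)}\delta^{s-\rho}(x)$.

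Each $T_\alpha$ would then be estimated in two steps. First I would freeze the time variable and apply the hyper-singular integral estimate established in the Appendix (a weighted Hardy--Littlewood--Sobolev inequality tailored to the Bessel-type denominators and the boundary weights $\delta^{s-\rho}$, $|\log\delta|$), which produces an $L^q(\Omega)$ bound for the spatial convolution with $h(\cdot,\tau)$, with $q$ related to $m$ and $r$ through a Sobolev-type scaling. Then I would apply Minkowski's inequality in time and handle the resulting convolution in $\tau$ against $(t-\tau)^{-\beta}$ (possibly with an extra $|\log(t-\tau)|$) by a one-dimensional Hardy--Littlewood--Sobolev inequality; the integrability of these one-dimensional kernels at $\tau=t$ and at $\tau=0$ is exactly what produces the powers of $T$ listed in the statement and pins down the admissible exponents $r$.

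The case split $2s+\rho\ge 1$ versus $2s+\rho<1$ is dictated by the sign of $2s-1$: if $2s+\rho<1$ the prefactor $t^{(2s-1)/(2s)}$ from Theorem \ref{Thm1.1IN} becomes singular at $t=0$, and $L^1_t$-integrability of this singularity is precisely what forces the extra assumption $s>\tfrac14$ (so that $4s-1>0$) and the balance $\rho-s<(4s-1)/(N+2s-1)$. The complementary restriction $\rho-s<s/(N+2s)$ is what ensures that the spatial hyper-singular estimate has exponents in the admissible range. The dichotomy $m\lessgtr (N+2s)/(2s-\rho)$ (respectively $(N+2s)/(4s-1)$) is the familiar Sobolev threshold: below it the operator gains spatial regularity and we obtain the Sobolev exponent $\overline{\overline{m}}_{s,\rho}$, whereas above it the spatial integral is locally bounded and the limiting factor becomes the integrability of $\delta^{s-\rho}(x)$, which is finite only for $r(\rho-s)<1$, explaining the ceiling $r<1/(\rho-s)$.

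The main obstacle will be the non-translation-invariant weight $\delta^{s-\rho}(x)$ and the logarithm $|\log\delta(x)|$ in the kernel: neither is amenable to Young's inequality, so one must insert them via H\"older's inequality after splitting $\Omega$ into the sub-regions $\{\delta^s(y)\le \sqrt{t-\tau}\}$ and $\{\delta^s(y)>\sqrt{t-\tau}\}$, keeping the decomposition consistent across all summands $K_\alpha$ so that the gains from the factor $(\delta^s(y)/\sqrt{t})\wedge 1$ can be cashed in uniformly. A secondary delicate point is the endpoint, where borderline logarithmic losses are absorbed by the parameter $\eta>0$ appearing in the statement; collecting all contributions and summing the various powers of $T$ then yields the final uniform bound $\|(-\Delta)^{\rho/2}w\|_{L^r(\Omega_T)}\le C(\Omega,T)\|h\|_{L^m(\Omega_T)}$.
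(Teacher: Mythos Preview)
Your overall architecture is correct and matches the paper: Duhamel representation, the pointwise kernel bound of Theorem~\ref{Thm1.1IN}, and then a term-by-term $L^m\to L^r$ estimate for each of the resulting integral operators. The paper packages the core space--time convolution estimate as a single lemma (Theorem~\ref{integrals regularityII}), proved by duality plus Young's inequality first in space and then in time, which is exactly your ``freeze time, spatial HLS, then one-dimensional time convolution'' plan written compactly.

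Where your proposal diverges from the paper is in the handling of the boundary weights, and here you are making the argument harder than it needs to be. First, the factor $\bigl(\delta^s(y)/\sqrt{t}\bigr)\wedge 1$ is never ``cashed in'' in the paper's proof of this theorem: it is simply bounded above by $1$ and discarded. There is no splitting of $\Omega$ into $\{\delta^s(y)\le\sqrt{t-\tau}\}$ versus its complement. Second, the weights $\delta^{s-\rho}(x)$ and $|\log\delta(x)|$ are not an obstacle requiring a decomposition: they sit \emph{outside} the $y$-integral as multiplicative factors in $x$, so after estimating the inner space--time convolution in $L^{r_1}(\Omega_T)$ one just applies H\"older's inequality in $x$, using $\delta^{s-\rho}\in L^\beta(\Omega)$ for all $\beta<\tfrac{1}{\rho-s}$ and $|\log\delta|\in L^\xi(\Omega)$ for all $\xi<\infty$. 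This H\"older step is precisely what produces the exponent $\overline{\overline{m}}_{s,\rho}$ (smaller than $\overline{m}_{s,\rho}$ by the factor coming from $\beta$) and the ceiling $r<\tfrac{1}{\rho-s}$ in the large-$m$ case. Your proposed $y$-splitting does not interact with these $x$-dependent weights at all, so it would neither help nor hurt; it is simply unnecessary.

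In short: drop the $y$-weight, apply the space--time hyper-singular lemma to the translation-invariant part of each kernel (with the duality argument absorbing the logarithmic factor $\log(D/|x-y|)$ via $|x-y|^{-\eta}$, which is where the $\eta$ in the statement originates), and finish with a direct H\"older step for the factors depending only on $x$.
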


We refer to Section
\ref{Regularity_heat_Equation_Section} for a complete proof and
other related results.$\square$

    \smallbreak
    \smallbreak
\medbreak
  ---  In the \textbf{\em second part}, we apply the results obtained in the first part to analyze
  a class of Kardar-Parisi-Zhang equations with fractional gradients. More specifically, we consider the following problem :
   \begin{equation*}
    (KPZ_f)\qquad
    \left\{
    \begin{array}{llll}
        u_t + (-\Delta)^s u &=& \big|(-\Delta)^{\frac{s}{2}} u\big|^q + f & \text{in } \Omega_T = \Omega \times (0,T), \vspace{0.2cm} \\
        u(x,t) &=& 0 & \text{in } (\mathbb{R}^N \setminus \Omega) \times (0,T), \vspace{0.2cm} \\
        u(x,0) &=& u_0(x) & \text{in } \Omega.
    \end{array}
    \right.
\end{equation*}
where $q\geq 1$, $f$ and $u_0$ are measurable functions satisfying
some integrability assumptions.
\vskip2mm
\noindent  It is worth recalling that the fractional gradient, or the so-called ``\textbf{the
half-s
        Laplacian}'',  is given by the expression:
    \begin{equation*}
(-\Delta)^{\frac{s}{2}}\phi(x)=\int_{\mathbb{R}^N}\dfrac{\phi(x)-\phi(y)}{\rvert
            x-y\rvert^{N+ s}}dy.
    \end{equation*}
\vskip2mm

{ The Kardar--Parisi--Zhang $(KPZ_f)$  problem  with fractional
diffusion is a prominent and actively studied model at the
intersection of stochastic partial differential equations (SPDEs),
statistical physics, and nonlocal analysis. It generalizes the
classical $(KPZ_f) $ equation by replacing the standard Laplacian
(which models ordinary diffusion) with a fractional Laplacian,
thereby allowing the description of anomalous transport and
long-range interactions in interface growth phenomena. This type
of equations arises in various fields such as turbulent flows,
porous media dynamics, mathematical finance, and biological
transport processes. For an in-depth discussion and further
references, we refer the reader to the seminal
works~\cite{KPZ,Ka,KT,H} and the interesting monograph~\cite{W1}.
}

\smallbreak

\noindent

{
  As far as we know, Problem $(KPZ_f)$, with
fractional gradient, has not yet been investigated in the
literature, and addressing it represents the {\em second goal} of
the present work. Under suitable integrability assumptions on $ f
$ and $ u_0 $, we establish the existence of a  solution to this
Problem,  in the sense of Definition~
\ref{weak_kpz_solution}.

 \smallbreak

\noindent

{
 It is worth recalling that the case where the nonlocal term $|(-\Delta)^{\frac{s}{2}} u|^q$ is replaced by a local gradient term has been studied in~\cite{CV} and~\cite{AB01}, where the existence of solution was obtained under the condition $ q < \dfrac{s}{1 - s} $, provided the data satisfy appropriate assumptions. We also note that the elliptic counterpart of this problem was recently addressed in~\cite{AFTY2}, and further analyzed in~\cite{YBFA} under additional smallness conditions on the data.
}

\smallbreak

\noindent
 {\em Our aim is to extend this result to the largest
possible class of data $(f,u_0)$ and exponents $q$.}
\smallbreak
 \noindent
 The existence results are proved by combining the regularity estimates obtained in the first part of this paper with a
 suitable application of the Schauder fixed-point Theorem.\\
 In order to do this, an important tool will be a compactness
result, in a suitable fractional space, for the operator $\Phi$
defined on $L^1(\Omega_T) \times L^1(\Omega)$ by $\Phi(h, w_0) =
w$, where $w$ denotes the unique solution to $(FHE)$.
%
%
Taking into account the non-local character of the fractional gradient, proving the compactness of the operator $\Phi$
(in a parabolic Bessel potential space), especially for $s \leq \frac{1}{2}$, is a delicate matter. It requires a fine analysis that combines
various a priori estimates with a new representation of elements in the Bessel potential space, as stated in Theorem \ref{GUT}.\\
This compactness result appears to be new and, to the best of our
knowledge, may prove useful in other related problems.\\
A detailed proof is given in Section
\ref{Compactness_Section}.

\medbreak

Finally, related to Problem $(KPZ_f)$, for the
sake of clarity, we shall present our main results by
distinguishing between the following two prototypical cases:
\begin{itemize}
    \item[\ding{226}]  $f\neq 0$ and $u_0=0$ ;
    \item[\ding{226}]  $f=0$ and $u_0\neq0$.\\
    \end{itemize}

Our results can be stated as follows.

\smallbreak
   \noindent\ding{227} First case: $f\neq 0$ and $u_0=0$.
  \begin{Theorem}\label{into6}
        Suppose $f\in L^m(\O_T)$ with $1\leq m$ and $s\in (\frac 14, 1)$. Assume that one of the following conditions hold

        \begin{enumerate}
            \item[ {\em (i)}] $s\in (\frac 13, 1)$ and
            \begin{equation*}
                \left\{\begin{array}{lll}
                    1\leq m\leq \frac{N+2s}{s} \vspace{0.2cm}\mbox{  and } 1<q \leq
                    \frac{N+2s}{N+2s-ms},\\
                    \mbox{  or  }\\
                    m>\frac{N+2s}{s}\vspace{0.4cm}\mbox{  and }q \in (1, +\infty).
                \end{array}
                \right.
            \end{equation*}

            \item[ {\em (ii)}]  $s\in (\frac 14,\frac 13]$ and
            \begin{equation*}
                \left\{\begin{array}{lll}
                    1\leq m\leq \frac{N+2s}{4s-1}\vspace{0.4cm} \mbox{  and } 1<q \leq
                    \frac{N+2s}{N+2s-m(4s-1)},\\
                    \mbox{  or  }\\
                    m>\frac{N+2s}{4s-1}\vspace{0.6cm} \mbox{ and  } q\in(1, +\infty).
                \end{array}
                \right.
            \end{equation*}
        \end{enumerate}
        Then, there exists $0<T^*<T$ such that Problem $(KPZ_f)$
        has a solution $u\in L^{\gamma}(0,T^*; \mathbb{L
        }^{s,\gamma}_0(\Omega))$ with
        \begin{enumerate}
           \item[ {\em 1.}]  $1<\gamma< \overline{m}:=\frac{m(N+2s)}{N+2s-ms}$ if $1\leq
            m<\frac{N+2s}{s}$, and $1<\g<+\infty$ if $
            m\geq\frac{N+2s}{s}$, if $s\in (\frac 13,1)$, \\
            or
            \item[ {\em 2.}] $1<\gamma< \overline{m}:=\frac{m(N+2s)}{N+2s-m(4s-1)}$ if $1\leq
            m<\frac{N+2s}{4s-1}$, and $1<\g<+\infty$ if $
            m\geq\frac{N+2s}{4s-1}$, if $s\in (\frac 14,\frac 13]$.
        \end{enumerate}
    \end{Theorem}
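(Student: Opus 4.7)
The plan is to set up a Schauder fixed-point scheme on a ball of a suitable parabolic Bessel-potential space, using the linear regularity results of the first part applied with $\rho=s$ so that $(-\Delta)^{\rho/2}$ coincides with the nonlocal gradient appearing in $(KPZ_f)$. First I would fix a parameter $T^*\in(0,T]$ and a radius $R>0$ to be determined, and introduce
\[
E := L^{\gamma}\bigl(0,T^*;\mathbb{L}^{s,\gamma}_{0}(\Omega)\bigr),\qquad
\mathcal{B}_R := \bigl\{v\in E:\ \|(-\Delta)^{s/2}v\|_{L^{\gamma}(\Omega_{T^*})}\le R\bigr\},
\]
for $\gamma$ chosen in the range singled out in items 1--2 of the statement. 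To each $v\in\mathcal{B}_R$ I associate $u=\mathcal{T}(v)$, the unique weak solution (in the sense of Definition~\ref{veryweak}) of the linear problem
\[
u_t+(-\Delta)^s u=|(-\Delta)^{s/2}v|^{q}+f,\qquad u\equiv 0\ \text{in }(\mathbb{R}^N\setminus\Omega)\times(0,T^*),\qquad u(\cdot,0)=0.
\]

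The core step is to check that $\mathcal{T}(\mathcal{B}_R)\subset\mathcal{B}_R$ for $T^*$ small. Since $|(-\Delta)^{s/2}v|^{q}\in L^{\gamma/q}(\Omega_{T^*})$ and $f\in L^{m}(\Omega_{T^*})$, the forcing lies in $L^{\widetilde m}$ with $\widetilde m=\min\{\gamma/q,m\}$; the purpose of the arithmetic constraints in (i)--(ii) is exactly to guarantee that $\gamma$ may be chosen with $qm\le\gamma<\overline{m}$ in the subcritical regime (or $\gamma$ arbitrary in the supercritical one), so that the $h$-only regularity theorem of the first part can be applied with $\rho=s$ and exponent $r=\gamma$. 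The split $s>\tfrac13$ vs.\ $s\in(\tfrac14,\tfrac13]$ is precisely the split between the cases $2s+\rho\ge 1$ and $2s+\rho<1$ of that theorem, with the threshold $s>\tfrac14$ imposed by the assumption $\rho-s<\min\{\tfrac{s}{N+2s},\tfrac{4s-1}{N+2s-1}\}$ used in the linear estimate. The estimate then reads
\[
\|(-\Delta)^{s/2}u\|_{L^{\gamma}(\Omega_{T^*})}\le C\,(T^*)^{\alpha}\Bigl(\|(-\Delta)^{s/2}v\|_{L^{\gamma}(\Omega_{T^*})}^{q}+\|f\|_{L^{m}(\Omega_{T^*})}\Bigr),
\]
for some $\alpha>0$ extracted by summing the various powers of $T^*$ in the linear theorem; choosing $R\simeq 2C\|f\|_{L^m}$ and then $T^*$ so that $C(T^*)^{\alpha}(R^{q-1}+1)\le\tfrac12$ yields self-invariance of $\mathcal{B}_R$.

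Next I would verify continuity and compactness of $\mathcal{T}$ on $\mathcal{B}_R$. The nonlinear map $v\mapsto|(-\Delta)^{s/2}v|^{q}$ from $E$ to $L^{\widetilde m}(\Omega_{T^*})$ is continuous by Vitali's theorem, using the uniform $L^{\gamma}$ bound on $\mathcal{B}_R$ and a.e.\ convergence of a subsequence, together with the strict inequality $\widetilde m<\gamma/q$ granted by the gap between $q$ and the borderline exponent. The linear map $h\mapsto w$ from $L^{\widetilde m}(\Omega_{T^*})$ into $E$ is compact by the compactness statement for the operator $\Phi$ discussed in the introduction and proved in Section~\ref{Compactness_Section}. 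Composing, $\mathcal{T}\colon\mathcal{B}_R\to\mathcal{B}_R$ is continuous and compact, and Schauder's fixed-point theorem provides a solution $u\in\mathcal{B}_R\subset L^{\gamma}(0,T^*;\mathbb{L}^{s,\gamma}_{0}(\Omega))$. That $u$ satisfies Definition~\ref{weak_kpz_solution} is obtained by passing to the limit in the distributional formulation, using strong $L^{\widetilde m}$ convergence of $|(-\Delta)^{s/2}v_{n}|^{q}$ along a fixed-point sequence.

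The main obstacle is the bookkeeping of exponents needed to ensure simultaneously that $\widetilde m$ falls in the range of the linear theorem, that $r=\gamma$ is admissible there, and that the overall power of $T^*$ in the nonlinear estimate is strictly positive; this is what pins down the dichotomy on $s$ and the explicit upper bounds on $q$. A secondary, more technical difficulty is to control the various logarithmic and boundary singularities present in Theorem~\ref{Thm1.1IN} uniformly in $T^*\to 0^{+}$, which is why the hyper-singular $L^p$-estimate announced for the appendix (and the $\eta>0$ room it offers) plays an essential role in absorbing the critical powers of $t$ arising when $s$ is small.
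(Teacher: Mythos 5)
Your overall architecture --- Schauder's fixed point for the map $v\mapsto u$ solving the linear problem with source $|(-\Delta)^{s/2}v|^{q}+f$, invariance of a ball obtained from the linear regularity theorem with $\rho=s$ together with smallness of $T^{*}$, and compactness borrowed from the operator $\Phi$ --- is the same as the paper's. There is, however, a genuine gap in your compactness step. You run Schauder directly in $E=L^{\gamma}(0,T^{*};\mathbb{L}^{s,\gamma}_{0}(\Omega))$ and assert that $h\mapsto w$ is compact from $L^{\widetilde m}(\Omega_{T^{*}})$ into $E$ ``by the compactness statement for the operator $\Phi$''. But Theorem \ref{compact1} only gives compactness of $\Phi$ into $L^{a}(0,T;\mathbb{L}^{s,a}_{0}(\Omega))$ for $a<\widehat{\kappa}_{s,\rho}\le \frac{N+2s}{N+s}<2$, whereas your target exponent $\gamma$ ranges up to $\overline{m}=\frac{m(N+2s)}{N+2s-ms}$ (or $+\infty$), which exceeds that threshold as soon as $m>1$. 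As written, relative compactness of $\mathcal{T}(\mathcal{B}_{R})$ in the $L^{\gamma}(\mathbb{L}^{s,\gamma})$ topology does not follow. Two repairs are available: (a) combine the strong convergence in the low-exponent space with the uniform bound in $L^{r}(0,T^{*};\mathbb{L}^{s,r}_{0}(\Omega))$ for some $r\in(\gamma,\overline{m})$ and interpolate to upgrade to strong convergence in $L^{\gamma}(\mathbb{L}^{s,\gamma})$; or (b) do what the paper does, namely take the ambient topology for Schauder to be a weak one, $L^{1}(0,T;\mathbb{L}^{s,1+\xi}_{0}(\Omega))$ with $1+\xi$ below the compactness threshold, and encode the bound $\|\varphi\|_{L^{r}(0,T;\mathbb{L}^{s,r}_{0}(\Omega))}\le \ell^{1/q}$ only as the defining constraint of the closed convex set. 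Either way, one further application of the linear regularity theorem to the fixed point itself is needed to recover the full stated range of $\gamma$, since the ball controls only one exponent.

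A secondary looseness: your choice $\widetilde m=\min\{\gamma/q,m\}$ only closes the invariance estimate when $\widetilde m=m$, i.e. $\gamma\ge qm$; if instead $\widetilde m=\gamma/q<m$, the requirement $\gamma<\frac{\widetilde m(N+2s)}{N+2s-\widetilde m s}$ forces the extra lower bound $\gamma>\frac{(q-1)(N+2s)}{s}$, which excludes part of the range you claim. The paper sidesteps this by fixing once and for all a single $r$ with $qm<r<\overline{m}$, measuring the ball in that norm, and treating the source simply as an element of $L^{m}(\Omega_{T})$.
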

 \begin{remarks}
  As for the unexpected restriction $s>\frac{1}{4}$, it is not clear whether it is only technical,
or due to deeper phenomena. Nevertheless, we shall see that it naturally appears in the proof, and we suggest some possible reasons. 
\end{remarks}

   \noindent\ding{227} Second case: $u_0\neq 0$ and $f=0$.

    \begin{Theorem}\label{into7}
        Suppose $u_0\in L^{\sigma}(\O)$ with $1\le \s<\frac{N}{(1-3s)_+}$. Assume that $1\leq q <\frac{N+ 2s\s}{N+\s s}$. Then, there exists $T^*>0$
        such that Problem $(KPZ_f)$ has a weak solution $u$ such that :
        \begin{enumerate}
           \item[ {\em 1.}] if $s>\frac 13$, then  $u \in L^{\theta}(0,T^*;
            \mathbb{L}^{s,\theta_1}_0(\Omega))$ for all
            $\theta<\min\left\{\frac{q(N+\s s)}{q(N+\s s)-\s
                s},\frac{\s(N+2s)}{N+\s
                s}\right\}$.
           \item[ {\em 2.}] if $s\in (\frac 14,\frac 13]$, then $u\in L^{\theta}(0,T^*; \mathbb{L}^{s,\theta}_0(\Omega))$
            with $\theta<\min\left\{\frac{q(N+\s s)}{q(N+\s s)-\s(4
                s-1)},\frac{\s(N+2s)}{N+\s
                s}\right\}$. $\square$
        \end{enumerate}
    \end{Theorem}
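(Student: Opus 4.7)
The natural approach is Schauder's fixed-point theorem applied to $\Phi : v \mapsto u$, where $u$ is the unique weak solution of the linear problem
\begin{equation*}
u_t + (-\Delta)^s u = |(-\Delta)^{s/2} v|^q \ \text{ in } \Omega_{T^*}, \quad u = 0 \ \text{ in } (\mathbb{R}^N\setminus\Omega)\times(0,T^*), \quad u(\cdot,0)=u_0.
\end{equation*}
I would look for a fixed point in a closed ball $\mathcal{B}_R \subset L^\theta(0,T^*;\mathbb{L}^{s,\theta_1}_0(\Omega))$ of the form $\{v : \|(-\Delta)^{s/2} v\|_{L^\theta(L^{\theta_1})} \le R\}$, for an admissible pair $(\theta,\theta_1)$ and well-chosen $R,\,T^*$. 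By linearity, I would split $u = u^{(1)} + u^{(2)}$, with $u^{(1)}$ carrying the initial datum (and zero source) and $u^{(2)}$ carrying the source $h := |(-\Delta)^{s/2} v|^q$ (and zero initial datum).

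For $u^{(1)}$, Theorem \ref{Thm1.4} with $\rho = s$ (so $\widehat\sigma = \sigma$ and the powers involving $\rho - s$ drop out) delivers the smoothing bound
\begin{equation*}
\|(-\Delta)^{s/2} u^{(1)}(\cdot,t)\|_{L^{\theta_1}(\Omega)} \le C\, t^{-\frac{N}{2s}(\frac{1}{\sigma}-\frac{1}{\theta_1}) - \frac{1}{2}} \|u_0\|_{L^\sigma(\Omega)}.
\end{equation*}
Time-integrability of the $\theta$-th power on $(0,T^*)$ is equivalent, after elementary manipulation, to the upper bound $\theta < \sigma(N+2s)/(N+\sigma s)$ appearing in the statement. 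For $u^{(2)}$, the source satisfies $h \in L^{\theta/q}(0,T^*; L^{\theta_1/q}(\Omega))$ with $\|h\|_{L^{\theta/q}(L^{\theta_1/q})} = \|(-\Delta)^{s/2} v\|^q_{L^\theta(L^{\theta_1})} \le R^q$. Applying the companion global-regularity theorem for $(FHE)$ with $w_0=0$ (Section \ref{Regularity_heat_Equation_Section}), in its Case~1 when $s > \tfrac13$ and its Case~2 when $s \in (\tfrac14,\tfrac13]$, yields
\begin{equation*}
\|(-\Delta)^{s/2} u^{(2)}\|_{L^\theta(L^{\theta_1})} \le C\, (T^*)^\kappa\, R^q
\end{equation*}
for some positive exponent $\kappa = \kappa(N,s,q,\sigma,\theta,\theta_1)$; the hypothesis $q < (N+2\sigma s)/(N+\sigma s)$ is exactly what is needed to simultaneously fit the source into the integrability range of that theorem and recover the target space.

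These two estimates give $\Phi(\mathcal{B}_R) \subset \mathcal{B}_R$ by choosing $R$ larger than the $u^{(1)}$-contribution and then $T^*$ so small that $C(T^*)^\kappa R^q \le R/2$. Compactness of $\Phi$ on $\mathcal{B}_R$ is the content of the compactness result for $(h,w_0) \mapsto w$ announced in the introduction and proved in Section \ref{Compactness_Section}; continuity is obtained by combining the linearity of that solution map with a Vitali / dominated-convergence argument for the Nemytskii operator $v \mapsto |(-\Delta)^{s/2} v|^q$. Schauder's theorem then produces a fixed point, which is a weak solution of $(KPZ_f)$ in $\Omega_{T^*}$ with the claimed integrability. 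Note that in the case $s \in (\tfrac14,\tfrac13]$ one is forced to set $\theta_1 = \theta$, because the additional singular factor $t^{(2s-1)/(2s)}$ from Theorem \ref{Thm1.1IN} only accommodates one common exponent in space and time.

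The main obstacle is the calibration of $(\theta,\theta_1)$ to satisfy three competing constraints at once: time-integrability of the $u^{(1)}$ smoothing bound near $t=0$ (yielding $\theta < \sigma(N+2s)/(N+\sigma s)$); extractability of a positive $T^*$-power in the $u^{(2)}$ estimate so that the map is contracting for small $T^*$ (translating into $q < (N+2\sigma s)/(N+\sigma s)$ and, in the low regime, $s > \tfrac14$); and compatibility with the two kernel regimes $s \lessgtr \tfrac12$ separated in Theorem \ref{Thm1.1IN}. Tracking all powers of $t$ and $T^*$ explicitly, and verifying that the admissible region for $(\theta,\theta_1)$ is nonempty precisely under the hypotheses of the theorem, is the only technically demanding part.
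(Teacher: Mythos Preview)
Your approach is correct but takes a different organizational route from the paper. The paper does not re-run a Schauder argument with nonzero initial data; instead it subtracts the free evolution. Concretely, it lets $\phi$ solve $\phi_t+(-\Delta)^s\phi=0$ with $\phi(\cdot,0)=u_0$, sets $\widetilde u=u-\phi$, and observes that $\widetilde u$ solves a problem with zero initial datum and nonlinearity $|(-\Delta)^{s/2}\widetilde u+(-\Delta)^{s/2}\phi|^q\le C(|(-\Delta)^{s/2}\widetilde u|^q+|(-\Delta)^{s/2}\phi|^q)$. Since $(-\Delta)^{s/2}\phi\in L^p(\Omega_T)$ for all $p<\sigma(N+2s)/(N+\sigma s)$ by Theorem~\ref{globalu_0_t}, the term $|(-\Delta)^{s/2}\phi|^q$ plays the role of a forcing $f\in L^m(\Omega_T)$ with $m<\sigma(N+2s)/(q(N+\sigma s))$, and the already-proved Theorem~\ref{into6} (case $u_0=0$, $f\neq 0$) applies directly. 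The compatibility of $m\ge 1$ with the $q$-condition of Theorem~\ref{into6} is exactly $q<(N+2\sigma s)/(N+\sigma s)$.

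Your direct splitting $u=u^{(1)}+u^{(2)}$ inside a single Schauder loop reaches the same constraint and the same regularity thresholds, but requires redoing the invariance, continuity and compactness checks (essentially re-proving Theorem~\ref{into6} with an extra initial-data contribution). The paper's reduction is shorter; your route is more self-contained. Two small points: for the $\mathbb{L}^{s,\theta}_0$ norm of $u^{(1)}$ you need not only Theorem~\ref{Thm1.4} but also the exterior estimate (Proposition~\ref{ext1}, packaged in Theorem~\ref{globalu_0_t}); and Schauder does not require the map to be contracting, only that $\Phi(\mathcal B_R)\subset\mathcal B_R$ together with continuity and compactness, which is indeed what your $T^*$-smallness yields.
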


    \smallbreak
    \begin{remarks}

For Problem $(KPZ_f)$ with $u_0\neq 0$ and $f=0$, it is possible
to bypass the usual restriction $s>\frac 14$  by working in the
functional space $\mathbb{X}^{s,p,\g}_0(\O\times (0,\infty))$
defined by $$
\mathbb{X}^{s,p,\g}_0(\O\times
(0,\infty))=\{\phi(t,.)\in
        \mathbb{L}^{s,p}_0(\O)\:\:  \mbox{ for {\textit{a.e.}} }t\in (0,T) \mbox{ and
        }\sup_{t\in
            (0,T)}t^\g||\phi(.,t)||_{\mathbb{L}^{s,p}_0(\O)}<\infty\},
            $$
where $1\le p<\infty, \g>0$ choosing adequately. A detailed
analysis of this case, along with related questions, will be
presented in a forthcoming work.
\end{remarks}
The proof of Theorems \ref{into6} and \ref{into7}
will be given in Section \ref{Application_Section}.

        \medbreak

  The remainder of this paper is organized as follows. In Section~\ref{Preliminaries_Section}, we gather several preliminary tools and inequalities that will be used throughout the paper. This section is divided into three subsections. In Subsection~\ref{sub11}, we introduce the main functional spaces relevant to our study, including fractional Sobolev spaces and Bessel potential spaces, and discuss some of their key properties. Subsection~\ref{sub12} presents a number of technical lemmas and useful inequalities. Finally, in Subsection~\ref{sub13}, we review known regularity results for the (unique) weak solution to Problem~$(\mathrm{FHE})$.
Section~\ref{Estimate_heat_Kernel} is devoted to establishing the main
pointwise estimate on the ``$\rho$-Laplacian'' of $P_\Omega$. The
proof relies on integral functional estimates and fine properties
of the heat kernel.
Once these estimates have been established, we derive the
main regularity results for  the (unique) solution to the fractional heat
equation $(FHE)$, using hyper-singular integrals. For clarity of
the presentation, we consider two cases separately:  (i) $h = 0$
and $w_0 \neq 0$ and (ii)  $h\neq 0$ and $w_0=0$.
It is worth noting that the resulting regularity is global in
nature. These results are presented in
Section~\ref{Regularity_heat_Equation_Section}.
As a significative  application of the regularity theory developed
in Section~\ref{Regularity_heat_Equation_Section}, we establish
the compactness of the operator $ \Phi: L^1(\Omega_T) \times
L^1(\Omega) \to L^q(0,T; \mathbb{L}^{s,q}_0(\Omega)), $ for $q <
\widehat{\kappa}_{s,\rho}$, where $\widehat{\kappa}_{s,\rho} > 1$
is defined in~\eqref{kapp}. The proof is given in
Section~\ref{Compactness_Section}, where we refine the previous
regularity estimates and apply a new nonlocal characterization of
the Bessel-potential space established in~\cite{GU}, using
suitable Marcinkiewicz space.
   Building on the compactness result established in the previous section, we address a class of Kardar--Parisi--Zhang problems involving a nonlocal gradient term.
   Specifically, in Section~\ref{Application_Section}, we prove the existence of a solution to Problem $(KPZ_f)$, treating separately the cases:
   (i)  $f\neq 0$ and $u_0=0$ and  (ii) $f=0$ and $u_0\neq 0$.
    Some extensions and perspectives are presented in Section~\ref{open}. Finally, the paper concludes with an appendix containing the proofs of some technical results used throughout the paper


\section{Preliminaries and Functional Settings. }\label{Preliminaries_Section}

    In this preliminary section, we present the functional framework and some essential tools that will be used throughout the paper.
    The first subsection  is devoted to a review of relevant function spaces, including fractional Sobolev spaces and Bessel potential spaces,
    which provide the appropriate setting for our analysis. In the second subsection, we recall  useful functional inequalities and  several technical
    lemmas that will play a crucial role in establishing our main regularity result. In the third subsection, we recall known regularity results
    for the (unique) weak solution to Problem~$(\mathrm{FHE})$.

    \subsection{Some {useful} functional spaces and their properties.}{}\label{sub11}
    \
    \

     Let $s\in (0,1)$,  $p\ge 1$ and  $U$ be an open subset of $\ren$.

  \subsubsection* {$\bullet$ \underline{Fractional Sobolev
            Spaces}}
            \
            \
            \

      \noindent ---  The fractional Sobolev space $W^{s,p}(U)$ is defined
    as follows
    {\[ W^{s,p}(U):= \left\{ \phi \in L^p(U)\,\, ;\;\;
        \iint\limits_{U\times U}
        \frac{|\phi(x)-\phi(y)|^p}{|x-y|^{N+sp}}dx dy < \infty \right\}.
        \]}
        and it is a
     Banach space  endowed   with the norm
{    {   \[ \|\phi\|_{W^{s,p}(U)} := \left(
        \|\phi\|_{L^p(U)}^p + \iint\limits_{U\times U}
        \frac{|\phi(x)-\phi(y)|^p}{|x-y|^{N+sp}} dx dy
        \right)^{\frac{1}{p}},.\]}}
 We refer to \cite{dine,DaouLaam} and their references for more
    properties of the fractional Sobolev spaces $W^{s,p}(U)$.
   \medbreak
   \noindent ---  When $ U \subsetneq \mathbb{R}^N$, the space $\W_0^{s,p}(U)$ is defined as
    \[ \W_0^{s,p}(U):= \bigg\{\phi\in W^{s,p}(\mathbb{R}^N)
    \,\, ;\,\, \phi= 0 \textup{ in } \mathbb{R}^N\setminus
    U\bigg\}. \]
    If in addition $ U $ is bounded, then $\W_0^{s,p}(U)$ is a Banach
    space endowed with the norm
    \[ \|\phi\|_{\W^{s,p}_0(U)} := \left( \iint\limits_{D_{U}} \frac{|\phi(x)-\phi(y)|^p}{|x-y|^{N+sp}} dx dy \right)^{1/p},\]
    with $D_{U} := (\mathbb{R}^N \times \mathbb{R}^N) \setminus
    (\mathcal{C}U \times \mathcal{C}U).$
   \medbreak
  \noindent ---  If $p=2$, then
  \begin{itemize}
  \item[--]
  {$W^{s,2}(\mathbb{R}^N)=H^s(\mathbb{R}^N)$}  is a Hilbert space.  Moreover,     for $\phi,\psi\in H^{s}(\mathbb R^{N}) $, we have
{    \begin{equation*}
<(-\Delta)^s\phi,\psi>=\dfrac{a_{N,s}}{2}\iint\limits_{\mathbb{R}^{2N}}\dfrac{\left(\phi(x)-\phi(y)\right)\left(\psi(x)-\psi(y)\right)}{||
            x-y||^{N+2s}}dxdy,
    \end{equation*}}
    where the constant $ a_{N,s} $  was introduced via the formula \eqref{aNS}.\\
      \item[--] $\mathbb{H}_0^s(U) : = \mathbb{H}_0^{s,2}(U)$. Furthermore, we denote by $\mathbb{H}^{-s}(U)$ the dual space of $\mathbb{H}_0^s(U)$.
      \end{itemize}

   \medbreak
    \noindent --- In the same way, we define the parabolic fractional space
    $L^{p}(0,T;\, \W^{s,p}_0(U))$ as the set of functions $\phi
    \in L^{p}(U\times(0,T)$ such that
    $$|| \phi||_{L^{p}(0,T;\,\W_0 ^{s,p}(U))}<\infty,$$
    with
$$||\phi||_{L^{p}(0,T;\,\W_0^{s,p}(U))}=\left(\int_{0}^{T}\iint\limits_{D_U}\frac{|\phi(x,t)-\phi(y,t)|^p}{|x-y|^{N+sp}} dx dy dt \right)^{\frac{1}{p}}.$$
    \vspace{0.2cm}
    \subsubsection*{$\bullet$ {\underline{ Marcinkiewicz space $\mathcal{M}^p(U)$} }}
    \
    \

    We denote by {$\mathcal{M}^p(U)$}, the
        Marcinkiewicz space, defined as the set of measurable functions $\phi$
        defined on {$U$} such that
        \begin{equation}\label{TRT}
     \bigg|\bigg\{x\in U: |\phi(x)| >k\bigg\}\bigg|\le \frac{C}{k^p}.
        \end{equation}

        If we set $$||\phi||_{\mathcal{M}^p(U)}=\inf\bigg\{C\ge 0 \mbox{ such
            that }\eqref{TRT} \mbox{  holds  }\bigg\},$$ then
        $||.||_{\mathcal{M}^p(U)}$ is a semi-norm.$\square$

    \subsubsection*{ $\bullet$ {\underline{Bessel Potential Spaces $\mathbb{L}^{s,p}(\mathbb{R}^N)$ }}}{}
    \
    \
    \

    Before proceeding with the definition of Bessel potential spaces, we recall two important nonlocal analogues of the classical gradient that will be relevant in our analysis. For any test function $ \phi \in \mathcal{C}_0^\infty(\mathbb{R}^N) $ and for $ x \in \mathbb{R}^N $, we define:

\begin{equation}\label{frac-g}
    \nabla^s \phi(x) := \int_{\mathbb{R}^N}
    \frac{\phi(x)-\phi(y)}{|x-y|^{s}} \cdot \frac{x - y}{|x - y|} \cdot \frac{dy}{|x - y|^N}, \quad s \in (0, 1),
\end{equation}

and the scalar-valued quantity

\begin{equation} \label{nonlocal gradient}
    \mathbb{D}_s (\phi)(x) := \left( \frac{a_{N,s}}{2}
    \int_{\mathbb{R}^N} \frac{|\phi(x)-\phi(y)|^2}{|x-y|^{N+2s}} \, dy
    \right)^{\frac{1}{2}}.
\end{equation}
 These objects provide different ways of capturing
nonlocal variations of smooth functions. For further details, we
refer the reader to \cite{S_S_2015, Ponce_Book_2015}.
\smallbreak

It is known (see e.g. \cite{Schikorra, Stein1}) that $
\mathbb{D}_s $ behaves asymptotically like the classical gradient
as $ s \to 1^- $, in the sense that
\begin{equation} \label{limit gradient term}
    \lim_{s \to 1^{-}} (1-s)\,\mathbb{D}_s^2(\phi)(x) = |\nabla \phi(x)|^2, \quad \text{for all } \phi \in \mathcal{C}_0^\infty(\mathbb{R}^N).
\end{equation}

    \medbreak
    Given the nonlocal character of the fractional Laplacian, it is natural to investigate the regularity of $w$ (the weak solution to Problem $(FHE)$, in a functional space that reflects this structural feature.

\medbreak

   \noindent ---  The Bessel potential space
    $\mathbb{L}^{s,p}(\mathbb{R}^N)$ is defined as the completion of
    $\mathcal{C}_0^{\infty}(\mathbb{R}^N) $ with respect to the norm
    $\|\cdot\|_{\mathbb{L}^{s,p}(\mathbb{R}^N)}$ defined by
    $$
    ||\phi||_{\mathbb{L}^{s,p}(\ren)} =
    \|(1-\Delta)^{\frac{s}{2}}\phi\|_{L^p(\mathbb{R}^N)} \quad \textup{
        and } \quad (1-\Delta)^{\frac{s}{2}}\phi = \mathcal{F}^{-1} (
    (1+|\cdot|^2)^{\frac{s}{2}} \mathcal{F} \phi).
    $$
    According to \cite{adams, S_S_2015,Stein1, Stein}, the space
    {   $\mathbb{L}^{s,p}(\mathbb{R}^N)$} can be endowed with the
    equivalent norm {$$
        \|\phi\|_{\mathbb{L}^{s,p}(\mathbb{R}^N)} := \|\phi\|_{L^p(\mathbb{R}^N)} +
        \|(-\Delta)^{\frac{s}{2}}\phi\|_{L^p(\mathbb{R}^N)},
        $$}
    or
    {   $$
        \|\phi\|_{\mathbb{L}^{s,p}(\mathbb{R}^N)} := \|\phi\|_{L^p(\mathbb{R}^N)} +
        \|\n^s \phi\|_{L^p(\mathbb{R}^N)}.
        $$}
        \smallbreak
     \noindent --- If  $p>\frac{2N}{N+2s}$,
    {$\mathbb{L}^{s,p}(\mathbb{R}^N)$} can  also  be defined as the set of
    functions $\phi\in L^p(\mathbb{R}^N)$ such that $ \mathbb{D}_s
    (\phi)\in L^p(\mathbb{R}^N)$, endowed with the equivalent norm

{$$||\phi||_{\mathbb{L}^{s,p}(\mathbb{R}^N)}=\|\phi\|_{L^p(\mathbb{R}^N)}+
        \|\mathbb{D}_s (\phi)\|_{L^p(\mathbb{R}^N)}.
        $$}
    {   We refer} to \cite{Stein} for more details.
     \medbreak
        We can summarize the essential
    properties of {$\mathbb{L}^{s,p}(\mathbb{R}^N)$} in the following
    proposition.

    \begin{Proposition}\label{proper}
        Assume that $1\le p<+\infty$ and $0<s<1$. Then, we have

\begin{enumerate}
        \item[{\em 1.}] 
         {$W^{s,p}(\mathbb{R}^N)\hookrightarrow
            \mathbb{L}^{s,p}(\mathbb{R}^N)$ if $p\in (1,2]$ and
            $\mathbb{L}^{s,p}(\mathbb{R}^N)\hookrightarrow
            W^{s,p}(\mathbb{R}^{N})$ if $p\in [2,+\infty)$.}

         \item[{\em 2.}] For all $0 < \e < s < 1$ and $1 < p < +\infty$,
        {$$
            \mathbb{L}^{s+\e,p}(\mathbb{R}^N)\subset W^{s,p}(\mathbb{R}^N)
            \subset \mathbb{L}^{s-\e,p}(\mathbb{R}^N).
            $$}
           \end{enumerate}
    \end{Proposition}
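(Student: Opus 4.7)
The plan is to identify both function spaces with members of the Triebel--Lizorkin scale and then invoke the standard inclusions in that scale. For $0<s<1$ and $1<p<\infty$, one has the identifications
\[
\mathbb{L}^{s,p}(\mathbb{R}^N) = F^{s}_{p,2}(\mathbb{R}^N), \qquad W^{s,p}(\mathbb{R}^N) = F^{s}_{p,p}(\mathbb{R}^N) = B^{s}_{p,p}(\mathbb{R}^N),
\]
the first arising from the Littlewood--Paley characterization of the Bessel norm $\|(1-\Delta)^{s/2}\cdot\|_{L^p}$, and the second being the classical Stein--Triebel identification of the Slobodeckij seminorm with the Besov seminorm when $0<s<1$ and $p<\infty$.

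For Part (1), I would apply the elementary monotonicity of Triebel--Lizorkin spaces in the second index, namely $F^{s}_{p,q_1}(\mathbb{R}^N)\hookrightarrow F^{s}_{p,q_2}(\mathbb{R}^N)$ whenever $q_1\le q_2$. When $p\in(1,2]$, setting $q_1=p\le 2=q_2$ yields $W^{s,p}=F^{s}_{p,p}\hookrightarrow F^{s}_{p,2}=\mathbb{L}^{s,p}$; when $p\in[2,\infty)$, interchanging the roles gives the opposite embedding. At the balance point $p=2$ the two norms are actually equivalent, in agreement with Plancherel's theorem.

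For Part (2), I would invoke the sharp inclusion between Triebel--Lizorkin scales of different smoothness: for fixed $p\in(1,\infty)$ and arbitrary $q_1,q_2\in[1,\infty]$, the relation $s_1>s_2$ implies $F^{s_1}_{p,q_1}(\mathbb{R}^N)\hookrightarrow F^{s_2}_{p,q_2}(\mathbb{R}^N)$. Applying this twice, once from $\mathbb{L}^{s+\varepsilon,p}=F^{s+\varepsilon}_{p,2}$ into $F^{s}_{p,p}=W^{s,p}$, and once from $W^{s,p}=F^{s}_{p,p}$ into $F^{s-\varepsilon}_{p,2}=\mathbb{L}^{s-\varepsilon,p}$, yields the claimed two-sided chain.

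The main non-trivial obstacle is verifying the two Triebel--Lizorkin identifications themselves, as these rely on substantial harmonic-analytic ingredients (lifting properties of the Bessel potential, Littlewood--Paley square-function estimates, and Plancherel--P\'olya-type inequalities reconciling the Slobodeckij seminorm with the dyadic Besov characterization). A more self-contained but less sharp route for Part (1) would use the Mikhlin--H\"ormander multiplier theorem to show that $|\xi|^{s}(1+|\xi|^2)^{-s/2}$ defines a bounded $L^p$-multiplier for $1<p<\infty$, giving one direction of the norm comparison; combining this with Plancherel at $p=2$ and complex interpolation then recovers the full statement.
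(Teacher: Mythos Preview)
Your argument is correct. The paper does not actually give a proof: it simply records that Part~1 is \cite[Theorem~5, Chapter~V]{Stein} and Part~2 is \cite[Theorem~7.63]{adams}. Your route---identifying $\mathbb{L}^{s,p}=F^{s}_{p,2}$ and $W^{s,p}=F^{s}_{p,p}$, then invoking monotonicity of $F^{s}_{p,q}$ in $q$ for Part~1 and in $s$ for Part~2---is the standard modern packaging of those classical results and handles both assertions with a single toolkit. Stein's original proof of Part~1 is closer in flavour to the alternative you sketch at the end (direct Littlewood--Paley and multiplier estimates on $(1+|\xi|^2)^{s/2}$, without the full Triebel--Lizorkin scale), while the Adams reference for Part~2 proceeds through Besov-space embeddings and is essentially your smoothness-index argument. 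The only caveat is the one you flag yourself: the identifications $\mathbb{L}^{s,p}=F^{s}_{p,2}$ and $W^{s,p}=F^{s}_{p,p}$ are themselves nontrivial inputs, so your proof is short precisely because it outsources the hard analysis to those two equalities.
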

The first assertion follows from   \cite[Theorem 5 in Chapter V]{Stein}, 
while the second is a consequence of    \cite[Theorem 7.63]{adams}.
%
\medbreak
    According to \cite{GU}  (see    \cite[Theorem 1.2]{GU}), we have the
    next beautiful partial characterization of the {Bessel potential }
    space {$\mathbb{L}^{s,p}(\mathbb{R}^N)$}.
    \begin{Theorem}\label{GUT}
        Assume that {$\phi\in \mathbb{L}^{s,p}(\mathbb{R}^N)$}, then
        {   \begin{equation}\label{GUTF}
\bigg\|\frac{\phi(x)-\phi(y)}{|x-y|^{\frac{N}{p}+s}}\bigg\|_{\mathcal{M}^p(\mathbb{R}^N\times
                    \mathbb{R}^N)}\le C(p,N)
\|(1-\D)^{\frac{s}{2}}\phi\|_{L^p(\mathbb{R}^N)}=||\phi||_{\mathbb{L}^{s,p}(\mathbb{R}^N)}.\square
        \end{equation}}
    \end{Theorem}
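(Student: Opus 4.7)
The plan is to reduce the assertion to a weak-type inequality via the Bessel-potential representation. Setting $f:=(1-\Delta)^{s/2}\phi$, the hypothesis becomes $f\in L^p(\mathbb{R}^N)$, and we may write $\phi=G_s\ast f$, where $G_s$ is the Bessel kernel of order $s$ (characterized by $\widehat{G_s}(\xi)=(1+|\xi|^2)^{-s/2}$). The goal is to convert the bound $\|f\|_{L^p}<\infty$ into a Marcinkiewicz estimate for the difference quotient $(\phi(x)-\phi(y))/|x-y|^{N/p+s}$ over $\mathbb{R}^{2N}$.

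The first step is a pointwise \emph{Hedberg-type inequality}
$$|\phi(x)-\phi(y)|\leq C\,|x-y|^s\,\big(Mf(x)+Mf(y)\big),$$
where $M$ denotes the Hardy-Littlewood maximal operator. To prove it, I would write
$$\phi(x)-\phi(y)=\int_{\mathbb{R}^N}\big(G_s(x-z)-G_s(y-z)\big)\,f(z)\,dz$$
and split the integration into the near zone $\{|x-z|\leq 2|x-y|\}$ and the far zone $\{|x-z|>2|x-y|\}$. In the near zone, using $|G_s(w)|\lesssim |w|^{s-N}$ together with the rapid decay of $G_s$ at infinity, a dyadic decomposition of the ball bounds each of the two convolutions separately by $|x-y|^s\,Mf(x)$ and $|x-y|^s\,Mf(y)$. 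In the far zone, the mean-value inequality $|G_s(x-z)-G_s(y-z)|\lesssim |x-y|\,|z-x|^{s-N-1}$ combined with another dyadic summation (which converges because $s<1$) produces a contribution of order $|x-y|^s\,Mf(x)$.

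The second step is a distributional estimate on the product space $\mathbb{R}^{2N}$. From the Hedberg inequality,
$$\{(x,y):|\phi(x)-\phi(y)|>k\,|x-y|^{N/p+s}\}\subset \{(x,y):Mf(x)>c\,k\,|x-y|^{N/p}\}\cup\{(x,y):Mf(y)>c\,k\,|x-y|^{N/p}\},$$
and by symmetry it suffices to bound the first set. For fixed $x$, the $y$-slice $\{y:|x-y|<(Mf(x)/ck)^{p/N}\}$ is a ball of volume $\omega_N\,(Mf(x))^p/(ck)^p$. Integrating in $x$ and invoking the $L^p$-boundedness of $M$ (valid for $p>1$) yields
$$\big|\{(x,y):|\phi(x)-\phi(y)|>k\,|x-y|^{N/p+s}\}\big|\leq \frac{C(N,p)\,\|f\|_{L^p}^p}{k^p},$$
which is precisely the claimed Marcinkiewicz bound in the form \eqref{TRT}.

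The main technical obstacle is the Hedberg-type pointwise inequality: one must extract the exact power $|x-y|^s$ uniformly, which requires delicate handling of the Bessel kernel both near the origin (where $G_s$ differs from the Riesz kernel by lower-order terms, with a logarithmic correction when $2s=N$) and at infinity (where the exponential decay of $G_s$ and $\nabla G_s$ must be exploited to justify the dyadic summation). A secondary subtlety arises at the endpoint $p=1$: since $M$ is only of weak type $(1,1)$, the passage from the pointwise bound to the Marcinkiewicz estimate must be carried out via the weak-type inequality, which still yields the correct $\mathcal{M}^1$-estimate thanks to the extra volume factor $|x-y|^N$ provided by the $y$-slice.
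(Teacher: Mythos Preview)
The paper does not provide its own proof of this theorem; it is quoted directly from \cite{GU} (Gu--Yung, \textit{J.\ Funct.\ Anal.}\ 2021, Theorem~1.2), and the $\square$ at the end of the statement signals that no argument is given. So there is no ``paper's proof'' to compare against.

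Your approach---the Hedberg-type pointwise inequality $|\phi(x)-\phi(y)|\le C|x-y|^{s}\big(Mf(x)+Mf(y)\big)$ followed by slicing in $y$ and the $L^p$-boundedness of the maximal operator---is correct and complete for $p>1$, and this is precisely the range used later in the paper (Proposition~\ref{compat11} applies it with exponent $q>1$). The near-zone/far-zone decomposition you describe for the Bessel kernel is standard and works as you indicate.

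There is, however, a genuine gap in your treatment of $p=1$. After the pointwise bound, the set $\{(x,y):Mf(x)>ck\,|x-y|^{N}\}$ has, for each fixed $x$, a $y$-slice of volume $\omega_N\,Mf(x)/(ck)$; integrating in $x$ gives $\tfrac{\omega_N}{ck}\int_{\mathbb{R}^N}Mf(x)\,dx$, which is \emph{infinite} for any nontrivial $f\in L^1$. Switching the order of integration does not help: with $h=x-y$, the weak-$(1,1)$ bound gives a slice measure $\le C\|f\|_{L^1}/(ck|h|^{N})$, and $\int_{\mathbb{R}^N}|h|^{-N}\,dh=\infty$. The ``extra volume factor $|x-y|^{N}$'' you invoke is already consumed in producing the $y$-slice volume and does not provide an additional gain. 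The endpoint $p=1$ in the Gu--Yung/Brezis--Van~Schaftingen--Yung circle of results requires a genuinely different argument (e.g.\ a Vitali-type covering argument applied directly to the level sets, rather than passing through $Mf$). Since the paper only uses the case $p>1$, this gap does not affect the applications here, but your sketch should either restrict to $p>1$ or supply the correct endpoint argument.
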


      \medbreak

   \subsubsection*{ $\bullet$ {\underline{Bessel Potential Spaces $\mathbb{L}^{s,p}_0(U)$ }}}
    \
    \

    In this subsection, we assume that $U$ is \textbf{\em bounded}. \\
    Since we are working under homogeneous exterior Dirichlet boundary condition, we
        need to
        use the Bessel potential space $\mathbb{L}^{s,p}_0(U)$ defined by
        $$ \mathbb{L}^{s,p}_0(U):= \left\{\phi\in \mathbb{L}^{s,p}(\mathbb{R}^N) \,\,
        ;\,\, \phi= 0 \textup{ in } \mathbb{R}^N\setminus U\right\}.
        $$
    Moreover, we can endow $\mathbb{L}^{s,p}_0(U)$ by
        the following equivalent norms
        $$\|\phi\|_{\mathbb{L}^{s,p}_0(U)}\backsimeq
\|(-\Delta)^{\frac{s}{2}}\phi\|_{L^p(\mathbb{R}^N)}\backsimeq
\|\n^s
        \phi\|_{L^p(\mathbb{R}^N)}.
        $$


         \noindent According to \eqref{GUTF}, we have for $\phi\in
        \mathbb{L}^{s,p}_0(U)$
    \begin{equation}\label{GUTFB}
\bigg\|\frac{\phi(x)-\phi(y)}{|x-y|^{\frac{N}{p}+s}}\bigg\|_{\mathcal{M}^p(\mathbb{R}^N\times
                \mathbb{R}^N)}\le C(p,N,U) ||(-\D)^{\frac{s}{2}}\phi||_{{L}^{p}(\mathbb{
                    R}^N)}.\square
    \end{equation}

       \medbreak

        \subsubsection*{ $\bullet$ {\underline{Parabolic {  Bessel potential } space $L^{p}(0,T;\,
        \mathbb{L}^{s,p}_0(\Omega))$}}}
    \
    \

        We define  the Parabolic {  Bessel potential } space $L^{p}(0,T;\,
        \mathbb{L}^{s,p}_0(U))$ as the set of functions $\phi \in
        L^{p}(U\times (0,T))$ such that
        $$|| \phi||_{L^{p}(0,T;\,\mathbb{L}^{s,p}_0(U))}<\infty,$$
        with
$$||\phi||_{L^{p}(0,T;\,\mathbb{L}^{s,p}_0(U))}=\left(\int_{0}^{T}||\phi(.,t)||_{\mathbb{L}^{s,p}_{0}(U)}^pdt \right)^{\frac{1}{p}}.\square$$

     \medbreak

      \subsection{Some  useful  inequalities and technical lemmas.}{}\label{sub110}
    \
    \

     To make the paper self-contained and to ease the reading, we recall in this subsection some (more or less)  inequalities and classical technical lemmas.

        \subsubsection*{ $\bullet$ {\underline{A nonlocal versions of the Sobolev's inequality }}}\label{sub12}
    \
    \

We now present a nonlocal version of the Sobolev inequality, which
we use  in establishing our regularity estimates
      \begin{Theorem}
        Assume that  $N>ps$. There exists a positive constant
        $S_i=S_i(N,p,s), i=1,2,3$ such that for $\phi\in
        C_0^{\infty}(\mathbb{R}^N)$, we
        have\\
        $$\dyle
        S_1||\phi||_{L^{p^*_s}(\mathbb{R}^N)}\le \left(\iint_{\mathbb{R}^N\times \mathbb{R}^N} \dfrac{|\phi(x)-\phi(y)|^p}{|x-y|^{N+sp}} dx
        dy \right)^{\frac{1}{p}}.$$
        $$ \dyle
        S_2||\phi||_{L^{p^*_s}(\mathbb{R}^N)}\le \|\nabla^s
        \phi\|_{L^p(\mathbb{R}^N)}, $$ \\and \\
        $$ \noindent  \dyle
        S_3||\phi||_{L^{p^*_s}(\mathbb{R}^N)}\le
        \|(-\Delta)^{\frac{s}{2}}\phi\|_{L^p(\mathbb{R}^N)},$$\\
        \noindent where $p^*_s=\frac{pN}{N-ps}$. 
    \end{Theorem}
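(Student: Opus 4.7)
The plan is to prove the inequality involving $(-\Delta)^{s/2}$ directly via Riesz-potential inversion and then transfer the resulting estimate to the other two right-hand sides, exploiting the fact that the three expressions are essentially equivalent norms on the Bessel-potential space $\mathbb{L}^{s,p}(\mathbb{R}^N)$ (for $1<p<\infty$). More precisely, given $\phi\in C_0^\infty(\mathbb{R}^N)$, I would write $\phi=c_{N,s}\,I_s((-\Delta)^{s/2}\phi)$, where $I_s g(x):=\int_{\mathbb{R}^N}\frac{g(y)}{|x-y|^{N-s}}\,dy$ denotes the Riesz potential of order $s$. The Hardy-Littlewood-Sobolev inequality then yields, for $1<p<N/s$ and $p^*_s=pN/(N-ps)$,
\[
\|\phi\|_{L^{p^*_s}(\mathbb{R}^N)} \leq c_{N,s}\,\|I_s((-\Delta)^{s/2}\phi)\|_{L^{p^*_s}(\mathbb{R}^N)} \leq C(N,p,s)\,\|(-\Delta)^{s/2}\phi\|_{L^p(\mathbb{R}^N)},
\]
which is inequality (3).

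Next, I would deduce (2) from (3) by a Fourier-side computation. Since $\widehat{\nabla^s\phi}(\xi)$ is proportional to $(i\xi/|\xi|)\,|\xi|^s\,\widehat{\phi}(\xi)$, the fractional gradient $\nabla^s\phi$ coincides (up to sign and a multiplicative constant) with the action of the vector of Riesz transforms $\mathcal{R}$ on $(-\Delta)^{s/2}\phi$. The boundedness of $\mathcal{R}$ on $L^p(\mathbb{R}^N)$ for $1<p<\infty$ then gives the comparability $\|\nabla^s\phi\|_{L^p}\simeq\|(-\Delta)^{s/2}\phi\|_{L^p}$, whence (2). For (1), I would invoke the standard Stein-type equivalence between the Gagliardo seminorm and the Bessel-potential norm: for $0<s<1$ and $1<p<\infty$,
\[
\|(-\Delta)^{s/2}\phi\|_{L^p(\mathbb{R}^N)} \leq C(N,p,s)\left(\iint_{\mathbb{R}^N\times\mathbb{R}^N}\frac{|\phi(x)-\phi(y)|^p}{|x-y|^{N+sp}}\,dx\,dy\right)^{1/p},
\]
so that (1) is a direct consequence of (3).

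The main obstacle is the endpoint $p=1$, where the Hardy-Littlewood-Sobolev inequality breaks down at the scale $L^1\to L^{N/(N-s)}$ (only a weak-type estimate survives) and the Riesz transforms are not bounded on $L^1$. For inequality (1) the embedding $W^{s,1}(\mathbb{R}^N)\hookrightarrow L^{N/(N-s)}(\mathbb{R}^N)$ nevertheless holds and is classical, following for instance from a direct truncation/layer-cake argument applied to the Gagliardo seminorm. For (2)--(3) at $p=1$, the naive Riesz-potential route is not available and one has to appeal to the sharper recent analysis of the fractional gradient $\nabla^s$ (cf.\ the work of Schikorra-Spector-Van Schaftingen cited earlier) to recover the endpoint embedding; everything else in the argument is otherwise standard.
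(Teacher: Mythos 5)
The paper does not actually prove this theorem: it disposes of it by citing \cite[Theorem 6.5]{dine} for the first inequality and \cite{adams} for the other two. Your self-contained sketch is therefore a genuinely different (more explicit) route, and the parts concerning (3) (Riesz-potential inversion $\phi=c_{N,s}I_s((-\Delta)^{s/2}\phi)$ plus Hardy--Littlewood--Sobolev) and (2) (identifying $\nabla^s\phi$ with the vector of Riesz transforms applied to $(-\Delta)^{s/2}\phi$ and using their $L^p$-boundedness) are correct and standard for $1<p<N/s$.

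The genuine gap is in your deduction of (1) from (3). The inequality $\|(-\Delta)^{s/2}\phi\|_{L^p(\mathbb{R}^N)}\le C\,[\phi]_{W^{s,p}(\mathbb{R}^N)}$ that you invoke is not a two-sided ``Stein-type equivalence'' valid for all $1<p<\infty$: it is exactly the embedding $W^{s,p}(\mathbb{R}^N)\hookrightarrow\mathbb{L}^{s,p}(\mathbb{R}^N)$, which holds only for $1<p\le 2$ (see Proposition \ref{proper}; equivalently $B^{s}_{p,p}\hookrightarrow F^{s}_{p,2}$ requires $p\le 2$). For $p>2$ the inclusion reverses, $\mathbb{L}^{s,p}\hookrightarrow W^{s,p}$, and the inequality you need is false in general, so for $2<p<N/s$ your argument for (1) breaks down. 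The repair is to prove (1) directly by the elementary chopping argument of \cite[Theorem 6.5]{dine}, which works for every $p\ge1$ and never passes through the Bessel scale. A smaller but real issue concerns the endpoint $p=1$: you suggest the endpoint versions of (2)--(3) can be ``recovered'' from the sharper $L^1$ theory of $\nabla^s$. This is correct for (2), where the vectorial structure is precisely what the Schikorra--Spector--Van Schaftingen type results exploit, but the scalar estimate (3) genuinely fails at $p=1$ (the bound $\|\phi\|_{L^{N/(N-s)}}\le C\|(-\Delta)^{s/2}\phi\|_{L^1}$ is false), so (3) must be restricted to $p>1$ rather than rescued.
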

   See \cite[Theorem 6.5]{dine} for the first inequality and \cite{adams} for more details regarding the two last inequalities.
 .
      \medbreak

        \subsubsection*{ $\bullet$ {\underline{Hardy Inequality}}}
    \
    \

   The following Hardy-type inequality will be employed repeatedly throughout this paper.
   For a detailed proof and further discussion, we refer the reader to \cite{Edmunds2}.
    \begin{Theorem}\label{hardyd}
        Assume that $N \geq 2s$.  {Then, for all $\phi\in \W^{s,2}_0(\O)$,  we
        have}
        \begin{equation}\label{hhardy}
            \int\limits_{\Omega} \frac{\phi^2(x)}{\d^{2s}(x)}dx \leq C(\O)
            \iint\limits_{\ \mathbb{R}^N\times \mathbb{R}^N}\frac{|\phi(x)-\phi(y)|^2}{|x-y|^2}dxdy.
        \end{equation}
    \end{Theorem}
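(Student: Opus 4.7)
The plan is to exploit the vanishing of $\phi$ outside $\Omega$, combined with the exterior geometry of $\partial\Omega$, to convert the nonlocal Gagliardo-type seminorm on the right-hand side (which I read as $|x-y|^{N+2s}$ in the denominator, this being the only form consistent with $\phi\in \W^{s,2}_0(\Omega)$ and the assumption $N\ge 2s$) into a weighted integral carrying the weight $\delta^{-2s}$.

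First I would split the $y$-integration in $\mathbb{R}^N$ according to whether $y\in\Omega$ or $y\in\mathbb{R}^N\setminus\Omega$. Since $\phi\in\W^{s,2}_0(\Omega)$ vanishes outside $\Omega$, for $x\in\Omega$ and $y\in\mathbb{R}^N\setminus\Omega$ one has $|\phi(x)-\phi(y)|^2=\phi^2(x)$, whence
\begin{equation*}
\iint_{\mathbb{R}^N\times\mathbb{R}^N}\frac{|\phi(x)-\phi(y)|^2}{|x-y|^{N+2s}}\,dx\,dy \;\geq\; 2\int_\Omega \phi^2(x)\,K(x)\,dx, \qquad K(x):=\int_{\mathbb{R}^N\setminus\Omega}\frac{dy}{|x-y|^{N+2s}}.
\end{equation*}
The core of the argument is then the pointwise lower bound
\begin{equation*}
K(x)\;\geq\;\frac{c(\Omega)}{\delta^{2s}(x)},\qquad x\in\Omega,
\end{equation*}
which, substituted above, immediately yields the claimed Hardy inequality with $C(\Omega)=\frac{1}{2c(\Omega)}$.

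To obtain the bound on $K(x)$ I would invoke the regularity of $\partial\Omega$: such domains enjoy a uniform exterior ball condition, so for every $x\in\Omega$ near the boundary there exist $z_x\in\mathbb{R}^N\setminus\overline{\Omega}$ and $r_x\ge c_1\delta(x)$ with $B(z_x,r_x)\subset\mathbb{R}^N\setminus\Omega$ and $|x-z_x|\le c_2\delta(x)$. Estimating crudely on this ball gives
\begin{equation*}
K(x)\;\geq\;\int_{B(z_x,r_x)}\frac{dy}{|x-y|^{N+2s}}\;\geq\;\frac{|B(z_x,r_x)|}{(|x-z_x|+r_x)^{N+2s}}\;\geq\;\frac{c\,\delta(x)^N}{\delta(x)^{N+2s}}\;=\;\frac{c}{\delta^{2s}(x)}.
\end{equation*}
For $x$ away from $\partial\Omega$, $\delta(x)$ is bounded below by a positive constant depending only on $\Omega$, so the bound is trivial.

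The main obstacle is the uniform geometric step producing the lower bound on $K(x)$: its constant must be independent of $x\in\Omega$, and this is exactly what the regularity of $\partial\Omega$ (an exterior ball, or equivalently an exterior cone, condition) provides. An alternative, technically heavier route would be a Whitney-type decomposition of $\Omega$ reducing to the half-space fractional Hardy inequality via boundary flattening; both approaches encode the same geometric content, but the direct exterior-ball estimate above is the cleanest path and keeps the dependence on $\Omega$ fully explicit.
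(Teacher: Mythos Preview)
Your proof is correct. The paper does not actually prove this statement: it merely cites \cite{Edmunds2} (Edmunds, Hurri-Syrj\"anen, V\"ah\"akangas), so there is no ``paper's own proof'' to compare against in the strict sense.

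That said, it is worth contrasting your approach with the cited reference. The result in \cite{Edmunds2} is stated for domains whose complement is \emph{uniformly fat} (a capacity-type thickness condition), and the proof there proceeds via fractional capacity estimates and a Maz'ya-type argument. Your route is more elementary: you exploit directly that $\phi$ vanishes on $\mathbb{R}^N\setminus\Omega$ to extract the exterior part of the Gagliardo energy, and then reduce everything to the pointwise lower bound $K(x)\gtrsim\delta^{-2s}(x)$, which you obtain from the uniform exterior ball condition. This is perfectly adequate here because the paper assumes throughout that $\Omega$ is a bounded domain with regular (indeed $\mathcal{C}^{1,1}$) boundary, so the exterior ball condition holds with a uniform radius. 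What you lose relative to \cite{Edmunds2} is generality (your argument would not cover, say, domains with inward cusps whose complement is still uniformly fat); what you gain is a short, self-contained proof with transparent dependence of $C(\Omega)$ on the exterior-ball radius and the diameter. Your reading of the denominator as $|x-y|^{N+2s}$ rather than $|x-y|^2$ is also correct---the exponent $2$ in the displayed inequality is a typo in the paper.
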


   \medbreak

        \subsubsection*{ $\bullet$ {\underline{Some technical lemmas}}}
    \
    \

   We recall some technical lemmas to be used in the proof of our main regularity results.

       \begin{Lemma}[{\cite[Lemma 2.2]{AFTY}}]\label{Gr}
        Let $N \geq 1$, $R > 0$ and $\alpha, \beta \in (-\infty,N)$. There
        exists $C:= C(N,R, \alpha, \beta) > 0$ such that:
        \begin{itemize}
            \item[$\bullet$] {If} $ N-\alpha - \beta\neq 0$, then
            $$
            \int_{B_R (0)} \frac{dz}{|x-z|^{\alpha} |y-z|^{\beta} }  \leq  C
            \Big(1+ |x-y|^{N-\alpha - \beta} \Big) \quad \textup{ for all }
            x,y \in B_R(0) \textup{ with } x \neq y;
            $$
            \item[$\bullet$] {If} $ N - \alpha - \beta = 0$, then
            $$
            \int_{B_R (0)} \frac{dz}{|x-z|^{\alpha} |y-z|^{\beta} }  \leq  C
            \Big(1+ \big|\ln |x-y|\big| \Big)\le \bigg(1+\log(\frac{D}{|x-y|})\bigg)  \quad \textup{ for all } x,y
            \in B_R(0)\textup{ with } x \neq y, $$
        \end{itemize}
        with $D>>\text{diam\,\,} B_R(0):=\max\{|x-y|\;;\; x,y\in \bar{B}_R(0)\}.$
    \end{Lemma}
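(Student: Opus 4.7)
The plan is to split the ball $B_R(0)$ into three natural pieces according to the position of $z$ relative to the singular poles $x$ and $y$. Setting $r := |x-y|$, I would write $B_R(0) = A_1 \cup A_2 \cup A_3$ with
$$A_1 := \{z \in B_R(0) : |x-z| < r/2\}, \quad A_2 := \{z \in B_R(0) : |y-z| < r/2\},$$
and $A_3$ the complementary set on which both $|x-z| \ge r/2$ and $|y-z| \ge r/2$ hold. The heuristic is that $A_1$ and $A_2$ capture the local behaviour near the two poles (and will each contribute a term of order $r^{N-\alpha-\beta}$), while $A_3$ carries the potentially non-integrable tail that produces the dichotomy appearing in the statement.

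On $A_1$ the triangle inequality yields $r/2 \le |y-z| \le 3r/2$, so that $|y-z|^{-\beta}$ is comparable to $r^{-\beta}$ uniformly in the sign of $\beta$. Since $\alpha < N$, a straightforward polar-coordinate computation gives
$$\int_{A_1} \frac{dz}{|x-z|^\alpha |y-z|^\beta} \le C\, r^{-\beta} \int_0^{r/2} \rho^{N-1-\alpha}\, d\rho \le C\, r^{N-\alpha-\beta},$$
and the same bound holds on $A_2$ by symmetry. On $A_3$ the triangle inequality shows $|x-z|$ and $|y-z|$ are pointwise comparable, so the integrand is dominated by $C |x-z|^{-(\alpha+\beta)}$; integrating over the annular region $r/2 \le |x-z| \le 2R$ produces
$$\int_{A_3} \frac{dz}{|x-z|^\alpha |y-z|^\beta} \le \begin{cases} C\bigl(1 + r^{N-\alpha-\beta}\bigr) & \text{if } N-\alpha-\beta \ne 0,\\[2pt] C\bigl(1 + |\log r|\bigr) & \text{if } N-\alpha-\beta = 0, \end{cases}$$
which is exactly the place where the case split of the lemma emerges.

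Summing the three contributions yields the announced bound. Two minor points deserve verification: when $N-\alpha-\beta < 0$ the additive constant $1$ is automatically absorbed by $r^{N-\alpha-\beta}$ up to a harmless factor depending on $R$, so the stated form $C(1+r^{N-\alpha-\beta})$ is uniform; and the refinement $1+|\log r| \le 1 + \log(D/|x-y|)$ is obtained simply by choosing $D$ larger than $e\cdot\operatorname{diam} B_R(0)$. The whole argument is elementary; the only book-keeping subtlety is handling both signs of $\beta$ on $A_1$, which the two-sided comparison $|y-z| \sim r$ settles in one stroke. I do not foresee any deeper obstacle.
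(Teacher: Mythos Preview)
Your decomposition into the near-$x$, near-$y$, and far regions is the standard and correct route to this estimate, and each of the three integrals is handled properly; in particular, the two-sided comparability $|x-z|\sim|y-z|$ on $A_3$ works regardless of the signs of $\alpha$ and $\beta$, and the annular integral there produces exactly the trichotomy in $N-\alpha-\beta$. The paper does not supply its own proof of this lemma---it is quoted verbatim from \cite[Lemma~2.2]{AFTY}---so there is nothing to compare against here, but your argument is complete and would serve as a proof.
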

    \begin{Lemma} [{\cite[Lemma 2.5]{J-S-W-2020}, \cite[Lemma 2.4]{AFTY}}]\label{Tobias}
        Let $\Omega \subset \mathbb{R}^N$ with $N \geq 2$, be a bounded domain with
        boundary $\partial \Omega$ of class $C^{2}$, $0 < \lambda < 1$ and
        $0 < a < 1$. There exists $C:= C(N, \lambda, a, \Omega) > 0$ such
        that
        $$
        \int_{\Omega} |x-y|^{\lambda-N} \delta^{-a}(y)\, dy  \leq C
        \big(1+ \omega_{a,\lambda}(x) \big) \quad \textup{ for all } x
        \in \Omega,
        $$
        where
        \begin{equation} \label{omega}
            \omega_{a,\lambda} (x) :=
            \begin{cases}
                1 & \mbox{ if } \lambda > a, \\
                |\ln \delta(x)| \qquad &\mbox{ if }  \lambda = a, \\
                \delta^{-(a-\lambda)}(x) &\mbox{ if } \lambda < a.
            \end{cases}
        \end{equation}
    \end{Lemma}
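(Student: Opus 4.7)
The strategy is to split the integration domain of $I(x) := \int_\Omega |x-y|^{\lambda-N}\delta(y)^{-a}\,dy$ into three pieces depending on the relative sizes of $|x-y|$, $\delta(x)$ and $\delta(y)$, and then exploit the $C^2$ regularity of $\partial\Omega$ via a tubular-neighborhood unfolding. Fix $x \in \Omega$, write $\delta := \delta(x)$, and decompose $\Omega = E_1 \cup E_2 \cup E_3$ with
\[
E_1 := B(x,\delta/2)\cap\Omega, \qquad E_2 := \{y\in\Omega\setminus E_1 : \delta(y)\ge |x-y|/4\}, \qquad E_3 := \{y\in\Omega : \delta(y) < |x-y|/4\}.
\]

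On $E_1$ the triangle inequality gives $\delta(y)\ge \delta/2$, so the boundary weight is harmless and
\[
\int_{E_1}|x-y|^{\lambda-N}\delta(y)^{-a}\,dy \le C\delta^{-a}\int_{B(x,\delta/2)}|x-y|^{\lambda-N}\,dy \le C\delta^{\lambda-a}.
\]
On $E_2$ the pointwise bound $\delta(y)^{-a}\le 4^{a}|x-y|^{-a}$ reduces the estimate to a radial one,
\[
\int_{E_2}|x-y|^{\lambda-N}\delta(y)^{-a}\,dy \le C\int_{\delta/2}^{\operatorname{diam}\Omega} r^{\lambda-a-1}\,dr,
\]
which already displays the trichotomy hidden in $\omega_{a,\lambda}$: a bounded quantity when $\lambda>a$, a $|\log\delta(x)|$ when $\lambda=a$, and $\delta^{\lambda-a}$ when $\lambda<a$.

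The genuine work is on $E_3$, where $y$ is much closer to $\partial\Omega$ than to $x$. Here I would use that $\partial\Omega\in C^2$ admits a tubular neighborhood of uniform positive reach, parametrizing $y = z - t\,\nu(z)$ with $z\in\partial\Omega$, $t\in(0,\varepsilon_0)$, $\nu(z)$ the outward unit normal, so that $\delta(y)=t$ and $dy = (1+O(t))\,dS(z)\,dt$. Since $|x-y|\asymp|x-z|$ on $E_3$, integrating first in $t$ over $(0,c|x-z|)$ — where $a<1$ makes $t^{-a}$ integrable — yields
\[
\int_{E_3}|x-y|^{\lambda-N}\delta(y)^{-a}\,dy \le C\int_{\partial\Omega} |x-z|^{\lambda-N+1-a}\,dS(z).
\]
Letting $x^*\in\partial\Omega$ denote the closest boundary point to $x$ (so $|x-x^*|=\delta$) and using the $C^2$ hypothesis to approximate $\partial\Omega$ near $x^*$ by its tangent hyperplane, the remaining surface integral is bounded by $C\int_0^D r^{N-2}(\delta^2+r^2)^{(\lambda-N+1-a)/2}\,dr$; splitting at $r=\delta$ and using $\lambda-N+1-a<0$ reproduces exactly the same three regimes of $\omega_{a,\lambda}(x)$.

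The main obstacle is precisely this $E_3$ step: the $C^2$ assumption must be used to guarantee a uniform-in-$x$ tubular-neighborhood radius $\varepsilon_0$, a controlled Jacobian for the unfolding, and a quantitative estimate of how $\partial\Omega$ deviates from its tangent hyperplane at $x^*$ (without which the reduction to a Euclidean surface integral over $\mathbb{R}^{N-1}$ is not justified). The complementary region $\Omega\setminus\{\operatorname{dist}(\cdot,\partial\Omega)<\varepsilon_0\}$ is harmless since there $\delta(y)^{-a}\le\varepsilon_0^{-a}$ and the integral is trivially $O(1)$. Combining the three pieces, and absorbing the $C\delta^{\lambda-a}$ contribution from $E_1$ into $\omega_{a,\lambda}$ (into the constant $1$ when $\lambda\ge a$, since $\delta$ is bounded above, and into $\delta^{-(a-\lambda)}$ when $\lambda<a$), one obtains the announced bound $C(1+\omega_{a,\lambda}(x))$.
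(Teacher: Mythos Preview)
The paper does not give its own proof of this lemma; it is stated with citations to external references (Jarohs--Salda\~na--Weth and Abdellaoui--Fern\'andez--Leonori--Younes), so there is no in-paper argument to compare yours against. Your decomposition into $E_1,E_2,E_3$ and the tubular-neighborhood/tangent-plane treatment of $E_3$ is the standard route to this estimate and is correct as outlined: the comparability $|x-y|\asymp|x-z|$ on $E_3$ follows from $|x-z|\le|x-y|+t<\tfrac54|x-y|$ together with $|x-y|\le|x-z|+t<|x-z|+\tfrac14|x-y|$, the $t$-integration uses $a<1$, and the resulting surface integral reduces via the tangent-plane chart at $x^*$ to the radial integral you wrote, which reproduces the trichotomy in $\omega_{a,\lambda}$. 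The portion of $E_3$ outside the $\varepsilon_0$-tubular neighborhood is indeed harmless for the reason you state.
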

\smallbreak
    \begin{Proposition}[Algebraic inequalities]
        Assume that $a,b\geq 0$ and $\a\geq 1$, then we have
        \begin{equation}\label{first_algebraic inequality}
            (a-b)(a^\alpha-b^\alpha)\geq C\rvert
a^{\frac{\alpha+1}{2}}-b^{\frac{\alpha+1}{2}}\rvert^{2},
        \end{equation}
        and
        \begin{equation}\label{second_algebraic_inequality}
            (a-b)^2(a+b)^{\alpha-1}\leq C(a^{\frac{\alpha+1}{2}}-b^{\frac{\alpha+1}{2}})^2.
        \end{equation}
    \end{Proposition}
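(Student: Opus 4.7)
The proof plan is straightforward since the two inequalities are classical and can be reduced to one-variable estimates. First I would observe that both inequalities are symmetric in $a$ and $b$ (the RHS in each is a square, and the LHS is either a product of two same-sign differences or a square times a sum), so without loss of generality I may assume $a\geq b\geq 0$; the case $a=b$ is trivial. Then I would exploit the integral representations
\begin{equation*}
a^{\alpha}-b^{\alpha}=\alpha\int_{b}^{a}s^{\alpha-1}\,ds,\qquad a^{\frac{\alpha+1}{2}}-b^{\frac{\alpha+1}{2}}=\frac{\alpha+1}{2}\int_{b}^{a}s^{\frac{\alpha-1}{2}}\,ds,
\end{equation*}
which transfer the algebraic inequalities to comparison estimates between these two integrals.

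For the first inequality \eqref{first_algebraic inequality}, the plan is to apply the Cauchy--Schwarz inequality to the second integral:
\begin{equation*}
\Bigl(\int_{b}^{a}1\cdot s^{\frac{\alpha-1}{2}}\,ds\Bigr)^{2}\leq (a-b)\int_{b}^{a}s^{\alpha-1}\,ds=\frac{a-b}{\alpha}(a^{\alpha}-b^{\alpha}).
\end{equation*}
Squaring the representation of $a^{(\alpha+1)/2}-b^{(\alpha+1)/2}$ and combining these yields \eqref{first_algebraic inequality} with the explicit constant $C=\frac{4\alpha}{(\alpha+1)^{2}}$.

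For the second inequality \eqref{second_algebraic_inequality}, the plan is to bound the integral $\int_{b}^{a}s^{(\alpha-1)/2}\,ds$ from below by integrating only over the right half-interval $\left[\frac{a+b}{2},a\right]$. Since $\alpha\geq 1$, the integrand $s\mapsto s^{(\alpha-1)/2}$ is nondecreasing, so on this subinterval it is bounded below by $\left(\frac{a+b}{2}\right)^{(\alpha-1)/2}$, and the length of the subinterval is $\frac{a-b}{2}$. Squaring the resulting lower bound produces exactly $C(a-b)^{2}(a+b)^{\alpha-1}$, which gives \eqref{second_algebraic_inequality} with $C=\frac{16\cdot 2^{\alpha-1}}{(\alpha+1)^{2}}$.

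The only subtle point is the second inequality: a direct mean-value theorem argument would give $a^{(\alpha+1)/2}-b^{(\alpha+1)/2}=\frac{\alpha+1}{2}\xi^{(\alpha-1)/2}(a-b)$ for some $\xi\in(b,a)$, but this does not provide any useful lower bound on $\xi$ in terms of $a+b$ when $b$ is small and $a$ is large. Restricting the integral to the upper half $\left[\frac{a+b}{2},a\right]$ circumvents this obstruction and is the key technical step. Both inequalities are sharp in the limit $a\to b$, which also explains the appearance of the factor $\frac{4}{(\alpha+1)^{2}}$ coming from the asymptotics $a^{(\alpha+1)/2}-b^{(\alpha+1)/2}\sim \frac{\alpha+1}{2}(a-b)\,b^{(\alpha-1)/2}$.
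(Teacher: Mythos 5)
Your proof is correct. Note that the paper itself states this proposition without proof, as a recalled classical fact among the preliminaries, so there is no argument of the authors to compare against; your write-up simply supplies the missing details. Both steps check out: the Cauchy--Schwarz bound
$\bigl(\int_b^a s^{\frac{\alpha-1}{2}}\,ds\bigr)^2\le (a-b)\int_b^a s^{\alpha-1}\,ds$ gives \eqref{first_algebraic inequality} with $C=\frac{4\alpha}{(\alpha+1)^2}\le 1$, and restricting the integral to $\bigl[\frac{a+b}{2},a\bigr]$, where the nondecreasing integrand is at least $\bigl(\frac{a+b}{2}\bigr)^{\frac{\alpha-1}{2}}$ and the interval has length $\frac{a-b}{2}$, gives \eqref{second_algebraic_inequality} with $C=\frac{16\cdot 2^{\alpha-1}}{(\alpha+1)^2}$. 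The reduction to $a\ge b$ by symmetry is legitimate since both sides of each inequality are invariant under swapping $a$ and $b$, and the edge cases $a=b$ and $b=0$ are consistent with your constants.
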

    Finally, let us recall the next well known result for singular
    integral, we refer to \cite{Stein} for a complete proof.
    \begin{Theorem}\label{stein1}
        Let $0<\lambda<N$ and  $1\le p<\ell<\infty$ be such that  $\dfrac{1}{\ell}+1=\dfrac{1}{p}+\dfrac{\lambda}{N}$. For $g\in L^p(\mathbb{R}^N)$, we define $$J_\lambda(g)(x)=\int_{\mathbb{R}^N}
        \dfrac{g(y)}{|x-y|^\lambda}dy.$$
        Then, it follows that:
        \begin{itemize}
            \item[$a)$] $J_\lambda$ is well defined in the sense that the integral converges absolutely for  almost all $x\in \mathbb{R}^N$ ; \vspace{0.15cm}
            \item[$b)$] if $p>1$, then $\|J_{\lambda}(g)\|_{L^\ell(\mathbb{R}^N)} \leq c_{p,q} \|g\|_{L^p(\mathbb{R}^N)};$ 
            \item[$c)$] if $p=1$, then $\big|\{x\in \mathbb{R}^N\,| J_\lambda(g)(x)>\sigma\}\big|\le \left(\dfrac{
                A\|g\|_{L^1(\mathbb{R}^N)}}{\sigma}\right)^\ell$. $\square$
        \end{itemize}
    \end{Theorem}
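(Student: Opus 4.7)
The plan is to derive all three assertions simultaneously from the classical Hardy--Littlewood--Sobolev argument, based on a pointwise comparison between $J_\lambda(|g|)$ and the Hardy--Littlewood maximal function $Mg$. The starting point is the natural splitting
\begin{equation*}
J_\lambda(|g|)(x) = \int_{|x-y|\le R}\frac{|g(y)|}{|x-y|^{\lambda}}\,dy + \int_{|x-y|>R}\frac{|g(y)|}{|x-y|^{\lambda}}\,dy =: I_1(x,R) + I_2(x,R),
\end{equation*}
valid for every $R>0$ and every $x\in\mathbb{R}^N$. The idea is to bound $I_1$ by the maximal function of $g$ and $I_2$ by $\|g\|_{L^p(\mathbb{R}^N)}$, and to then optimize in $R$.

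For $I_1$ I would dyadically decompose the ball $B(x,R)$ into annuli $A_k=\{2^{-k-1}R<|x-y|\le 2^{-k}R\}$, apply the elementary estimate $\int_{A_k}|g|\le C(2^{-k}R)^{N}Mg(x)$, and sum the resulting geometric series, obtaining $I_1(x,R)\le C\,R^{N-\lambda}\,Mg(x)$. For $I_2$, H\"older's inequality with exponents $p$ and $p'$ combined with a direct radial computation yields $I_2(x,R)\le C\,R^{N/p'-\lambda}\,\|g\|_{L^p(\mathbb{R}^N)}$; the local integrability $|x-y|^{-\lambda p'}\in L^1(\{|x-y|>R\})$ is guaranteed exactly by the scaling relation $\frac{1}{\ell}+1=\frac{1}{p}+\frac{\lambda}{N}$, which forces $\lambda p'>N$. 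Balancing the two bounds by choosing $R^{N/p}\simeq\|g\|_{L^p(\mathbb{R}^N)}/Mg(x)$ leads to the key pointwise estimate
\begin{equation*}
J_\lambda(|g|)(x) \le C\,\|g\|_{L^p(\mathbb{R}^N)}^{\,1-p/\ell}\,\bigl(Mg(x)\bigr)^{p/\ell}.
\end{equation*}

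From this pointwise bound, assertion (b) for $p>1$ is immediate upon raising to the power $\ell$ and integrating, by the classical $L^p$-boundedness of $M$. Assertion (c) for $p=1$ follows from the weak-$(1,1)$ estimate for $M$: the level set $\{J_\lambda(|g|)>\sigma\}$ is contained in a suitable level set of $Mg$, and Chebyshev's inequality combined with $|\{Mg>t\}|\le C\|g\|_{L^1(\mathbb{R}^N)}/t$ yields the distributional bound with the claimed exponent $\ell$. Finally, assertion (a) is a by-product: (b) and (c) both ensure $J_\lambda(|g|)(x)<+\infty$ for a.e.\ $x\in\mathbb{R}^N$, which is exactly the absolute convergence of the defining integral. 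The main technical obstacle is the sharp balancing of $I_1$ and $I_2$ together with the bookkeeping of constants, and the handling of the exceptional set where $Mg(x)\in\{0,+\infty\}$; in practice, one first establishes the estimate for $g\in\mathcal{C}_0^\infty(\mathbb{R}^N)$ and extends by density to arbitrary $g\in L^p(\mathbb{R}^N)$.
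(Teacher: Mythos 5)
Your proposal is correct. Note, however, that the paper does not prove Theorem \ref{stein1} at all: it is stated as a classical recollection with a pointer to Stein's monograph, so there is no in-paper argument to match. Your route is the Hedberg-type proof: the splitting $J_\lambda(|g|)=I_1+I_2$ at radius $R$, the dyadic-annuli bound $I_1\le CR^{N-\lambda}Mg(x)$, the H\"older bound $I_2\le CR^{N/p'-\lambda}\|g\|_{L^p}$ (with the convention $N/p'=0$ when $p=1$, where H\"older degenerates to a sup bound), and optimization in $R$ to get $J_\lambda(|g|)(x)\le C\|g\|_{L^p}^{1-p/\ell}(Mg(x))^{p/\ell}$; the exponent bookkeeping checks out since $\frac{p}{\ell}=1-\frac{p(N-\lambda)}{N}$, and (b), (c) then follow from the strong $(p,p)$ and weak $(1,1)$ bounds for $M$, with (a) as a by-product of the a.e.\ finiteness of $Mg$. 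This differs from the argument in Stein's book, which instead splits the \emph{kernel} $|z|^{-\lambda}$ into its restrictions to $\{|z|\le\mu\}$ and $\{|z|>\mu\}$, derives a weak-type $(p,\ell)$ estimate directly from the distribution function by choosing $\mu$ in terms of the level $\sigma$, and then upgrades to the strong bound for $p>1$ via Marcinkiewicz interpolation. Your approach avoids interpolation entirely at the cost of invoking the maximal theorem, and yields the strong and weak conclusions from a single pointwise inequality; the density/approximation step you mention at the end is actually unnecessary, since the pointwise estimate involves only $|g|$ and holds verbatim for every $g\in L^p(\mathbb{R}^N)$ at every point where $0<Mg(x)<\infty$, i.e.\ almost everywhere.
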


    \subsection{Regularity results for the Fractional Heat Equation (FHE):  a review }\label{sub13}
    \
    \

 For the reader's convenience, let us recall the fractional heat equation presented in the introduction :
    \begin{equation*}\label{eq_linear_0}
        (FHE)\qquad \left\{
        \begin{array}{llll}
            w_t+(-\Delta)^s w&=& h
            & \text{in}\quad\Omega_T,\vspace{0.2cm}\\
            w(x,t)&=&0&\text{in} \quad (\mathbb{R}^N\setminus\Omega)\times(0,T),\vspace{0.2cm}\\
            w(x,0)&=&w_0(x)&\text{in} \quad  \Omega,
        \end{array}
        \right.
    \end{equation*}
    where $h$ and $w_0$ are functions defined in suitable Lebesgue spaces.\\

     In this {subsection},
    we aim to recall some known results about the  regularity of the (unique)
    solution  to  Problem $(FHE)$ that we need in our study. We refer
    to \cite{LPPS}, \cite{BWZ} and \cite{APPS} for more details.

    \

    Let us begin by the following definition of the {``finite energy"}
    solution.
    \begin{Definition}[{\cite[Section5]{LPPS}}]\label{def:energie}
        Assume {that }$(h,w_0)\in L^2(\Omega_T)\times L^2(\Omega)$. We say that $w$ is an energy solution to Problem $(FHE)$
        if $w\in L^{2}(0,T;\mathbb{H}_0^s(\Omega
        ))\cap {\mathcal{C}([0,T]; L^{2}(\Omega))}  $, $w_t\in L^{2}(0,T; \mathbb{H}^{-s}(\Omega))$, and for all $v\in L^{2}(0,T; \mathbb{H}_0^s(\Omega
        ))$,
        {\begin{equation}\label{sol_energie}
                \Int_{0}^{T}<w_t,v> dt+ \dfrac{1}{2}\Int_{0}^{T}\iint\limits_{D_\Omega}\dfrac{\left(w(x,t)-w(y,t)\right)\left(v(x,t)-v(y,t)\right)}{|x-y|^{N+2s}}dxdydt=\iint\limits_{\O_T}h(x,t)v(x,t)dxdt,
        \end{equation}}
        and $w(.,t)\rightarrow w_0(.)$ strongly in $ L^2(\Omega)$, as $t\rightarrow 0$.
    \end{Definition}

    For the existence of finite energy solution to Problem $(FHE)$, we
    refer to \cite{LPPS}.
    \smallbreak

    In  case where the data
    $(h,w_{0})\in L^1(\O_T)\times L^{1}(\Omega)$, we need to precise
    the sense for which solutions are defined. To this end, let us
    begin by defining the set of test functions $\Upsilon$
    such that       \begin{equation}
        \begin{array}{lll}
            \Upsilon:=\left\{\phi: \mathbb{R}^N\times [0,T]\rightarrow
            \mathbb{R}, \; ; \; \phi \,\hbox{ is a solution of  } (P_\varphi)
            \right\},
        \end{array}
    \end{equation} where
    $\varphi \in  L^{\infty}(\O_T)\times \mathcal{C}^{\a,\b}_0(\O_T) $ such that  $\a,\b\in (0,1)$  and \begin{equation*} 
        (P_\varphi)\quad\quad\left\{
        \begin{array}{rclll}
            -\phi_t+(-\D)^s\phi&=& \dyle \varphi  & \text{ in }& \O_{T}, \\ \phi(x,t)&=&0 & \text{ in }&(\mathbb{R}^N\setminus\O) \times (0,T], \\
            \phi(x,T)&=&0& \mbox{  in}&\O,
        \end{array}%
        \right.
    \end{equation*}
    The authors in \cite{FelsKass}, proved the existence of  a regular solution  $\phi \in L^\infty(\O_T)$ to
    Problem $(P_\varphi)$. Moreover,  $ -\partial_t\phi+(-\D)^s\phi= \dyle \varphi$ a.e. in  $\O_T$.
    Notice that if  $\phi \in \Upsilon$, then $\phi \in
    L^{\infty}(\Omega \times(0,T))$ (see \cite{LPPS} for more
    details).
    \medbreak
    Now we can state the definition of weak solution to Problem
    $(FHE)$.
    \begin{Definition}\label{veryweak}  Assume that $(h,w_{0})\in L^1(\O_T)\times L^{1}(\Omega)$.
        We  say that $w\in \mathcal{C}([0,T); {L}^{1}(\O))$ is a weak solution to Problem $(FHE)$, if for all $\phi\in \Upsilon$, we have
        \begin{equation}\label{eq:subsuper}
            \iint\limits_{\O_T}\,w\big(-\phi_t\, +(-\Delta)^{s}\phi\big)\,dx\,dt=\\ \iint\limits_{\O_T}\,h\phi\,dxdt +\int\limits_\Omega{w_0(x)\phi(x,0)\,dx}.
        \end{equation}
    \end{Definition}
    Before giving the  existence {theorem} in this case, let us recall
    first the {definition} of the truncating function. For $k>0$, we
    define the truncating function as
    \begin{equation}
        T_k(\phi):=\left\{
        \begin{array}{lll}
            \phi&\text{if} & \rvert \phi\rvert\leq k,\\ \vspace{0.3cm}
            k\dfrac{\phi}{\rvert  \phi\rvert}& \text{if}&\rvert \phi\rvert>k.
        \end{array}
        \right.
    \end{equation}
    In what follows, the constant $C$ will be a positive constant
    depends only on {$\O$} and the data $N,m,s$ and it is independent
    of $w$, that can changes from a line to other.

    The first regularity result for $w$, the weak solution of Problem
    $(FHE)$, is given in the next {theorem.}

    \begin{Theorem}[{\cite[Theorem 28]{LPPS}}]\label{main-exis}
        Suppose that $(h,w_0)\in L^{1}(\Omega_T)\times L^{1}(\Omega)$.
        Then, Problem $(FHE)$ has a unique weak solution $w$
        such that $w\in \mathcal{C}([0,T];L^{1}(\Omega))\cap L^{\theta}(\Omega_T)$
        for all $\theta \in [1,\frac{N+2s}{N}), \quad
        |(-\Delta)^{\frac{s}{2}}w|\in L^{r}(\Omega_T),$ for all $r\in
        [1,\frac{N+2s}{N+s})$, and $T_k(w)\in L^{2}(0,T;\mathbb{H}_0^{s}(\Omega))$
        for all $k>0$. {Moreover,} $w\in L^{q}(0,T; \W^{s,q}_0(\Omega))$ for
        all   $q\in [1,\frac{N+2s}{N+s})$, and
        \begin{equation}
            \begin{array}{lll}
                ||w||_{\mathcal{C}([0,T]; L^{1}(\Omega))}+||w||_{L^\theta(\Omega_T)}+ ||(-\Delta)^{\frac{s}{2}}w||_{L^{r}(\Omega_T)}+||w||_{L^{q}(0,T; \W^{s,q}_0(\Omega))}\\ \\
                \leq C \left(||h||_{
L^{1}(\Omega_T)}+||w_0||_{L^{1}(\Omega)}\right).
            \end{array}
        \end{equation}
    \end{Theorem}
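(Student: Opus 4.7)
The plan is to proceed by approximation combined with truncation estimates, in the spirit of the Boccardo--Gallou\"et approach adapted to the nonlocal parabolic setting. First I would regularize the data: choose sequences $h_n \in L^\infty(\Omega_T)$ and $w_{0,n} \in L^\infty(\Omega)$ with $h_n \to h$ in $L^1(\Omega_T)$ and $w_{0,n} \to w_0$ in $L^1(\Omega)$. For each $n$, standard existence theory (for instance applying the results recalled for finite energy solutions and a Galerkin or semigroup scheme) yields a unique energy solution $w_n \in L^2(0,T;\mathbb{H}_0^s(\Omega)) \cap \mathcal{C}([0,T];L^2(\Omega))$ to the approximate problem.

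The core a priori estimates are then obtained by testing with the truncation $T_k(w_n)$. Using that $T_k$ is a Lipschitz contraction and the pointwise Str\"omberg-type inequality
\begin{equation*}
\bigl(w_n(x,t)-w_n(y,t)\bigr)\bigl(T_k(w_n)(x,t)-T_k(w_n)(y,t)\bigr) \ge \bigl|T_k(w_n)(x,t)-T_k(w_n)(y,t)\bigr|^2,
\end{equation*}
together with $\tfrac{d}{dt}\int_\Omega \Theta_k(w_n)\,dx = \int_\Omega T_k(w_n)\,\partial_t w_n\,dx$ where $\Theta_k$ is the primitive of $T_k$, I obtain
\begin{equation*}
\|T_k(w_n)\|^2_{L^2(0,T;\mathbb{H}_0^s(\Omega))} + \sup_{t\in[0,T]}\int_\Omega \Theta_k(w_n)(x,t)\,dx \le C\,k\,\bigl(\|h\|_{L^1(\Omega_T)}+\|w_0\|_{L^1(\Omega)}\bigr).
\end{equation*}
From this linear-in-$k$ bound, a standard Marcinkiewicz distribution argument (analyzing $|\{|w_n|>k\}|$ via the Sobolev embedding applied to $T_k(w_n)$) yields that $w_n$ is bounded in the Marcinkiewicz space $\mathcal{M}^{(N+2s)/N}(\Omega_T)$ and that $(-\Delta)^{s/2}w_n$ is bounded in $\mathcal{M}^{(N+2s)/(N+s)}(\Omega_T)$. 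Interpolation between these weak-type estimates and the embedding $\mathcal{M}^p \hookrightarrow L^q$ for $q<p$ on sets of finite measure gives the strong $L^\theta(\Omega_T)$ bound for $\theta<\tfrac{N+2s}{N}$ and the $L^r(\Omega_T)$ bound for $|(-\Delta)^{s/2}w_n|$ with $r<\tfrac{N+2s}{N+s}$; combining with Proposition \ref{proper} and the equivalence of norms on $\mathbb{L}^{s,q}_0$ translates the latter into the announced $L^q(0,T;\W^{s,q}_0(\Omega))$ bound.

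Next I would pass to the limit. The uniform estimate together with the equation gives compactness of $w_n$ in $\mathcal{C}([0,T];L^1(\Omega))$: using $(w_n-w_m)$ as an admissible manipulation together with the monotonicity of the fractional Laplacian and the $L^1$ contraction principle (which follows by testing the difference equation against $\operatorname{sign}_\varepsilon(w_n-w_m)$ and letting $\varepsilon\to 0$), one gets
\begin{equation*}
\sup_{t\in[0,T]}\|w_n(\cdot,t)-w_m(\cdot,t)\|_{L^1(\Omega)} \le \|h_n-h_m\|_{L^1(\Omega_T)}+\|w_{0,n}-w_{0,m}\|_{L^1(\Omega)}\longrightarrow 0.
\end{equation*}
Hence $w_n\to w$ in $\mathcal{C}([0,T];L^1(\Omega))$, and combined with the a priori estimates above and Fatou's lemma one recovers $w$ as a weak solution in the sense of Definition \ref{veryweak}, simply by testing the approximate equation against any $\phi\in \Upsilon$ (which is admissible thanks to $\phi\in L^\infty$ and the regularity theory for the backward dual problem $(P_\varphi)$) and passing to the limit in the three terms separately.

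The main obstacles, to my mind, are twofold. The first is passing to the limit in the nonlinear-looking term $(-\Delta)^{s/2}w_n$: although we only have weak-type bounds, we in fact need to identify the pointwise limit, which requires extracting an a.e. convergent subsequence of $w_n$ (obtained via the $\mathcal{C}([0,T];L^1)$-convergence) and then using the equi-integrability coming from the Marcinkiewicz control of $(-\Delta)^{s/2}w_n$ together with Vitali's theorem on sets of finite measure. The second is uniqueness: one cannot test directly with the difference of two weak solutions since they lack enough regularity, so I would invoke the duality test-function space $\Upsilon$ and show that if $\widetilde w_1,\widetilde w_2$ are two weak solutions with the same data, then $\iint_{\Omega_T}(\widetilde w_1-\widetilde w_2)\varphi\,dx\,dt=0$ for every $\varphi\in L^\infty(\Omega_T)$ by choosing $\phi$ to be the unique regular solution of $(P_\varphi)$ provided by \cite{FelsKass}, which forces $\widetilde w_1=\widetilde w_2$ a.e. in $\Omega_T$.
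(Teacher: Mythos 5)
The theorem you were asked to prove is not proved in this paper at all: it is quoted verbatim from \cite{LPPS} (their Theorem 28) in the review Subsection~\ref{sub13}, so there is no in-paper argument to compare yours against. That said, your outline is essentially the standard one used in that reference and in \cite{AABP,APPS}: regularize the data, test with $T_k(w_n)$ to get the linear-in-$k$ energy bound, run a Boccardo--Gallou\"et level-set argument to get Marcinkiewicz bounds, obtain $\mathcal{C}([0,T];L^1)$ compactness from the $L^1$-contraction (Kato-type inequality for $(-\Delta)^s$), and prove uniqueness by duality through the test class $\Upsilon$ and the regular solutions of $(P_\varphi)$ from \cite{FelsKass}. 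All of that is sound.

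The one genuine soft spot is your claim that ``a standard Marcinkiewicz distribution argument'' directly yields that $(-\Delta)^{\frac s2}w_n$ is bounded in $\mathcal{M}^{\frac{N+2s}{N+s}}(\Omega_T)$. The Boccardo--Gallou\"et decomposition controls quantities built from $T_k(w_n)$, and in the nonlocal setting $(-\Delta)^{\frac s2}T_k(w)\neq (-\Delta)^{\frac s2}w\,\chi_{\{|w|\le k\}}$ --- a point the paper itself stresses at the start of Section~\ref{Compactness_Section} --- so the truncation estimate does not transfer to $(-\Delta)^{\frac s2}w_n$ by slicing level sets of $w_n$ alone. The correct order of deduction is the reverse of the one you wrote: first run the level-set argument on the Gagliardo double integral $\iint |w_n(x,t)-w_n(y,t)|^q|x-y|^{-N-sq}\,dx\,dy$ (splitting according to the sizes of $w_n(x,t)$ and $w_n(y,t)$) to obtain the uniform $L^{q}(0,T;\W^{s,q}_0(\Omega))$ bound for $q<\frac{N+2s}{N+s}$, and only then invoke Proposition~\ref{proper}\,(1) --- applicable precisely because $\frac{N+2s}{N+s}<2$, so $W^{s,q}(\mathbb{R}^N)\hookrightarrow\mathbb{L}^{s,q}(\mathbb{R}^N)$ --- to convert this into the $L^{r}(\Omega_T)$ bound on $(-\Delta)^{\frac s2}w$. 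With that reordering (and the corresponding adjustment when you identify the limit of $(-\Delta)^{\frac s2}w_n$ via Vitali), your scheme goes through.
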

    For further details, we refer also to \cite{AABP}  and \cite{BWZ}.

    \vspace{0.2cm}

    In the case where $h$ and $w_0$ are more regular, specifically,  $h\in
    L^m(\O_T), w_0\in L^\s(\O)$ with $m,\s>1$, then we get more
    regularity of the solution $w$ which will be given in the next
    {theorems}.

    \begin{Theorem}[{\cite[Theorem 3.11 ]{APPS}}]\label{u_delta_s}
        Assume that $w_0\equiv 0$ and  $h\in L^{m}(\Omega_T)$ with $m\geq 1$. Let
        $w$ be the unique weak solution to Problem $(FHE)$, then there exists a positive constant $C$ such that
        \begin{itemize}
            \item[$i)$] {If} $1\le m\le \frac{N+2s}{s}$, then $\dfrac{w}{\delta^s}\in L^{\theta}(\Omega_T)$ for all $\theta<\frac{m(N+2s)}{N+2s-ms}$.
            Moreover, for $m<\theta<\frac{m(N+2s)}{N+2s-ms}$, we have
            $$\bigg\|\dfrac{w}{\delta^s}\bigg\|_ {L^{\theta}(\Omega_T)}\leq C T^{\frac{N+2s}{2s}(\frac{1}{\theta}-\frac{1}{m})+\frac{1}{2}}||h||_{L^{m}(\Omega_T)}.$$
            \item[$ii)$] {If} {$m>\frac{N+2s}{s}$}, then $\dfrac{w}{\delta^s}\in L^{\theta}(\Omega_T)$ for all $\theta\le \infty$ and
$$\bigg\|\dfrac{w}{\delta^s}\bigg\|_{L^{\infty}(\Omega_T)}\leq C
            T^{\frac 12-\frac{N+2s}{2sm}}||h||_{L^{m}(\Omega_T)}.$$
        \end{itemize}
    \end{Theorem}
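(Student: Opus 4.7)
\smallbreak
\noindent\emph{Proof sketch (proposal).} The plan is to exploit the Duhamel representation of $w$ together with a sharp pointwise estimate on the Dirichlet heat kernel $P_\Omega$ of $\partial_t + (-\Delta)^s$. Since $w_0 \equiv 0$, one has
\begin{equation*}
w(x,t) = \int_0^t \int_\Omega P_\Omega(x,y,t-\tau)\, h(y,\tau)\, dy\, d\tau.
\end{equation*}
The sharp two-sided heat kernel bound for $\mathcal{C}^{1,1}$ domains reads
\begin{equation*}
P_\Omega(x,y,t) \le C\, \Bigl(1 \wedge \frac{\delta^s(x)}{\sqrt{t}}\Bigr)\Bigl(1 \wedge \frac{\delta^s(y)}{\sqrt{t}}\Bigr)\, \frac{t}{(t^{1/(2s)}+|x-y|)^{N+2s}},
\end{equation*}
and combining the $x$-prefactor with the division by $\delta^s(x)$ yields, in both regimes $\delta^s(x) \le \sqrt{t}$ and $\delta^s(x) > \sqrt{t}$, the uniform bound
\begin{equation*}
\frac{P_\Omega(x,y,t)}{\delta^s(x)} \le C\, \frac{t^{1/2}}{(t^{1/(2s)}+|x-y|)^{N+2s}} =: K(x-y,t).
\end{equation*}

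The next step is to interpret the pointwise inequality $|w(x,t)|/\delta^s(x) \le (K \ast |\widetilde h|)(x,t)$, with $\widetilde h$ the zero-extension of $h$ outside $\Omega_T$, as a parabolic convolution inequality on $\mathbb{R}^N \times (0,T)$. A direct calculation in polar coordinates using the substitution $r = t^{1/(2s)} u$ shows that $K \in L^p(\mathbb{R}^N \times (0,T))$ exactly when $p < \frac{N+2s}{N+s}$, with
\begin{equation*}
\|K\|_{L^p(\mathbb{R}^N \times (0,T))} \le C\, T^{\frac{1}{p} + \frac{1}{2} + \frac{N}{2sp} - \frac{N+2s}{2s}}.
\end{equation*}
Applying Young's convolution inequality with the relation $1 + \frac{1}{\theta} = \frac{1}{p} + \frac{1}{m}$ then gives the claimed bound
\begin{equation*}
\Bigl\|\frac{w}{\delta^s}\Bigr\|_{L^\theta(\Omega_T)} \le C\, T^{\frac{N+2s}{2s}\bigl(\frac{1}{\theta}-\frac{1}{m}\bigr)+\frac{1}{2}}\, \|h\|_{L^m(\Omega_T)},
\end{equation*}
valid precisely when $\theta < \frac{m(N+2s)}{N+2s-ms}$, which settles case $i)$ for $m>1$. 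The endpoint $m=1$ requires a weak-type replacement, obtained by viewing the operator as a parabolic Riesz potential and invoking the fractional integration result (Theorem~\ref{stein1}) transplanted to the parabolic metric $d_P((x,t),(y,\tau)) = |x-y| + |t-\tau|^{1/(2s)}$, whose homogeneous dimension is $N+2s$.

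For case $ii)$, when $m > (N+2s)/s$, the dual exponent $m'$ satisfies $m' < (N+2s)/(N+s)$, so $K \in L^{m'}(\mathbb{R}^N \times (0,T))$; a pointwise H\"older estimate then yields
\begin{equation*}
\Bigl\|\frac{w}{\delta^s}\Bigr\|_{L^\infty(\Omega_T)} \le \|K\|_{L^{m'}}\, \|h\|_{L^m(\Omega_T)} \le C\, T^{\frac{1}{2} - \frac{N+2s}{2sm}}\, \|h\|_{L^m(\Omega_T)},
\end{equation*}
which matches the stated scaling. The chief obstacle is the heat kernel bound itself: extracting the boundary factor $\delta^s(x)$ with the correct power of $t$ is the delicate analytic input, and it is this sharp two-sided estimate in $\mathcal{C}^{1,1}$ domains, rather than the subsequent Young/HLS interpolation, that carries the weight of the theorem.
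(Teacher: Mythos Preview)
Your proposal is correct and follows essentially the same route as the paper. The paper cites this result from \cite{APPS} rather than proving it inline, but the general machinery it develops in Theorem~\ref{integrals regularityII} (Appendix) is precisely your argument: with $\alpha=\tfrac12$ and $\lambda=2s$ one has $2s(\alpha+1)-\lambda=s$, and the duality/Young computation there reproduces exactly your kernel bound $K(z,t)=t^{1/2}(t^{1/(2s)}+|z|)^{-(N+2s)}$, the integrability range $p<\frac{N+2s}{N+s}$, and the exponent $\frac{N+2s}{2s}(\frac{1}{\theta}-\frac{1}{m})+\frac12$. One minor point: your remark that $m=1$ ``requires a weak-type replacement'' is unnecessary --- Young's inequality with $p=\theta$ handles $m=1$ directly for every $\theta<\frac{N+2s}{N+s}$, so no parabolic HLS detour is needed.
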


If $s\in (\frac 12, 1)$, the local gradient of the solution $w$ to
Problem $(FHE)$ is well-defined and lies in an appropriate
Lebesgue space. More precisely, we have
    \begin{Theorem}[{\cite[Theorem 3.11]{APPS}}]\label{th_grad_u_deltas}
        Suppose that $s\in (\frac 12,1)$. Assume that the {hypotheses} of Theorem \ref{u_delta_s} hold. Let $w$ be the unique weak solution to
        Problem {$(FHE)$}, then there exists a positive constant $C$ such that
        \begin{itemize}
            \item[$i)$] {If} $m<\frac{N+2s}{2s-1}$, then $|\nabla w|\delta^{1-s}\in L^{\theta}(\Omega_T)$ for all $\theta<\frac{m(N+2s)}{N+2s-m(2s-1)}$. Moreover
            $$\bigg\||\nabla w|\delta^{1-s}\bigg\|_ {L^{\theta}(\Omega_T)}\leq CT^{\gamma_2} ||h||_{L^{m}(\Omega_T)};$$
            \item[$ii)$] {If} $m\geq\frac{N+2s}{2s-1}$, then $|\nabla w|\delta^{1-s}\in L^{\theta}(\Omega_T)$ for all $\theta<\infty$ and
            $$\bigg\||\nabla w|\delta^{1-s}\bigg\|_ {L^{\theta}(\Omega_T)}\leq CT^{\gamma_2}||h||_{L^{m}(\Omega_T)},$$
        \end{itemize}
        where $\gamma_2=\frac{N+2s}{2s}(\frac{1}{\theta}-\frac{1}{m})+\frac{2s-1}{2s}$.
    \end{Theorem}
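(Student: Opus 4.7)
The plan is to combine the Duhamel representation of $w$ with a sharp pointwise estimate on the classical gradient of the Dirichlet fractional heat kernel $P_\Omega$, and then invoke a parabolic Hardy--Littlewood--Sobolev inequality analogous to Theorem \ref{stein1}. Since $w_0=0$, one has the representation
\[
w(x,t) \;=\; \int_0^t\!\!\int_\Omega P_\Omega(x,y,t-\tau)\,h(y,\tau)\,dy\,d\tau,
\]
and the first goal is to legitimate differentiation in $x$ and to control $|\nabla_x P_\Omega|$ with the correct boundary weight.

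First, I would establish, in the spirit of Theorem \ref{Thm1.1IN}, the weighted pointwise estimate
\[
\delta^{1-s}(x)\,|\nabla_x P_\Omega(x,y,t)| \;\leq\; C\,\frac{\bigl(\tfrac{\delta^s(y)}{\sqrt{t}}\wedge 1\bigr)}{(t^{1/(2s)}+|x-y|)^{N+2s-1}},
\]
valid for $s>\tfrac12$, possibly up to a harmless logarithmic factor. The derivation parallels the proof of Theorem \ref{Thm1.1IN}: decompose $P_\Omega$ as the free fractional heat kernel minus a boundary correction, differentiate in $x$, and combine the interior decay of the free kernel with the known boundary bound $P_\Omega(x,y,t)\lesssim \delta^s(y)/\sqrt{t}$. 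The factor $\delta^{1-s}(x)$ compensates for the singularity of the gradient near $\partial\Omega$: since $w$ decays like $\delta^s(x)$ at the boundary, $|\nabla w|$ is generically of order $\delta^{s-1}(x)$, which is exactly the singularity the weight is designed to absorb.

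Inserting this estimate into the Duhamel formula reduces the theorem to an $L^\theta(\Omega_T)$ bound for the parabolic Riesz-type operator
\[
\mathcal{I}h(x,t) \;:=\; \int_0^t\!\!\int_\Omega \frac{|h(y,\tau)|\,\bigl(\tfrac{\delta^s(y)}{\sqrt{t-\tau}}\wedge 1\bigr)}{\bigl((t-\tau)^{1/(2s)}+|x-y|\bigr)^{N+2s-1}}\,dy\,d\tau.
\]
The boundary factor involving $\delta^s(y)$ is controlled via Lemma \ref{Tobias} since $s<1$. The kernel has parabolic homogeneity of order $1$ under the scaling $(x,t)\mapsto(\lambda x,\lambda^{2s}t)$, so the parabolic HLS inequality (Theorem \ref{stein1} applied in parabolic coordinates, exactly as in the proof of Theorem \ref{u_delta_s}) produces the claimed bound with the sharp relation $\tfrac{1}{m}-\tfrac{1}{\theta}<\tfrac{2s-1}{N+2s}$ and the correct time power $\gamma_2=\tfrac{N+2s}{2s}(\tfrac{1}{\theta}-\tfrac{1}{m})+\tfrac{2s-1}{2s}$. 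Case (i) corresponds to the subcritical regime of HLS yielding the explicit $\theta$-range, while case (ii), where $m\geq\tfrac{N+2s}{2s-1}$, falls into the supercritical regime and is handled by combining a direct $L^\infty$ estimate (via a Morrey-type embedding applied to the parabolic Riesz potential) with interpolation to cover every finite $\theta$.

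The principal obstacle is the weighted pointwise gradient estimate on $P_\Omega$. Unlike the fractional gradient $(-\Delta)^{\rho/2}$, whose scaling matches the natural kernel estimates in Theorem \ref{Thm1.1IN}, the classical gradient has strictly worse boundary behavior and requires a careful case split according to whether $|x-y|$ is smaller or larger than $\delta(x)/2$, together with a subordination representation of $P_\Omega$ in terms of the classical heat semigroup on $\Omega$. The restriction $s>\tfrac12$ is structural: for $s\leq\tfrac12$ the integrand defining $\nabla_x P_\Omega$ ceases to be locally integrable uniformly in $x$, and the local gradient of $w$ need not exist in a reasonable pointwise sense.
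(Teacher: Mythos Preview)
Your overall strategy --- Duhamel representation, pointwise gradient bound on $P_\Omega$, then a space--time Young/HLS convolution estimate --- is exactly the route taken in \cite{APPS}, and the paper itself does not reprove the theorem but merely quotes it, recording the needed kernel estimate in Lemma~\ref{lemma_P} (estimates \eqref{second_prop}--\eqref{lllll}). So you are not missing any idea.

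There is, however, a concrete error in your pointwise bound. The estimate
\[
\delta^{1-s}(x)\,|\nabla_x P_\Omega(x,y,t)|\;\le\;\frac{C\bigl(\tfrac{\delta^s(y)}{\sqrt t}\wedge1\bigr)}{(t^{1/(2s)}+|x-y|)^{\,N+2s-1}}
\]
is \emph{false}: for an interior point $x$ (so $\delta(x)$ bounded below) and $y$ close to $x$, the left-hand side behaves like $t^{-(N+1)/(2s)}$ as $t\to0$, whereas your right-hand side is only $t^{-(N+2s-1)/(2s)}$, which is strictly smaller since $2s-1<1$. What actually follows from \eqref{lllll} after multiplying by $\delta^{1-s}(x)$ is the two-case bound with numerators $\sqrt t$ (when $\delta(x)<t^{1/(2s)}$) and $C(\Omega)\,t^{(2s-1)/(2s)}$ (when $\delta(x)\ge t^{1/(2s)}$), both over $(t^{1/(2s)}+|x-y|)^{N+2s}$; equivalently, a single bound with exponent $N+1$. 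Feeding the dominant case $t^{(2s-1)/(2s)}/(t^{1/(2s)}+|x-y|)^{N+2s}$ into the Young-inequality machinery (Theorem~\ref{integrals regularityII} with $\alpha=\tfrac{2s-1}{2s}$, $\lambda=2s$) yields precisely $2s(\alpha+1)-\lambda=2s-1$ and hence the correct threshold $\theta<\tfrac{m(N+2s)}{N+2s-m(2s-1)}$ together with the time exponent $\gamma_2$.

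Your argument is also internally inconsistent: a kernel with denominator exponent $N+2s-1$ has parabolic homogeneity of order $1$, not $2s-1$, so HLS would return the relation $\tfrac1m-\tfrac1\theta<\tfrac{1}{N+2s}$, not the $\tfrac{2s-1}{N+2s}$ you assert. Finally, there is no need to rederive the gradient estimate via subordination or a $|x-y|\lessgtr\delta(x)/2$ split, and Lemma~\ref{Tobias} plays no role here --- the factor $\bigl(\tfrac{\delta^s(y)}{\sqrt t}\wedge1\bigr)$ is simply bounded by $1$.
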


    \begin{Corollary}[{\cite[Corollary 3.13]{APPS}}]\label{first_corrolary}
        By  combining Theorems \ref{u_delta_s} and \ref{th_grad_u_deltas} and  using the fact that $|\nabla \delta|=1$ a.e. in $\Omega$, we deduce that
        if $w$ is the unique
        weak solution to { Problem} $(FHE)$ with $w_0=0$, then $w\delta^{1-s}\in L^{\theta}(0,T; \mathbb{W}^{1,\theta}_0(\Omega))$ for all $\theta$ such that
        $\frac{1}{\theta}>\frac{1}{m}-\frac{2s-1}{N+2s}$ and
        $$||w\delta^{1-s}||_{L^{\theta}(0,T; \mathbb{W}^{1,\theta}_0(\Omega))}\leq C(T^{\gamma_1}+T^{\gamma_2})||h||_{L^{m}(\Omega_T)},$$
        where $\g_1=\frac{N+2s}{2s}(\frac{1}{\theta}-\frac{1}{m})+\frac{1}{2}$ and $\gamma_2=\frac{N+2s}{2s}(\frac{1}{\theta}-\frac{1}{m})+\frac{2s-1}{2s}$. Moreover, if $m< \frac{N+2s}{2s-1}$, then the above estimate holds for all $\theta<\frac{m(N+2s)}{N+2s-m(2s-1)}.$
    \end{Corollary}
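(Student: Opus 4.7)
The plan is to unpack the $W^{1,\theta}_0(\Omega)$-norm of $v := w\delta^{1-s}$ in terms of two pieces: the $L^\theta$-norm of $v$ itself and the $L^\theta$-norm of $\nabla v$. Applying the product rule and the fact that $\delta$ is Lipschitz on $\bar\Omega$ with $|\nabla\delta|=1$ a.e.\ in $\Omega$, one obtains the pointwise identity
\begin{equation*}
\nabla\bigl(w\,\delta^{1-s}\bigr) \;=\; \delta^{1-s}\nabla w \;+\;(1-s)\,w\,\delta^{-s}\,\nabla\delta,
\end{equation*}
so that $|\nabla(w\delta^{1-s})|\le \delta^{1-s}|\nabla w|+(1-s)\,|w|\,\delta^{-s}$ a.e.\ in $\Omega$. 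Thus the whole problem reduces to controlling $\delta^{1-s}|\nabla w|$ and $w/\delta^s$ in $L^\theta(\Omega_T)$ separately.

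First, I would bound the $L^\theta$-norm of $v=w\delta^{1-s}=\delta\cdot(w/\delta^s)$ by $\|\delta\|_{L^\infty(\Omega)}\,\|w/\delta^s\|_{L^\theta(\Omega_T)}$, to which Theorem \ref{u_delta_s} applies directly: the resulting bound has the form $CT^{\gamma_1}\|h\|_{L^m(\Omega_T)}$ with $\gamma_1=\frac{N+2s}{2s}(\frac{1}{\theta}-\frac{1}{m})+\frac12$, as long as $\theta$ lies in the admissible range coming from that theorem (i.e.\ $\theta<\frac{m(N+2s)}{N+2s-ms}$ when $m\le\frac{N+2s}{s}$, otherwise any $\theta\le\infty$). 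Similarly, the second contribution $(1-s)|w|\delta^{-s}$ yields the same term up to a constant.

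Next, I would estimate $\delta^{1-s}|\nabla w|$ directly via Theorem \ref{th_grad_u_deltas}, which provides $\||\nabla w|\delta^{1-s}\|_{L^\theta(\Omega_T)}\le CT^{\gamma_2}\|h\|_{L^m(\Omega_T)}$ with $\gamma_2=\frac{N+2s}{2s}(\frac{1}{\theta}-\frac{1}{m})+\frac{2s-1}{2s}$, valid for $\theta<\frac{m(N+2s)}{N+2s-m(2s-1)}$ when $m<\frac{N+2s}{2s-1}$ and any finite $\theta$ otherwise. Adding the two bounds produces the claimed estimate $C(T^{\gamma_1}+T^{\gamma_2})\|h\|_{L^m(\Omega_T)}$. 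The restriction $\frac{1}{\theta}>\frac{1}{m}-\frac{2s-1}{N+2s}$ is precisely the one coming from Theorem \ref{th_grad_u_deltas} (the gradient bound being the more restrictive of the two, since $\frac{2s-1}{N+2s}<\frac{2s}{N+2s}$), and the two ranges of $\theta$ described in the statement follow from the dichotomy $m\lessgtr\frac{N+2s}{2s-1}$ given in that theorem.

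The only subtle point is verifying that $v=w\delta^{1-s}$ actually belongs to $\mathbb{W}^{1,\theta}_0(\Omega)$ for a.e.\ $t$, i.e.\ that it vanishes in the trace sense on $\partial\Omega$; this follows because $\delta^{1-s}\to 0$ on $\partial\Omega$ combined with the interior integrability of $|\nabla w|\delta^{1-s}$ and $w/\delta^s$, which makes $v$ and $\nabla v$ integrable up to the boundary with vanishing trace. Apart from this, the proof is a bookkeeping exercise matching the integrability ranges of the two ingredient theorems, and the assumption $s>\tfrac12$ (inherited from Theorem \ref{th_grad_u_deltas}) ensures that $\nabla w$ is meaningful in the first place.
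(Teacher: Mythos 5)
Your proposal is correct and follows exactly the route the paper indicates (the corollary is quoted from \cite{APPS} and its justification is precisely the product rule $\nabla(w\delta^{1-s})=\delta^{1-s}\nabla w+(1-s)w\delta^{-s}\nabla\delta$ together with Theorems \ref{u_delta_s} and \ref{th_grad_u_deltas}); the bookkeeping of the exponents $\gamma_1,\gamma_2$ and of the admissible range of $\theta$ is right, the gradient estimate being the binding constraint since $2s-1<s$.
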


    In the case where $h=0$ and $w_0\in L^\s(\O)$ with $\s\ge 1$, we
    have the next regularity result.
    \begin{Theorem}[{\cite[Proposition 3.20]{APPS} }]\label{pro:lp2}
        Suppose that $h\equiv 0$ and $w_0\in L^\s(\O)$.  Let $w$ be  the unique weak solution to Problem $(FHE)$, then
        there exists a positive constant $C$ such that, for all
        $r, \rho\ge \s$ and for all $t>0$, we have
        \begin{equation}\label{sem1}
            ||w(\cdot,t)||_{L^r(\O)}\le Ct^{-\frac{N}{2s}(\frac{1}{\s}-\frac{1}{r})}||w_0||_{L^\s(\O)},
        \end{equation}
        and
        \begin{equation}\label{sem001}
            \Big\|\dfrac{w(\cdot,t)}{\d^s}\Big\|_{L^\rho(\O)}\le Ct^{-\frac{N}{2s}(\frac{1}{\s}-\frac{1}{\rho})-\frac{1}{2}}||w_0||_{L^\s(\O)}.
        \end{equation}
        Moreover, $w\in L^{r}(\O_T)$ for all $r<\frac{\s(N+2s)}{N}$ and $\dfrac{w}{\d^s}\in L^{\rho}(\O_T)$ for all
        $\rho<\frac{\s(N+2s)}{N+\s s}$.
    \end{Theorem}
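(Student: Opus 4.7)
The plan is to exploit the representation formula
\[
w(x,t)=\int_\O P_\Omega(x,y,t)\, w_0(y)\,dy,
\]
where $P_\Omega$ denotes the heat kernel of $\partial_t+(-\Delta)^s$ on $\O$ under exterior Dirichlet data, together with the classical two-sided pointwise estimate
\[
P_\Omega(x,y,t) \le C \Big(\tfrac{\d^s(x)}{\sqrt{t}}\wedge 1\Big)\Big(\tfrac{\d^s(y)}{\sqrt{t}}\wedge 1\Big)\Big(t^{-\frac{N}{2s}}\wedge \tfrac{t}{|x-y|^{N+2s}}\Big),
\]
which is the same type of kernel input that already underlies Theorem~\ref{Thm1.1IN}. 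The whole argument is a boundary-sensitive variant of ultracontractivity for a sub-Markov semigroup.

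For \eqref{sem1}, I would simply drop the two boundary factors in the kernel. Uniformly in $x$ one has $\int_\O P_\Omega(x,y,t)\,dy\le 1$ (sub-Markov property) and $\|P_\Omega(\cdot,\cdot,t)\|_{L^\infty(\O\times\O)}\le C t^{-N/(2s)}$. These translate into the endpoint bounds $L^1\to L^1$ (with norm $\le 1$) and $L^1\to L^\infty$ (with norm $\le Ct^{-N/(2s)}$) for the solution operator $w_0\mapsto w(\cdot,t)$; Riesz--Thorin interpolation then yields
\[
\|w(\cdot,t)\|_{L^r(\O)}\le C t^{-\frac{N}{2s}(\frac{1}{\s}-\frac{1}{r})}\|w_0\|_{L^\s(\O)}
\]
for every $r\ge \s$, which is precisely \eqref{sem1}.

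For \eqref{sem001}, I would use the stronger form $\d^s(x)/\sqrt{t}\wedge 1\le \d^s(x)/\sqrt{t}$ to extract an extra factor $t^{-1/2}$ from the kernel:
\[
\frac{P_\Omega(x,y,t)}{\d^s(x)}\le \frac{C}{\sqrt{t}}\,\widetilde K(x,y,t),\qquad \widetilde K(x,y,t):=\Big(\tfrac{\d^s(y)}{\sqrt{t}}\wedge 1\Big)\Big(t^{-\frac{N}{2s}}\wedge \tfrac{t}{|x-y|^{N+2s}}\Big).
\]
The kernel $\widetilde K$ obeys (in $y$, uniformly in $x$) the same $L^1$ and $L^\infty$ bounds as $P_\Omega$ itself, so repeating the interpolation step with $\widetilde K$ in place of $P_\Omega$ reproduces the argument above and exactly accounts for the additional factor $t^{-1/2}$ in \eqref{sem001}.

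Finally, for the time-integrated conclusions I would raise \eqref{sem1} and \eqref{sem001} to the $r$-th and $\rho$-th power respectively and integrate in $t\in(0,T)$. The resulting singularity at $t=0$ is integrable iff $\tfrac{rN}{2s}(\tfrac{1}{\s}-\tfrac{1}{r})<1$, which rearranges to $r<\s(N+2s)/N$, and iff $\tfrac{\rho N}{2s}(\tfrac{1}{\s}-\tfrac{1}{\rho})+\tfrac{\rho}{2}<1$, which rearranges to $\rho<\s(N+2s)/(N+\s s)$. The only non-routine input is the boundary-sensitive two-sided heat-kernel bound above; everything else is interpolation and Fubini, the sole delicate point being the bookkeeping of the extra $t^{-1/2}$ produced by dividing by $\d^s$.
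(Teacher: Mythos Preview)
The paper does not supply its own proof of this statement: it is quoted verbatim from \cite[Proposition~3.20]{APPS} as part of the review subsection~\ref{sub13}, so there is nothing in the paper to compare against. Your argument is the standard one and is correct; it is in fact the approach used in \cite{APPS}, relying on the kernel bound \eqref{first_prop}--\eqref{third_prop} recalled in Lemma~\ref{lemma_P}.

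One small point of bookkeeping: the two endpoints you list ($L^1\to L^1$ and $L^1\to L^\infty$) only give $L^1\to L^r$ after Riesz--Thorin. To reach $L^\s\to L^r$ for general $\s\ge 1$ you also need the $L^\infty\to L^\infty$ bound (equivalently, the $L^\s\to L^\s$ contraction, which follows from the sub-Markov property and symmetry of $P_\Omega$) and a second interpolation; alternatively, you can use the semigroup property $S(t)=S(t/2)S(t/2)$ in the usual way. The same remark applies to the $\widetilde K$ step for \eqref{sem001}: there $\widetilde K$ is not a semigroup, but since $\sup_x\int\widetilde K\,dy\le C$ and $\sup_y\int\widetilde K\,dx\le C$ (both follow from $\widetilde K\le t^{-N/(2s)}\wedge t|x-y|^{-N-2s}$), the three endpoint bounds needed for the double interpolation are all available. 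With this adjustment your proof is complete.
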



    \section{Estimate on the fractional gradient of the heat kernel.}\label{Estimate_heat_Kernel}

    In this section, we prove the main pointwise estimate on the half-fractional gradient of $P_\Omega$, the  heat kernel with Dirichlet
    boundary condition.
    This estimate will be the key tool in establishing the main regularity result for $w$, the unique weak solution of $(FHE)$.
    \smallbreak

    We recall that, $w$ to Problem $(FHE)$ is given by
    \begin{equation}\label{repres_ solution}
        w(x,t)= \Int_\Omega w_0(y) P_{\Omega} (x,y,t) dy+\iint_{\Omega_t}h(y,\tau) P_{\Omega}(x,y,t-\tau)dy d\tau,
    \end{equation}
    where $ \Omega_t= \Omega \times (0,t)$.\\
     In the next {lemma}, we recall the most useful properties of the heat kernel of the fractional
    Laplacian. We refer to \cite{APPS, BJak, kernel2, kernel3} for the proofs.
    \begin{Lemma}\label{lemma_P}
        Assume that $s\in (0,1)$, then for all $x,y\in\Omega$ and for all
        $0<t<T$, we have
        \begin{equation}\label{first_prop}
            P_{\Omega}(x,y,t) \backsimeq \left(1\wedge
            \frac{\delta^s(x)}{\sqrt{t}}\right)\times\left(1\wedge
            \frac{\delta^s(y)}{\sqrt{t}}\right)\times\left(t^{\frac{-N}{2s}}\wedge
            \dfrac{t}{|x-y|^{N+2s}}\right)
        \end{equation}
        and
        \begin{equation}\label{second_prop}
            |\nabla_xP_{\Omega}(x,y,t)|\leq
            C(N,s,\O)\left(\dfrac{1}{\delta(x)\wedge
                t^{\frac{1}{2s}}}\right)P_{\Omega}(x,y,t).
        \end{equation}
        Thus
        \begin{equation}\label{lllll}
            |\nabla_x P_{\Omega} (x,y,t)|\leq \left\{\begin{array}{rcl}
                \Big( 1\wedge \dfrac{\delta^s(y)}{\sqrt{t}}\Big)\cdot \dfrac{\sqrt{t}}{(\delta(x))^{1-s}} \cdot \dfrac{C}{(t^{\frac{1}{2s}} +|x-y|)^{(N+2s)}} \qquad \mbox{ if } \quad
                \delta(x)<t^{\frac{1}{2s} },\\
                \\
                C \Big( 1\wedge \dfrac{\delta^s(y)}{\sqrt{t}}\Big)\cdot \dfrac{t^{\frac{2s-1}{2s}}}{(t^{\frac{1}{2s}}+ |x-y|)^{N+2s}}\qquad    \mbox{ if }\quad  \delta(x)\geq t^{\frac{1}{2s}
                }.
            \end{array}
            \right.
        \end{equation}
        Moreover, if $s>\frac 12$, then $|\nabla_xP_{\Omega}|\in
        L^q(\O\times \O\times (0,T))$ for all $q<\dfrac{N+2s}{N+1}$ and
        $$
        \dint_{0}^{T} \iint_{\Omega\times \Omega}|\nabla_x
        P_{\Omega}|^q\, dx\,dy\,dt\le
        C(\O)(T^{\frac{N+2s-q(N+s)}{2s}}+T^{\frac{N+2s-q(N+1)}{2s}}).
        $$
    \end{Lemma}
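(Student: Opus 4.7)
The first two estimates \eqref{first_prop} and \eqref{second_prop} are classical heat-kernel results: the two-sided bound on $P_\Omega$ comes from Bogdan--Jakubowski / Chen--Kim--Song type estimates (cited references \cite{BJak,kernel2,kernel3}), and the gradient bound is standard for subordinate Brownian motions on smooth bounded domains. My plan is therefore to take these two inequalities as inputs and focus on deriving the splitting \eqref{lllll} and the $L^q$ integrability claim, which is the only part requiring genuine computation.

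The estimate \eqref{lllll} is obtained by inserting \eqref{first_prop} into \eqref{second_prop} and distinguishing whether $\delta(x)<t^{1/(2s)}$ or $\delta(x)\ge t^{1/(2s)}$. In the first case $\delta(x)\wedge t^{1/(2s)}=\delta(x)$ and $1\wedge \delta^s(x)/\sqrt t=\delta^s(x)/\sqrt t$, so multiplying gives the prefactor $\sqrt t/\delta(x)^{1-s}$; in the second case $\delta(x)\wedge t^{1/(2s)}=t^{1/(2s)}$ and $1\wedge\delta^s(x)/\sqrt t=1$, producing $t^{(2s-1)/(2s)}$. In both cases, the denominator $|x-y|^{N+2s}$ coming from $P_\Omega$ combines with the $t$ from $P_\Omega$ and the formula $\min(t^{-N/(2s)},t/|x-y|^{N+2s})\lesssim t/(t^{1/(2s)}+|x-y|)^{N+2s}$ to yield the stated expressions.

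For the $L^q$ bound I would split $\Omega\times\Omega\times(0,T)=A_1\cup A_2$ with $A_1=\{\delta(x)<t^{1/(2s)}\}$ and $A_2$ its complement, and treat each region with the corresponding estimate from \eqref{lllll}, dropping the harmless factor $(1\wedge\delta^s(y)/\sqrt t)^q\le 1$. In each case the $y$-integral is handled by the rescaling
\[
\int_\Omega\frac{dy}{(t^{1/(2s)}+|x-y|)^{q(N+2s)}}\;\lesssim\; t^{(N-q(N+2s))/(2s)},
\]
which is finite since $q(N+2s)>N$ (automatic from $q\ge 1$). On $A_2$ the $x$-integral is bounded by $|\Omega|$, and the resulting $t$-integral converges at $0$ exactly when $q<(N+2s)/(N+1)$, producing the second term $T^{(N+2s-q(N+1))/(2s)}$. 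On $A_1$ one further uses a boundary-coordinate argument to bound $\int_{\{\delta(x)<R\}\cap\Omega}\delta(x)^{-q(1-s)}\,dx\lesssim R^{1-q(1-s)}$ with $R=t^{1/(2s)}$, which requires $q(1-s)<1$; this is exactly where the assumption $s>\tfrac12$ enters, since combined with $q<(N+2s)/(N+1)<2$ it gives $q(1-s)<2(1-s)<1$.

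The main obstacle, as anticipated, is the bookkeeping of exponents in the integration over $A_1$: one must track the contributions $t^{q/2}$ (from $\sqrt t$ in the prefactor), $t^{(N-q(N+2s))/(2s)}$ (from the $y$-integral), and $t^{(1-q(1-s))/(2s)}$ (from the $\delta^{-q(1-s)}$ integral), and verify that the resulting time-exponent exceeds $-1$ precisely in the range $q<(N+2s)/(N+1)$. A direct sum of the two regional bounds then gives the claimed inequality. No further tools beyond Lemma \ref{Gr} and elementary change-of-variables are needed, and the result is sharp in the sense that both the upper bound on $q$ and the restriction $s>\tfrac12$ appear naturally from the integrability constraints at $t=0$ and near $\partial\Omega$.
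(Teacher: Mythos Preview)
Your approach is correct and is essentially the natural one (and the one taken in the cited reference \cite{APPS}, to which the paper defers for a proof): derive \eqref{lllll} by combining \eqref{first_prop} with \eqref{second_prop} in the two regimes, then split the $L^q$ integral over $A_1\cup A_2$ and integrate out $y$ by rescaling.

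There is, however, one bookkeeping slip in the final step. On $A_1$ your three exponents sum to
\[
\frac{q}{2}+\frac{N-q(N+2s)}{2s}+\frac{1-q(1-s)}{2s}=\frac{N+1-q(N+1)}{2s},
\]
so after integrating in $t$ you obtain $T^{(N+1+2s-q(N+1))/(2s)}$, not the stated $T^{(N+2s-q(N+s))/(2s)}$. Your exponent is actually \emph{larger} (hence your bound is sharper for small $T$), but it is not the one written in the lemma. The stated form arises if, instead of exploiting the restriction $\delta(x)<t^{1/(2s)}$, you simply bound
\[
\int_{\{\delta(x)<t^{1/(2s)}\}\cap\Omega}\delta(x)^{-q(1-s)}\,dx\le \int_{\Omega}\delta(x)^{-q(1-s)}\,dx=C(\Omega),
\]
which is finite precisely because $q(1-s)<1$ (your observation). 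Then the $A_1$ time-integrand becomes $t^{(N-q(N+s))/(2s)}$, yielding $T^{(N+2s-q(N+s))/(2s)}$ exactly. This also explains why the two powers of $T$ in the lemma correspond to the regions $A_1$ and $A_2$ with this cruder treatment of $A_1$.
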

    Notice that
    \begin{equation*}
        t^{\frac{-N}{2s}}\wedge \dfrac{t}{|x-y|^{N+2s}}\simeq
        \dfrac{t}{(t^{\frac{1}{2s}}+|x-y|)^{N+2s}}.
    \end{equation*}
    Hence
    \begin{equation}\label{third_prop}
        P_{\Omega}(x,y,t)\backsimeq C\left(1\wedge
        \frac{\delta^s(x)}{\sqrt{t}}\right)\times\left(1\wedge
        \frac{\delta^s(y)}{\sqrt{t}}\right)\dfrac{t}{(t^{\frac{1}{2s}}+|x-y|)^{N+2s}}\backsimeq C\left(1\wedge
        \frac{\delta^s(x)\delta^s(y)}{t}\right)\dfrac{t}{(t^{\frac{1}{2s}}+|x-y|)^{N+2s}}.
    \end{equation}
    In the same way, we have
    \begin{equation*}
        \Int_{0}^{+\infty} P_{\Omega}(x,y,t) dt =\mathcal{G}_s(x,y),
    \end{equation*}
    where $\mathcal{G}_s(x,y)$ is the Green function of the fractional
    Laplacian with Dirichlet condition. It is known  that

    \begin{equation}
        \mathcal{G}_s(x,y) \simeq
        \dfrac{1}{|x-y|^{N-2s}}\bigg(\frac{\delta^s(x)}{|x-y|^{s}}\wedge
        1\bigg) \bigg(\frac{\delta^s(y)}{|x-y|^{s}}\wedge 1\bigg).
    \end{equation}
    We refer to  \cite{CZ} for more details.

    \

    Now, we are in position to give the key estimate on the {nonlocal gradient of the } heat
    kernel.

    \begin{Theorem} \label{regulaP}
   
        For any  $ \rho \in [s, \min\{1,2s\})$, there exists a positive constant $C$  such that for {\em a.e.}  $x,y\in\Omega$ with $x\neq y$ and for all $t>0$,
        we have
        \begin{enumerate}
            \item If $s\le \frac 12$, we have
            \begin{equation}\label{gradp12}
                \begin{array}{lll}
                    & &|(-\Delta) ^{\frac{\rho}{2}}P_\O(x,y,t)|\leq\dfrac{C
                        (\frac{\d^s(y)}{\sqrt{t}}\wedge 1)}{
                        (t^{\frac{1}{2s}}+|x-y|)^{N+2s+\rho-1}} \\
                    &\times &
                    \bigg(\frac{\d^{s-\rho}(x)}{(t^{\frac{1}{2s}}+|x-y|)^{1-s-\rho}}+t^{\frac{2s-1}{2s}}\log\frac{D}{|x-y|}+t^{\frac{s+\rho-1}{2s}}\d^{s-\rho}(x)+
                    t^{\frac{2s-1}{2s}}|\log(\d(x))|+|x-y|^{2s-1}\bigg).
                \end{array}
            \end{equation}

            \item If $s>\frac 12$, we have
            \begin{equation}\label{gradp22}
                \begin{array}{lll}
                    & & |(-\Delta) ^{\frac{\rho}{2}}P_\O(x,y,t)|\leq \dfrac{C
                        (\frac{\d^s(y)}{\sqrt{t}}\wedge 1)}{
                        (t^{\frac{1}{2s}}+|x-y|)^{N+\rho}}\\
                    &\times & \bigg( \frac{\d^{s-\rho}(x)}{
                        (t^{\frac{1}{2s}}+|x-y|)^{s-\rho}}+\frac{t^{\frac{2s-1}{2s}}}{|x-y|^{2s-1}}+t^{\frac{\rho-s}{2s}}\d^{s-\rho}(x)+
                    \log\frac{D}{|x-y|}+|\log(\d(x))|\bigg),
                \end{array}
            \end{equation}
        \end{enumerate}
        where $D>>\text{diam}(\O):=\max\{|x-y| \mbox{ ;
        }x,y\in \overline{\O}\}$.
    \end{Theorem}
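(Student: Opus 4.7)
The plan is to write $(-\Delta)^{\rho/2}_x P_\Omega(x,y,t)$ via its singular integral representation
\[
(-\Delta)^{\rho/2}_x P_\Omega(x,y,t) \;=\; a_{N,\rho}\,\mathrm{P.V.}\!\int_{\mathbb{R}^N}\frac{P_\Omega(x,y,t)-P_\Omega(z,y,t)}{|x-z|^{N+\rho}}\,dz,
\]
and to split the $z$-integral adaptively according to the four natural length scales $\delta(x)$, $\delta(y)$, $t^{1/(2s)}$ and $|x-y|$. A first decomposition is the near-field / far-field split $\{|x-z|<r\}\cup\{|x-z|\ge r\}$ with a cutoff $r$ to be optimized at the end (typically $r\simeq\min\{\delta(x),t^{1/(2s)},|x-y|/2\}$). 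In the near field, since $\rho<1$ on one branch (and since the P.V.~compensates the first order term when $2s\le\rho$ might fail, we use the symmetric difference), the mean value theorem gives
\[
|P_\Omega(x,y,t)-P_\Omega(z,y,t)|\;\le\;|x-z|\,\sup_{\xi\in[x,z]}|\nabla_\xi P_\Omega(\xi,y,t)|,
\]
and plugging the two-regime gradient bound \eqref{lllll} from Lemma \ref{lemma_P}, integration over $\{|x-z|<r\}$ yields a contribution of order $r^{1-\rho}|\nabla_x P_\Omega(x,y,t)|$; the two branches of \eqref{lllll} are exactly the source of the $\delta^{s-\rho}(x)(t^{1/(2s)}+|x-y|)^{s+\rho-1}$ piece and the $t^{(2s-1)/(2s)}$ piece that appear in \eqref{gradp12} and \eqref{gradp22}.

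For the far field, I would use the trivial estimate $|P_\Omega(x,y,t)-P_\Omega(z,y,t)|\le P_\Omega(x,y,t)+P_\Omega(z,y,t)$. The $P_\Omega(x,y,t)$ part produces a factor $r^{-\rho}$ after integration and recombines with the near-field result to give the main sharp scaling $(t^{1/(2s)}+|x-y|)^{-(N+2s+\rho-1)}$ (resp.~$(t^{1/(2s)}+|x-y|)^{-(N+\rho)}$ for $s>1/2$) once the optimal $r$ is chosen. The $P_\Omega(z,y,t)$ part is the delicate one: using \eqref{third_prop}, i.e.
\[
P_\Omega(z,y,t)\;\lesssim\;\Bigl(1\wedge\tfrac{\delta^s(y)}{\sqrt{t}}\Bigr)\,\delta^{s-\rho}(z)\cdot\tfrac{t\,\delta^{\rho-s}(z)}{(t^{1/(2s)}+|z-y|)^{N+2s}},
\]
and then inserting the estimate $P_\Omega(z,y,t)\le C(1\wedge\delta^s(y)/\sqrt t)\,t\,(t^{1/(2s)}+|z-y|)^{-(N+2s)}$, I would split the far field further into the regions $\{|x-z|\le|x-y|/2\}$, $\{|x-y|/2\le|x-z|\le 2|x-y|\}$, $\{|x-z|\ge 2|x-y|\}$, and on each of these apply Lemma \ref{Gr} to the convolution-type integrals $\int dz/(|x-z|^{N+\rho}(t^{1/(2s)}+|z-y|)^{N+2s})$. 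The borderline cases in Lemma \ref{Gr} (namely $N-\alpha-\beta=0$) are precisely what produces the logarithmic factor $\log(D/|x-y|)$ in the statement.

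The boundary-distance-singular contributions (the $\delta^{s-\rho}(x)$ and $|\log\delta(x)|$ terms) are captured by a second use of the far-field analysis: instead of estimating $P_\Omega$ via the scale $t^{1/(2s)}+|z-y|$, I exploit the factor $(\delta^s(x)/\sqrt{t}\wedge 1)$ in \eqref{first_prop} together with the inequality $\delta(x)\le\delta(z)+|x-z|$ to write $(1\wedge\delta^s(x)/\sqrt{t})\,(t^{1/(2s)}+|x-y|)^{-N-2s}$ as a product involving $\delta^{s-\rho}(x)$ times a kernel in $z$ that can be integrated through Lemma \ref{Tobias} with $\lambda\in\{\rho,s-\rho,\ldots\}$ and $a=\rho-s$; this is the source of $\delta^{s-\rho}(x)$ and, in the borderline $\lambda=a$ case, of $|\log\delta(x)|$. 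The factor $(1\wedge\delta^s(y)/\sqrt t)$ is preserved throughout because $y$ is never integrated over.

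The main obstacle will be the careful bookkeeping required to (i) choose the optimal cutoff $r$ in each sub-case so that the near-field $r^{1-\rho}$ and the far-field $r^{-\rho}$ combine to the claimed scaling, and (ii) identify the regime $s\le 1/2$ versus $s>1/2$: the factor $t^{(2s-1)/(2s)}$ coming from the second branch of \eqref{lllll} is large for small $t$ when $s\le 1/2$ and forces the exponent $N+2s+\rho-1$ in the denominator of \eqref{gradp12}, while for $s>1/2$ it is dominated by the other branch, allowing the cleaner exponent $N+\rho$ in \eqref{gradp22}; the $t^{(\rho-s)/(2s)}\delta^{s-\rho}(x)$ term in the $s>1/2$ case similarly comes from optimizing the near-field cutoff against the gradient bound restricted to $\{\delta(x)\ge t^{1/(2s)}\}$. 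Throughout, the constraint $\rho\in[s,\max\{1,2s\})$ guarantees that every exponent we meet in Lemmas \ref{Gr} and \ref{Tobias} stays in the range of applicability and that the resulting powers of $\delta(x)$ are integrable after we later integrate against $h$ or $w_0$.
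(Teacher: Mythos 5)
Your plan follows the classical near-field/far-field decomposition with an optimized cutoff, which is genuinely different from the paper's argument: there, one multiplies $P_\O$ by the weight $(t^{1/(2s)}+|x-y|)^{N+\s}$ with a free parameter $\s$ (chosen at the very end as $2s+\rho-1$ or $\rho$ according to the sign of $2s-1$), splits off the exterior integral over $\mathbb{R}^N\setminus\O$, and represents the increment of the auxiliary function $\Theta_{(y,t)}(x)=(t^{1/(2s)}+|x-y|)^{N+\s}P_\O(x,y,t)\in\W^{1,p}_0(\O)$ by a line integral of its gradient, so that Lemmas \ref{Gr} and \ref{Tobias} are only ever applied to kernels of order strictly below $N$. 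As written, however, your sketch has gaps that would make it fail.

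First, the cutoff and the far-field bound are incompatible. If you include $|x-y|/2$ in $r=\min\{\d(x),t^{1/(2s)},|x-y|/2\}$ and then estimate the far field by $P_\O(x,y,t)+P_\O(z,y,t)$, the first piece produces $|x-y|^{-\rho}P_\O(x,y,t)$ in the regime $|x-y|\ll\min\{\d(x),t^{1/(2s)}\}$. But the right-hand sides of \eqref{gradp12}--\eqref{gradp22} blow up as $|x-y|\to 0$ only like $|x-y|^{-(2s-1)_+}$ and $\log\frac{D}{|x-y|}$ (times quantities bounded in $|x-y|$), and since $\rho\ge s>2s-1$, the singularity $|x-y|^{-\rho}$ is \emph{not} dominated by the claimed bound whenever $s>\frac13$ --- in particular in the central case $\rho=s$ used later. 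The cutoff must be taken as $\min\{\d(x),t^{1/(2s)}\}$ only (one then checks that $t^{1/(2s)}+|\xi-y|\simeq t^{1/(2s)}+|x-y|$ on the segment anyway), so that the cancellation at scales below $t^{1/(2s)}$ is never discarded. Second, you propose to apply Lemma \ref{Gr} to $\int dz/\big(|x-z|^{N+\rho}(t^{1/(2s)}+|z-y|)^{N+2s}\big)$, but that lemma requires both exponents to be strictly below $N$; you must first lower the order of the kernels (this is precisely what the paper's weight $(t^{1/(2s)}+|x-y|)^{N+\s}$ accomplishes, and what in your scheme requires writing $(1\wedge\frac{\d^s(z)}{\sqrt t})\le \frac{C|x-z|^s}{\sqrt t}$ on $\{|x-z|\ge \d(x)/2\}$ to trade $|x-z|^{-(N+\rho)}$ for $|x-z|^{-(N+\rho-s)}$). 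Third, and relatedly, your stated mechanism for the terms $\d^{s-\rho}(x)$ and $|\log\d(x)|$ does not work: Lemma \ref{Tobias} with $a=\rho-s$ is empty in the main case $\rho=s$ (it requires $0<a<1$), and discarding the factor $(1\wedge\frac{\d^s(z)}{\sqrt t})$ in $P_\O(z,y,t)$, as your displayed inequality does, yields only $\d^{-\rho}(x)$ rather than $\d^{s-\rho}(x)$, which again exceeds the claimed bound as $\d(x)\to0$. These terms arise either from the far-field computation just indicated (the borderline $\rho=s$ giving the logarithm), or, as in the paper, from Lemma \ref{Tobias} with $a=1-s$ and $\lambda=1-\rho$ applied to $\int_\O|x-\xi|^{-(N+\rho-1)}\d^{-(1-s)}(\xi)\,d\xi$.
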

    \begin{remarks}
   As mentioned in the introduction, the constant $C$ in the previous theorem is  independent of $x,y$ and $t$.
      \end{remarks}

    \begin{proof}
        The proof will be a consequence of a more general estimate on
        $|(-\Delta) ^{\frac{\rho}{2}}P_\O(x,y,t)|$. Fixed $\rho>0$
        such that $s\le \rho<\min\{1,2s\}$. Let $(x,y)\in \O\times \O$ with $x\neq y$ and $t>0$ be fixed. Then, we know that
        \begin{equation}
            \begin{array}{lll}
            {    (-\Delta)_x ^{\frac{\rho}{2}}P_\O(x,y,t)}&=
                \Int_{\mathbb{R}^N} \dfrac{P_\O(x,y,t)-P_\O(z,y,t)}{|x-z|^{N+\rho}}dz\vspace{0.2cm}\\
                &=\Int_{\mathbb{R}^N\setminus\Omega} \dfrac{P_\O(x,y,t)-P_\O(z,y,t)}{|x-z|^{N+\rho}}dz\vspace{0.2cm}+\Int_{\Omega} \dfrac{P_\O(x,y,t)-P_\O(z,y,t)}{|x-z|^{N+\rho}}dz\vspace{0.2cm}\\
                &=I_1(x,y,t)+I_2(x,y,t).
            \end{array}
        \end{equation}
        We start by estimating $I_1$, since $P_\O(z,y,t)\equiv 0$ when $z\in \mathbb{R}^N\setminus\Omega$, then
        \begin{equation}\label{I001}
            \begin{array}{lll}
                I_1(x,y,t)&=\Int_{\mathbb{R}^N\setminus\Omega} \dfrac{P_\O(x,y,t)-P_\O(z,y,t)}{|x-z|^{N+\rho}}dz=\Int_{\mathbb{R}^N\setminus\Omega} \dfrac{P_\O(x,y,t)}{|x-z|^{N+\rho}}dz\\
                &\leq C(\O)\dfrac{P_\O(x,y,t)}{\delta^\rho(x)}.\\
            \end{array}
        \end{equation}

        \

        \

        We deal now with $I_2$ which is more involved. Let $\s\in \re$ be such that $N+\s>\max\{2s,1\}$, then
        \begin{equation*}
            \begin{array}{llll}
                \left(t^{\frac{1}{2s}}+|x-y|\right)^{N+\sigma}{I}_2(x,y,t)&=&\Int_{\Omega} \dfrac{\left(t^{\frac{1}{2s}}+|x-y|\right)^{N+\s}P_\O(x,y,t)-
                    \left(t^{\frac{1}{2s}}+|x-y|\right)^{N+\s}P_\O(z,y,t)}{|x-z|^{N+\rho}}dz,\vspace{0.2cm}\\
                &=&\Int_{\Omega} P_\O(z,y,t)\dfrac{\left(t^{\frac{1}{2s}}+|z-y|\right)^{N+\s}-\left(t^{\frac{1}{2s}}+|x-y|\right)^{N+\s}}{|x-z|^{N+\rho}}dz,\vspace{0.2cm}\\
                &+&\Int_{\Omega}\dfrac{\left(t^{\frac{1}{2s}}+|x-y|\right)^{N+\s}P_\O(x,y,t)-\left(t^{\frac{1}{2s}}+|z-y|\right)^{N+\s}P_\O(z,y,t)}{|x-z|^{N+\rho}}dz,\vspace{0.2cm}\\
                &=&{I}_{21}(x,y,t)+{I}_{22}(x,y,t).
            \end{array}
        \end{equation*}
        We begin with ${I}_{21}$
        \begin{equation*}
            \begin{array}{lll}
                |{I}_{21}(x,y,t)|&=\bigg|\Int_{\Omega}P_\O(z,y,t)\dfrac{\left(t^{\frac{1}{2s}}+|z-y|\right)^{N+\s}-\left(t^{\frac{1}{2s}}+|x-y|\right)^{N+\s}}{|x-z|^{N+\rho}}dz\bigg|\vspace{0.2cm}\\
                &\leq
                \Int_{\Omega}\dfrac{P_\O(z,y,t)}{|x-z|^{N+\rho}}\Int_{0}^{1} \bigg|\langle z-x,\nabla\left((t^{\frac{1}{2s}}+|\tau x+(1-\tau)z-y|)^{N+\s}\right)\rangle\bigg|d\tau
                dz\vspace{0.2cm}\\
                &\leq C  \Int_{\Omega}\Int_{0}^{1}\dfrac{P_\O(z,y,t)}{|x-z|^{N+\rho-1}} \left(t^{\frac{1}{2s}}+|\tau x+(1-\tau)z-y|\right)^{N+\s-1}
                d\tau dz.
            \end{array}
        \end{equation*}
        Notice that, for $0<\tau<1$  and for all $x,y,z\in \Omega$, we have
        \begin{eqnarray*}
            \left(t^{\frac{1}{2s}}+|\tau x +(1-\tau )z-y|\right)^{N+\s-1}\leq& \left(t^{\frac{1}{2s}}+\tau |x-y|+(1-\tau)|z-y|\right)^{N+\s-1}\\
            \le & C \left(t^{\frac{1}{2s}}+|z-y|\right)^{N+\s-1}+C|x-y|^{N+\s-1}.
        \end{eqnarray*}

        Hence
        \begin{equation*}
            \begin{array}{lll}
                |{I}_{21}(x,y,t)|&\leq C\dyle \int_{\Omega}\dfrac{P_\O(z,y,t) \left(t
                    ^{\frac{1}{2s}}+|z-y|\right)^{N+\s-1}}{|x-z|^{N+\rho-1}} dz+C \Int_{\Omega}\dfrac{P_\O(z,y,t) \,|x-y|^{N+\s-1}}{|x-z|^{N+\rho-1}}
                dz\\ \\
                &={I}_{211}(x,y,t)+{I}_{212}(x,y,t).
            \end{array}
        \end{equation*}
        We begin by estimating ${I}_{211}$. Using estimates
        \eqref{first_prop}, \eqref{third_prop} in Lemma \ref{lemma_P} and
        according to  Lemma \ref{Gr}, we obtain that
        \begin{equation}\label{{I211}}
            \begin{array} {lll}
                {I}_{211}(x,y,t)&\leq & C(\frac{\d^s(y)}{\sqrt{t}}\wedge 1)\Int_{\Omega} \dfrac{1}{|x-z|^{N+\rho-1}|\,(t^{\frac{1}{2s}}+|z-y|)^{1-\s}}dz\vspace{0.2cm}\\
                &\leq & C(\frac{\d^s(y)}{\sqrt{t}}\wedge 1)\Int_{\Omega}\dfrac{1}{|x-z|^{N-(1-\rho)}\,|z-y|^{1-\s}}dz\vspace{0.2cm}\vspace{0.2cm}\\
                & \leq& C(\frac{\d^s(y)}{\sqrt{t}}\wedge 1) \times
                \left\{
                \begin{array}{lll}
                    \bigg(1+|\log|x-y||)\bigg)
                    &\mbox{  if  }\s=\rho,\\
                    \bigg(1+|x-y|^{\s-\rho})\bigg) &\mbox{  if  }\s\neq \rho
                \end{array}
                \right.
            \end{array}
        \end{equation}
        We deal now with ${I}_{212}$. We have
        \begin{equation*}
            \begin{array}{lll}
                {I}_{212}(x,y,t) &\le & C
                |x-y|^{N+\s-1}\dyle\Int_{\Omega}\frac{P_\O(z,y,t)}{|x-z|^{N+\rho-1}}dz\\
                & \le & \dyle C(\frac{\d^s(y)}{\sqrt{t}}\wedge 1)|x-y|^{N+\s-1}
                \io\dfrac{t dz}{(t^{\frac{1}{2s}}+|z-y|)^{N+2s}|x-z|^{N+\rho-1}}\\
                & \le & \dyle C(\frac{\d^s(y)}{\sqrt{t}}\wedge 1)|x-y|^{N+\s-1}
                \int_{\ren}\dfrac{P_{\ren}(z,y,t)}{|x-z|^{N+\rho-1}}dz,
            \end{array}
        \end{equation*}
        where $P_{\ren}$ is the fractional heart kernel in $\ren$. Setting
        $$
        W(x,y,t)=\int_{\ren}\dfrac{P_{\ren}(z,y,t)}{|x-z|^{N+\rho-1}}dz,
        $$
        then, $W$ solves the equation
        \begin{equation}\label{CCC}
            \left\{
            \begin{array}{llll}
                W_t(x,y,t)+(-\Delta)_y^s W(x,y,t)&=& 0& \text{in}\quad\ren\times (0,T),\vspace{0.2cm}\\
                W(x,y,0)&=&\dfrac{1}{|x-y|^{N+\rho-1}}&\text{in} \quad
                \ren,
            \end{array}
            \right.
        \end{equation}
        see \cite{BSV} {for more details}. Since $\frac{1}{|x-y|^{N+\rho-1}}$ is a
        supersolution to \eqref{CCC}, then $W(x,y,t)\le
        \frac{1}{|x-y|^{N+\rho-1}}$. Thus
        \begin{equation}\label{I212}
            {I}_{212}(x,y,t)\le C(\frac{\d^s(y)}{\sqrt{t}}\wedge
            1)|x-y|^{\s-\rho}.
        \end{equation}
        Hence,  from \eqref{{I211}} and \eqref{I212},  we get
        \begin{equation}\label{{I}{21}}
            |{I}_{21}(x,y,t)|\leq C(\frac{\d^s(y)}{\sqrt{t}}\wedge
            1)\times
            \left\{
            \begin{array}{lll}
                \bigg(1+|\log|x-y||)\bigg)
                &\mbox{  if  }\s=\rho,\\
                \bigg(1+|x-y|^{\s-\rho})\bigg) &\mbox{  if  }\s\neq \rho
            \end{array}
            \right.
        \end{equation}

        \

        Now, going back to  ${I}_{22}$, we have
        \begin{equation*}
            \begin{array}{lll}
                {I}_{22}(x,y,t)&=
                \Int_{\Omega}\dfrac{\left(t^{\frac{1}{2s}}+|x-y|\right)^{N+\s}P_\O(x,y,t)-\left(t^{\frac{1}{2s}}+|z-y|\right)^{N+\s}P_\O(z,y,t)}{|x-z|^{N+\rho}}dz\vspace{0.2cm}\\
                &=\Int_{\Omega}\dfrac{\Theta_{(y,t)}(x)-\Theta_{(y,t)}(z)}{|x-z|^{N+\rho}}dz,
            \end{array}
        \end{equation*}
        where, for $t>0, y\in \O$ fixed, $\Theta$ is defined by
        $$\Theta_{(y,t)}(x)=
        (t^{\frac{1}{2s}}+|x-y|)^{N+\s}P_\O(x,y,t).
        $$

        Notice that, using \eqref{first_prop} and \eqref{third_prop}, it holds that
        $\Theta_{(y,t)}(x)=0$ if $x\in \mathbb{R}^N\setminus \O$ for a.e. $y\in \O$ and $t>0$. Moreover, we find that
        $$
        \Theta_{(y,t)}(x)\le C\left(1\wedge
        \frac{\delta^s(x)}{\sqrt{t}}\right)\times\left(1\wedge
        \frac{\delta^s(y)}{\sqrt{t}}\right)\times \dfrac{t}{(t^{\frac{1}{2s}}+|x-y|)^{2s-\s}}.
        $$
        Notice that
        $$
        \n_x \Theta_{(y,t)}(x)=\left(t^{\frac{1}{2s}}+|x-y|\right)^{N+\s}\n_x
        P_\O(x,y,t)+P_\O(x,y,t)\n_x \left(t^{\frac{1}{2s}}+|x-y|\right)^{N+\s}.$$
        Then, using now \eqref{first_prop} and  \eqref{second_prop}, it follows that

        \begin{eqnarray*}
            |\n_x \Theta_{(y,t)}(x)|
            &\le & C
            \left(t^{\frac{1}{2s}}+|x-y|\right)^{N+\s}P_\O(x,y,t)\bigg(\frac{1}{\d(x)\wedge
                t^{\frac{1}{2s}}}\bigg) +C
            P_\O(x,y,t)\left(t^{\frac{1}{2s}}+|x-y|\right)^{N+\s-1}.
        \end{eqnarray*}
        Notice that
        \begin{eqnarray*}
            |\n_x \Theta_{(y,t)}(x)|
            &\le & \dfrac{C}{(t^{\frac{1}{2s}}+|x-y|)^{2s-\s}}\bigg(\frac{\sqrt{t}}{(\d(x))^{1-s}}\chi_{\{\d(x)<t^{\frac{1}{2s}}\}}+
            t^{\frac{2s-1}{2s}}\chi_{\{\d(x)\ge t^{\frac{1}{2s}}\}}\bigg)+\frac{C \d^s(x)\d^s(y)}{(t^{\frac{1}{2s}}+|x-y|)^{2s-\s+1}} .
        \end{eqnarray*}
        Thus, for $(y,t)$ fixed, we have  $\Theta_{(y,t)}\in \W^{1,p}_0(\O)$ for all $p<\frac{1}{1-s}$, for a.e.  $y\in \O$ and $t>0$.
        Therefore, using Lemma 3.1 in \cite{AFTY}, we obtain that, for $(y,t)$ fixed and for a.e.  $x,z \in \mathbb{R}^N$,
        $$
        \Theta_{(y,t)}(x)-\Theta_{(y,t)}(z)=\int_0^1 \langle x-z, \nabla \Theta_{(y,t)}(\tau x + (1-\tau)z) \rangle d\tau.
        $$
        Hence, for $t>0$ and for a.e. $x,y\in \O$, we have
        $${I}_{22}(x,y,t)=\io \bigg(\int_0^1
        \langle x-z, \nabla \Theta_{(y,t)}(\tau x + (1-\tau)z) \rangle
        d\tau\bigg)\frac{dz}{|x-z|^{N+\rho}}.
        $$
        Setting $\xi_\tau=\tau x+(1-\tau) z$ and define $I_\O=\{\tau\in
        [0,1]: \xi_\tau\in \O\}$, then
        \begin{equation*}
            \begin{array}{lll}
                |{I}_{22}(x,y,t)|&\leq  \Int_{\Omega}\Int_{0}^{1}\dfrac{|\nabla \Theta_{(y,t)}(\tau x+(1-\tau)z)|}{|x-z|^{N+\rho-1}}d\tau dz\vspace{0.2cm}\\
                &\leq C(\O)\Int_{\Omega}\Int_{0}^{1}\dfrac{(t^{\frac{1}{2s}}+|\tau x+(1-\tau) z-y|)^{N+\s-1}}{|x-z|^{N+\rho-1}}P_\O(\tau x+(1-\tau) z,y,t)d\tau dz\vspace{0.2cm}\\
                &+\Int_{\Omega}\Int_{0}^{1}\dfrac{(t^{\frac{1}{2s}}+|\tau x+(1-\tau) z-y|)^{N+\s}}{|x-z|^{N+\rho-1}} |\nabla P_\O(\tau x+(1-\tau) z,y,t)|d\tau dz\vspace{0.2cm}\\
                &\leq C(\O)\Int_{\Omega}\Int_{I_\O}\dfrac{(t^{\frac{1}{2s}}+|\xi_\tau-y |)^{N+\s-1}}{|x-z|^{N+\rho-1}}P_\O(\xi_\tau,y,t)d\tau dz
                +\Int_{\Omega}\Int_{I_\O}\dfrac{(t^{\frac{1}{2s}}+|\xi_\tau-y|)^{N+\s}}{|x-z|^{N+\rho-1}} |\nabla P_\O(\xi_\tau,y,t)|d\tau dz\vspace{0.2cm}\\
                &={I}_{221}(x,y,t)+{I}_{222}(x,y,t).
            \end{array}
        \end{equation*}
        Since $\O$ is a bounded domain, we get the existence of $R>>1$ such that $\O\subset B_R(0)$ and $\xi_\tau\in B_R(0)$ for all $\tau\in [0,1]$. Hence, using
        \eqref{first_prop} and \eqref{third_prop}, we get
        \begin{equation*}
            \begin{array}{lll}
                {I}_{221}(x,y,t)&\le \dyle\int_{B_R(0)}\bigg(\int_0^1(t^{\frac{1}{2s}}+|\xi_\tau-y|)^{N+\s-1}P_\O(\xi_\tau,y,t)d\tau\bigg) \frac{dz}{|z-x|^{N+\rho-1}}\vspace{0.2cm}\\
                &\leq (\frac{\d^s(y)}{\sqrt{t}}\wedge 1)\dyle\int_0^1\bigg( \Int_{B_R(0)}\dfrac{1}{(t^{\frac{1}{2s}}+|\xi_\tau-y|)^{1-\s} |z-x|^{N+\rho-1}} dz\bigg)d\tau\vspace{0.2cm}\\
                &\leq (\frac{\d^s(y)}{\sqrt{t}}\wedge 1)\dyle\int_0^1\bigg( \Int_{B_R(0)}\dfrac{1}{|\xi_\tau-y|^{1-\s} |z-x|^{N+\rho-1}} dz\bigg)d\tau\vspace{0.2cm}\\
                &\leq (\frac{\d^s(y)}{\sqrt{t}}\wedge 1)\dyle\int_0^1\bigg( \Int_{B_R(0)}\dfrac{1}{|z-(1-\tau)^{-1}(y-\tau x)|^{1-\s} |z-x|^{N+\rho-1}} dz\bigg)\frac{d\tau}{(1-\tau)^{1-\s}}\vspace{0.2cm}\\
                &\leq C(\frac{\d^s(y)}{\sqrt{t}}\wedge 1) \left\{
                \begin{array}{lll}
                    \dyle\int_0^1 \frac{d\tau}{(1-\tau)^{1-\s}}
                    (1+|\ln(\frac{|x-y|}{1-\tau}|)\mbox{  if  }\s=\rho,\\
                    \dyle\int_0^1 \frac{d\tau}{(1-\tau)^{1-\rho}}
                    (1+|x-y|^{\s-\rho}) \mbox{  if  }\s\neq \rho,\\
                \end{array}
                \right.
                \\
                \\
                &\le C(\frac{\d^s(y)}{\sqrt{t}}\wedge 1) \left\{
                \begin{array}{lll}
                    (1+|\ln|x-y||)\mbox{  if  }\s=\rho,\\
                    (1+|x-y|^{\s-\rho}) \mbox{  if  }\s\neq \rho.\\
                \end{array}
                \right.
            \end{array}
        \end{equation*}
        Hence
        \begin{equation}\label{I221}
            I_{221}\le C(\frac{\d^s(y)}{\sqrt{t}}\wedge 1) \left\{
            \begin{array}{lll}
                (1+|\ln|x-y||)\mbox{  if  }\s=\rho,\\
                (1+|x-y|^{\s-\rho}) \mbox{  if  }\s\neq \rho.\\
            \end{array}
            \right.
        \end{equation}

        We estimate now $I_{222}$. Using \eqref{second_prop}, it follows
        that
        \begin{equation*}
            \begin{array}{lll}
                {I}_{222}(x,y,t)& = & \Int_{\Omega}\Int_{I_\O}\dfrac{(t^{\frac{1}{2s}}+|\xi_\tau-y|)^{N+\s}}{|x-z|^{N+\rho-1}}
                |\nabla P_\O(\xi_\tau,y,t)|d\tau dz\vspace{0.2cm}\\ &\le &
                C\Int_{\Omega}\Int_{I_\O}\dfrac{(t^{\frac{1}{2s}}+|\xi_\tau-y|)^{N+\s}}{|x-z|^{N+\rho-1}}\left(\dfrac{1}{\delta(\xi_\tau)\wedge t^{\frac{1}{2s}}}\right)P_\O(\xi_\tau,y,t)  d\tau dz\vspace{0.2cm}\\
                &\leq & C\Int_{\Omega}\Int_{I_\O}\dfrac{(t^{\frac{1}{2s}}+|\xi_\tau-y|)^{N+\s}}{|x-z|^{N+\rho-1}}\dfrac{P_\O(\xi_\tau,y,t)}{\delta(\xi_\tau)} \chi_{\{\delta(\xi_\tau)< t^{\frac{1}{2s}}\}} d\tau dz\vspace{0.2cm}\\
                &+&  C\Int_{\Omega}\Int_{I_\O}\dfrac{(t^{\frac{1}{2s}}+|\xi_\tau-y|)^{N+\s}}{|x-z|^{N+\rho-1}}\dfrac{P_\O(\xi_\tau,y,t)}{ t^{\frac{1}{2s}}}\chi_{\{\delta(\xi_\tau)\geq
                    t^{\frac{1}{2s}}\}}d\tau dz
            \end{array}
        \end{equation*}
        Now, using \eqref{first_prop} and \eqref{third_prop}, we obtain that
        \begin{equation*}\label{{I222}}
            \begin{array}{lll}
                {I}_{222}(x,y,t)& \le & C(\frac{\d^s(y)}{\sqrt{t}}\wedge 1)\Int_{\Omega}\Int_{I_\O}
                \dfrac{1}{|x-z|^{N+\rho-1}}\dfrac{\sqrt{t}}{\delta^{1-s}(\xi_\tau)(t^{\frac{1}{2s}}+|\xi_\tau-y|)^{2s-\s}}
                d\tau dz\vspace{0.2cm}\\ &+ &
                C(\frac{\d^s(y)}{\sqrt{t}}\wedge 1)\Int_{\Omega}\Int_{I_\O}\dfrac{t^{\frac{2s-1}{2s}}}{|x-z|^{N+\rho-1}(t^{\frac{1}{2s}}+|\xi_\tau-y|)^{2s-\s}}
                \chi_{\{\delta(\xi_\tau)\geq t^{\frac{1}{2s}}\}}d\tau dz\vspace{0.2cm}\\
                &= & C(\frac{\d^s(y)}{\sqrt{t}}\wedge 1)\bigg({J}_{1}(x,y,t)+{J}_{2}(x,y,t)\bigg).
            \end{array}
        \end{equation*}

        We begin by estimating $J_1$,{ we consider}  the set $\Omega'=\{z\in \O:
        \xi_\tau=\tau x+(1-\tau)z\in \O.\}$ and define the change of variable $Y(z)=\xi_\tau$,
        then $Y(\O')\subset \O$. {Hence,  we get}
        \begin{equation*}
            \begin{array}{rcll}
                J_1(x,y,t) &=&\Int_0^1\Int_{\Omega'}\dfrac{1}{|x-z|^{N+\rho-1}}
                \dfrac{\sqrt{t}}{\delta^{1-s}(\xi_\tau)(t^{\frac{1}{2s}}+|\xi_\tau-y|)^{2s-\s}} dzd\tau\vspace{0.2cm}\\
                &=&\Int_0^1\Int_{Y(\Omega')}\dfrac{1}{|x-\xi_\tau|^{N+\rho-1}}
                \dfrac{\sqrt{t}}{\delta^{1-s}(\xi_\tau)(t^{\frac{1}{2s}}+|\xi_\tau-y|)^{2s-\s}}d\xi \dfrac{d\tau}{(1-\tau)^{1-\rho}}\vspace{0.2cm}\\
                &\le & C \Int_0^1 \dfrac{d\tau}{(1-\tau)^{1-\rho}}
                \io \dfrac{1}{|x-\xi|^{N+\rho-1}}
                \dfrac{\sqrt{t}}{\delta^{1-s}(\xi)(t^{\frac{1}{2s}}+|\xi-y|)^{2s-\s}} d\xi\\
                &\le & C\Int_{\Omega}\dfrac{1}{|x-\xi|^{N+\rho-1}}\dfrac{\sqrt{t}}{\delta^{1-s}(\xi)(t^{\frac{1}{2s}}+|\xi-y|)^{2s-\s}}d\xi \vspace{0.2cm}\\
                &\le & Ct^{\frac{\sigma-s}{2s}}\Int_{\Omega}\dfrac{d\xi}{|x-\xi|^{N+\rho-1}\delta^{1-s}(\xi)}.
            \end{array}
        \end{equation*}
        Hence,  using Lemma \ref{Tobias}, we deduce that
        \begin{equation}\label{J11}
            J_1(x,y,t)\leq Ct^{\frac{\s-s}{2s}} \left\{
            \begin{array}{lll}
                |\log({\delta(x)})| &\mbox{  if  } \rho=s,\\
                \delta^{s-\rho}(x) &\mbox{  if   } \rho\neq s.
            \end{array}
            \right.
        \end{equation}

        We treat now the term $J_2$. As above, we have
        \begin{equation*}
            \begin{array}{lll}
                J_2(x,y,t)&=&C\Int_{\Omega}\Int_{I_\O}\dfrac{t^{\frac{2s-1}{2s}}}{|x-z|^{N+\rho-1}(t^{\frac{1}{2s}}+|\xi_\tau-y|)^{2s-\s}}
                \chi_{\{\delta(\xi_\tau)\geq t^{\frac{1}{2s}}\}}d\tau dz\vspace{0.2cm}\\
                &\leq&
                C\Int_{\Omega}\dfrac{t^{\frac{2s-1}{2s}}}{|x-\xi|^{N+\rho-1}(t^{\frac{1}{2s}}+|\xi-y|)^{2s-\s}}
                d\xi\vspace{0.2cm}\\
                &\le & C t^{\frac{2s-1}{2s}}\Int_{\Omega}\dfrac{d\xi}{|x-\xi|^{N+\rho-1}(|\xi-y|)^{2s-\s}}.\\
            \end{array}
        \end{equation*}

        Using Lemma \ref{Gr}, il holds that

        \begin{equation*}
            J_2(x,y,t)\leq Ct^{\frac{2s-1}{2s}}\left\{
            \begin{array}{lll}
                1+|\log|x-y|| &\mbox{  if  }& 1+\s=2s+\rho,\\
                1+|x-y|^{1+\s-2s-\rho} &\mbox{  if   }& 1+\s\neq 2s+\rho.\\
            \end{array}
            \right.
        \end{equation*}
        Using the fact that $x,y\in \O$, a bounded domain, we have
        $1+|\log|x-y||\le C(\O)\log\frac{D}{|x-y|}$. Thus, we conclude that
        \begin{equation}\label{J2-sr}
            J_2(x,y,t)\leq Ct^{\frac{2s-1}{2s}}\left\{
            \begin{array}{lll}
                1&\mbox{  if  }& 1+\s>2s+\rho,\\
                \log\frac{D}{|x-y|} &\mbox{  if  }& 1+\s=2s+\rho,\\
                \frac{1}{|x-y|^{2s+\rho-1-\s}} &\mbox{  if   }& 1+\s<2s+\rho.\\
            \end{array}
            \right.
        \end{equation}
        Combining \eqref{J11} and \eqref{J2-sr}, we deduce that:

        $\bullet$ If $\rho=s$, then
        \begin{equation}\label{J2-rho}
            {I}_{222}(x,y,t)\leq
            C(\O)(\frac{\d^s(y)}{\sqrt{t}}\wedge 1)
            \left\{
            \begin{array}{lll}
                t^{\frac{2s-1}{2s}}+ t^{\frac{\s-s}{2s}}|\log(\d(x))|&\mbox{  if  }& 1+\s>3s,\\
                t^{\frac{2s-1}{2s}}\log\frac{D}{|x-y|}+t^{\frac{\s-s}{2s}}|\log(\d(x))| &\mbox{  if  }& 1+\s=3s,\\
                t^{\frac{2s-1}{2s}} \frac{1}{|x-y|^{3s-1-\s}}+t^{\frac{\s-s}{2s}}|\log(\d(x))| &\mbox{  if   }& 1+\s<3s.\\
            \end{array}
            \right.
        \end{equation}

        $\bullet$ If $\rho\neq s$, then
        \begin{equation}\label{J2-s}
            {I}_{222}(x,y,t)\leq
            C(\O)(\frac{\d^s(y)}{\sqrt{t}}\wedge 1)
            \left\{
            \begin{array}{lll}
                t^{\frac{2s-1}{2s}}+ t^{\frac{\s-s}{2s}}\d^{s-\rho}(x)&\mbox{  if  }& 1+\s>2s+\rho,\\
                t^{\frac{2s-1}{2s}}\log\frac{D}{|x-y|}+t^{\frac{\s-s}{2s}}\d^{s-\rho}(x) &\mbox{  if  }& 1+\s=2s+\rho,\\
                t^{\frac{2s-1}{2s}} \frac{1}{|x-y|^{2s+\rho-1-\s}}+t^{\frac{\s-s}{2s}}\d^{s-\rho}(x) &\mbox{  if   }& 1+\s<2s+\rho.\\
            \end{array}
            \right.
        \end{equation}

        Thus, from  \eqref{J2-rho}, \eqref{J2-s} and   \eqref{I221}, we
        obtain that

        $\bullet$ If $\rho=s$ and $\s=\rho$, then
        \begin{equation}\label{TT1}
            |{I}_{22}(x,y,t)|\leq
            C(\O)(\frac{\d^s(y)}{\sqrt{t}}\wedge
            1)\times
            \left\{
            \begin{array}{lll}
                t^{\frac{2s-1}{2s}}+ t^{\frac{\s-s}{2s}}|\log(\d(x))|+\log\frac{D}{|x-y|}&\mbox{  if  }& 2s<1,\\
                (1+t^{\frac{2s-1}{2s}})\log\frac{D}{|x-y|}+t^{\frac{\s-s}{2s}}|\log(\d(x))| &\mbox{  if  }& 2s=1,\\
                {t^{\frac{2s-1}{2s}} \frac{1}{|x-y|^{2s-1}}+|\log(\d(x))| +\log\frac{D}{|x-y|}}&\mbox{  if   }& 2s>1.\\
            \end{array}
            \right.
        \end{equation}

        $\bullet$ If $\rho=s$ and $\s\neq \rho$, then
        \begin{equation}\label{TT2}
            |{I}_{22}(x,y,t)|\leq C(\O)(\frac{\d^s(y)}{\sqrt{t}}\wedge
            1)\times
            \left\{
            \begin{array}{lll}
                1+t^{\frac{2s-1}{2s}}+ t^{\frac{\s-s}{2s}}|\log(\d(x))|+|x-y|^{\s-\rho}&\mbox{  if  }& 3s<1+\s,\\
                1+t^{\frac{2s-1}{2s}}\log\frac{D}{|x-y|}+t^{\frac{\s-s}{2s}}|\log(\d(x))|+|x-y|^{\s-\rho} &\mbox{  if  }& 3s=1+\s,\\
                1+t^{\frac{2s-1}{2s}}{\frac{1}{|x-y|^{3s-1-\s}}}+t^{\frac{\s-s}{2s}}|\log(\d(x))|+|x-y|^{\s-\rho} &\mbox{  if   }& 3s>1+\s.\\
            \end{array}
            \right.
        \end{equation}

        $\bullet$ If $\rho\neq s$ and $\s=\rho$, then
        \begin{equation}\label{TT3}
            |{I}_{22}(x,y,t)|\leq
            C(\O)(\frac{\d^s(y)}{\sqrt{t}}\wedge
            1)\times \left\{
            \begin{array}{lll}
                t^{\frac{2s-1}{2s}}+ t^{\frac{\s-s}{2s}}\d^{s-\rho}(x)+\log\frac{D}{|x-y|}&\mbox{  if  }& 2s<1,\\
                \log\frac{D}{|x-y|}+t^{\frac{\s-s}{2s}}\d^{s-\rho}(x)&\mbox{  if  }& 2s=1,\\
                t^{\frac{2s-1}{2s}} \frac{1}{|x-y|^{2s-1}}+t^{\frac{\s-s}{2s}}\d^{s-\rho}(x)+\log\frac{D}{|x-y|}&\mbox{  if   }& 2s>1.\\
            \end{array}
            \right.
        \end{equation}

        $\bullet$ If $\rho\neq s$ and $\s\neq\rho$, then
        \begin{equation}\label{TT4}
            |{I}_{22}(x,y,t)|\leq C(\O)(\frac{\d^s(y)}{\sqrt{t}}\wedge
            1)
            \times
            \left\{
            \begin{array}{lll}
                1+t^{\frac{2s-1}{2s}}+ t^{\frac{\s-s}{2s}}\d^{s-\rho}(x)+|x-y|^{\s-\rho}&\mbox{  if  }& 2s+\rho<1+\s,\\
                1+t^{\frac{2s-1}{2s}}\log\frac{D}{|x-y|}+t^{\frac{\s-s}{2s}}\d^{s-\rho}(x)+|x-y|^{\s-\rho}&\mbox{  if  }& 2s+\rho=1+\s,\\
                1+ t^{\frac{2s-1}{2s}} \frac{1}{|x-y|^{2s+\rho-1-\s}}+t^{\frac{\s-s}{2s}}\d^{s-\rho}(x)+|x-y|^{\s-\rho} &\mbox{  if   }& 2s+\rho>1+\s.\\
            \end{array}
            \right.
        \end{equation}

        Going back to estimates \eqref{{I}{21}}, \eqref{TT1},\eqref{TT2},
        \eqref{TT3} and \eqref{TT4}, it holds that

        $\bullet$ If $\rho=s$ and $\s=\rho$, then
        \begin{equation}\label{UU1}
            {I}_2(x,y,t)\leq \dfrac{C(\O)(\frac{\d^s(y)}{\sqrt{t}}\wedge
                1)}{(t^{\frac{1}{2s}}+|x-y|)^{N+\s}}
            \times\left\{
            \begin{array}{lll}
                t^{\frac{2s-1}{2s}}+|\log(\d(x))|+\log\frac{D}{|x-y|}&\mbox{  if  }& 2s<1,\\
                \log\frac{D}{|x-y|}+|\log(\d(x))|&\mbox{  if  }& 2s=1,\\
                \frac{t^{\frac{2s-1}{2s}}}{|x-y|^{2s-1}}+|\log(\d(x))| +\log\frac{D}{|x-y|}&\mbox{  if   }& 2s>1.\\
            \end{array}
            \right.
        \end{equation}

        $\bullet$ If $\rho=s$ and $\s\neq \rho$, then
        \begin{equation}\label{UU2}
            {I}_2(x,y,t)\leq
            \dfrac{C(\O)(\frac{\d^s(y)}{\sqrt{t}}\wedge
                1)}{(t^{\frac{1}{2s}}+|x-y|)^{N+\s}}\times
            \left\{
            \begin{array}{lll}
                1+t^{\frac{2s-1}{2s}}+ t^{\frac{\s-s}{2s}}|\log(\d(x))|+|x-y|^{\s-\rho}&\mbox{  if  }& 3s<1+\s,\\
                1+t^{\frac{2s-1}{2s}}\log\frac{D}{|x-y|}+t^{\frac{\s-s}{2s}}|\log(\d(x))|+|x-y|^{\s-\rho} &\mbox{  if  }& 3s=1+\s,\\
                1+t^{\frac{2s-1}{2s}} \frac{1}{|x-y|^{3s-1-\s}}+t^{\frac{\s-s}{2s}}|\log(\d(x))|+|x-y|^{\s-\rho} &\mbox{  if   }& 3s>1+\s.\\
            \end{array}
            \right.
        \end{equation}

        $\bullet$ If $\rho\neq s$ and $\s=\rho$, then
        \begin{equation}\label{UU3}
            {I}_2(x,y,t)\leq
            \dfrac{C(\O)(\frac{\d^s(y)}{\sqrt{t}}\wedge
                1)}{(t^{\frac{1}{2s}}+|x-y|)^{N+\s}}\times
            \left\{
            \begin{array}{lll}
                t^{\frac{2s-1}{2s}}+ t^{\frac{\s-s}{2s}}\d^{s-\rho}(x)+\log\frac{D}{|x-y|}&\mbox{  if  }& 2s<1,\\
                \log\frac{D}{|x-y|}+t^{\frac{\s-s}{2s}}\d^{s-\rho}(x)&\mbox{  if  }& 2s=1,\\
                t^{\frac{2s-1}{2s}} \frac{1}{|x-y|^{2s-1}}+t^{\frac{\s-s}{2s}}\d^{s-\rho}(x)+\log\frac{D}{|x-y|}&\mbox{  if   }& 2s>1.\\
            \end{array}
            \right.
        \end{equation}

        $\bullet$ If $\rho\neq s$ and $\s\neq\rho$, then
        \begin{equation}\label{UU4}
            {I}_2(x,y,t)\leq
            \dfrac{C(\O)(\frac{\d^s(y)}{\sqrt{t}}\wedge
                1)}{(t^{\frac{1}{2s}}+|x-y|)^{N+\s}}\times \left\{
            \begin{array}{lll}
                1+t^{\frac{2s-1}{2s}}+ t^{\frac{\s-s}{2s}}\d^{s-\rho}(x)+|x-y|^{\s-\rho}&\mbox{  if  }& 2s+\rho<1+\s,\\
                1+t^{\frac{2s-1}{2s}}\log\frac{D}{|x-y|}+t^{\frac{\s-s}{2s}}\d^{s-\rho}(x)+|x-y|^{\s-\rho}&\mbox{  if  }& 2s+\rho=1+\s,\\
                1+ t^{\frac{2s-1}{2s}} \frac{1}{|x-y|^{2s+\rho-1-\s}}+t^{\frac{\s-s}{2s}}\d^{s-\rho}(x)+|x-y|^{\s-\rho} &\mbox{  if   }& 2s+\rho>1+\s.\\
            \end{array}
            \right.
        \end{equation}

        As a consequence, from \eqref{I001}, \eqref{UU1}, \eqref{UU2},
        \eqref{UU3} and \eqref{UU4}, we reach that

        $\bullet$ If $\rho=s$ and $\s=\rho$, then
        \begin{equation}\label{RR1}
        {    |(-\Delta)_x ^{\frac{s}{2}}P_\O(x,y,t)|}\leq \dfrac{C
                (\frac{\d^s(y)}{\sqrt{t}}\wedge 1)}{
                (t^{\frac{1}{2s}}+|x-y|)^{N+s}} \\
            \times \left\{
            \begin{array}{lll}
                t^{\frac{2s-1}{2s}}+ |\log(\d(x))|+\log\frac{D}{|x-y|}&\mbox{  if  }& 2s<1,\\
                \log\frac{D}{|x-y|}+|\log(\d(x))|&\mbox{  if  }& 2s=1,\\
                \frac{t^{\frac{2s-1}{2s}}}{|x-y|^{2s-1}}+|\log(\d(x))|+\log\frac{D}{|x-y|}&\mbox{  if   }& 2s>1.\\
            \end{array}
            \right.
        \end{equation}

        $\bullet$ If $\rho=s$ and $\s\neq \rho$, then
        \begin{equation}\label{RR2}
            \begin{array}{lll}
                &{|(-\Delta)_x ^{\frac{\rho}{2}}P_\O(x,y,t)|}\leq \dfrac{C (\frac{\d^s(y)}{\sqrt{t}}\wedge 1)}{
                    (t^{\frac{1}{2s}}+|x-y|)^{N+\s}} \\
                &\times
                \left\{
                \begin{array}{lll}
                    \frac{1}{(t^{\frac{1}{2s}}+|x-y|)^{s-\s}}+1+t^{\frac{2s-1}{2s}}+ t^{\frac{\s-s}{2s}}|\log(\d(x))|+|x-y|^{\s-\rho}&\mbox{  if  }& 3s<1+\s,\\
                    \frac{1}{(t^{\frac{1}{2s}}+|x-y|)^{s-\s}}+1+t^{\frac{2s-1}{2s}}\log\frac{D}{|x-y|}+t^{\frac{\s-s}{2s}}|\log(\d(x))|+|x-y|^{\s-\rho} &\mbox{  if  }& 3s=1+\s,\\
                    \frac{1}{(t^{\frac{1}{2s}}+|x-y|)^{s-\s}}+ 1+t^{\frac{2s-1}{2s}}
                {    \frac{1}{|x-y|^{3s-1-\s}}}+t^{\frac{\s-s}{2s}}|\log(\d(x))|+|x-y|^{\s-\rho}
                    &\mbox{  if   }& 3s>
                    1+\s.\\
                \end{array}
                \right.
            \end{array}
        \end{equation}

        $\bullet$ If $\rho\neq s$ and $\s=\rho$, then
        \begin{equation}\label{RR3}
            \begin{array}{lll}
                &{|(-\Delta)_x ^{\frac{\rho}{2}}P_\O(x,y,t)|}\leq \dfrac{C (\frac{\d^s(y)}{\sqrt{t}}\wedge 1)}{
                    (t^{\frac{1}{2s}}+|x-y|)^{N+\rho}} \\
                &\times
                \left\{
                \begin{array}{lll}
                    \frac{\d^{s-\rho}(x)}{(t^{\frac{1}{2s}}+|x-y|)^{s-\rho}}+1+t^{\frac{2s-1}{2s}}+ t^{\frac{\rho-s}{2s}}\d^{s-\rho}(x)+\log\frac{D}{|x-y|}&\mbox{  if  }& 2s<1,\\
                    \frac{\d^{s-\rho}(x)}{(t^{\frac{1}{2s}}+|x-y|)^{s-\rho}}+\log\frac{D}{|x-y|}+t^{\frac{\rho-s}{2s}}\d^{s-\rho}(x)&\mbox{  if  }& 2s=1,\\
                    \frac{\d^{s-\rho}(x)}{(t^{\frac{1}{2s}}+|x-y|)^{s-\rho}}+t^{\frac{2s-1}{2s}} \frac{1}{|x-y|^{2s-1}}+t^{\frac{\rho-s}{2s}}\d^{s-\rho}(x)+\log\frac{D}{|x-y|} &\mbox{  if   }& 2s>1.\\
                \end{array}
                \right.
            \end{array}
        \end{equation}

        $\bullet$ If $\rho\neq s$ and $\s\neq\rho$, then
        \begin{equation}\label{RR4}
            \begin{array}{lll}
                &{|(-\Delta)_x ^{\frac{\rho}{2}}P_\O(x,y,t)|}\leq \dfrac{C (\frac{\d^s(y)}{\sqrt{t}}\wedge 1)}{
                    (t^{\frac{1}{2s}}+|x-y|)^{N+\s}} \\
                &\times
                \left\{
                \begin{array}{lll}
                    \frac{\d^{s-\rho}(x)}{(t^{\frac{1}{2s}}+|x-y|)^{s-\s}}+1+t^{\frac{2s-1}{2s}}+
                    t^{\frac{\s-s}{2s}}\d^{s-\rho}(x)+|x-y|^{\s-\rho}&\mbox{  if  }&
                    2s+\rho<
                    1+\s,\\
                    \frac{\d^{s-\rho}(x)}{(t^{\frac{1}{2s}}+|x-y|)^{s-\s}}+1+t^{\frac{2s-1}{2s}}\log\frac{D}{|x-y|}+t^{\frac{\s-s}{2s}}\d^{s-\rho}(x)+|x-y|^{\s-\rho}&\mbox{
                        if  }& 2s+\rho=
                    1+\s,\\
                    \frac{\d^{s-\rho}(x)}{(t^{\frac{1}{2s}}+|x-y|)^{s-\s}}+1+
                    t^{\frac{2s-1}{2s}}
                    \frac{1}{|x-y|^{2s+\rho-1-\s}}+t^{\frac{\s-s}{2s}}\d^{s-\rho}(x)+|x-y|^{\s-\rho}
                    &\mbox{  if   }& 2s+\rho>
                    1+\s.\\
                \end{array}
                \right.
            \end{array}
        \end{equation}
        Now, choosing $\s$ such that
        $$
        \s= \left\{\begin{array}{lll}
            2s+\rho-1 &\mbox{  if  }s\le \frac 12,\\
            \rho & \mbox{  if  }s>\frac 12, \end{array} \right.
        $$
        it follows that:

        \begin{enumerate}
            \item If $s\le \frac 12$ and $\s=2s+\rho-1$, then

            $\bullet$ If $s=\frac 12$, we have $\s=\rho$, hence using
            \eqref{RR1} and \eqref{RR3} (for $s=\frac 12$), it follows that
            \begin{equation*}
            {    |(-\Delta)_x ^{\frac{s}{2}}P_\O(x,y,t)|}\leq \dfrac{C
                    (\frac{\d^s(y)}{\sqrt{t}}\wedge 1)}{
                    (t+|x-y|)^{N+s}}
                \bigg(\log\frac{D}{|x-y|}+|\log(\d(x))|\bigg)  \mbox{  if
                } \rho=\frac 12,
            \end{equation*}
            and
            \begin{equation*}
                \begin{array}{lll}
            {   |(-\Delta)_x ^{\frac{\rho}{2}}P_\O(x,y,t)|} &\leq & \dfrac{C
                        (\frac{\d^s(y)}{\sqrt{t}}\wedge 1)}{
                        (t+|x-y|)^{N+\rho}}
                    \bigg(\frac{\d^{s-\rho}(x)}{(t+|x-y|)^{s-\rho}}+\log\frac{D}{|x-y|}+t^{\frac{\rho-s}{2s}}\d^{s-\rho}(x)\bigg)
                    \mbox{ if  }\rho\neq \frac 12.
                \end{array}
            \end{equation*}

            \item If $s<\frac 12$, then $\s=2s+\rho-1<\rho$. Hence
            \begin{enumerate}
                \item[$i)$] If $\rho=s$, then from \eqref{RR2}, it follows that
                \begin{equation}\label{PP3}
                    \begin{array}{lll}
                    {    |(-\Delta)_x ^{\frac{s}{2}}P_\O(x,y,t)|}&\leq & \dfrac{C
                            (\frac{\d^s(y)}{\sqrt{t}}\wedge 1)}{
                            (t^{\frac{1}{2s}}+|x-y|)^{N+\s}}\\
                        &\times &
                        \bigg(\frac{1}{(t^{\frac{1}{2s}}+|x-y|)^{s-\s}}+t^{\frac{2s-1}{2s}}\log\frac{D}{|x-y|}+t^{\frac{2s-1}{2s}}|\log(\d(x))|+|x-y|^{2s-1}\bigg).
                    \end{array}
                \end{equation}

                \item[$i)$] If $\rho\neq s$, then from \eqref{RR4}, it follows
                that
                \begin{equation}\label{PP4}
                    \begin{array}{lll}
                    {    |(-\Delta)_x ^{\frac{\rho}{2}}P_\O(x,y,t)|} &\leq & \dfrac{C
                            (\frac{\d^s(y)}{\sqrt{t}}\wedge 1)}{
                            (t^{\frac{1}{2s}}+|x-y|)^{N+\s}} \\
                        &\times &
                        \bigg(\frac{\d^{s-\rho}(x)}{(t^{\frac{1}{2s}}+|x-y|)^{s-\s}}+t^{\frac{2s-1}{2s}}\log\frac{D}{|x-y|}+t^{\frac{s+\rho-1}{2s}}\d^{s-\rho}(x)+|x-y|^{2s-1}\bigg).
                    \end{array}
                \end{equation}
            \end{enumerate}
        \end{enumerate}
        Therefore, we deduce that if $s\le \frac 12$, with
        $\s=2s+\rho-1$, we have
        \begin{equation}\label{QQQ1}
            \begin{array}{lll}
                & &{|(-\Delta)_x ^{\frac{\rho}{2}}P_\O(x,y,t)|}\leq\dfrac{C
                    (\frac{\d^s(y)}{\sqrt{t}}\wedge 1)}{
                    (t^{\frac{1}{2s}}+|x-y|)^{N+2s+\rho-1}} \\
                &\times &
                \bigg(\frac{\d^{s-\rho}(x)}{(t^{\frac{1}{2s}}+|x-y|)^{1-s-\rho}}+t^{\frac{2s-1}{2s}}\log\frac{D}{|x-y|}+t^{\frac{s+\rho-1}{2s}}\d^{s-\rho}(x)+
                t^{\frac{2s-1}{2s}}|\log(\d(x))|+|x-y|^{2s-1}\bigg).
            \end{array}
        \end{equation}

        Now, if $s>\frac 12$, then in this case we have $\s=\rho$. Thus
        \begin{enumerate}
            \item If $\rho=s$, using \eqref{RR1}, we have
            \begin{equation}\label{OO1}
         {   |(-\Delta)_x ^{\frac{s}{2}}P_\O(x,y,t)|}\leq \dfrac{C
                    (\frac{\d^s(y)}{\sqrt{t}}\wedge 1)}{
                    (t^{\frac{1}{2s}}+|x-y|)^{N+s}} \bigg(\frac{t^{\frac{2s-1}{2s}}}{|x-y|^{2s-1}}+ \log\frac{D}{|x-y|}+|\log(\d(x))|\bigg)
            \end{equation}

            \item If $\rho\neq s$, using \eqref{RR3}, we have
            \begin{equation}\label{OO2}
                \begin{array}{lll}
                {    |(-\Delta)_x ^{\frac{\rho}{2}}P_\O(x,y,t)|} &\leq & \dfrac{C
                        (\frac{\d^s(y)}{\sqrt{t}}\wedge 1)}{
                        (t+|x-y|)^{N+\rho}}
                    \bigg( \frac{\d^{s-\rho}(x)}{
                        (t^{\frac{1}{2s}}+|x-y|)^{s-\rho}}+\frac{t^{\frac{2s-1}{2s}}}{|x-y|^{2s-1}}+t^{\frac{\rho-s}{2s}}\d^{s-\rho}(x)+
                    \log\frac{D}{|x-y|}\bigg).
                \end{array}
            \end{equation}
        \end{enumerate}

        Hence, as a conclusion we deduce that if $s>\frac 12$, we have
        \begin{equation}\label{OO2LL}
            \begin{array}{lll}
            {    |(-\Delta)_x ^{\frac{\rho}{2}}P_\O(x,y,t)|} &\leq & \dfrac{C
                    (\frac{\d^s(y)}{\sqrt{t}}\wedge 1)}{
                    (t^{\frac{1}{2s}}+|x-y|)^{N+\rho}}
                \bigg( \frac{\d^{s-\rho}(x)}{
                    (t^{\frac{1}{2s}}+|x-y|)^{s-\rho}}+\frac{t^{\frac{2s-1}{2s}}}{|x-y|^{2s-1}}+t^{\frac{\rho-s}{2s}}\d^{s-\rho}(x)+
                \log\frac{D}{|x-y|}+|\log(\d(x))|\bigg).
            \end{array}
        \end{equation}
        {Therefore,} the result follows.
    \end{proof}
    As a {first} application of   we get the next estimate on the
    fractional gradient of $\mathcal{G}_s(x,y)$, the Green's function
    of the fractional Laplacian with Dirichlet condition.

    \begin{Corollary}
        Assume that $s>\frac 14$, then for all $s\le \rho<\min\{1,2s\}$,
        we have
        \begin{equation}\label{ellip}
            \begin{array}{lll}
                |(-\Delta)_x^{\frac{\rho}{2}}\mathcal{G}_s(x,y)|&\leq \dfrac{C}{
                    |x-y|)^{N-(2s-\rho)}}\left(\dfrac{\d^{s-\rho}(x)}{|x-y|^{s-\rho}}+\log(\frac{D}{|x-y|})
                +|\log(\delta(x))|\right).
            \end{array}
        \end{equation}
    \end{Corollary}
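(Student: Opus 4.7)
The plan is to start from the representation $\mathcal{G}_s(x,y)=\int_0^\infty P_\Omega(x,y,t)\,dt$ and commute the fractional Laplacian with the time integration, so that
\[
|(-\Delta)^{\rho/2}_x\mathcal{G}_s(x,y)|\;\le\;\int_0^\infty |(-\Delta)^{\rho/2}_x P_\Omega(x,y,t)|\,dt,
\]
and then insert Theorem~\ref{regulaP} and integrate term by term. The commutation is legitimate because for a bounded $\Omega$ the Dirichlet heat kernel decays exponentially in $t$ (so convergence at $+\infty$ is automatic), and the pointwise bound of Theorem~\ref{regulaP} is locally integrable in $t$ on $(0,\infty)$ provided $s>\tfrac14$, which then lets Fubini/dominated convergence close the argument away from $x=y$.

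Each term appearing in \eqref{gradp12} or \eqref{gradp22} has the schematic form $\dfrac{t^{a}\,\varphi(x,y)}{(t^{1/(2s)}+|x-y|)^{b}}$, where the factor $\varphi$ is one of $\delta^{s-\rho}(x)$, $\log(D/|x-y|)$, $|\log\delta(x)|$, $|x-y|^{2s-1}$, or a product of these. The change of variables $u=t^{1/(2s)}$ transforms $dt$ into $2s\,u^{2s-1}\,du$ and reduces the kernel factor to $(u+|x-y|)^{-b}$; the further scaling $u=|x-y|\,v$ converts the $u$-integral into a Beta-type integral times $|x-y|^{2sa+2s-b}$. A direct check of exponents shows that, in every case --- both for $s\le\tfrac12$ using \eqref{gradp12} and for $s>\tfrac12$ using \eqref{gradp22} --- the resulting power of $|x-y|$ reconciles to $-(N-(2s-\rho))$, with $\varphi$ carried along. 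Summing the contributions then reproduces exactly the three summands on the right-hand side of \eqref{ellip}.

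The critical step, and the sole reason for the hypothesis $s>\tfrac14$, is the integrability near $t=0$ of the terms carrying the prefactor $t^{(2s-1)/(2s)}$ in \eqref{gradp12}: after the substitution the integrand near the origin behaves like $u^{4s-2}$, which is integrable at $0$ if and only if $4s-2>-1$, i.e.\ $s>\tfrac14$. The other time factors $t^{(s+\rho-1)/(2s)}$ and $t^{(\rho-s)/(2s)}$ produce integrands of order $u^{3s+\rho-2}$ and $u^{s+\rho-1}$ at the origin, both greater than $-1$ as soon as $\rho\ge s>\tfrac14$, so they impose no new restriction. The main obstacle is therefore not analytic but bookkeeping: Theorem~\ref{regulaP} branches into two principal cases and several sub-terms, and one must verify that every resulting exponent in $|x-y|$ collapses to the uniform value $N-(2s-\rho)$ predicted by \eqref{ellip}, regardless of which of the $\varphi$-factors accompanies it.
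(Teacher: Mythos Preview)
Your proposal is correct and follows essentially the same route as the paper: start from $\mathcal{G}_s(x,y)=\int_0^\infty P_\Omega(x,y,t)\,dt$, pass $(-\Delta)^{\rho/2}_x$ inside, insert Theorem~\ref{regulaP}, and evaluate the resulting time integrals via the scaling $\hat t=t/|x-y|^{2s}$ (equivalent to your two-step $u=t^{1/(2s)}$, $u=|x-y|v$); the paper writes out only the case $\tfrac14<s\le\tfrac12$ and leaves $s>\tfrac12$ to the reader, whereas you treat both, and your identification of the $t^{(2s-1)/(2s)}$ factor as the sole source of the restriction $s>\tfrac14$ matches the paper exactly.
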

    \begin{proof}
        We give the proof in the case where $\frac 14<s\le \frac 12$, the author case
        follows in the same way.

        Let $\mathcal{G}_s(x,y)$ is the Green's function of the fractional
        Laplacian with Dirichlet condition, then
        \begin{equation*}
            \mathcal{G}_s(x,y)=\Int_{0}^{+\infty} P_{\Omega}(x,y,t) dt.
        \end{equation*}
        Hence
        $$
        |(-\Delta)_x^{\frac{\rho}{2}}\mathcal{G}_s(x,y)|\le
        \Int_{0}^{+\infty} |(-\Delta)_x^{\frac{\rho}{2}}P_{\Omega}(x,y,t)|
        dt.
        $$
        Recall that, since $s\le \frac{1}{2}$, then $\s=2s+\rho-1$ and
        \begin{equation*}
            \begin{array}{lll}
                & & {|(-\Delta)_x ^{\frac{\rho}{2}}P_\O(x,y,t)|}\leq\dfrac{C}{
                    (t^{\frac{1}{2s}}+|x-y|)^{N+2s+\rho-1}} \\
                &\times &
                \bigg(\frac{\d^{s-\rho}(x)}{(t^{\frac{1}{2s}}+|x-y|)^{1-s-\rho}}+t^{\frac{2s-1}{2s}}\log\frac{D}{|x-y|}+t^{\frac{s+\rho-1}{2s}}\d^{s-\rho}(x)+
                t^{\frac{2s-1}{2s}}|\log(\d(x))|+|x-y|^{2s-1}\bigg).
            \end{array}
        \end{equation*}
        Then
        \begin{equation*}
            \begin{array}{llll}
                & & |(-\Delta)_x^{\frac{\rho}{2}}\mathcal{G}_s(x,y)|\leq C\dyle
                \d^{s-\rho}(x)\bigg(\int_0^\infty\dfrac{dt}{
                    (t^{\frac{1}{2s}}+|x-y|)^{N+s}} dt+
                \int_0^\infty\dfrac{t^{\frac{s+\rho-1}{2s}}dt}{
                    (t^{\frac{1}{2s}}+|x-y|)^{N+2s+\rho-1}} dt\bigg)\\
                &+&\dyle (
                |\log(\d(x))|+\log\frac{D}{|x-y|})\int_0^\infty\dfrac{t^{\frac{2s-1}{2s}}dt}{
                    (t^{\frac{1}{2s}}+|x-y|)^{N+2s+\rho-1}} dt\\
                &+& \dyle|x-y|^{2s-1}\int_0^\infty\dfrac{dt}{
                    (t^{\frac{1}{2s}}+|x-y|)^{N+2s+\rho-1}} dt.
            \end{array}
        \end{equation*}
        Since $s>\frac 14$, then the above integrals are converging near
        the zero. Now, setting $\hat{t}=\frac{t}{|x-y|^{2s}}$, it holds
        that
        $$
        \int_0^\infty\dfrac{dt}{ (t^{\frac{1}{2s}}+|x-y|)^{N+s}} dt+
        \int_0^\infty\dfrac{t^{\frac{s+\rho-1}{2s}}dt}{
            (t^{\frac{1}{2s}}+|x-y|)^{N+2s+\rho-1}}\le
        \frac{C(\O)}{|x-y|^{N-s}},
        $$
        $$
        \int_0^\infty\dfrac{t^{\frac{2s-1}{2s}}dt}{
            (t^{\frac{1}{2s}}+|x-y|)^{N+2s+\rho-1}} dt\le
        \frac{C(\O)}{|x-y|^{N-(2s-\rho)}},
        $$
        and
        $$
        |x-y|^{2s-1}\int_0^\infty\dfrac{dt}{
            (t^{\frac{1}{2s}}+|x-y|)^{N+2s+\rho-1}} dt\le
        \frac{C(\O)}{|x-y|^{N-(2s-\rho)}}.$$ Thus,
        \begin{equation*}
            \begin{array}{lll}
                |(-\Delta)_x^{\frac{\rho}{2}}\mathcal{G}_s(x,y)|&\leq &\dyle
                \dfrac{C}{|x-y|^{N-(2s-\rho)}}\bigg(\frac{\d^{s-\rho}(x)}{|x-y|^{s-\rho}}+\log\frac{D}{|x-y|}+|\log(\d(x)|
                \bigg).
            \end{array}
        \end{equation*}
    \end{proof}
    \begin{remarks}
    Notice that in \cite{AFTY}, the estimate \eqref{ellip} is proved
    without any restriction on $s$.
    \end{remarks}

  Now, let us recall that for $\phi\in \mathcal{C}^\infty_0(\ren)$, the Riesz
    gradient is defined by
    $$
    \nabla^s \phi(x):=\int_{ \mathbb{R}^N}
    \frac{\phi(x)-\phi(y)}{|x-y|^s}\frac{x-y}{|x-y|}\frac{dy}{|x-y|^N},
    \quad \forall\ x \in \mathbb{R}^N.
    $$
    Then following the same computations as in the proof of Theorem
    \ref{regulaP}, it holds that
    \begin{Corollary}
        Assume that the conditions of Theorem \ref{regulaP} are satisfied,
        then for all $s\le \rho<\min\{2s,1\}$, we have

        $\bullet$ If $s\le \frac 12$, then

        \begin{equation*}
            \begin{array}{lll}
                & &{ |\n_x^\rho P_\O(x,y,t)|}\leq\dfrac{C(\O)
                    (\frac{\d^s(y)}{\sqrt{t}}\wedge 1)}{
                    (t^{\frac{1}{2s}}+|x-y|)^{N+2s+\rho-1}} \\
                &\times &
                \bigg(\frac{\d^{s-\rho}(x)}{(t^{\frac{1}{2s}}+|x-y|)^{1-s-\rho}}+t^{\frac{2s-1}{2s}}\log\frac{D}{|x-y|}+t^{\frac{s+\rho-1}{2s}}\d^{s-\rho}(x)+
                t^{\frac{2s-1}{2s}}|\log(\d(x))|+|x-y|^{2s-1}\bigg).
            \end{array}
        \end{equation*}

        $\bullet$ If $s>\frac 12$, then

        \begin{equation*}
            \begin{array}{lll}
            {    |\n_x^\rho P_\O(x,y,t)|} &\leq & \dfrac{C
                    (\frac{\d^s(y)}{\sqrt{t}}\wedge 1)}{
                    (t^{\frac{1}{2s}}+|x-y|)^{N+\rho}}
                \bigg( \frac{\d^{s-\rho}(x)}{
                    (t^{\frac{1}{2s}}+|x-y|)^{s-\rho}}+\frac{t^{\frac{2s-1}{2s}}}{|x-y|^{2s-1}}+t^{\frac{\rho-s}{2s}}\d^{s-\rho}(x)+
                \log\frac{D}{|x-y|}+|\log(\d(x))|\bigg).
            \end{array}
        \end{equation*}

    \end{Corollary}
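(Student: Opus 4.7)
The plan is to adapt, step by step, the proof of Theorem \ref{regulaP}. The starting observation is that in the definition of $\n^\rho$ the vectorial factor $\frac{x-z}{|x-z|}$ has unit norm, so passing the absolute value inside the defining integral already yields
\begin{equation*}
|\n_x^\rho P_\O(x,y,t)| \leq \int_{\mathbb{R}^N}\frac{|P_\O(x,y,t)-P_\O(z,y,t)|}{|x-z|^{N+\rho}}\,dz,
\end{equation*}
and the right-hand side is exactly the quantity estimated in the proof of Theorem \ref{regulaP}. Indeed, in that proof the difference $P_\O(x,y,t)-P_\O(z,y,t)$ is immediately treated pointwise, so no principal-value cancellation proper to $(-\Delta)^{\rho/2}$ is ever invoked.

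With this reduction, I would split the right-hand side above as $I_1+I_2$, with $I_1$ the integral over $\mathbb{R}^N\setminus\O$ and $I_2$ the integral over $\O$. The term $I_1$ is controlled at once by $C\,P_\O(x,y,t)/\d^\rho(x)$, since $P_\O(z,y,t)=0$ outside $\O$ and the remaining integral is handled by Lemma \ref{Gr}; this contribution is readily absorbed in the target estimates \eqref{gradp12} and \eqref{gradp22}. For the more delicate term $I_2$, I would reproduce the key device used in the proof of Theorem \ref{regulaP}: introduce the auxiliary weight
\begin{equation*}
\Theta_{(y,t)}(z):=(t^{\frac{1}{2s}}+|z-y|)^{N+\s}\,P_\O(z,y,t),
\end{equation*}
with $\s=2s+\rho-1$ if $s\leq\tfrac12$ and $\s=\rho$ if $s>\tfrac12$, and use the algebraic splitting
\begin{equation*}
(t^{\frac{1}{2s}}+|x-y|)^{N+\s}\bigl[P_\O(x,y,t)-P_\O(z,y,t)\bigr]=\bigl[\Theta_{(y,t)}(x)-\Theta_{(y,t)}(z)\bigr]+P_\O(z,y,t)\bigl[(t^{\frac{1}{2s}}+|z-y|)^{N+\s}-(t^{\frac{1}{2s}}+|x-y|)^{N+\s}\bigr].
\end{equation*}

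Each resulting piece is then estimated exactly as in the proof of Theorem \ref{regulaP}: the mean value inequality along the segment $\xi_\tau=\tau x+(1-\tau)z$ combined with the pointwise bounds on $P_\O$ and on $|\n_x P_\O|$ from Lemma \ref{lemma_P} reduces everything to model Riesz-type integrals controlled by Lemmas \ref{Gr} and \ref{Tobias}. Running through the same case distinction ($\rho=s$ vs.\ $\rho\neq s$, and the comparison of $1+\s$ with $2s+\rho$) and substituting the chosen value of $\s$ according to the range of $s$, one recovers verbatim the right-hand sides of \eqref{gradp12} and \eqref{gradp22}.

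The main, and essentially only, conceptual point to verify is that no cancellation specific to the principal-value interpretation of $(-\Delta)^{\rho/2}$ is exploited at any stage of the proof of Theorem \ref{regulaP}. A direct inspection confirms this: from the very first line absolute values are carried inside all integrals, and the oddness of the kernel $\frac{x-z}{|x-z|^{N+\rho+1}}$ is never used. Consequently, the full argument transfers to the Riesz gradient without any modification beyond the initial triangle-inequality step, yielding the stated estimates for $|\n^\rho_x P_\O(x,y,t)|$ in both regimes $s\leq\tfrac12$ and $s>\tfrac12$.
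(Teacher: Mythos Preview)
Your proposal is correct and matches the paper's own treatment, which simply states that the estimates follow ``the same computations as in the proof of Theorem \ref{regulaP}''. You have correctly identified the key point: since the vectorial factor $\frac{x-z}{|x-z|}$ has unit norm, and since the proof of Theorem \ref{regulaP} bounds every term by first taking absolute values inside the integrals (never exploiting any principal-value cancellation), the entire argument carries over verbatim. One minor quibble: the estimate $\int_{\mathbb{R}^N\setminus\O}|x-z|^{-N-\rho}\,dz\le C\,\d^{-\rho}(x)$ used for $I_1$ is a standard exterior-integral bound, not an application of Lemma \ref{Gr}, but this does not affect the argument.
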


    \section{Application to the regularity of the fractional heat
        equation.}\label{Regularity_heat_Equation_Section}

    The main goal of this section is to establish the regularity of the solution $w$ to Problem $(FHE)$  in suitable Bessel potential and fractional Sobolev spaces, depending on the regularity of the data $h$  and $ w_0$. This will be achieved by means of the representation formula and the sharp estimate previously obtained for the heat kernel.

    To simplify our presentation, we will consider separately the two cases:\\
    $\bullet$ Case: $h=0$, $w_0\in L^\s(\O)$;\\
    $\bullet$ Case:  $h\in L^m(\O_T) $, $w_0=0$. \vspace{0.2cm}

    \subsection{\textbf{Case:  $w_0\in L^\s(\O)$ and $h=0$.}}
    \
    \

   In this case, the problem $(FHE)$  takes the form
    \begin{equation}\label{eq_linear_01}
        \left\{
        \begin{array}{llll}
            w_t+(-\Delta)^s w&=& 0
            & \text{in}\quad\Omega_T,\vspace{0.2cm}\\
            w(x,t)&=&0&\text{in} \quad (\mathbb{R}^N\setminus\Omega)\times(0,T),\vspace{0.2cm}\\
            w(x,0)&=&w_0(x)&  \text{in} \quad \Omega.
        \end{array}
        \right.
    \end{equation}
    Before going further, we state in the next theorem, a regularity result related to the
    regularity of a class of hyper-singular integrals. This result will be used
    systematically throughout this section.

    \begin{Theorem}\label{first_integrals_regularity}
        Assume that $g\in L^m(\O)$ with {$m\geq1$}. For $x\in \O$ and $t>0$, we define
        $$
        G_{\l}(x,t)=\io \frac{g(y)}{(t^{\frac{ 1}{2s}}+|x-y|)^{N+\l}}dy,
        $$
        where $\l\in \mathbb{R}$. Then,  there exists a positive
        constant depending on $\O$ and the data and it is
        independent of $t$ and $g$ such that:
        \

        $\bullet$ If $N+\l\le 0$, then $G_{\l}(x,t)\le
        C(\O)(t^{-\frac{N+\l}{2s}}+1)||g||_{L^1(\O)}$ a.e. for $t>0$ and
        $x\in \O$.

        $\bullet$ If $\l\ge 0$, then for $p>m$, we have
        \begin{equation}\label{hyper10}
            ||G_{\l}(.,t)||_{L^p(\O)}\le  C
            t^{\frac{-\l}{2s}-\frac{N}{2s}(\frac
                1m-\frac{1}{\ell})}||g||_{L^m(\O)}.
        \end{equation}
        $\bullet$ If $-N<\l<0$, then considering $m_0$ such that $1\le
        m_0<\min\{m,-\frac{N}{\l}\}$, then for all $p>m_0$ and for all
        $\nu>0$, we have
        \begin{equation}\label{hyper100}
            ||G_{\l}(.,t)||_{L^p(\O)}\le  C(\O)\times
            \left\{
            \begin{array}{lll}
                t^{\frac{-\l}{2s}-\frac{N}{2s}(\frac
                    {1}{m_0}-\frac{1}{p})}||g||_{L^{m}(\O)} &\mbox{  if
                }p>\frac{m_0N}{N+m_0\l},\\
                t^{-\nu\frac{N}{2s}}||g||_{L^{m}(\O)}&\mbox{  if
                }p\le \frac{m_0N}{N+m_0\l}.
            \end{array}
            \right.
        \end{equation}
        Furthermore, in the case where $\l<0$, then for all $p>m$, we have
        \begin{equation}\label{YY00}
            ||G_{\l}(.,t)||_{L^p(\O)}\le  C(t^{-\frac{\l}{2s}}+1)
            t^{-\frac{N}{2s}(\frac
                1p-\frac{1}{m})}||g||_{L^m(\O)}.
        \end{equation}
    \end{Theorem}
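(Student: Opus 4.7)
The plan is to exploit the convolution structure of $G_\lambda(\cdot,t) = K_t \ast (g\chi_\Omega)$, where the kernel $K_t(x) := (t^{\frac{1}{2s}} + |x|)^{-(N+\lambda)}$ enjoys the self-similar scaling $K_t(x) = t^{-\frac{N+\lambda}{2s}} k_1(x/t^{\frac{1}{2s}})$ with $k_1(z) = (1+|z|)^{-(N+\lambda)}$. A direct change of variables yields $\|K_t\|_{L^q(B_R)} \le C\, t^{-\frac{N+\lambda}{2s} + \frac{N}{2sq}}$ whenever $q(N+\lambda) > N$, while the same $L^q$-norm is bounded uniformly in $t$ when $q(N+\lambda) < N$ (with a logarithmic factor at equality, which is harmless). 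This preliminary kernel computation together with Young's convolution inequality is the common engine for all the estimates.

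First I will dispose of the regime $N+\lambda \le 0$ by the elementary pointwise inequality $(t^{\frac{1}{2s}} + |x-y|)^{-(N+\lambda)} \le C(t^{-\frac{N+\lambda}{2s}} + 1)$, valid since the exponent $-(N+\lambda)$ is nonnegative and $|x-y| \le \mathrm{diam}(\Omega)$, giving the $L^\infty$-type bound against $\|g\|_{L^1(\Omega)}$. For the regime $\lambda \ge 0$, I apply Young's inequality with $\frac{1}{q} = 1 + \frac{1}{p} - \frac{1}{m}$; the hypothesis $p > m$ gives $q > 1$, and since $\lambda \ge 0$ this forces $q(N+\lambda) > N$, so the kernel computation yields precisely the claimed factor $t^{-\frac{\lambda}{2s} - \frac{N}{2s}(\frac{1}{m} - \frac{1}{p})}$. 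For the extra estimate \eqref{YY00} in the case $\lambda < 0$ with $p > m$, I factor the kernel as $(t^{\frac{1}{2s}} + |x-y|)^{-(N+\lambda)} = (t^{\frac{1}{2s}} + |x-y|)^{-\lambda} \cdot (t^{\frac{1}{2s}} + |x-y|)^{-N}$; the first factor is bounded by $C(t^{-\frac{\lambda}{2s}} + 1)$ (using $-\lambda > 0$ and the subadditivity type inequality), while the second factor is then treated by the preceding case with $\lambda$ replaced by $0$.

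The core case is $-N < \lambda < 0$. I will first pass from $L^m$ to $L^{m_0}$ using $\|g\|_{L^{m_0}(\Omega)} \le C(\Omega)\|g\|_{L^m(\Omega)}$ (valid since $m_0 \le m$ and $\Omega$ is bounded). Then I apply Young's inequality with $\frac{1}{q} = 1 + \frac{1}{p} - \frac{1}{m_0}$: the critical condition $q(N+\lambda) > N$ translates, after rearrangement, into $p > \frac{m_0 N}{N + m_0 \lambda}$, which is exactly the threshold in the statement, and the scaling of $\|K_t\|_{L^q}$ then produces the stated $t$-exponent $-\frac{\lambda}{2s} - \frac{N}{2s}(\frac{1}{m_0} - \frac{1}{p})$. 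For the sub-range $p \le \frac{m_0 N}{N + m_0 \lambda}$, I use a Hölder embedding $\|G_\lambda\|_{L^p(\Omega)} \le |\Omega|^{\frac{1}{p} - \frac{1}{p_0}} \|G_\lambda\|_{L^{p_0}(\Omega)}$ for some $p_0$ just above the threshold, and apply the previous case at exponent $p_0$; the key observation is that the $t$-exponent vanishes exactly as $p_0$ approaches $\frac{m_0 N}{N + m_0 \lambda}$ from above, so for any prescribed $\nu > 0$ a suitable choice of $p_0$ makes the $t$-singularity weaker than $t^{-\nu N/(2s)}$.

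The main obstacle will be the threshold analysis in the regime $-N < \lambda < 0$. On one hand the kernel behaves as the Riesz potential $I_{-\lambda}$ near $|x-y|=0$, with all the delicacy of the Hardy-Littlewood-Sobolev embedding (Theorem \ref{stein1}), while on the other hand the regularization scale $t^{\frac{1}{2s}}$ must be tracked with the right power. Verifying the exact cancellation of the $t$-exponent at $p = \frac{m_0 N}{N + m_0 \lambda}$, and establishing that this cancellation can be continuously traded against an arbitrarily small loss $t^{-\nu N/(2s)}$ for $p$ below the threshold, is the delicate step that underpins the uniform statement in (\ref{hyper100}).
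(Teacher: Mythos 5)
Your proposal is correct and follows essentially the same route as the paper: the paper's ``duality argument'' combined with Young's inequality for a triple product is exactly Young's convolution inequality $\|K_t\ast g\|_{L^p}\le \|K_t\|_{L^q}\,\|g\|_{L^m}$ in disguise, and your scaling computation of $\|K_t\|_{L^q}$ together with the threshold condition $q(N+\lambda)>N$ (equivalently $\tfrac1p-\tfrac1m<\tfrac{\lambda}{N}$, or $\tfrac1p-\tfrac1{m_0}<\tfrac{\lambda}{N}$ after passing to $L^{m_0}$) coincides with the paper's. You moreover make explicit the two sub-cases the paper's appendix leaves implicit ($N+\lambda\le 0$ via the pointwise bound, and $p\le \frac{m_0N}{N+m_0\lambda}$ via a H\"older embedding towards the threshold exponent at which the $t$-power vanishes), and your computation produces the exponent $t^{-\frac{N}{2s}(\frac1m-\frac1p)}$ in \eqref{YY00}, which matches the appendix (the sign appearing in the theorem's display is a typo).
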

    We refer the reader to the Appendix for the proof.

    \medbreak

    Now, we are ready to state our first  regularity result about  the fractional regularity of the solution to Problem \eqref{eq_linear_01}.
    In order to clarify the structure of the proof, we proceed by distinguishing two separate cases :
    \begin{itemize}
    \item[$\bullet$] Inside regularity: $x\in \O$ and $t\in (0,T)$;
     \item[$\bullet$] Outside regularity: $x\in \mathbb{R}^N\setminus \O$ and $t\in
    (0,T)$.
  \end{itemize}

    Let us begin by the next {theorem}.
    \begin{Theorem}\label{reg_frac_u0_t}
        Assume that $s\le \rho<\max\{1,2s\}$. Suppose that $w_0\in
        L^\s(\O)$ with $\s\ge 1$ and define $\widehat{\s}\le
        \min\{\s,\frac{1}{\rho-s}\}$. Let $w$ be the unique weak solution
        to Problem \eqref{eq_linear_01}, then
        \begin{enumerate}
            \item If $2s+\rho\ge 1$, then for all $\eta>0$ small enough and for
            all $p>\widehat{\s}$, we have
            $$
            \begin{array}{lll}
                & &||(-\Delta)^{\frac{\rho}{2}} w(.,t)||_{L^p(\O)}\leq C(\O)
                t^{-\frac{N}{2s}(\frac{1}{\widehat{\s}}-\frac{1}{p})-\frac{1}{2}}\\
                &\times & \bigg( t^{-\frac{N}{2s}(1+\eta)(\rho-s)}+
                t^{-\frac{\rho-s}{2s}}+t^{-\frac{\rho+\eta-s}{2s}}
                +t^{-\frac{N\eta}{2sp(1+\eta)}-\frac{\rho-s}{2s}}
                \bigg)||w_0||_{L^{\widehat{\s}}(\O)}.
            \end{array}
            $$
            \item If $2s+\rho<1$, then setting $\s_0<\min\{\widehat{\s},
            \frac{N}{1-2s-\rho}\}$, for all
            $p>\frac{\s_0N}{N-\s_0(1-2s-\rho)}$, we have
            $$
            \begin{array}{lll}
                & &||(-\Delta)^{\frac{\rho}{2}} w(.,t)||_{L^p(\O)}\leq C(\O)
                t^{-\frac{N}{2s}(\frac{1}{\s_0}-\frac{1}{p})-\frac{1}{2}}\\
                &\times & \bigg( t^{-\frac{N}{2s}(1+\eta)(\rho-s)}+
                t^{-\frac{\rho+\eta-s}{2s}}+
                t^{-\frac{N\eta}{2sp(1+\eta)}-\frac{\rho-s}{2s}}
                +t^{-\frac{\rho-s}{2s}}\bigg)||w_0||_{L^{\widehat{\s}}(\O)}.
            \end{array}
            $$
        \end{enumerate}
    \end{Theorem}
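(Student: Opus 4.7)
\medbreak

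\textbf{Plan of proof.} Since $h=0$, the representation formula gives
$w(x,t)=\int_{\Omega} w_0(y)\,P_{\Omega}(x,y,t)\,dy$, and I will move the nonlocal operator under the integral sign to obtain
$(-\Delta)^{\rho/2} w(x,t)=\int_{\Omega} w_0(y)\,(-\Delta)^{\rho/2}_x P_{\Omega}(x,y,t)\,dy$,
which is licit thanks to the integrability of $(-\Delta)^{\rho/2}_x P_{\Omega}$ ensured by Theorem \ref{regulaP}. The pointwise estimate of that theorem is then applied in its two regimes ($s\leq \tfrac12$ and $s>\tfrac12$), producing a finite sum of auxiliary integrals of the form
$J_{\lambda}(x,t)=\int_{\Omega}\frac{|w_0(y)|\,\delta^s(y)}{(t^{1/2s}+|x-y|)^{N+\lambda}}\,dy$,
multiplied by purely $x$-dependent prefactors (powers of $t$ and of $\delta(x)$, plus $|\log\delta(x)|$ and $\log(D/|x-y|)$ logarithmic terms).

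\medbreak

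\textbf{Reduction to hyper-singular integrals.} The bound $(\delta^s(y)/\sqrt{t})\wedge 1 \le \delta^s(y)/\sqrt{t}$ extracts the universal factor $t^{-1/2}$ appearing in the statement; the leftover weight $\delta^s(y)|w_0(y)|$ is controlled in $L^{\widehat{\sigma}}(\Omega)$ by $\|w_0\|_{L^{\widehat{\sigma}}(\Omega)}$ (up to a constant depending on $\Omega$, using that $\delta^s \in L^{\infty}(\Omega)$). Each $J_\lambda$ is then exactly of the form treated by Theorem \ref{first_integrals_regularity}: I take $\lambda$ equal to $s-1$ or $s+\rho-1$ etc.\ in the case $s\le \tfrac12$, and to $\rho$ or $\rho-s$ in the case $s>\tfrac12$, read off from the pointwise estimate. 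For the logarithmic terms $\log(D/|x-y|)$ and $|\log\delta(x)|$, I absorb them by a Hölder step with a small slack $\eta>0$, passing from $L^p$ to $L^{p(1+\eta)}$, which produces exactly the $t^{-N\eta/(2sp(1+\eta))}$ and $t^{-\eta/(2s)}$ factors in the final bound. The $\delta^{s-\rho}(x)$ prefactor is pulled out and estimated in $L^{p(1+\eta)/\eta}(\Omega)$, which is finite provided $(\rho-s)p(1+\eta)/\eta <1$, a condition compatible with the range of $p$ stated.

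\medbreak

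\textbf{Splitting into the two statement cases.} When $2s+\rho\ge 1$, all exponents $\lambda$ arising above satisfy $\lambda\ge 0$, so estimate \eqref{hyper10} of Theorem \ref{first_integrals_regularity} applies with $m=\widehat{\sigma}$ to any $p>\widehat{\sigma}$, directly yielding the factor $t^{-\frac{N}{2s}(\tfrac{1}{\widehat{\sigma}}-\tfrac{1}{p})}$ and, from the various $t$-prefactors in the kernel estimate, the terms $t^{-N(1+\eta)(\rho-s)/(2s)}$, $t^{-(\rho-s)/(2s)}$, $t^{-(\rho+\eta-s)/(2s)}$ and $t^{-N\eta/(2sp(1+\eta))-(\rho-s)/(2s)}$. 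When $2s+\rho<1$, the critical exponent $\lambda=1-2s-\rho$ coming from the term $|x-y|^{2s-1}$ in the estimate of $(-\Delta)^{\rho/2}P_\Omega$ is negative, so I instead apply \eqref{hyper100} with $1\le m_0<\min\{\widehat\sigma,N/(1-2s-\rho)\}$; this forces the lower bound $p>\sigma_0 N/(N-\sigma_0(1-2s-\rho))$ and replaces $\widehat\sigma$ by $\sigma_0$ in the $t$-exponent. Summing the resulting bounds gives the announced inequality.

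\medbreak

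\textbf{Main obstacle.} The delicate point is the interplay between the singularity $\delta^{s-\rho}(x)$ of the kernel estimate (which is integrable in $L^{q}(\Omega)$ only for $q<1/(\rho-s)$) and the high integrability $p$ required in the statement; one must balance the Hölder splitting of the $\delta^{s-\rho}(x)|\log\delta(x)|$ terms against the hyper-singular $L^p$-estimate, which is why the small parameter $\eta>0$ is indispensable and why the restriction $\widehat{\sigma}\le \min\{\sigma,1/(\rho-s)\}$ enters the hypotheses. A secondary difficulty is the term $|x-y|^{2s-1}$ in the regime $s\le \tfrac12$, which creates a genuinely negative $\lambda$ in the singular integral; this is precisely what produces the second case of the theorem and the extra constraint $\sigma_0 N/(N-\sigma_0(1-2s-\rho))<p$.
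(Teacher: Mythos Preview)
Your overall strategy matches the paper's: representation formula, the kernel estimate of Theorem~\ref{regulaP}, then Theorem~\ref{first_integrals_regularity}. However, the way you extract the factor $t^{-1/2}$ is wrong and produces a bound weaker by exactly $t^{-1/2}$ than the one asserted. The paper bounds $(\delta^s(y)/\sqrt{t})\wedge 1\le 1$, \emph{not} $\le \delta^s(y)/\sqrt{t}$; the $t^{-1/2}$ in the final estimate arises instead from applying Theorem~\ref{first_integrals_regularity} with $\lambda=s$ to the leading integral $\widehat I(x,t)=\int_\Omega w_0(y)\,(t^{1/2s}+|x-y|)^{-(N+s)}\,dy$, which contributes $t^{-\lambda/(2s)}=t^{-1/2}$. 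If you first pull out $t^{-1/2}$ as you propose and \emph{then} invoke Theorem~\ref{first_integrals_regularity}, you accrue a second $t^{-1/2}$ and end up with $t^{-1}$ in front, which does not prove the stated estimate.

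There are also bookkeeping errors that would block the argument even once the above is fixed. The exponents $\lambda$ read off from \eqref{gradp12} are $s$ and $2s+\rho-1$ (not $s-1$ or $s+\rho-1$), and from \eqref{gradp22} they are $s$ and $\rho$ (not $\rho-s$). In the regime $2s+\rho<1$ it is $\lambda=2s+\rho-1<0$ --- the common denominator exponent appearing in $I_2,I_3,I_4$ --- that forces the use of \eqref{hyper100} with the auxiliary index $\sigma_0$, \emph{not} the factor $|x-y|^{2s-1}$; that factor is handled, as is $\log(D/|x-y|)$, by a duality/Young argument irrespective of the sign of $2s+\rho-1$. (Note also that $1-2s-\rho$ is positive, not negative, in this regime.) Finally, your H\"older split placing $\delta^{s-\rho}(x)$ in $L^{p(1+\eta)/\eta}(\Omega)$ is incompatible with taking $\eta$ small; the correct split --- and the one that actually generates the term $t^{-N(1+\eta)(\rho-s)/(2s)}$ in the statement --- puts $\delta^{s-\rho}$ in $L^{1/((1+\eta)(\rho-s))}(\Omega)$, i.e.\ in an $L^\beta$-space with $\beta$ just below $1/(\rho-s)$.
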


    \begin{proof}
        Let $w$ be the solution of Problem \eqref{eq_linear_01}, then
        $$w(x,t)=\int_{\O} w_0(y)P_\O(x,y,t)dy.$$
        Hence
 {   $$|(-\Delta)^{\frac{\rho}{2}}w(x,t)|\leq \int_{\O}
w_0(y)|(-\Delta)_x^{\frac{\rho}{2}}P_\O(x,y,t)|dy.$$}

        According to the value of $s$ and the sign of $2s+\rho-1$, we will
        consider three main cases.

{    {\bf Case:  $2s+\rho\ge 1$ and $s\le \frac 12$.}}

        Using estimate \eqref{gradp12} on
{    $|(-\Delta)_x^{\frac{\rho}{2}}P_\O(x,y,t)|$}, we obtain that
        \begin{equation}\label{nancy1}
            |(-\Delta)^{\frac{\rho}{2}}w(x,t)|\leq C\left(I_1(x,t)+t^{\frac{2s-1}{2s}} I_2(x,t)+t^{\frac{s+\rho-1}{2s}}I_3(x,t)+
            t^{\frac{2s-1}{2s}}I_4(x,t)+I_5(x,t)\right),
        \end{equation}
        where
        $$I_1(x,t):=\d^{s-\rho}(x)\Int_{\O} \dfrac{w_0(y)}{ (t^{\frac{1}{2s}}+|x-y|)^{N+s}}dy,\:\:I_2(x,t):=\Int_{\O} \dfrac{w_0(y)}{ (t^{\frac{1}{2s}}+|x-y|)^{N+(2s+\rho-1)}}\log(\frac{D}{|x-y|})dy,$$
        $$ I_3(x,t):= \d^{s-\rho}(x)\Int_{\O} \dfrac{w_0(y)}{ (t^{\frac{1}{2s}}+|x-y|)^{N+(2s+\rho-1)}}dy,\:\: I_4(x,t):=|\log(\d(x))|\Int_{\O} \dfrac{w_0(y)}{
            (t^{\frac{1}{2s}}+|x-y|)^{N+(2s+\rho-1)}}dy,$$ and
        $$I_5(x,t):=\Int_{\O} \dfrac{w_0(y)}{|x-y|^{1-2s}(t^{\frac{1}{2s}}+|x-y|)^{N+(2s+\rho-1)}}dy.$$

        Define
        $$
        \widehat{I}(x,t)=\Int_{\O} \dfrac{w_0(y)}{
            (t^{\frac{1}{2s}}+|x-y|)^{N+s}}dy,
        $$
        and
        $$
        \widetilde{I}(x,t)=\Int_{\O} \dfrac{w_0(y)}{
            (t^{\frac{1}{2s}}+|x-y|)^{N+2s+\rho-1}}dy.
        $$
        Using Theorem \ref{first_integrals_regularity} with $\l=s$, and $\l=2s+\rho-1\ge 0$ respectively, we reach that, for all $p>\s$,
        \begin{equation}\label{Main000}
            ||\widehat{I}(.,t)||_{L^p(\O)}\leq C
            t^{-\frac{N}{2s}(\frac{1}{\s}-\frac{1}{p})-\frac{1}{2}}
            ||w_0||_{L^\s(\O)}
        \end{equation}
        and
        \begin{equation}\label{Main001}
            ||\widetilde{I}(.,t)||_{L^p(\O)}\leq C
            t^{-\frac{N}{2s}(\frac{1}{\s}-\frac{1}{p})-\frac{2s+\rho-1}{2s}}
            ||w_0||_{L^\s(\O)}.
        \end{equation}
        Let us begin by estimating $I_1$. We have $I_1(x,t)=\d^{s-\rho}(x)
        \widehat{I}(x,t)$.

        Since $\d^{-\beta}\in L^1(\O)$ if and only if $\beta<1$, then for
        $\widehat{\s}<p<\frac{1}{\rho-s}$ fixed, for $\eta>0$ small
        enough, using H\"older's inequality, it holds that
        \begin{equation}\label{vacance1}
            ||{I}_1(.,t)||_{L^p(\O)}\leq C
            t^{-\frac{N}{2s}(\frac{1}{\widehat{\s}}-\frac{1}{p}+(1+\eta)(\rho-s))-\frac{1}{2}}
            ||w_0||_{L^{\widehat{\s}}(\O)}.
        \end{equation}
        Notice that $I_3(x,t)=\delta^{s-\rho}(x))\widetilde{I}(x,t)$. Then
        as above, we deduce that $I_3(.,t)\in L^p(\O)$ for all $p>\s$ and
        \begin{equation}\label{I33}
            ||I_3(.,t)||_{L^p(\O)}\leq C
            t^{-\frac{N}{2s}(\frac{1}{\widehat{\s}}-\frac{1}{p}+(1+\eta)(\rho-s))-\frac{2s+\rho-1}{2s}}
            ||w_0||_{L^\s(\O)}.
        \end{equation}
        We deal with the term $I_2$. We know that
        $\log(\frac{D}{|x-y|})\leq C|x-y|^{-\eta}$ for all $\eta>0$ and $x,y\in \O$.
        Thus, fixed $\eta$ to be chosen later, we have
        $$I_2(x,t)\leq C \Int_{\O} \dfrac{w_0(y)|x-y|^{-\eta}}{ (t^{\frac{1}{2s}}+|x-y|)^{N+2s+\rho-1}}dy.$$
        To estimate $I_2$, we use a duality argument. {Indeed, let $\phi\in\mathcal{C}^\infty_0(\Omega)$ such that}
        \begin{equation*}
            \begin{array}{lll}
                ||I_2(.,t)||_{L^{p }(\Omega)}&=&\dyle\sup\limits_{\{|| \phi|| _{L^{p'}(\Omega)\leq 1}\}}{\int_{\O}}  I_2(x,t)\phi(x) dx dt,\vspace{0.2cm}\\
                &\leq &\dyle\sup\limits_ {\{|| \phi|| _{L^{p'}(\Omega)\leq 1}\}}\int_{\O} |\phi(x)|\int_{\Omega}|w_0(y)|{H}(x-y,t)dy  dx,\\
            \end{array}
        \end{equation*}
        where
        \begin{equation*}
            {H}(x,t):=\dfrac{|x|^{-\eta}}{(t^{\frac{1}{2s}}+|x|)^{N+2s+\rho-1}}.
        \end{equation*}
        By  using Young's inequality, we obtain
        \begin{equation}
            \begin{array}{lll}
                ||I_2(.,t)||_{L^{p}(\Omega)}&\leq& \sup\limits_{\{|| \phi||
                    _{L^{p'}(\Omega)\leq 1}\}} || \phi || _{L^{p'}(\Omega)}||
                w_0||_{L^{\s}(\Omega)} || H(.,t)||_{L^{a}(\Omega)},
            \end{array}
        \end{equation}
        with
        \begin{equation}
            \dfrac{1}{a}+\dfrac{1}{p'}+\dfrac{1}{\s}=2.
        \end{equation}
        Thus, choosing $\eta$ small enough such that $\eta a<N$, we
        find {that}
        \begin{equation*}
            || H(.,t)||_{L^{a}(\Omega)}\leq C
            t^{\frac{N}{2sa}-\frac{\eta}{2s}-\frac{N+2s+\rho-1}{2s}}.
        \end{equation*}

        Hence
        \begin{equation}\label{fifth_estimation}
            ||I_2(.,t)||_{L^{p}(\Omega)}\leq C
            t^{\frac{-N}{2s}(\frac{1}{\s}-\frac{1}{p})-\frac{2s+\rho+\eta-1}{2s}}
            ||w_0||_{L^{\s}(\Omega)}.
        \end{equation}
        In the same way, choosing $\eta=1-2s$, it follows that
        \begin{equation}\label{I55}
            ||I_5(.,t)||_{L^{p}(\Omega)}\leq C
            t^{\frac{-N}{2s}(\frac{1}{\s}-\frac{1}{p})-\frac{\rho}{2s}}
            ||w_0||_{L^{\s}(\Omega)}.
        \end{equation}
        Related to $I_4$, we have $$I_4(x,t)=|\log(\delta(x))|\widetilde{I}(x,t).$$ Since $|\log(\delta(x))| \in L^{\theta}(\O)$ for all $\theta\in
        [1,\infty)$, we deduce that $I_4(.,t)\in L^p(\O)$ for all $p>\s$
        with
        \begin{equation}\label{I33-33}
            ||I_4(.,t)||_{L^p(\O)}\leq C
            t^{-\frac{N}{2s}(\frac{1}{\s}-\frac{1}{p(1+\eta)})-\frac{2s+\rho-1}{2s}}
            ||w_0||_{L^\s(\O)}.
        \end{equation}
        Thus, combining {the estimates \eqref{vacance1}, \eqref{I33}, \eqref{fifth_estimation}, \eqref{I55} and \eqref{I33-33}} , we deduce that
        $$
        \begin{array}{lll}
            & &||(-\Delta)^{\frac{\rho}{2}} w(.,t)||_{L^p(\O)}\leq C(\O)
            t^{-\frac{N}{2s}(\frac{1}{\widehat{\s}}-\frac{1}{p})-\frac{1}{2}}\\
            &\times & \bigg( t^{-\frac{N}{2s}(1+\eta)(\rho-s)}+
            t^{-\frac{\rho+\eta-s}{2s}}+
            t^{-\frac{N\eta}{2sp(1+\eta)}-\frac{\rho-s}{2s}}
            +t^{-\frac{\rho-s}{2s}}\bigg)||w_0||_{L^{\widehat{\s}}(\O)}.
        \end{array}
        $$

{    {\bf Case: $2s+\rho\ge 1$ and $s>\frac 12$.}}   From
        \eqref{gradp22}, it holds that
        \begin{equation}\label{nancy2}
            |(-\Delta)^{\frac{\rho}{2}}w(x,t)|\leq C\left(K_1(x,t)+t^{\frac{2s-1}{2s}}K_2(x,t)+t^{\frac{\rho-s}{2s}}K_3(x,t)+
            K_4(x,t)+K_5(x,t)\right),
        \end{equation}
        where
        $$K_1(x,t):=\d^{s-\rho}(x)\Int_{\O} \dfrac{w_0(y)}{
            (t^{\frac{1}{2s}}+|x-y|)^{N+s}}dy,\:\:K_2(x,t):=\Int_{\O} \dfrac{w_0(y)}{|x-y|^{2s-1}(t^{\frac{1}{2s}}+|x-y|)^{N+\rho}}dy,$$

        $$K_3(x,t):=\d^{s-\rho}(x)\Int_{\O} \dfrac{w_0(y)}{
            (t^{\frac{1}{2s}}+|x-y|)^{N+\rho}}dy,\:\:K_4(x,t):= \Int_{\O} \dfrac{w_0(y)}{ (t^{\frac{1}{2s}}+|x-y|)^{N+\rho}}\log\frac{D}{|x-y|}dy,$$
        and
        $$K_5(x,t):=|\log(\d(x))|\Int_{\O} \dfrac{w_0(y)}{(t^{\frac{1}{2s}}+|x-y|)^{N+\rho}}dy.$$
        It is clear that $K_1=I_1$. Hence, for
        $\widehat{\s}<p<\frac{1}{\rho-s}$ fixed, for $\eta>0$ small
        enough, we have
        \begin{equation}\label{K01}
            ||{K}_1(.,t)||_{L^p(\O)}\leq C
            t^{-\frac{N}{2s}(\frac{1}{\widehat{\s}}-\frac{1}{p}+(1+\eta)(\rho-s))-\frac{1}{2}}
            ||w_0||_{L^{\widehat{\s}}(\O)}.
        \end{equation}
        Define
        $$
        \widetilde{K}(x,t)=\Int_{\O} \dfrac{w_0(y)}{
            (t^{\frac{1}{2s}}+|x-y|)^{N+\rho}}dy.
        $$
        Using Theorem \ref{first_integrals_regularity} with $\l=\rho$, we deduce that for $p>\s$,
        \begin{equation}\label{KMain001}
            ||\widetilde{K}(.,t)||_{L^p(\O)}\leq C
            t^{-\frac{N}{2s}(\frac{1}{\s}-\frac{1}{p})-\frac{\rho}{2s}}
            ||w_0||_{L^\s(\O)}.
        \end{equation}
        Notice that { $K_3(x,t)=\d^{s-\rho}(x)\widetilde{K}(.,t)$, }
     as in the estimating of $K_1$, for
        $\widehat{\s}<p<\frac{1}{\rho-s}$ fixed, for $\eta>0$ small
        enough, we have
        \begin{equation}\label{K03}
            ||{K}_3(.,t)||_{L^p(\O)}\leq C
            t^{-\frac{N}{2s}(\frac{1}{\widehat{\s}}-\frac{1}{p}+(1+\eta)(\rho-s))-\frac{\rho}{2s}}
            ||w_0||_{L^{\widehat{\s}}(\O)}.
        \end{equation}

        Since $K_5(x,t)=|\log(\d(x))|\widetilde{K}(.,t)$, then as for the
        term $I_3$, we reach that
        \begin{equation}\label{K5}
            ||{K}_5(.,t)||_{L^p(\O)}\leq C
            t^{-\frac{N}{2s}(\frac{1}{\s}-\frac{1}{p(1+\eta)})-\frac{\rho}{2s}}
            ||w_0||_{L^\s(\O)}.
        \end{equation}
        To estimate $K_2$ and $K_4$, we use the same duality argument used
        for estimating $I_2$. Hence by a direct computation, it follows that
        \begin{equation}\label{K222}
            ||K_2(.,t)||_{L^{p}(\Omega)}\leq C
            t^{\frac{-N}{2s}(\frac{1}{\s}-\frac{1}{p})-\frac{2s+\rho-1}{2s}}
            ||w_0||_{L^{\s}(\Omega)},
        \end{equation}
        and
        \begin{equation}\label{K444}
            ||K_4(.,t)||_{L^{p}(\Omega)}\leq C
            t^{\frac{-N}{2s}(\frac{1}{\s}-\frac{1}{p})-\frac{\rho+\eta}{2s}}
            ||w_0||_{L^{\s}(\Omega)}.
        \end{equation}
        {Therefore, } we conclude that
        $$
        \begin{array}{lll}
            & &||(-\Delta)^{\frac{\rho}{2}} w(.,t)||_{L^p(\O)}\leq C(\O)
            t^{-\frac{N}{2s}(\frac{1}{\widehat{\s}}-\frac{1}{p})-\frac{1}{2}}\\
            &\times & \bigg( t^{-\frac{N}{2s}(1+\eta)(\rho-s)}+
            t^{-\frac{\rho-s}{2s}}+t^{-\frac{\rho+\eta-s}{2s}}
            +t^{-\frac{N\eta}{2sp(1+\eta)}-\frac{\rho-s}{2s}}
            \bigg)||w_0||_{L^\s(\O)}.
        \end{array}
        $$
{   Hence,  the} result follows in this case.

{    {\bf Case:
$2s+\rho<1$.} In this case we have trivially that $s\le \frac 12$.
Fixed $1\le
            \s_0<\min\{\widehat{\s},\frac{N}{1-2s-\rho}\}$, then using the
            second point in Theorem \ref{first_integrals_regularity}, it holds
            that for all $p>\frac{\s_0N}{N-\s_0(1-2s-\rho)}$, we have
            $$
            || \widehat{I}(.,t)||_{L^p(\O)}\le  C(\O)t^{-\frac{N}{2s}(\frac
                {1}{\s_0}-\frac{1}{p})+\frac{1-2s-\rho}{2s}}||w_0||_{L^{\s}(\O)}
            $$
            Thus, fixed $\s_0$ and $p>\frac{\s_0N}{N-\s_0(1-2s-\rho)}$, we
            reach that $I_3(.,t)\in L^p(\O)$ for all
            $p>\frac{\s_0N}{N-\s_0(1-2s-\rho)}$ and
            \begin{equation}\label{I33ss}
                ||I_3(.,t)||_{L^p(\O)}\leq C
                t^{-\frac{N}{2s}(\frac{1}{\s_0}-\frac{1}{p}+(1+\eta)(\rho-s))-\frac{2s+\rho-1}{2s}}
                ||w_0||_{L^\s(\O)}.
            \end{equation}
            In the same way we obtain that
            $$
            ||I_2(.,t)||_{L^{p}(\Omega)}\leq C
            t^{\frac{-N}{2s}(\frac{1}{\s_0}-\frac{1}{p})-\frac{2s+\rho+\eta-1}{2s}}
            ||w_0||_{L^{\s}(\Omega)},
            $$
            and $$ ||I_4(.,t)||_{L^p(\O)}\leq C
            t^{-\frac{N}{2s}(\frac{1}{\s_0}-\frac{1}{p(1+\eta)})-\frac{2s+\rho-1}{2s}}
            ||w_0||_{L^\s(\O)}.
            $$
            Combining the above estimates, for $p>\frac{\s_0N}{N-\s_0(1-2s-\rho)}$,
            we get
            \begin{equation}\label{negat1}
                \begin{array}{lll}
                    & &||(-\Delta)^{\frac{\rho}{2}} w(.,t)||_{L^p(\O)}\leq C(\O)
                    t^{-\frac{N}{2s}(\frac{1}{\s_0}-\frac{1}{p})-\frac{1}{2}}\\
                    &\times & \bigg( t^{-\frac{N}{2s}(1+\eta)(\rho-s)}+
                    t^{-\frac{\rho+\eta-s}{2s}}+
                    t^{-\frac{N\eta}{2sp(1+\eta)}-\frac{\rho-s}{2s}}
                    +t^{-\frac{\rho-s}{2s}}\bigg)||w_0||_{L^{\widehat{\s}}(\O)}.
                \end{array}
        \end{equation}}

    \end{proof}

    \begin{remarks}
        In the case $2s+\rho<1$, taking into consideration that
        $$
        \Int_{\O} \dfrac{w_0(y)}{
            (t^{\frac{1}{2s}}+|x-y|)^{N+(2s+\rho-1)}}dy \le
        C(t^{\frac{1-2s-\rho}{2s}}+1)\Int_{\O} \dfrac{w_0(y)}{
            (t^{\frac{1}{2s}}+|x-y|)^{N}}dy,
        $$
        Estimating in the same way all the integrals where the exponent $2s+\rho-1$ appears and using
        the third point in Theorem \ref{first_integrals_regularity}, we
        obtain that
        \begin{equation}\label{negat2}
            \begin{array}{lll}
                & &||(-\Delta)^{\frac{\rho}{2}} w(.,t)||_{L^p(\O)}\leq
                C(\O)t^{-\frac{N}{2s}(\frac{1}{\widehat{\s}}-\frac{1}{p})-\frac{1}{2}}
                \bigg(t^{-\frac{N}{2s}(1+\eta)(\rho-s)}+t^{-\frac{\rho-s}{2s}}\bigg)||w_0||_{L^{\widehat{\s}}(\O)}\\
                &+ &
                C(t^{\frac{1-2s-\rho}{2s}}+1)t^{-\frac{N}{2s}(\frac{1}{\widehat{\s}}-\frac{1}{p})}\bigg(t^{\frac{2s-1-\eta}{2s}}+
                t^{-\frac{N}{2s}(1+\eta)(\rho-s)+\frac{s+\rho-1}{2s}}+t^{\frac{2s-1}{2s}}\bigg)||w_0||_{L^{\widehat{\s}}(\O)}
            \end{array}
        \end{equation}
        Notice that estimate \eqref{negat2} is equivalent to estimate
        \eqref{negat1} for $t>>1$.
    \end{remarks}

    As a consequence, we have the next result.
    \begin{Corollary}\label{CRR}
        Assume that the hypotheses of Theorem \ref{reg_frac_u0_t}    are
        all satisfied, and let $w$ be the unique weak solution to Problem
        \eqref{eq_linear_01}. Then

        \begin{enumerate}
            \item If $s\le \rho<s+\frac{s}{N}$ and $\s=1$, then
            $(-\Delta)^{\frac{\rho}{2}} w\in L^{p}(\Omega_T)$ for all
            $p<\frac{N+2s}{N+s+N(\rho-s)}$ and
            $$ ||(-\Delta)^{\frac{\rho}{2}}
            w||_{L^{p}(\Omega_T)}\le
            C(\O_T)||w_0||_{L^1(\O)},$$
            where $C(\O_T)\to 0$ as $T\to 0$.

            \item Assume that $\rho=s$, define
            $$\widehat{p_s}:=
            \left\{
            \begin{array}{lll}
                \frac{\s(N+2s)}{N+\s
                    s} \mbox{  if      }\:\:3s>1\\ \\
                \frac{\s_0(N+2s)}{N+\s_0
                    s} \mbox{  if      }\:\: 3s<1,
            \end{array}
            \right.
            $$
            where $\s_0<\min\{\s,\frac{N}{1-3s}\}$. Then for all $p<\widehat{p_s}$, there exists a positive constant $C=C(\O)$
            such that
            \begin{equation}
                ||(-\Delta)^{\frac{s}{2}} w||_{L^{p}(\Omega_T)}\le
                C(\O)\bigg(
                T^{-\frac{N}{2s}(\frac{1}{\s}-\frac{1}{p})-\frac{1}{2}+\frac{1}{p}}+
                T^{-\frac{N}{2s}(\frac{1}{\s}-\frac{1}{p(1+\eta)})-\frac{1}{2}+\frac{1}{p}}+
                T^{-\frac{N}{2s}(\frac{1}{\s}-\frac{1}{p})-\frac{s+\eta}{2s}+\frac{1}{p}}\bigg)
                ||w_0||_{L^\s(\Omega)},
            \end{equation}
            where $\eta>0$ is small enough and chosen such that
 $p<\min\bigg\{\frac{\s(N+2s)}{N+\s(s+\eta)}, \frac{\s(\frac{N}{1+\eta}+2s)}{N+\s s}\bigg\}$.

            In particular, if $\s=1$, then for all $p<\frac{N+2s}{N+s}$, we have $||(-\Delta)^{\frac{s}{2}}
            w||_{L^{p}(\Omega_T)}\le C(\O_T)||w_0||_{L^1(\O)}$ where
            $C(\O_T)\to 0$ as $T\to +\infty$.
        \end{enumerate}
    \end{Corollary}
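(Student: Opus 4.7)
The plan is to integrate, over $t \in (0,T)$, the pointwise-in-time $L^p(\O)$ bounds furnished by Theorem \ref{reg_frac_u0_t} after raising them to the $p$-th power, and then identify for which exponents $p$ the resulting negative powers of $t$ are integrable at the origin. Every term in the theorem's estimate has the form $t^{-\alpha_i}\|w_0\|_{L^{\widehat{\s}}(\O)}$, so the $L^p(\O_T)$ norm is finite precisely when $p\alpha_i < 1$ for all $i$; the most restrictive index $i$ determines the admissible range of $p$, and the resulting $T$-dependent constant appears as a sum of the positive powers $T^{1-p\alpha_i}$.

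For assertion (1), the assumption $\s = 1$ combined with $\rho - s < s/N \le 1$ forces $\widehat{\s} = \min\{1, 1/(\rho-s)\} = 1$. Applying the appropriate branch of Theorem \ref{reg_frac_u0_t} (the first if $2s+\rho\ge 1$, the second if $2s+\rho<1$, with $\s_0 = \widehat{\s} = 1$), the dominant singularity near $t=0$ is $t^{-\beta}$ with $\beta = \frac{N}{2s}(1-\frac{1}{p}) + \frac{1}{2} + \frac{N}{2s}(1+\eta)(\rho-s)$. Solving $p\beta < 1$ and sending $\eta \to 0^+$ yields the claimed threshold $p < \frac{N+2s}{N+s+N(\rho-s)}$; the remaining terms in the pointwise bound carry strictly smaller singular exponents once $\eta$ is small, so no further restriction is needed. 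Integration contributes a factor $T^{1-p\beta}$, which vanishes as $T \to 0$.

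For assertion (2), set $\rho = s$. The factors $\d^{s-\rho}(x)$ and $t^{(\rho-s)/(2s)}$ collapse to $1$, and $\widehat{\s} = \s$. When $3s > 1$ the first branch of Theorem \ref{reg_frac_u0_t} applies, and the leading near-zero exponent is $\frac{N}{2s}(\frac{1}{\s}-\frac{1}{p}) + \frac{1}{2}$, producing the constraint $p < \frac{\s(N+2s)}{N + \s s}$. The auxiliary terms involving $\eta$ (namely $t^{-\frac{N}{2s}(1/\s - 1/(p(1+\eta))) - 1/2}$ and $t^{-\frac{N}{2s}(1/\s - 1/p) - (s+\eta)/(2s)}$) impose the milder conditions $p < \frac{\s(N+2s)}{N+\s(s+\eta)}$ and $p < \frac{\s(\frac{N}{1+\eta}+2s)}{N+\s s}$ recorded in the statement, and both hold for $\eta$ sufficiently small once $p < \widehat{p_s}$. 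When $3s < 1$ one instead invokes the second branch with $\s_0$ replacing $\s$, and letting $\s_0$ approach $\min\{\s, N/(1-3s)\}$ delivers the corresponding formula for $\widehat{p_s}$.

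The analysis is mostly bookkeeping; the main obstacle is tracking the several singular time exponents produced by the five summands of the pointwise bound, choosing $\eta>0$ small enough so that every auxiliary exponent is dominated by the principal one, and correctly selecting between the two regimes $3s \gtrless 1$ that dictate which branch of Theorem \ref{reg_frac_u0_t} is available. The specialization $\s = 1$ in (2) then reduces to $p < \frac{N+2s}{N+s}$, with the various $T$-powers collected into the single constant $C(\O_T)$.
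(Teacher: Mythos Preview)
Your approach is exactly what the paper intends: the corollary is stated without proof as a direct consequence of Theorem \ref{reg_frac_u0_t}, obtained by raising the pointwise-in-$t$ bound to the $p$-th power and integrating over $(0,T)$. Your bookkeeping for the exponents in part (2) with $\rho=s$ is correct and reproduces precisely the three $T$-powers displayed in the corollary.

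There is, however, a technical slip in your handling of the sub-case $2s+\rho<1$ (and correspondingly $3s<1$ in part (2)). You write ``with $\s_0=\widehat{\s}=1$'', but branch~2 of Theorem \ref{reg_frac_u0_t} requires $\s_0<\widehat{\s}$ strictly, and moreover the theorem only applies for $p>\frac{\s_0 N}{N-\s_0(1-2s-\rho)}$. When $\widehat{\s}=1$ and $2s+\rho<1$, this lower bound can exceed the target threshold $\frac{N+2s}{N+s+N(\rho-s)}$; pushing $\s_0$ downward to recover small $p$ then inflates $\frac{1}{\s_0}$ in the exponent and destroys the integrability. The clean fix is to invoke the alternative estimate \eqref{negat2} from the Remark immediately following Theorem \ref{reg_frac_u0_t}: that estimate is written directly in terms of $\widehat{\s}$ (not $\s_0$), carries no lower bound on $p$, and its dominant near-zero singularity is again $t^{-\frac{N}{2s}(\frac{1}{\widehat{\s}}-\frac1p)-\frac12-\frac{N}{2s}(1+\eta)(\rho-s)}$, so your computation of the threshold goes through unchanged.
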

    To complete our global regularity result in this case, we have to
    estimate the fractional gradient of $w$ outside $\O$. More
    precisely, we have the next proposition :
    \begin{Proposition}\label{ext1}
        Suppose that   the assumptions of Theorem \ref{reg_frac_u0_t} are satisfied. Let $w$ be the unique weak solution to Problem
        \eqref{eq_linear_01}. Assume that $s\le \rho<\max\{2s,1\}$, $p<\frac{1}{\rho-s}$ and  fix $\a$ small enough
        such that $p<\frac{1}{\rho-s+\a}$.
        Let $p_0\ge p$ be such that $p_0>\frac{Np}{N+\a p-pN(\rho-s+\a)}$, then, there exists a
        positive constant $C$ depends only on $\O$ and the data and
        it is independent of $t$, such that
        \begin{equation}\label{exter001}
            ||(-\Delta)^{\frac{\rho}{2}}
            w(.,t)||_{L^{p}(\mathbb{R}^N\backslash \Omega)}\le
            C(\O)   {\bigg\|\dfrac{w}{\delta^s}(.,t)\bigg \|_{L^{p_0}(\Omega)}}\le
            C(\O)t^{-\frac{N}{2s}(\frac{1}{\s}-\frac{1}{p_0})-\frac{1}{2}}\|w_0\|_{L^{\s}(\O)}.
        \end{equation}
        Moreover, for $\rho=s$, then $|(-\Delta)^{\frac{s}{2}} w| \in
        {L^p((\mathbb{R}^N\backslash \Omega) \times (0,T))}$ for all
        $p<\widehat{p_s}:=\dfrac{\s(N+2s)}{N+\s s}$.
    \end{Proposition}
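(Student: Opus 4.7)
The plan is to exploit that $w(\cdot,t)$ vanishes outside $\Omega$, which lets one rewrite the fractional Laplacian at an exterior point as a single (non-principal-value) integral over $\Omega$, and then combine a duality argument with a Hardy--Littlewood--Sobolev estimate. Concretely, for $x \in \mathbb{R}^N\setminus\Omega$ both $w(x,t)$ and $w(y,t)$ for $y\notin\Omega$ vanish, so
$$|(-\Delta)^{\rho/2}w(x,t)| \le C \int_\Omega \frac{|w(y,t)|}{|x-y|^{N+\rho}}\,dy.$$
The key geometric ingredient is that for $x \notin \Omega$ and $y \in \Omega$ the segment $[y,x]$ must cross $\partial\Omega$, so $\delta(y) \le |x-y|$. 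Factoring $|x-y|^{N+\rho}=|x-y|^{N-\alpha}\,|x-y|^{\rho+\alpha}$ and using $|x-y|^{\rho+\alpha}\ge \delta^{\rho+\alpha}(y)$, I would rewrite the bound as
$$|(-\Delta)^{\rho/2}w(x,t)| \le C\int_\Omega \frac{w(y,t)/\delta^s(y)}{\delta^{\rho-s+\alpha}(y)\,|x-y|^{N-\alpha}}\,dy,$$
thereby trading the super-singular kernel $1/|x-y|^{N+\rho}$ for a genuine sub-critical Riesz kernel $1/|x-y|^{N-\alpha}$ at the cost of an extra boundary weight $\delta^{-(\rho-s+\alpha)}$.

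Next, to pass to the $L^p_x$ norm on $\mathbb{R}^N\setminus\Omega$ I would dualize: for any $\phi\in L^{p'}(\mathbb{R}^N\setminus\Omega)$ with $\|\phi\|_{L^{p'}}\le 1$, Fubini yields
$$\int_{\mathbb{R}^N\setminus\Omega} |(-\Delta)^{\rho/2}w(x,t)|\,\phi(x)\,dx \le C \int_\Omega \frac{w(y,t)/\delta^s(y)}{\delta^{\rho-s+\alpha}(y)}\,J_{N-\alpha}(\phi)(y)\,dy,$$
where $J_{N-\alpha}(\phi)(y)=\int \phi(x)\,|x-y|^{-(N-\alpha)}\,dx$. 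Since $N-\alpha<N$, Theorem \ref{stein1} gives $\|J_{N-\alpha}(\phi)\|_{L^\ell(\Omega)} \le C\|\phi\|_{L^{p'}}$ with $1/\ell = 1/p'-\alpha/N$, valid provided $\alpha p'<N$, which holds for $\alpha$ sufficiently small.

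The endgame is a triple H\"older inequality in $y$ with exponents $p_0,a,\ell$ satisfying $1/p_0+1/a+1/\ell=1$, applied to the three factors $w(\cdot,t)/\delta^s$, $\delta^{-(\rho-s+\alpha)}$, and $J_{N-\alpha}(\phi)$. A direct computation gives $1/a = 1/p - 1/p_0 + \alpha/N$, and the hypothesis $p_0>\frac{Np}{N+\alpha p - pN(\rho-s+\alpha)}$ of the proposition is precisely equivalent to $1/a>\rho-s+\alpha$, which is in turn the integrability threshold $\delta^{-(\rho-s+\alpha)}\in L^a(\Omega)$. The accompanying smallness $p<1/(\rho-s+\alpha)$ is exactly what keeps this admissible window for $p_0$ nonempty and compatible with $p_0\ge p$. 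Taking the supremum over $\phi$ and invoking \eqref{sem001} of Theorem \ref{pro:lp2} to control $\|w(\cdot,t)/\delta^s\|_{L^{p_0}(\Omega)}$ then yields the stated pointwise-in-$t$ bound.

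For the concluding claim in the case $\rho=s$, the weight reduces to $\delta^{-\alpha}$, which lies in every $L^a(\Omega)$ when $\alpha$ is small, so one picks $p_0$ strictly between $p$ and $\widehat{p_s}$, applies the estimate above pointwise in $t$, and integrates in time using $w/\delta^s\in L^{p_0}(\Omega_T)$ (Theorem \ref{pro:lp2}) together with H\"older in $t$. The most delicate point of the whole argument is the simultaneous balancing of the three constraints $\alpha p'<N$ (for HLS), $1/a>\rho-s+\alpha$ (for the weight), and $p_0\ge p$, which together hinge on $\alpha$ being chosen small enough relative to the data in the statement.
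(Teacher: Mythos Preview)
Your argument is correct and reaches the same conclusion with the same numerology, but it is organized genuinely differently from the paper. The paper first splits $\mathbb{R}^N\setminus\Omega$ into a far region $\Omega_1$ (where a crude bound $|x-y|\gtrsim |x|$ suffices) and a near region $\Omega_2$; on $\Omega_2$ it places the boundary weight on the \emph{exterior} variable, writing $|x-y|\ge \mathrm{dist}(x,\partial\Omega)$ to get $|(-\Delta)^{\rho/2}w(x,t)|\le \mathrm{dist}(x,\partial\Omega)^{-(\rho-s+\alpha)}R(x,t)$ with $R(x,t)=\int_\Omega \frac{w(y,t)}{\delta^s(y)}\,|x-y|^{-(N-\alpha)}dy$, then applies Theorem~\ref{stein1} directly to $w/\delta^s\in L^{p_0}$ and finishes with a H\"older inequality in $x$ on the bounded set $\Omega_2$. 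You instead place the weight on the \emph{interior} variable via $|x-y|\ge\delta(y)$, dualize, apply Theorem~\ref{stein1} to the test function $\phi$, and close with a triple H\"older in $y$ on $\Omega$. The two routes are mirror images: the condition $p_0>\frac{Np}{N+\alpha p-pN(\rho-s+\alpha)}$ arises in the paper as ``$p\le a$'' for its H\"older exponent $a$, and in your approach as ``$1/a>\rho-s+\alpha$'' for yours. Your version is tidier in that it avoids the near/far decomposition (HLS on $\mathbb{R}^N$ absorbs the far field automatically), while the paper's version is slightly more robust at the endpoint $p=1$, where your dualization would require $p'=\infty$ and Theorem~\ref{stein1} no longer applies to $\phi$; in that borderline case one should revert to a direct Fubini estimate or to the paper's decomposition.
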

    \begin{proof}
        Without loss of generality,  we  assume that $w_0\gneqq 0$. Thus,  $w\gneqq 0$ in $\mathbb{R}^N$. Hence,
        for $x\in \mathbb{R}^N\backslash \Omega$, we have
        $$
        |(-\Delta)^{\frac{\rho}{2}} w(x,t)|=\bigg|\Int _{\Omega } \dfrac {-w(y,t)}{|x-y|^{N+\rho}} dy\bigg|=\Int _{\Omega } \dfrac {w(y,t)}{|x-y|^{N+\rho}} dy.
        $$
        Notice that, for $y\in \Omega$, we have $|x-y|\ge \delta (y)$. Let $p\geq \s $ be fixed, we consider the exterior set $\Omega_1=\{x\in  \mathbb{R}^N\backslash \O;\, \text{dist}(x,\partial\Omega)>>1\}$, then $|x-y|\ge \frac{|x|+1}{2}$ for all $y\in \Omega$. Thus, for $x\in \O_1$, we get
        $$
        |(-\Delta)^{\frac{\rho}{2}} w(x,t)|\le \frac{2}{(|x|+1)^{N+\rho}}\int _{\Omega } |w(y,t)|dy\le\frac{C}{(|x|+1)^{N+\rho}}\bigg\|\frac{w(.,t)}{\d^s}\bigg\|_{L^1(\Omega)}.
        $$
        Therefore,  we conclude that $|(-\Delta)^{\frac{\rho}{2}} w(.,t)|\in L^1(\Omega_1)\cap
        L^\infty(\Omega_1)$ and, for all $\theta\ge 1$, for all $r>\s$, we have

        \begin{equation}\label{eqrr}
            ||(-\Delta)^{\frac{\rho}{2}} w(,t)||_{L^\theta(\O_1)}\le C(\O,r)\bigg\|\frac{w(.,t)}{\d^s}\bigg\|_{L^r(\O)}\le C(\O,r)t^{-\frac{N}{2s}(\frac{1}{\s}-\frac{1}{r})-\frac 12}||w_0||_{L^\s(\O)}.
        \end{equation}

        Now,  we treat the integral in the set $\Omega_2=\mathbb{R}^N\backslash (\Omega_1\cup \Omega)$. Without loss of generality,  we can assume that $0<\text{dist}(x, \partial\Omega)\le 2$ for all $x\in \O_2$.

        \vspace{0.2cm}
        So, for $\alpha>0$ small enough to be chosen later and for $x\in \O_2$, we have
        \begin{equation*}
            \begin{array}{lll}
                |(-\Delta)^{\frac{\rho}{2}} w(x,t)|&=&\Int _{\Omega } \dfrac {w(y,t)}{|x-y|^{N+\rho}} dy\vspace{0.2cm} \\
                &\le & \dfrac{1}{\d^{\rho-s+\alpha}(x)}\Int _{\Omega} \dfrac {w(y,t)}{\delta^s(y)}\dfrac{1}{|x-y|^{N-\alpha}}dy=
                \frac{R(x,t)}{(\text{dist}(x, \partial\Omega))^{\rho-s+\alpha}},
            \end{array}
        \end{equation*}
        where
        $$
        R(x,t)=\int_{\O}\dfrac {w(y,t)}{\delta^s(y)}\frac{1}{|x-y|^{N-\alpha}} dy dt.
        $$
        Since $w_0\in L^\s(\Omega)$ with $\s\geq 1$, then using Proposition \ref{pro:lp2}, we obtain that
        $\dfrac{w(y,t)}{\delta^s(y)}\in L^{p_0}(\O)$ for all $p_0\geq \s$, to be chosen later, and
        $$\bigg\|\dfrac{w}{\delta^s}(.,t)\bigg\|_ {L^{p_0}(\Omega)}\leq Ct^{-\frac{N}{2s}(\frac{1}{\s}-\frac{1}{p_0})-\frac{1}{2}} ||w_0||_{L^{\s}(\Omega)}.$$
        For $t\in (0,T)$ fixed, since $\O$ is bounded, {by }using Theorem \ref{stein1}, it follows that $R(.,t)\in L^{\frac{p_0N}{N-p_0\a}}(\ren)\cap L^1(\ren)$
        and
        $$
        ||R(.,t)||_{L^{\frac{p_0N}{N-p_0\a}}(\mathbb{R}^N)}\le C(\O)\bigg\|\dfrac{w(.,t)}{\delta^s}\bigg\|_{L^{p_0}(\O)}.
        $$
        Fixed now $p_0$ such that
        $p_0>\frac{Np}{N+\a p-pN(\rho-s+\a)}$, then by the generalized H\"older inequality, we deduce
        that $|(-\Delta)^{\frac{\rho}{2}} w(,t)|\in L^a(\O_2)$
        with $a=\frac{Np_0}{(\rho-s+\a)Np_0+N-\a p_0}$
        and
        $$
        \begin{array}{lll}
            \dyle \bigg(\int_{\Omega_2}|(-\Delta)^{\frac{\rho}{2}} w(x,t)|^a
            dx\bigg)^{\frac 1a}  &\le &\dyle
            C\bigg\|\frac{w(.,t)}{\d^s}\bigg\|_{L^{p_0}(\O)}\le
            C(\O)t^{-\frac{N}{2s}(\frac{1}{\s}-\frac{1}{p_0})-\frac{1}{2}}\|w_0\|_{L^{\s}(\O)}.
        \end{array}
        $$
        Notice that, taking into consideration the value of $p_0$, we
        deduce that $p\le a$. Hence,  choosing $\theta=p$ and $r=p_0$ in
        \eqref{eqrr}, we deduce that
        $|(-\Delta)^{\frac{\rho}{2}} w(x,t)|\chi_{\{x\in \mathbb{R}^N\backslash \Omega\}}\in {L^{p}(\mathbb{R}^N\backslash\Omega)}$ and
        $$
        ||(-\Delta)^{\frac{\rho}{2}} w(.,t)||_{L^{p}(\mathbb{R}^N\backslash \Omega)}\le C(\O)  \bigg\|\dfrac{w}{\delta^s}(.,t)\bigg\|_{L^{p_0}(\Omega)}\le C t^{-\frac{N}{2s}(\frac{1}{\s}-\frac{1}{p_0})-\frac{1}{2}}||w_0||_{L^\s(\Omega)}.
        $$
        Now, if $\rho=s$, then $\frac{Np}{N+\a
            p-pN(\rho-s+\a)}=\frac{Np}{N-\a p(N-1)}\to p$ as $\a\to 0$.
        Hence,
        we deduce that for all $p>\s$ and for all $\eta>0$, we have
        $$
        ||(-\Delta)^{\frac{\rho}{2}} w(.,t)||_{L^{p}(\mathbb{R}^N\backslash \Omega)}\le C(\O)  \bigg\|\dfrac{w}{\delta^s}(.,t)\bigg\|_{L^{p+\eta}(\Omega)}\le C t^{-\frac{N}{2s}(\frac{1}{\s}-\frac{1}{p+\eta})-\frac{1}{2}}||w_0||_{L^\s(\Omega)}.
        $$
        Therefore, if $p<\widehat{p_s}:=\dfrac{\s(N+2s)}{N+\s s}$, then
        $|(-\Delta)^{\frac{s}{2}} w| \in {L^p((\mathbb{R}^N\backslash
            \Omega) \times (0,T))}$ and

{    $$
        ||(-\Delta)^{\frac{\rho}{2}}
        w(.,t)||_{L^{p}(\mathbb{R}^N\backslash \Omega)}\le
        C(\O)t^{-\frac{N}{2s}(\frac{1}{\s}-\frac{1}{p_0})-\frac{1}{2}}\|w_0\|_{L^{\s}(\O)},
        $$
    and
        $$
        ||(-\Delta)^{\frac{\rho}{2}} w||_{L^{p}((\mathbb{R}^N\backslash \Omega)\times (0,T))}\le C(\O_T)
        ||w_0||_{L^\s(\Omega)}.
        $$}
    \end{proof}

    By combining Theorem  \ref{reg_frac_u0_t} and   Proposition
    \ref{ext1}, we get the next fractional regularity result in the
    Bessel potential space.
    \begin{Theorem}\label{globalu_0_t}
        Assume that $w_0\in L^{\s}(\Omega)$ with $\s\geq 1$,  let $w$ be the unique solution to  Problem \eqref{eq_linear_01}. Then,
        $w(.,t)\in  \mathbb{L}_0^{s,p}(\Omega)$ for all $p\geq \s$ and
        $$
        ||w(.,t)||_{\mathbb{L}_0^{s,p}(\Omega)}\le
        \left(t^{-\frac{N}{2s}(\frac{1}{\s}-\frac{1}{p})-\frac{1}{2}}+
        t^{-\frac{N}{2s}(\frac{1}{\s}-\frac{1}{p(1+\eta)})-\frac{\rho}{2s}}+t^{\frac{-N}{2s}(\frac{1}{\s}-\frac{1}{p})-\frac{s+\eta}{2s}}\right)
        ||w_0||_{L^\s(\O)};
        $$
        \begin{enumerate}
            \item If $3s\ge 1$, then $w(.,t)\in  \mathbb{L}_0^{s,p}(\Omega)$
            for all $p>\s$. Moreover, for all $\eta>0$ small enough, we have
            $$
            ||w(.,t)||_{\mathbb{L}_0^{s,p}(\Omega)}\le C(\O)
            t^{-\frac{N}{2s}(\frac{1}{{\s}}-\frac{1}{p})-\frac{1}{2}}\\
            \bigg(t^{-\frac{\eta}{2s}} +t^{-\frac{N\eta}{2sp(1+\eta)}} +
            t^{-\frac{N}{2s}(\frac 1p-\frac1{p_0})}
            \bigg)||w_0||_{L^{{\s}}(\O)}.
            $$

            \item If $3s<1$, then setting $\s_0<\min\{{\s}, \frac{N}{1-3s}\}$,
            $w(.,t)\in  \mathbb{L}_0^{s,p}(\Omega)$ for all
            $p>\frac{\s_0N}{N+\s_0(3s-1)}$. In addition, we have
            $$
            ||w(.,t)||_{\mathbb{L}_0^{s,p}(\Omega)}\le C(\O)
            t^{-\frac{N}{2s}(\frac{1}{{\s_0}}-\frac{1}{p})-\frac{1}{2}}\\
            \bigg(t^{-\frac{\eta}{2s}} +t^{-\frac{N\eta}{2sp(1+\eta)}} +
            t^{-\frac{N}{2s}(\frac 1p-\frac1{p_0})}
            \bigg)||w_0||_{L^{\widehat{\s}}(\O)}.
            $$
        \end{enumerate}
        Moreover, if $1\le \s<\frac{N}{(1-3s)_+}$, then $w\in L^p(
        0,T;\mathbb{L}_0^{s,p}(\Omega))$ for all $p<\frac{\s(N+2s)}{N+\s
            s}$ and
        $$
        ||w||_{L^p( 0,T;\mathbb{L}_0^{s,p}(\Omega))}\le
        C(\O_T)||w_0||_{L^\s(\O)}.$$
    \end{Theorem}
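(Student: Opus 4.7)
The plan is to deduce the Bessel-potential estimate from the equivalent norm recalled in Subsection~\ref{sub11}, namely
$$\|w(\cdot,t)\|_{\mathbb{L}^{s,p}_0(\Omega)} \simeq \|(-\Delta)^{s/2} w(\cdot,t)\|_{L^p(\mathbb{R}^N)},$$
and then split the right-hand side as the sum of the $L^p$-norms on $\Omega$ and on $\mathbb{R}^N\setminus\Omega$. Each piece is already controlled by the results proved earlier in this section, so the argument reduces to bookkeeping of the exponents.

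For the interior contribution, I would apply Theorem~\ref{reg_frac_u0_t} with the choice $\rho=s$. Then $\widehat\sigma=\sigma$, the quantity $\rho-s$ vanishes in several of the exponents, and the four time-scales appearing in the statement of that theorem collapse to the three present in the conclusion of Theorem~\ref{globalu_0_t}. When $3s\ge 1$ (i.e.\ $2s+\rho\ge 1$), the first item of Theorem~\ref{reg_frac_u0_t} gives the desired bound directly for every $p>\sigma$; when $3s<1$, the second item of Theorem~\ref{reg_frac_u0_t} forces the introduction of $\sigma_0<\min\{\sigma,N/(1-3s)\}$ and restricts $p>\sigma_0 N/(N+\sigma_0(3s-1))$, which is exactly the threshold that appears in the statement.

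For the exterior contribution, I would invoke Proposition~\ref{ext1} with $\rho=s$, which supplies
$$\|(-\Delta)^{s/2} w(\cdot,t)\|_{L^p(\mathbb{R}^N\setminus\Omega)} \le C(\Omega)\,\Big\|\frac{w(\cdot,t)}{\delta^s}\Big\|_{L^{p_0}(\Omega)}$$
for $p_0\ge p$ taken as close to $p$ as we wish. Then estimate \eqref{sem001} of Proposition~\ref{pro:lp2} upgrades the right-hand side to $C\, t^{-N(1/\sigma-1/p_0)/(2s)-1/2}\|w_0\|_{L^\sigma(\Omega)}$. Adding this to the interior bound yields the pointwise-in-$t$ estimate claimed in the two itemised cases.

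To obtain the parabolic conclusion, I would raise the pointwise-in-$t$ bound to the power $p$ and integrate over $(0,T)$. The dominant time exponent is $-Np(1/\sigma-1/p)/(2s)-p/2$ (the others being lower order once $\eta$ is small and $p_0$ is close to $p$), and a direct computation shows it is integrable at $0$ if and only if $p(s+N/\sigma)<N+2s$, that is, $p<\sigma(N+2s)/(N+\sigma s)$. The standing assumption $\sigma<N/(1-3s)_+$ is precisely what permits, in the regime $3s<1$, a choice of $\sigma_0$ making the lower bound on $p$ from the interior step compatible with this upper bound. The main subtlety will be to verify that each correction term (those involving the small parameter $\eta$ and the auxiliary exponent $p_0$) remains integrable in time within the same range of $p$; this is handled by taking $\eta$ sufficiently small and $p_0$ slightly larger than $p$, both allowed by the strict inequality in $p<\sigma(N+2s)/(N+\sigma s)$.
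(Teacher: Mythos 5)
Your proposal is correct and follows essentially the same route as the paper: the paper obtains Theorem~\ref{globalu_0_t} precisely by combining the interior estimate of Theorem~\ref{reg_frac_u0_t} (with $\rho=s$, which collapses the exponents exactly as you describe) with the exterior estimate of Proposition~\ref{ext1} together with \eqref{sem001}, and then integrating the pointwise-in-$t$ bound to get the range $p<\frac{\s(N+2s)}{N+\s s}$. Your exponent bookkeeping, including the integrability threshold at $t=0$ and the role of $\eta$ and $p_0$, matches the paper's argument.
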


    \begin{remarks}
        According to the {   regularity} results obtained in Theorem
        \ref{reg_frac_u0_t} and   Proposition \ref{ext1}, we get the existence of $\e>0$ small such that the results of
        Theorem \ref{globalu_0_t} holds in the Bessel potential space
        $\mathbb{L}_0^{s+\e,p}(\Omega)$. More precisely, we have
        {$$
        ||w(.,t)||_{\mathbb{L}_0^{s+\e,p}(\Omega)}\le C(t,\O)||w_0||_{L^\s(\O)}.
        $$}
        Furthermore, if $1\le \s<\frac{N}{(1-3s)_+}$, then $w\in L^p(
        0,T;\mathbb{L}_0^{s+\e,p}(\Omega))$ for all
        $p<\frac{\s(N+2s)}{N+\s s}$ and
     {   $$||w||_{L^p(
            0,T;\mathbb{L}_0^{s+\e,p}(\Omega))}\le C(\O_T)||w_0||_{L^\s(\O)},$$
        where $C(\O_T)\to 0$ as $T\to 0$.}
    \end{remarks}

    Now, taking into consideration the relation between the {potential
    Bessel space and the fractional Sobolev space}, we get the following
    result.

    \begin{Corollary}\label{coryy}
        Assume that $w_0\in L^{\s}(\Omega)$ with $1\le \s<\frac{N}{(1-3s)_+}$, then
        $w\in L^p(0,T; \W^{s,p}_0(\O))$ for all $p<\frac{\s(N+2s)}{N+\s
            s}$ and $||w||_{L^p(
            0,T;\W_0^{s,p}(\Omega))}\le C(\O_T)||w_0||_{L^\s(\O)}$.
    \end{Corollary}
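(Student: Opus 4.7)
\begin{pf}
The plan is to obtain this corollary as a direct consequence of Theorem \ref{globalu_0_t} together with the embedding properties between Bessel potential spaces and fractional Sobolev spaces collected in Proposition \ref{proper}.

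First, I would invoke Theorem \ref{globalu_0_t} (and the accompanying remark, which upgrades the spatial regularity to $\mathbb{L}_0^{s+\varepsilon,p}(\Omega)$ for some $\varepsilon>0$ sufficiently small): under the assumption $1\le\sigma<\frac{N}{(1-3s)_+}$, the unique solution $w$ to Problem \eqref{eq_linear_01} satisfies
\[
w\in L^p\bigl(0,T;\mathbb{L}_0^{s+\varepsilon,p}(\Omega)\bigr)\qquad\text{for every }p<\tfrac{\sigma(N+2s)}{N+\sigma s},
\]
with the quantitative bound $\|w\|_{L^p(0,T;\mathbb{L}_0^{s+\varepsilon,p}(\Omega))}\le C(\Omega,T)\|w_0\|_{L^\sigma(\Omega)}$.

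Next, I would split the range of $p$ according to whether the embedding of Proposition \ref{proper} is direct or requires the $\varepsilon$-room. For $p\ge 2$, assertion (1) of Proposition \ref{proper} gives $\mathbb{L}^{s,p}(\mathbb{R}^N)\hookrightarrow W^{s,p}(\mathbb{R}^N)$, so that the Bessel potential norm already controls the $W^{s,p}$ norm; since $w(\cdot,t)\equiv 0$ outside $\Omega$, this transfers to $\mathbb{W}^{s,p}_0(\Omega)$. For $1<p<2$, the direct embedding fails, and this is where I would use assertion (2) of Proposition \ref{proper}: picking $\varepsilon>0$ small enough (with $s+\varepsilon<1$), we have $\mathbb{L}^{s+\varepsilon,p}(\mathbb{R}^N)\subset W^{s,p}(\mathbb{R}^N)$, and again the vanishing of $w$ outside $\Omega$ yields $w(\cdot,t)\in\mathbb{W}^{s,p}_0(\Omega)$ with a continuous embedding. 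Integrating in time and combining with the estimate above gives the announced control $\|w\|_{L^p(0,T;\mathbb{W}^{s,p}_0(\Omega))}\le C(\Omega,T)\|w_0\|_{L^\sigma(\Omega)}$.

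The case $p=1$ (which occurs when $\sigma=1$ and one lets $p\to 1$) is the only place where one must be slightly careful, since Proposition \ref{proper}(2) is stated for $1<p<\infty$. I would handle this by choosing any $p_0\in(1,\frac{N+2s}{N+s})$ with $p_0>p$, applying the previous step to $p_0$, and then using the bounded inclusion $L^{p_0}(0,T;\mathbb{W}^{s,p_0}_0(\Omega))\hookrightarrow L^{p}(0,T;\mathbb{W}^{s,p}_0(\Omega))$ that follows from H\"older's inequality in both the time and the $(x,y)$ double-integral variables. The main --- and essentially the only --- technical point is thus the transition through the $\mathbb{L}^{s+\varepsilon,p}\subset W^{s,p}$ embedding for $p<2$, which forces one to rely on the $\varepsilon$-improved regularity pointed out after Theorem \ref{globalu_0_t}; everything else is a direct repackaging of the previously established estimates.
\end{pf}
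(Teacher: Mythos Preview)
Your proposal is correct and follows essentially the same approach as the paper. The paper derives the corollary in one line by ``taking into consideration the relation between the potential Bessel space and the fractional Sobolev space,'' relying on the remark preceding the corollary (which provides the $\mathbb{L}_0^{s+\varepsilon,p}$ regularity) together with Proposition \ref{proper}; your argument simply makes this transition explicit, including the case split $p\ge 2$ versus $1<p<2$ and the endpoint $p=1$.
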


    \vspace{0.2cm}
    \subsection{\textbf{Case: $w_0=0$ and $h\in L^m(\O_T)$}}
    For this case, we treat mainly  the following {Problem}
    \begin{equation}\label{eq_linear_02}
        \left\{
        \begin{array}{llll}
            w_t+(-\Delta)^s w&=& h
            & \text{in}\quad\Omega_T,\vspace{0.2cm}\\
            w(x,t)&=&0&\text{in} \quad (\mathbb{R}^N\setminus\Omega)\times(0,T),\vspace{0.2cm}\\
            w(x,0)&=&0&\text{in}\quad    \Omega,
        \end{array}
        \right.
    \end{equation}
    As in the first case, we start this part by giving the next
    theorem which provides the regularity of some kind of integrals.

    \begin{Theorem}\label{integrals regularityII}
        Assume that $g\in L^m(\O_T)$ with {$m\geq1$} and define
        $$
        G_{\a,\l}(x,t)=\int_0^t\io
        \frac{g(y,\tau)(t-\tau)^{\a}}{((t-\tau)^{\frac{
                    1}{2s}}+|x-y|)^{N+\l}} dy d\tau,
        $$
        where $\a,\l \in \mathbb{R}$ are such that $\a>-1,
        2s(\a+1)>\l$ and $N+\l>\max\{0,2s\a\}$. We have:

        \noindent $\bullet$ If $\l\ge 0$, then:
        \begin{enumerate}
            \item If $m>\frac{N+2s}{2s(\a+1)-\l}$, then $G_{\a,\l}\in
            L^\infty(\O_T)$ and
            $$
            ||G_{\a,\l}||_{L^\infty(\O_T)}\le
            CT^{\frac{2s(\a+1)-\l}{2s}-\frac{1}{m}\frac{N+2s}{2s}}||g||_{L^m(\O_T)}.
            $$
            \item If $m\le \frac{N+2s}{2s(\a+1)-\l}$, then $G_{\a,\l}\in
            L^r(\O_T)$ for $\ell$ satisfies
            $m<r<\frac{m(N+2s)}{N+2s-m(2s(\a+1)-\l)}$. Moreover,
            \begin{equation*}
                || G_{\a,\l}||_{L^{r}(\Omega_T)}
                \leq C(\O) T^{\frac{2s(\a+1)-\l}{2s}-\frac{N+2s}{2s}(\frac{1}{m}-\frac{1}{r})}||g||_{L^{m}(\O_T)}.
            \end{equation*}
        \end{enumerate}
        $\bullet$ If $\l<0$, for all $r$ satisfies
        $m<r<\frac{m(N+2s)}{N+2s-2sm(\a+1)}$, we have
        \begin{equation*}\label{ZA1}
            || G_{\a,\l}||_{L^{r}(\Omega_T)}
            \leq C(T^{-\frac{\l}{2s}}+1)T^{\a+1-\frac{N+2s}{2s}(\frac{1}{m}-\frac{1}{r})}||g||_{L^{m}(\O_T)}.
        \end{equation*}
        $\bullet$ If $\l<0$ and $|\l|<N(\a+1)$, choosing $m_0$ such that
        $1\le m_0<\min\{m,-\frac{N}{\l}\}$, for
        $\frac{m_0N}{N+m_0\l}<r<\frac{m_0(N+2s)}{(N+2s-m_0(2s(\a+1)-\l))_+}$,
        we have
        \begin{equation*}
            || G_{\a,\l}||_{L^{r}(\Omega_T)}
            \leq
            C(\O)T^{\frac{2s(\a+1)-\l}{2s}-\frac{N+2s}{2s}(\frac{1}{m_0}-\frac{1}{r})}
            ||g||_{L^{m_0}(\Omega_T)}\le
            C(\O)T^{\frac{1}{m_0}-\frac{1}{m}+\frac{2s(\a+1)-\l}{2s}-\frac{N+2s}{2s}(\frac{1}{m_0}-\frac{1}{r})}
            ||g||_{L^{m}(\Omega_T)}.
        \end{equation*}
    \end{Theorem}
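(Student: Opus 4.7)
The plan is to express $G_{\alpha,\lambda}$ as a space-time convolution and to apply Young's inequality. Denote by $\bar g$ the extension of $g$ by zero outside $\Omega_T$, and set
$$K(x,t) := \mathbf{1}_{(0,T)}(t)\, t^{\alpha}\, (t^{1/(2s)} + |x|)^{-(N+\lambda)}, \qquad (x,t) \in \mathbb{R}^{N+1}.$$
A direct verification shows that $G_{\alpha,\lambda}(x,t) = (\bar g \ast K)(x,t)$ for $(x,t) \in \Omega_T$. Young's convolution inequality then yields
$$\|G_{\alpha,\lambda}\|_{L^r(\Omega_T)} \leq \|\bar g \ast K\|_{L^r(\mathbb{R}^{N+1})} \leq \|K\|_{L^a(\mathbb{R}^N\times(0,T))}\, \|g\|_{L^m(\Omega_T)}, \qquad 1 + \tfrac{1}{r} = \tfrac{1}{a} + \tfrac{1}{m}.$$

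The next step is to compute $\|K\|_{L^a}$ via the scaling $x = t^{1/(2s)} y$ in the spatial integral, which gives
$$\|K\|_{L^a}^a = \Bigl(\int_{\mathbb{R}^N}(1+|y|)^{-a(N+\lambda)}\, dy\Bigr) \cdot \int_0^T t^{a\alpha + \frac{N}{2s} - \frac{a(N+\lambda)}{2s}}\, dt.$$
The spatial factor is finite if and only if $a(N+\lambda) > N$, and the temporal factor is finite if and only if $a(N+\lambda - 2s\alpha) < N + 2s$. When both conditions hold, substituting $1/a = 1 + 1/r - 1/m$ produces
$$\|K\|_{L^a} \leq C\, T^{(\alpha+1) - \frac{\lambda}{2s} - \frac{N+2s}{2s}\left(\frac{1}{m} - \frac{1}{r}\right)},$$
which is exactly the $T$-exponent appearing in every case of the statement.

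It remains to translate the two integrability constraints into admissible ranges on $r$. When $\lambda \geq 0$, the spatial condition is automatic for every $a \geq 1$, and the temporal condition rewrites as $r < m(N+2s)/(N+2s - m(2s(\alpha+1) - \lambda))$, valid when the denominator is positive, i.e.\ when $m \leq (N+2s)/(2s(\alpha+1)-\lambda)$; this gives the second sub-case. In the complementary regime $m > (N+2s)/(2s(\alpha+1)-\lambda)$, the choice $r = \infty$ (so $a = m/(m-1)$) satisfies both constraints and yields the $L^\infty$ bound. When $\lambda < 0$, the kernel $K$ fails to decay in space, so the direct Young argument is no longer available and I proceed in two ways. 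For the coarse bound (valid for $m < r < m(N+2s)/(N+2s - 2sm(\alpha+1))$), I exploit the pointwise estimate
$$(t^{1/(2s)} + |x-y|)^{-(N+\lambda)} \leq C\,\bigl(t^{-\lambda/(2s)} + 1\bigr)\, (t^{1/(2s)} + |x-y|)^{-N},$$
valid for $t \in (0,T)$ and $x,y \in \Omega$ bounded; this reduces the problem to the already-treated case $\lambda = 0$ at the cost of the prefactor $T^{-\lambda/(2s)} + 1$. For the sharper third range, I select an auxiliary exponent $m_0 \in [1, \min\{m, -N/\lambda\})$, so that $N + m_0 \lambda > 0$; applying the first step with $m$ replaced by $m_0$ (and majorising $\|g\|_{L^{m_0}(\Omega_T)} \leq C(\Omega_T)\,\|g\|_{L^m(\Omega_T)}$), the spatial constraint $a(N+\lambda) > N$ becomes $r > m_0 N /(N + m_0 \lambda)$, producing the stated window.

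The main obstacle is the case $\lambda < 0$: the space-time kernel $K$ lacks any decay in space at infinity, so Young's convolution inequality cannot be applied directly. The dichotomy between absorbing the non-decaying factor into a $T$-dependent constant (second bullet) and introducing a strictly smaller exponent $m_0 < -N/\lambda$ which restores spatial integrability at the cost of shrinking the $r$-window (third bullet) must be carried out carefully. In particular, one must verify that the denominator $N+2s - m_0(2s(\alpha+1)-\lambda)$ stays positive throughout the claimed range and that the admissible interval for $r$ is non-empty, which is precisely where the hypothesis $|\lambda| < N(\alpha+1)$ enters.
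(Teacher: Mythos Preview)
Your proposal is correct and takes essentially the same approach as the paper: Young's convolution inequality applied to the kernel $H(x,t)=t^{\alpha}(t^{1/(2s)}+|x|)^{-(N+\lambda)}$, followed by the same scaling computation of $\|H\|_{L^a}$ and the same case analysis (including the pointwise reduction to $\lambda=0$ for the second bullet and the auxiliary exponent $m_0<-N/\lambda$ for the third). The only difference is presentational---the paper writes Young's inequality via duality and applies it first in space and then in time, whereas you apply it once in $\mathbb{R}^{N+1}$; one minor slip is your phrase ``$K$ fails to decay in space'' when $\lambda<0$, since $K$ does decay like $|x|^{-(N+\lambda)}$ but simply is not in $L^a(\mathbb{R}^N)$ for $a\le N/(N+\lambda)$, which is the relevant obstruction.
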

    We refer to the Appendix for a detailed proof. See also Remark
    \ref{lastr} for the case $\l\ge 2s(\a+1)$.

    \

    Now, we are in position to state our first main result about the
    regularity of the fractional gradient of $w$, the solution {of}
    \eqref{eq_linear_02}.
    \begin{Theorem}\label{regug}
        Assume that $s>\frac 14$ and fixed $s\le \rho<\max\{2s,1\}$ be
        such that $\rho-s<\min\{\frac{s}{(N+2s)},
        \frac{4s-1}{(N+2s-1)}\}$.

        Let $h\in L^{m}(\Omega_T)$ where $m\ge 1$ and define $w$ to be the
        unique solution to Problem \eqref{eq_linear_02}. Then, for $\eta>0$ small enough, we
        have:
        \begin{enumerate}
            \item If $2s+\rho\ge 1$, then, \\
            $\bullet$   If $1\leq m\le \frac{N+2s}{2s-\rho}$, then  $(-\Delta)^{\frac{\rho}{2}} w\in {L^{r}(\Omega_T)}$  for all $m<r<\overline{\overline{m}}_{s,\rho}:=
            \frac{m(N+2s)}{(N+2s)(m(\rho-s)+1)-ms}$. Moreover, we have
            \begin{equation*}
                ||(-\Delta)^{\frac{\rho}{2}} w||_{L^{r}(\Omega_T)}\leq CT^{\frac
                    12-\frac{N+2s}{2s}(\frac{1}{m}-\frac{1}{r})}(1+
                T^{-\frac{\rho+\eta-s}{2s}}+T^{-\frac{\rho-s}{2s}})||h||_{L^m(\Omega_T)}.
            \end{equation*}
            $\bullet$   If $m>\frac{N+2s}{2s-\rho}$, then
            $(-\Delta)^{\frac{\rho}{2}} w\in {L^{r}(\Omega_T)}$  for all
            $r<\frac{1}{\rho-s}$ and
            $$
            ||(-\Delta)   ^{\frac{\rho}{2}} w||_ {L^{r}(\Omega_T)}\le
            CT^{\frac
                12-\frac{N+2s}{2sm}}(1+
            T^{-\frac{\rho+\eta-s}{2s}}+T^{-\frac{\rho-s}{2s}})||h||_{L^m(\Omega_T)}.
            $$
            \item If $2s+\rho<1$, then

            $\bullet$   If $m\le \frac{N+2s}{2s-\rho}$, then $(-\Delta)^{\frac{\rho}{2}} w\in
            {L^{r}(\Omega_T)}$ for all
            $m<r<\frac{m(N+2s)}{(N+2s)(m(\rho-s)+1)-m(3s+\rho-1)}$and
            \begin{equation*}
                \begin{array}{lll}
                    ||(-\Delta)^{\frac{\rho}{2}} w||_{L^{r}(\Omega_T)}&\leq &
                    C(T^{\frac{1-2s-\rho}{2s}}+1)T^{-\frac{N+2s}{2s}(\frac{1}{m}-\frac{1}{r})}\\
                    &\times & \bigg(T^{\frac 12} + T^{\frac{4s-1-\eta}{2s}}+
                    T^{\frac{3s+\rho-1}{2s}} + T^{\frac{4s-1}{2s}}
                    +T^{\frac{2s-\rho}{2s}}\bigg) ||h||_{L^m(\Omega_T)}.
                \end{array}
            \end{equation*}
            $\bullet$ If $m>\frac{N+2s}{4s-1}$, then $(-\Delta)^{\frac{\rho}{2}} w\in {L^{r}(\Omega_T)}$
            all $r<\frac{1}{\rho-s}$, moreover, for $\eta>0$ small enough, we have
            \begin{equation*}
                \begin{array}{lll}
                    ||(-\Delta)^{\frac{\rho}{2}} w||_{L^{r}(\Omega_T)}&\leq &
                    C(T^{\frac{1-2s-\rho}{2s}}+1)T^{\frac{1}{r}-\frac{N+2s}{2sm}}\\
                    &\times & \bigg(T^{\frac 12} + T^{\frac{4s-1-\eta}{2s}}+
                    T^{\frac{3s+\rho-1}{2s}} + T^{\frac{4s-1}{2s}}
                    +T^{\frac{2s-\rho}{2s}}\bigg) ||h||_{L^m(\Omega_T)}.
                \end{array}
            \end{equation*}
        \end{enumerate}
        In any case, we have
        \begin{equation*}
            ||(-\Delta)^{\frac{\rho}{2}} w||_{L^{r}(\Omega_T)}\leq C(\O,
            T)||h||_{L^m(\Omega_T)}.
        \end{equation*}
     \end{Theorem}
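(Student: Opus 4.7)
The plan is to combine the Duhamel representation of $w$ with the pointwise kernel estimate of Theorem \ref{regulaP}, and then apply Theorem \ref{integrals regularityII} term by term. Since $w_0=0$, we have
\begin{equation*}
    w(x,t)=\int_0^t\!\!\int_\Omega h(y,\tau)\,P_\Omega(x,y,t-\tau)\,dy\,d\tau,
\end{equation*}
hence $|(-\Delta)^{\rho/2}w(x,t)|$ is bounded above by $\int_0^t\!\!\int_\Omega |h(y,\tau)|\,|(-\Delta)_x^{\rho/2}P_\Omega(x,y,t-\tau)|\,dy\,d\tau$. Substituting \eqref{gradp12} (if $s\le 1/2$) or \eqref{gradp22} (if $s>1/2$), and dropping the harmless factor $(\delta^s(y)/\sqrt{t}\wedge 1)\le 1$, decomposes this bound into five summands, each of the form $w_\ast(x)\,G_{\alpha,\lambda}(x,t)$, where $G_{\alpha,\lambda}$ is as in Theorem \ref{integrals regularityII}, the weight $w_\ast(x)$ is one of $1$, $\delta^{s-\rho}(x)$ or $|\log\delta(x)|$, and the remaining factors $\log(D/|x-y|)$ and $|x-y|^{2s-1}$ appearing inside the space integral are to be absorbed into the exponent of $(t^{1/(2s)}+|x-y|)^{-N-\lambda}$.

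For each of these summands I would apply the appropriate branch of Theorem \ref{integrals regularityII}. The case split $2s+\rho\gtrless 1$ exactly reflects the sign of $\lambda=2s+\rho-1$ in the normalization prefactor, selecting either the first two ($\lambda\ge 0$) or the third ($\lambda<0$) bullet of Theorem \ref{integrals regularityII}. The factor $\log(D/|x-y|)$ is disposed of through the elementary bound $\log(D/|x-y|)\le C_\eta|x-y|^{-\eta}$ for arbitrarily small $\eta>0$, which merely shifts $\lambda\mapsto \lambda+\eta$; the factor $|x-y|^{2s-1}$ in the $s\le 1/2$ case does the opposite and improves the kernel exponent. The weights $\delta^{s-\rho}(x)$ and $|\log\delta(x)|$ are then pulled out by H\"older's inequality in $x$: using $\delta^{-(1+\eta)(\rho-s)}\in L^1(\Omega)$ (which is where the assumption $\rho-s<1$ and the threshold $r<1/(\rho-s)$ enter), together with $|\log\delta(x)|\in L^q(\Omega)$ for every $q<\infty$, the pure space-time $L^r$ estimate of Theorem \ref{integrals regularityII} transfers with an infinitesimal loss in the integrability exponent.

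The core difficulty, and the source of the restriction $s>1/4$, is the singular time factor $(t-\tau)^{(2s-1)/(2s)}$ appearing when $s\le 1/2$: this corresponds in Theorem \ref{integrals regularityII} to $\alpha=(2s-1)/(2s)<0$, and the admissibility conditions $\alpha>-1$, $2s(\alpha+1)>\lambda$, $N+\lambda>2s\alpha$, combined with the requirement that the output range of $r$ be non-empty, translate precisely into $s>1/4$ and into the quantitative bound $\rho-s<\min\{s/(N+2s),(4s-1)/(N+2s-1)\}$ of the hypothesis. The upper threshold on $r$ is determined by the dominant summand: that with $\lambda=s$ weighted by $\delta^{s-\rho}(x)$ produces the Sobolev-type cap $\overline{\overline{m}}_{s,\rho}$ in the case $2s+\rho\ge 1$, while in the case $2s+\rho<1$ the worst singularity comes from the summands with $\lambda=2s+\rho-1<0$, which leads instead to the cap $m(N+2s)/[(N+2s)(m(\rho-s)+1)-m(3s+\rho-1)]$.

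To conclude, I would collect the resulting estimates, each of the form $T^{\gamma_j}\|h\|_{L^m(\Omega_T)}$, and retain the worst power of $T$, obtaining the displayed bounds. The endpoint regime $m>(N+2s)/(2s-\rho)$ (respectively $m>(N+2s)/(4s-1)$) is handled by invoking the $L^\infty$-branch of Theorem \ref{integrals regularityII} on the dominant summand, which allows $r$ to reach $1/(\rho-s)$ up to the $\delta^{s-\rho}(x)$-integrability constraint, and yields the stated estimate with the time-exponent $\tfrac12-\tfrac{N+2s}{2sm}$ (resp. $\tfrac1r-\tfrac{N+2s}{2sm}$). The uniform bound $\|(-\Delta)^{\rho/2}w\|_{L^r(\Omega_T)}\le C(\Omega,T)\|h\|_{L^m(\Omega_T)}$ then follows by absorbing all remaining $T$-factors.
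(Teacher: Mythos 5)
Your proposal follows essentially the same route as the paper: Duhamel representation, substitution of the kernel bounds \eqref{gradp12}--\eqref{gradp22}, term-by-term application of Theorem \ref{integrals regularityII} (with the $\log(D/|x-y|)$ factor traded for $|x-y|^{-\eta}$ via a Young/duality step, and the weights $\delta^{s-\rho}(x)$, $|\log\delta(x)|$ extracted by H\"older), and you correctly locate both the origin of $s>\tfrac14$ in $\alpha=\tfrac{2s-1}{2s}>-1$ and the binding cap $\overline{\overline{m}}_{s,\rho}$ in the $\delta^{s-\rho}$-weighted summand. One small slip: for $s\le\tfrac12$ the factor $|x-y|^{2s-1}=|x-y|^{-(1-2s)}$ is singular and \emph{worsens} the effective spatial exponent (raising $\lambda$ from $2s+\rho-1$ to $\rho$) rather than improving it, but that term still yields the non-binding cap $\frac{m(N+2s)}{N+2s-m(2s-\rho)}$, so the argument is unaffected.
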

    \begin{proof}
        Without loss of generality and using the linearity of the problem,
        we can assume that $h\gneqq 0$. We give a detailed proof in the
        first case $1\leq m\le \frac{N+2s}{2s-\rho}$, the other case
        follows in the same way.

        According to the sign of $2s+\rho-1$ and the value of $s$, we will
        divide the proof into three cases:

{   {\bf Case: $2s+\rho\ge 1$ and $\frac 14<s\le \frac 12$.}}

        Let $w$ be the unique solution to Problem \eqref{eq_linear_02},
        then $w$ is
        given by
        \begin{equation*}
            w(x,t)=\iint_{\Omega_t}h(y,\tau)P_\O(x,y,t-\tau)dyd\tau.
        \end{equation*}
        Hence
 {   \begin{equation}
            |(-\Delta)   ^{\frac{\rho}{2}} w(x,t)|\leq\iint_{\Omega_t}h(y,\tau)|(-\Delta)_x ^{\frac{\rho}{2}}P_\O(x,y,t-\tau)|dyd\tau.\vspace{0.2cm}\\
        \end{equation}}

        Using estimate \eqref{gradp12}, we reach that

        \begin{equation*}
            |(-\Delta)   ^{\frac{\rho}{2}} w(x,t)|\leq C \left(K_1(x,t)+K_2(x,t)+K_3(x,t)+K_4(x,t)+K_5(x,t)\right),
        \end{equation*}
        with
        $$K_1(x,t):=\iint_{\Omega_t} \dfrac{h(y,\tau)}{\left((t-\tau)^{\frac{1}{2s}}+|x-y|\right)^{N+s}}
        d\tau dy, $$
        $$K_2(x,t):=\iint_{\Omega_t} \dfrac{h(y,\tau)(t-\tau)^{\frac{2s-1}{2s}}}{\left((t-\tau)^{\frac{1}{2s}}+|x-y|\right)^{N+2s+\rho-1}}
         {\log\bigg(\frac{D}{|x-y|}\bigg)} d\tau dy,$$
        $$K_3(x,t):=\d^{s-\rho}(x)\iint_{\Omega_t} \dfrac{h(y,\tau)
            (t-\tau)^{\frac{s+\rho-1}{2s}}}{
            \left((t-\tau)^{\frac{1}{2s}}+|x-y|\right)^{N+2s+\rho-1}} d\tau
        dy,$$
        $$K_4(x,t):=|\log(\d(x))|\iint_{\Omega_t} \dfrac{h(y,\tau)
            (t-\tau)^{\frac{2s-1}{2s}}}{
            \left((t-\tau)^{\frac{1}{2s}}+|x-y|\right)^{N+2s+\rho-1}} d\tau
        dy,$$ and
        $$K_5(x,t):=\iint_{\Omega_t} \dfrac{h(y,\tau)}{|x-y|^{1-2s}(
            (t-\tau)^{\frac{1}{2s}}+|x-y|)^{N+2s+\rho-1}} d\tau dy,$$

        Since $s>\frac 14$, then $\frac{2s-1}{2s},\frac{s+\rho-1}{2s}>-1$. To estimate $K_1$, we use Theorem \ref{integrals
            regularityII}
        with $\a=0$ and $\l=s$, then $2s(\a+1)-\l=s>0$. Thus,
        $K_1\in L^r(\O_T)$ for all
        $m<r<\overline{m}_{s}:=\frac{m(N+2s)}{N+2s-ms}$ and
        $$
        ||K_1||_{L^r(\O_T)}\le
        T^{\g_1+\frac{1}{a}}||h||_{L^m(\O_T)},
        $$
        where $\gamma_1=\frac{N}{2sa}-\frac{N+s}{2s}$ and
        $a=\frac{m'r}{m'+r}$. Hence
        \begin{equation}\label{KG1}
            ||K_1||_{L^r(\O_T)}\le
            T^{\frac 12-\frac{N+2s}{2s}(\frac{1}{m}-\frac{1}{r})}||h||_{L^m(\O_T)},
        \end{equation}
        We now consider the term $K_2$. Notice that for all $0<\eta<<1$,
        there exists a positive constant $C$ depends only on $\O$ such
        that
        $ \log(\frac{D}{|x-y|})\le \frac{C}{|x-y|^\eta}$  {a.e}  in $\O\times
        \O$. We follow the duality argument used in the proof of
        Theorem \ref{integrals regularityII}. More precisely, we have
        \begin{equation*}
            \begin{array}{lll}
                ||K_2||_ {L^{r}(\Omega_T)}&=\dyle \sup\limits_{\{|| \psi||_{L^{r'}(\Omega_T)}\leq 1\}}{\iint_{\Omega_T}}\iint_{\Omega_t}
                |\psi(x,t)|\dfrac{h(y,\tau)(t-\tau)^{\frac{2s-1}{2s}}}{ \left((t-\tau)^{\frac{1}{2s}}+|x-y|\right)^{N+2s+\rho-1}}
             {   \log\bigg(\frac{D}{|x-y|}\bigg)}d\tau dydxdt,\vspace{0.2cm}\\
                & \leq C\sup\limits_{\{|| \psi|| _{L^{r'}(\Omega_T)}\leq 1\}}\dyle
                \iint_{\Omega_T}\iint_{\Omega_t}|\psi(x,t)|  \dfrac{h(y,\tau)(t-\tau)^{\frac{2s-1}{2s}}}{|x-y|^{\eta}
                    ((t-\tau)^{\frac{1}{2s}}+|x-y|)^{N+2s+\rho-1}}
                d\tau dydxdt.\vspace{0.2cm}\\
            \end{array}
        \end{equation*}
        Setting
        $$\widehat{H}(x,t)=\dfrac{t^{\frac{2s-1}{2s}}|x|^{{-\eta}}}{(t^{\frac{1}{2s}}+|x|)^{N+2s+\rho-1}}.$$
        Then
        $$
        ||K_2||_ {L^{r}(\Omega_T)}\le \sup\limits_{\{|| \psi|| _{L^{r'}(\Omega_T)}\leq 1\}}\iint_{\Omega_T}\iint_{\Omega_t}|\psi(x,t)| h(y,\tau)\widehat{H}(t-\tau, x-y)d\tau dydxdt.
        $$
        By using Young's inequality, it holds that
        \begin{equation*}
            ||K_2||_ {L^{r}(\Omega_T)}\leq  \sup\limits_{\{|| \psi|| _{L^{r'}(\Omega_T)}\leq 1\}}\Int_0^T ||\psi(.,t)||_{L^{r'}(\Omega) }  \Int_0^t ||h(.,\tau)||_{L^{m}(\Omega)} ||\widehat{H}(.,t-\tau)||_{L^{\bar{a}}(\Omega)}d\tau dt,
        \end{equation*}
        with
        \begin{equation*}\label{l33}
            \dfrac{1}{r'}+\dfrac{1}{m}+\dfrac{1}{\bar a}=2.
        \end{equation*}
        Notice that $\overline{a}=\frac{rm'}{r+m'}=a${, choosing }${\eta}$ small enough such that $\bar a{\eta}<N$, it holds that
        $$ ||\widehat{H}(.,t-\tau)||_{L^{\bar{a}}(\Omega)}\leq
        C (t-\tau)^{\frac{2s-1}{2s}+\frac{N}{2s\bar{a}}-\frac{N+2s+\rho-1}{2s}-\frac{{\eta}}{2s}} \le
        C(t-\tau)^{\frac{N}{2s\bar{a}}-\frac{N+\rho}{2s}-\frac{{\eta}}{2s}}.$$
        Thus, using again Young's inequality, we deduce that
        \begin{eqnarray*}
            ||K_2||_ {L^{r}(\Omega_T)} &\leq & \sup\limits_{\{|| \psi||_{L^{r'}(\Omega_T)}\leq 1 \}}\Int_0^T\Int_0^T ||\psi(.,t)||_{L^{r'}(\Omega) }
            ||h(.,\tau)||_{L^{m}(\Omega)} |t-\tau|^{\frac{N}{2s\bar{a}}-\frac{N+\rho}{2s}-\frac{{\eta}}{2s}}d\tau dt\\
            &\le & \sup\limits_{\{|| \psi||_{L^{r'}(\Omega_T)}\leq 1\}}||\psi||_{L^{r'}(\Omega_T)}
            ||h||_{L^{m}(\Omega_T)} \bigg(\int_0^T t^{\bar{a}(\frac{N}{2s\bar{a}}-\frac{N+\rho+\eta}{2s})}dt\bigg)^{\frac{1}{a}}.
        \end{eqnarray*}
        Since, for $\eta$ small enough, choosing $r<\overline{m}_{s,\rho}:=\frac{m(N+2s)}{N+2s-m(2s-\rho)}$, we reach that
        $\bar{a}|\frac{N}{2s\bar{a}}-\frac{N+\rho+\eta}{2s}|<1$.
        Then,  the last integral is finite. Thus, $K_2\in L^r(\O_T)$ for all $m<r<\overline{m}_{s,\rho}$ and
        \begin{equation}
            ||K_2||_ {L^{r}(\Omega_T)}\leq C
            T^{{\g_2}+\frac{1}{\bar{a}}}||h||_{L^{m}(\Omega_T)},
        \end{equation}
        where
        ${\g_2}=\frac{N}{2s\overline{a}}-\frac{(N+\rho+{\eta})}{2s}=\g_1-\frac{\rho+\eta-s}{2s}$.

        We deal now with $K_3$. Notice that
        $K_3(x,t)=\d^{s-\rho}(x)\widehat{K}_3(x,t)$ where
        \begin{equation}\label{kchap}
            \widehat{K}_3(x,t)=\iint_{\Omega_t} \dfrac{h(y,\tau)
                (t-\tau)^{\frac{s+\rho-1}{2s}}}{
                \left((t-\tau)^{\frac{1}{2s}}+|x-y|\right)^{N+2s+\rho-1}} d\tau
            dy. \end{equation} Using Theorem \ref{integrals regularityII}
        with $\a=\frac{s+\rho-1}{2s}$ and $\l=2s+\rho-1$, then $2s(\a+1)-\l=s>0$. Thus,
        $\widehat{K}_3\in L^r(\O_T)$ for all {$r<\overline{m}_{s, \rho}$} and
        \begin{equation}\label{KG3}
            ||\widehat{K}_3||_{L^r(\O_T)}\le
            T^{\g_1+\frac{1}{a}}||h||_{L^m(\O_T)}.
        \end{equation}
        Now, since $\d^{s-\rho}\in L^\beta(\O)$ for all
        $\beta<\frac{1}{\rho-s}$, using the generalized H\"older's
        inequality, we deduce that $K_3\in L^{r_2}(\O_T)$ for all
        $r_2<\overline{\overline{m}}_{s,\rho}:=
        \frac{m(N+2s)}{(N+2s)(m(\rho-s)+1)-ms}<\overline{m}_{s,\rho}$.

        Since $\rho-s<\frac{s}{(N+2s)}$, then
        $\overline{\overline{m}}_{s,\rho}>m$. Hence, in this case we obtain
        that, for all $m<r<\overline{\overline{m}}_{s,\rho}$,
        \begin{equation}\label{K033}
            ||K_3||_{L^{r}(\Omega_T)}\leq C
            T^{\g_1+\frac{1}{a}}||h||_{L^m(\O_T)}.
        \end{equation}
        Respect to $K_4$, we have
        $K_4(x,t)=|\log(\d(x))|\widehat{K}_4(x,t)$ where
        $$
        \widehat{K}_4(x,t)=\iint_{\Omega_t} \dfrac{h(y,\tau)
            (t-\tau)^{\frac{2s-1}{2s}}}{
            \left((t-\tau)^{\frac{1}{2s}}+|x-y|\right)^{N+2s+\rho-1}} d\tau
        dy.$$ Using Theorem \ref{integrals regularityII}
        with $\a=\frac{2s-1}{2s}$ and $\l=2s+\rho-1$, then $2s(\a+1)-\l=2s-\rho>0$.
        Thus,
        $\widehat{K}_4\in L^r(\O_T)$ for all $r<\overline{m}_{s,\rho}$ and
        \begin{equation}\label{KG4}
            ||\widehat{K}_4||_{L^r(\O_T)}\le
            T^{\g_3+\frac{1}{a}}||h||_{L^m(\O_T)},
        \end{equation}
        where ${\g_3}=\frac{N}{2s{a}}-\frac{N+\rho}{2s}=\g_1-\frac{\rho-s}{2s}$.

        Since $\log(\delta(x)) \in L^{\xi}(\O)$ for all $\xi\in
        [1,\infty)$, we deduce that $K_4\in L^r(\O_T)$ for all
        $m<r<{\overline{m}_{s,\rho}}$ and
        \begin{equation}\label{K400}
            ||K_4||_{L^r(\O_T)}\leq C T^{\g_3+\frac{1}{a}} ||h||_{L^m(\O_T)}.
        \end{equation}

        To estimate $K_5$, we use the same duality argument used for
        $K_2$. Hence, we reach that Thus, $K_5\in L^r(\O_T)$ for all
        $m<r<\overline{m}_{s,\rho}$ and
        \begin{equation}\label{K55}
            ||K_5||_ {L^{r}(\Omega_T)}\leq C
            T^{{\g_3}+\frac{1}{{a}}}||h||_{L^{m}(\Omega_T)}.
        \end{equation}
        Combining the above estimates and taking into consideration that
        $\overline{\overline{m}}_{s,\rho}<{\overline{m}}_{s,\rho}<{\overline{m}}_{s}$,
        we conclude that, for all $m<r<\overline{\overline{m}}_{s,\rho}$, we
        have
        \begin{equation*}
            ||(-\Delta)^{\frac{\rho}{2}} w||_{L^{r}(\Omega_T)}\leq CT^{\frac
                12-\frac{N+2s}{2s}(\frac{1}{m}-\frac{1}{r})}(1+
            T^{-\frac{\rho+\eta-s}{2s}}+T^{-\frac{\rho-s}{2s}})||h||_{L^m(\Omega_T)}.
        \end{equation*}
  Hence, we conclude.

        As a consequence, we deduce that for all $1\le
        r<\overline{\overline{m}}_{s,\rho}${, we have}
        \begin{equation*}
            ||(-\Delta)^{\frac{\rho}{2}} w||_{L^{r}(\Omega_T)}\leq
            C(\O_T)||h||_{L^m(\Omega_T)},
        \end{equation*}
        where $C(\O_T)\to 0$ as $T\to 0$.

  {    {\bf Case:  $2s+\rho\ge 1$ and $s>\frac 12$.}} By
        estimate \eqref{gradp22}, we reach that
        \begin{equation*}
            |(-\Delta)   ^{\frac{\rho}{2}} w(x,t)|\leq C \left(K_1(x,t)+L_2(x,t)+L_3(x,t)+L_4(x,t)+L_5(x,t)\right),
        \end{equation*}
        where $K_1$ is already given in the first case,
        $$L_2(x,t):=\iint_{\Omega_t} \dfrac{h(y,\tau)(t-\tau)^{\frac{2s-1}{2s}}}{|x-y|^{2s-1}\left((t-\tau)^{\frac{1}{2s}}+|x-y|\right)^{N+\rho}}
        d\tau dy,$$
        $$L_3(x,t):=\d^{s-\rho}(x)\iint_{\Omega_t} \dfrac{h(y,\tau)
            (t-\tau)^{\frac{\rho-s}{2s}}}{
            \left((t-\tau)^{\frac{1}{2s}}+|x-y|\right)^{N+\rho}} d\tau dy,$$
        $$L_4(x,t):=\iint_{\Omega_t} \dfrac{h(y,\tau)}{ \left((t-\tau)^{\frac{1}{2s}}+|x-y|\right)^{N+\rho}}{\log\bigg(\frac{D}{|x-y|}\bigg) } d\tau
        dy,$$ and
        $$L_5(x,t):=|\log(\d(x))|\iint_{\Omega_t} \dfrac{h(y,\tau)}{(
            (t-\tau)^{\frac{1}{2s}}+|x-y|)^{N+\rho}} d\tau dy.$$

        Recall that $K_1\in L^r(\O_T)$ for all {$m<r<\overline{m}_{s,\rho}$} and
        \begin{equation*}
            ||K_1||_{L^r(\O_T)}\le
            T^{\g_1+\frac{1}{a}}||h||_{L^m(\O_T)}.
        \end{equation*}
        Now, using the same duality argument as in estimating $K_2$, it
        holds that $L_2\in L^r(\O_T)$ for all $m<r<\overline{m}_{s,\rho}$
        and
        \begin{equation}
            ||L_2||_ {L^{r}(\Omega_T)}\leq C
            T^{{\g_3}+\frac{1}{\bar{a}}}||h||_{L^{m}(\Omega_T)},
        \end{equation}
        We deal now with $L_3$. We set
        $$
        \widehat{L}_3(x,t)=\iint_{\Omega_t} \dfrac{h(y,\tau)
            (t-\tau)^{\frac{\rho-s}{2s}}}{
            \left((t-\tau)^{\frac{1}{2s}}+|x-y|\right)^{N+\rho}} d\tau dy.$$
        Using Theorem \ref{integrals regularityII}
        with $\a=\frac{\rho-s}{2s}$ and $\l=\rho$, then $2s(\a+1)-\l=s>0$.
        Thus, $\widehat{L}_3\in L^r(\O_T)$ for all {$m<r<\overline{m}_{s, \rho}$ } and
        \begin{equation}\label{LL3}
            ||\widehat{L}_3||_{L^r(\O_T)}\le
            T^{\g_1+\frac{1}{a}}||h||_{L^m(\O_T)}.
        \end{equation}
        Notice that $L_3(x,t)=\d^{s-\rho}(x)\widehat{L}_3(x,t)$. Since
        $\d^{s-\rho}\in L^\beta(\O)$ for all $\beta<\frac{1}{\rho-s}$,
        we deduce that $L_3\in L^{r_1}(\O_T)$ for all
        $m<r_1<\overline{\overline{m}}_{s,\rho}:=
        \frac{m(N+2s)}{(N+2s)(m(\rho-s)+1)-ms}<\overline{m}_{s,\rho}$,  and
        \begin{equation}\label{L033}
            ||L_3||_{L^{r}(\Omega_T)}\leq C
            T^{\g_1+\frac{1}{a}}||h||_{L^m(\O_T)}.
        \end{equation}

        Respect to $L_4$ and $L_5$, using the same approach as in
        estimating of $K_2$, it holds that for $L_4\in L^r(\O_T)$ for all
        $m<r<\overline{m}_{s,\rho}$ and
        \begin{equation}
            ||L_4||_ {L^{r}(\Omega_T)}\leq C
            T^{{\g_2}+\frac{1}{\bar{a}}}||h||_{L^{m}(\Omega_T)},
        \end{equation}
{   and
        \begin{equation}\label{L5}
            ||L_5||_{L^r(\O_T)}\le
            T^{\g_3+\frac{1}{a}}||h||_{L^m(\O_T)}.
        \end{equation}}

        Now, combining the above estimates, we conclude.

{\bf Case:   $2s+\rho<1$ and $\frac 14<s\le\frac 12$. } Since
        $\rho\ge s$, then, under the above condition{, we obtain} that
        $3s<1$. We follow closely the second point in Theorem
        \ref{integrals regularityII}. Notice that, related to $K_2$, we
        have
        $$K_2(x,t)\le C(T^{\frac{1-2s-\rho}{2s}}+1)
        \iint_{\Omega_t} \dfrac{h(y,\tau)(t-\tau)^{\frac{2s-1}{2s}}}{\left((t-\tau)^{\frac{1}{2s}}+|x-y|\right)^{N}}
{   \log\bigg(\frac{D}{|x-y|}\bigg)} d\tau dy.$$
        We apply now the duality argument used previously to reach that
        for $m<r<\frac{m(N+2s)}{N+2s-m(4s-1)}$, choosing $\eta>0$ small
        enough such that $r<\frac{m(N+2s)}{N+2s-m(4s-1-\eta)}$,  we have
        \begin{equation*}
            ||K_2||_ {L^{r}(\Omega_T)}\leq C(T^{\frac{1-2s-\rho}{2s}}+1)
            T^{\frac{4s-1-\eta}{2s}-\frac{N+2s}{2s}(\frac{1}{m}-\frac{1}{r})}||h||_{L^{m}(\Omega_T)},
        \end{equation*}
        In the same way, using H\"older's inequality and the fact that
        $\rho-s<\frac{4s-1}{(N+2s-1)}$, we reach that $K_3\in L^r(\O_T)$
        for all $m<r<\frac{m(N+2s)}{(N+2s)(m(\rho-s)+1)-m(3s+\rho-1)}$ and
        \begin{equation*}
            ||K_3||_ {L^{r}(\Omega_T)}\leq C(T^{\frac{1-2s-\rho}{2s}}+1)
            T^{\frac{3s+\rho-1}{2s}-\frac{N+2s}{2s}(\frac{1}{m}-\frac{1}{r})}||h||_{L^{m}(\Omega_T)},
        \end{equation*}
        For $K_4$, as in estimating of $K_2$, we have for all
        $m<r<\frac{m(N+2s)}{N+2s-m(4s-1)}$,
        \begin{equation*}
            ||K_4||_ {L^{r}(\Omega_T)}\leq C(T^{\frac{1-2s-\rho}{2s}}+1)
            T^{\frac{4s-1}{2s}-\frac{N+2s}{2s}(\frac{1}{m}-\frac{1}{r})}||h||_{L^{m}(\Omega_T)},
        \end{equation*}
        Related to $K_5$, for $m<r<\frac{m(N+2s)}{N+2s-m(2s-\rho)}$, we
        have
        \begin{equation*}
            ||K_5||_ {L^{r}(\Omega_T)}\leq C(T^{\frac{1-2s-\rho}{2s}}+1)
            T^{\frac{2s-\rho}{2s}-\frac{N+2s}{2s}(\frac{1}{m}-\frac{1}{r})}||h||_{L^{m}(\Omega_T)},
        \end{equation*}
    {   Thus,  combining} the above {estimates,  we} deduce that if $\frac
        14<s\le \frac 12$, for all
        $m<r<\frac{m(N+2s)}{(N+2s)(m(\rho-s)+1)-m(3s+\rho-1)}$, we have
        \begin{equation*}
            \begin{array}{lll}
                ||(-\Delta)^{\frac{\rho}{2}} w||_{L^{r}(\Omega_T)}&\leq &
                C(T^{\frac{1-2s-\rho}{2s}}+1)T^{-\frac{N+2s}{2s}(\frac{1}{m}-\frac{1}{r})}\\
                &\times & \bigg(T^{\frac 12} + T^{\frac{4s-1-\eta}{2s}}+
                T^{\frac{3s+\rho-1}{2s}} + T^{\frac{4s-1}{2s}}
                +T^{\frac{2s-\rho}{2s}}\bigg) ||h||_{L^m(\Omega_T)}.
            \end{array}
        \end{equation*}
    Hence, we conclude.
    \end{proof}

    It is clear that, under the condition of the theorem, we have
    \begin{equation*}
        ||(-\Delta)^{\frac{\rho}{2}} w||_{L^{r}(\Omega_T)}\leq
        C(\O,T)||h||_{L^m(\Omega_T)},
    \end{equation*}
    where $C(\O,T)\to 0$ as $T\to 0$.

    \begin{remarks}
        In the case $\frac 14<s<\frac 12$ and $2s+\rho-1<0$, we can closely follow
        the proof of the third point in Theorem \ref{integrals
            regularityII} to improve the above estimate on
        $(-\D)^{\frac{\rho}{2}}w$ under additional conditions on $N,s$ and
        $\rho$. More precisely, fixed $\rho,s$ such that
        $N>\frac{2s(1-2s-\rho)}{4s-1}$ and
        $\rho-s<\frac{N(4s-1)+2s(3s-1)}{(N+2s)(N-1)}$.

        Let $m_0$ be such that $1\le m_0<\min\{m,\frac{N}{1-2s-\rho}\}$,
        to estimate $K_2$ in this case, we follow the same duality
        argument as above. Choosing $\a=\frac{2s-1}{2s},
        \l=2s+\rho+\eta-1$, for $\eta$ small enough, then under the above
        condition on $N,s,\rho$, we have $N(\a+1)>|\l|$. Hence, we reach
        that for
        $\frac{m_0N}{N-m_0(1-2s-\rho)}<r<\frac{m_0(N+2s)}{(N+2s-m_0(2s-\rho)_+}=\overline{m}_{0,s,\rho}$
        and for all $\eta>0$ small enough, we have $K_2\in L^r(\O_T)$ and
        $$ ||K_2||_ {L^{r}(\Omega_T)}\le
        C(\O)T^{\frac{1}{m_0}-\frac{1}{m}+\frac{2s-\rho-\eta}{2s}-\frac{N+2s}{2s}(\frac{1}{m_0}-\frac{1}{r})}
        ||h||_{L^{m}(\Omega_T)}.
        $$
        Related to $\widehat{K}_3${, using} again Theorem \ref{integrals
            regularityII} with $\a=\frac{s+\rho-1}{2s}$ and $\l=2s+\rho-1$,
        then $2s(\a+1)-\l=s>0$. {Hence, } for
        $\frac{m_0N}{N-m_0(1-2s-\rho)}<r<\frac{m_0(N+2s)}{(N+2s-m_0s)_+}=\overline{m}_{0,s}$,
        we have $\widehat{K}_3\in L^{r}(\O_T)$ and
        \begin{equation}\label{00KG3}
            ||\widehat{K}_3||_{L^{r}(\O_T)}\le
            C(\O)T^{\frac{1}{m_0}-\frac{1}{m}+\frac{1}{2}-\frac{N+2s}{2s}(\frac{1}{m_0}-\frac{1}{r})}
            ||h||_{L^{m}(\Omega_T)}.
        \end{equation}
        Since $\d^{s-\rho}\in L^\beta(\O)$ for all
        $\beta<\frac{1}{\rho-s}$, using the generalized H\"older's
        inequality{, we deduce} that $K_3\in L^{r_2}(\O_T)$ for all
        $r_2<\overline{\overline{m}}_{0,s,\rho}:=
        \frac{m_0(N+2s)}{(N+2s)(m_0(\rho-s)+1)-m_0s}<\overline{m}_{0,s,\rho}$.
        Notice that under the above conditions on $N,s,\rho$, we have
        $$m_0<\frac{m_0N}{N-m_0(1-2s-\rho)}<\overline{\overline{m}}_{0,s,\rho}.$$
        Therefore, it holds that for all
        $\frac{m_0N}{N-m_0(1-2s-\rho)}<r<\overline{\overline{m}}_{0,s,\rho}$,
        \begin{equation}\label{00K033}
            ||K_3||_{L^{r}(\Omega_T)}\leq
            C(\O)T^{\frac{1}{m_0}-\frac{1}{m}+\frac{1}{2}-\frac{N+2s}{2s}(\frac{1}{m_0}-\frac{1}{r})}
            ||h||_{L^{m}(\Omega_T)}.
        \end{equation}
        Respect to $\widehat{K}_4$, choosing $\a=\frac{2s-1}{2s}$ and
        $\l=2s+\rho-1$, then $2s(\a+1)-\l=2s-\rho>0$. {Hence, } by using
        Theorem \ref{integrals regularityII}, for all $r$ such that
        $\frac{m_0N}{N+m_0(2s+\rho-1)}<r<\overline{m}_{0,s,\rho}$, we have
        \begin{equation*}
            || \widehat{K}_4||_{L^{r}(\Omega_T)}
            \leq  C(\O)T^{\frac{1}{m_0}-\frac{1}{m}+\frac{2s-\rho}{2s}-\frac{N+2s}{2s}(\frac{1}{m_0}-\frac{1}{r})}
            ||h||_{L^{m}(\Omega_T)}.
        \end{equation*}
        Since $\log(\delta(x)) \in L^{\xi}(\O)$ for all $\xi\in
        [1,\infty)$, we deduce that $K_4\in L^r(\O_T)$ for all
        $\frac{m_0N}{N+m_0\l}<r<\overline{m}_{0,s,\rho}$ and
        \begin{equation}\label{00K400}
            ||K_4||_{L^r(\O_T)}\leq  C(\O)T^{\frac{1}{m_0}-\frac{1}{m}+\frac{2s-\rho}{2s}-\frac{N+2s}{2s}(\frac{1}{m_0}-\frac{1}{r})}
            ||h||_{L^{m}(\Omega_T)}.
        \end{equation}
        Therefore, combining the above estimates and using the fact that
        $\overline{\overline{m}}_{0,s,\rho}<{\overline{m}}_{0,s,\rho}<{\overline{m}}_{0,s}$,
        we conclude that for all
        $\frac{m_0N}{N-m_0(1-2s-\rho)}<r<\overline{\overline{m}}_{0,s,\rho}$,
        we have
        \begin{equation*}
            \begin{array}{lll}
                & & ||(-\Delta)^{\frac{\rho}{2}} w||_{L^{r}(\Omega_T)}\leq C(\O)T^{\frac{1}{2}+\frac{1}{m_0}-\frac{1}{m}-\frac{N+2s}{2s}(\frac{1}{m_0}-\frac{1}{r})}\\
                & &\bigg(1+T^{\frac{N}{2s}(\frac{1}{m_0}-\frac{1}{m})} +
                T^{-\frac{\rho+\eta-s}{2s}} +T^{-\frac{\rho-s}{2s}}+
                T^{\frac{N}{2s}(\frac{1}{m_0}-\frac{1}{m})-\frac{\rho-s}{2s}}
                \bigg)||h||_{L^m(\Omega_T)}. \end{array}
        \end{equation*}
        Notice that if $s>\frac 13$ or $N>\frac{(1-3s)2s}{4s-1}$, then the
        above condition on $N$ holds.

    \end{remarks}

    \begin{remarks}
        Related to the case $\frac 14<s<\frac 13$, $2s+\rho<1$ and
        $m>\frac{N+2s}{4s-1}$. Under the above hypotheses, we have
        $\frac{N+2s}{4s-1}>\max\{\frac{N+2s}{2s-\rho},\frac{N+2s}{s}\}$.
    Hence, we can apply the first case in Theorem \ref{integrals
            regularityII} to conclude. Using the same argument, we can treat also the case
        $\max\{\frac{N+2s}{2s-\rho},\frac{N+2s}{s}\}<m<\frac{N+2s}{4s-1}$.
    \end{remarks}

    \begin{Corollary}\label{CorR}
        In the particular case $\rho=s$, we have
        \begin{enumerate}
            \item If $s>\frac{1}{3}$, for $1\leq m\le \frac{N+2s}{s}$, then
            $(-\Delta)^{\frac{s}{2}} w\in {L^{r}(\Omega_T)}$  for all
            $m<r<\overline{m}_s=
            \frac{m(N+2s)}{N+2s-ms}$, for all $\eta>0$ small enough,
            we have
            \begin{equation*}
                ||(-\Delta)^{\frac{s}{2}} w||_{L^{r}(\Omega_T)}\leq CT^{\frac
                    12-\frac{N+2s}{2s}(\frac{1}{m}-\frac{1}{r})}(1+
                T^{-\frac{\eta}{2s}})||h||_{L^m(\Omega_T)}.
            \end{equation*}

            \item If $s>\frac 13$ and $m>\frac{N+2s}{s}$, then
            $(-\Delta)^{\frac{s}{2}} w\in {L^{r}(\Omega_T)}$  for all $r<
            \infty$ and
            $$
            ||(-\Delta)   ^{\frac{s}{2}} w||_ {L^{r}(\Omega_T)}\le
            CT^{\frac
                12-\frac{N+2s}{2sm}}(1+
            T^{-\frac{\eta}{2s}})||h||_{L^m(\Omega_T)}.
            $$
            \item If $\frac 14<s\le \frac 13$, then for
            $m<r<\frac{m(N+2s)}{N+2s-m(4s-1)}$, we have
            \begin{equation*}
                \begin{array}{lll}
                    ||(-\Delta)^{\frac{s}{2}} w||_{L^{r}(\Omega_T)}\leq
                    C(T^{\frac{1-3s}{2s}}+1)T^{-\frac{N+2s}{2s}(\frac{1}{m}-\frac{1}{r})}\bigg(T^{\frac
                        12} + T^{\frac{4s-1-\eta}{2s}}+ T^{\frac{4s-1}{2s}}\bigg)
                    ||h||_{L^m(\Omega_T)}.
                \end{array}
            \end{equation*}
            \item If $\frac 14<s\le \frac 13$ and $m>\frac{N+2s}{4s-1}$, then
            for all $r<\infty$, we have
            \begin{equation*}
                \begin{array}{lll}
                    ||(-\Delta)^{\frac{s}{2}} w||_{L^{r}(\Omega_T)}&\leq &
                    C(T^{\frac{1-3s}{2s}}+1)T^{\frac{1}{r}-\frac{N+2s}{2sm}}\bigg(T^{\frac
                        12} + T^{\frac{4s-1-\eta}{2s}}+ T^{\frac{4s-1}{2s}}\bigg)
                    ||h||_{L^m(\Omega_T)}.
                \end{array}
            \end{equation*}

        \end{enumerate}

    \end{Corollary}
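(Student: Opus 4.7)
The plan is to deduce Corollary \ref{CorR} as a direct specialization of Theorem \ref{regug} to the diagonal case $\rho=s$. First I would check that the structural hypotheses of Theorem \ref{regug} are trivially satisfied: since $\rho-s=0$, the inequality $\rho-s<\min\{s/(N+2s),(4s-1)/(N+2s-1)\}$ holds automatically whenever $s>\frac 14$ (so that $(4s-1)/(N+2s-1)$ is positive), and no further restriction on $\rho$ is required. This means $w$, the unique solution to \eqref{eq_linear_02}, lies in the Lebesgue spaces provided by Theorem \ref{regug} and the estimate is at our disposal.

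Next I would split the analysis according to the sign of $2s+\rho-1$, which with $\rho=s$ amounts to whether $3s\ge 1$ or $3s<1$. Assuming $s>\frac 13$ we are in the first alternative of Theorem \ref{regug}; I would then substitute $\rho=s$ in every exponent appearing in the conclusion. The key algebraic simplifications are
\begin{equation*}
\rho-s=0, \qquad 2s-\rho=s, \qquad 3s+\rho-1=4s-1, \qquad \tfrac{\rho+\eta-s}{2s}=\tfrac{\eta}{2s},
\end{equation*}
together with
\begin{equation*}
\overline{\overline{m}}_{s,\rho}=\frac{m(N+2s)}{(N+2s)(m(\rho-s)+1)-ms}\Big|_{\rho=s}=\frac{m(N+2s)}{N+2s-ms}=\overline{m}_s,
\end{equation*}
and $(N+2s)/(2s-\rho)=(N+2s)/s$. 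Feeding these identifications into the two subcases $m\le (N+2s)/s$ and $m>(N+2s)/s$ of Theorem \ref{regug} yields items $1$ and $2$ of the corollary, noting that the factor $1+T^{-(\rho-s)/2s}+T^{-(\rho+\eta-s)/2s}$ collapses to $1+T^{-\eta/2s}$.

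For $\frac 14<s\le\frac 13$ I would argue analogously but using the second alternative of Theorem \ref{regug}. The substitutions
\begin{equation*}
\tfrac{1-2s-\rho}{2s}=\tfrac{1-3s}{2s}, \qquad \tfrac{2s-\rho}{2s}=\tfrac{1}{2}, \qquad \tfrac{3s+\rho-1}{2s}=\tfrac{4s-1}{2s},
\end{equation*}
turn the exponent cluster $(T^{1/2}+T^{(4s-1-\eta)/2s}+T^{(3s+\rho-1)/2s}+T^{(4s-1)/2s}+T^{(2s-\rho)/2s})$ into $(T^{1/2}+T^{(4s-1-\eta)/2s}+T^{(4s-1)/2s})$, since the first and last terms coincide. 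Adjusting the ranges for $r$ by the same substitution $\rho=s$ gives items $3$ and $4$ of the corollary.

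I do not expect any real obstacle in this proof: the entire argument is mechanical substitution once Theorem \ref{regug} is granted. The only point that deserves attention is the boundary case $s=\frac 13$, where the dichotomy $3s\ge 1$ versus $3s<1$ could be invoked either way; since Theorem \ref{regug} allows equality in $2s+\rho\ge 1$, both statements are valid there, and the corollary's phrasing $s>\frac 13$ in items $1$–$2$ and $\frac 14<s\le\frac 13$ in items $3$–$4$ merely reflects a conventional choice.
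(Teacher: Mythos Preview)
Your proposal is correct and matches the paper's approach: Corollary~\ref{CorR} is stated without proof immediately after Theorem~\ref{regug}, so it is intended as a direct specialization to $\rho=s$, exactly as you describe. Your algebraic substitutions and the handling of the boundary case $s=\frac13$ are all in order.
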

Regarding the regularity of $w$ outside of $\O$, following the
    same argument as in the proof of Proposition \ref{ext1}, we get
    the next estimates.
    \begin{Proposition}\label{exttt}
        Assume that $h\in L^{m}(\Omega_T)$ with $m\geq 1$ and let $w$ be the unique weak  solution to Problem \eqref{eq_linear_02}.
        Fixed $\rho\ge s$ such that $\rho-s<\min\{\frac{s}{(N+2s)},
        \frac{4s-1}{(N+2s-1)}\}$ and recall that
        $\overline{m}_s=\frac{m(N+2s)}{N+2s-ms}$. Then there exists a positive constant $C$ depends only on the data and it is
        independent  of $h$ and $w$ such that

 $\bullet$   If $m>\frac{N+2s}{s}$, then  $(-\Delta)^{\frac{\rho}{2}} w\in {L^{p}((\mathbb{R}^N\backslash \Omega)\times(0,T))}$
        for all $p<\frac{1}{\rho-s}$ and
        $$
        ||(-\Delta)^{\frac{\rho}{2}} w||_{L^{p} {((\mathbb{R}^N\backslash
            \Omega)\times (0,T))}}\le C(\O,N,s,p)T^{\frac{1}{p}+\frac
            12-\frac{N+2s}{2sm}}||h||_{L^m(\O_T)}.
        $$
$\bullet$   If $1\leq m\le \frac{N+2s}{s}$, for $\a>0$ small
enough, we define $\overbrace{m_s}$
        by
        $$
        \overbrace{m_s}:=\frac{N
            \overline{m}_s}{(\rho-s+\a)N\overline{m}_s+N-\a
            \overline{m}_s}=\frac{Nm(N+2s)}{m(N+2s)(N(\rho-s+\a)-\a)+N(N+2s-ms)},
        $$
Then, for all $p<\overbrace{m_s}$, $(-\Delta)^{\frac{\rho}{2}}
w\in {L^{p}((\mathbb{R}^N\backslash
            \Omega)\times(0,T))}$ and for all
        $r\in (p,\overline{m}_s)$, we have
        \begin{equation}\label{exter00}
            ||(-\Delta)^{\frac{\rho}{2}} w||_{L^{p}((\mathbb{R}^N\backslash
                \Omega)\times(0,T))}\le C(\O,N,s,p)T^{\frac{r-p}{pr}}
            \bigg\|\dfrac{w}{\delta^s}\bigg\|_{L^{r}(\Omega_T)}\le
            CT^{\frac{1}{p}-\frac{1}{m}+\frac{N}{2s}(\frac{1}{r}-\frac{1}{m})+\frac
                12}||h||_{L^m(\Omega_T)}.
        \end{equation}
    \end{Proposition}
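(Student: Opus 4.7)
The plan is to follow essentially the same geometric/analytic decomposition used in the proof of Proposition \ref{ext1}, but with the spatial bound on $w/\delta^s$ replaced by the space--time bound supplied by Theorem \ref{u_delta_s}. By linearity we may assume $h\gneqq 0$, hence $w\gneqq 0$ in $\mathbb{R}^N\times(0,T)$. For $x\in \mathbb{R}^N\setminus\O$ the nonlocal Laplacian collapses to a single-sign integral,
$$
|(-\Delta)^{\frac{\rho}{2}} w(x,t)|=\int_\O \frac{w(y,t)}{|x-y|^{N+\rho}}\,dy,
$$
and I would split $\mathbb{R}^N\setminus\O=\O_1\cup\O_2$, where $\O_1$ is a far-field region $\{\mathrm{dist}(x,\partial\O)\gg 1\}$ and $\O_2$ is a bounded boundary layer $\{0<\mathrm{dist}(x,\partial\O)\le 2\}$.

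On $\O_1$, the inequality $|x-y|\ge (|x|+1)/2$ holds uniformly for $y\in\O$, so
$$
|(-\Delta)^{\frac{\rho}{2}} w(x,t)|\le \frac{C}{(|x|+1)^{N+\rho}}\Big\|\frac{w(\cdot,t)}{\d^s}\Big\|_{L^1(\O)},
$$
which, after integrating over $\O_1\times(0,T)$ and using Theorem \ref{u_delta_s} to control the $L^1_{t,x}$ norm of $w/\d^s$ by $\|h\|_{L^m(\O_T)}$, yields the desired bound on the far-field contribution with ample margin in the exponents. This part is harmless.

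The delicate part is $\O_2$. For $\alpha>0$ small I would exploit the pointwise inequality $|x-y|^{N+\rho}\ge \d^{\rho-s+\alpha}(x)\,\d^{s}(y)\,|x-y|^{N-\alpha}$ (valid for $x\in\O_2$, $y\in\O$) to write
$$
|(-\Delta)^{\frac{\rho}{2}} w(x,t)|\le \frac{R(x,t)}{\d^{\rho-s+\alpha}(x)},\qquad R(x,t):=\int_\O \frac{w(y,t)/\d^s(y)}{|x-y|^{N-\alpha}}\,dy.
$$
For fixed $t$, Theorem \ref{stein1} applied to the Riesz potential of order $\alpha$ converts an $L^{r}$ bound on $w(\cdot,t)/\d^s$ into an $L^{rN/(N-r\alpha)}(\mathbb{R}^N)$ bound on $R(\cdot,t)$. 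Combining this with the generalized H\"older inequality against $\d^{-(\rho-s+\alpha)}\in L^{\beta}(\O_2)$ for any $\beta<1/(\rho-s+\alpha)$ produces a space integrability in $\O_2$ up to the exponent
$$
\overbrace{m_s}=\frac{N\,\overline{m}_s}{(\rho-s+\alpha)N\overline{m}_s+N-\alpha \overline{m}_s},
$$
once $r$ is chosen close to $\overline{m}_s$. The time factor is then handled by the parabolic estimate of Theorem \ref{u_delta_s}: in the range $1\le m\le (N+2s)/s$ we have $w/\d^s\in L^r(\O_T)$ for every $r<\overline{m}_s$ with the scaling $\|w/\d^s\|_{L^r(\O_T)}\le C T^{\frac{N+2s}{2s}(\frac{1}{r}-\frac{1}{m})+\frac12}\|h\|_{L^m(\O_T)}$, and an additional H\"older in time on the interval $(p,r)$ supplies the factor $T^{(r-p)/(pr)}$ appearing in \eqref{exter00}. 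When $m>(N+2s)/s$, Theorem \ref{u_delta_s} upgrades $w/\d^s$ to $L^\infty(\O_T)$, so the same estimate simplifies and the constraint $p<1/(\rho-s)$ comes purely from the integrability of $\d^{-(\rho-s+\alpha)}$ as $\alpha\downarrow 0$.

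The main obstacle is the bookkeeping of exponents: one must simultaneously pick $\alpha>0$ small enough that $\d^{-(\rho-s+\alpha)}$ is integrable to the required power, pick $r\in(p,\overline{m}_s)$ fine enough that the Riesz potential estimate controls $R$ in a Lebesgue space high enough to pair with this $\d$-weight through H\"older, and finally verify that the resulting exponent in the generalized H\"older computation indeed equals $\overbrace{m_s}$ in the limit. The standing condition $\rho-s<s/(N+2s)$ is precisely what guarantees $\overbrace{m_s}>1$ and keeps the admissible window nonempty; the condition $\rho-s<(4s-1)/(N+2s-1)$ plays no new role here beyond what was already needed in Theorem \ref{regug}. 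The final step is simply to collect the estimates from $\O_1$ and $\O_2$ and read off the powers of $T$ from the parabolic scaling.
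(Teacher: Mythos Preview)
Your proposal is correct and follows essentially the same route as the paper: the same far-field/near-boundary decomposition $\O_1\cup\O_2$, the same pointwise factorization $|x-y|^{N+\rho}\ge \mathrm{dist}(x,\partial\O)^{\rho-s+\alpha}\,\d^s(y)\,|x-y|^{N-\alpha}$ on $\O_2$, the same appeal to the Riesz potential estimate (Theorem \ref{stein1}) for $R(\cdot,t)$, and the same use of Theorem \ref{u_delta_s} to close the argument in both regimes of $m$. The only addition in your write-up is the explanatory remark on why $\rho-s<s/(N+2s)$ keeps $\overbrace{m_s}>1$, which the paper leaves implicit.
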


   \begin{proof} We follow closely the proof of Proposition
        \ref{ext1}. For the reader's convenience, we include here some
        details. Let $\Omega_1=\{x\in \mathbb{R}^N\backslash \O;\,
        \text{dist}(x,\partial\Omega)>>1\}$, then, for $x\in \O_1$,
        \begin{equation}\label{RQ2}
            |(-\Delta)^{\frac{\rho}{2}} w(x,t)|\le \frac{2}{(|x|+1)^{N+\rho}}\int _{\Omega } \frac{|w(y,t)|}{\d^s(y)}dy.
        \end{equation}
        Since $\dfrac{|w|}{\d^s}\in L^r(\O_T)$ for all
        $r<\overline{m}_{s}=\frac{m(N+2s)}{N+2s-ms}$, then we conclude that $|(-\Delta)^{\frac{\rho}{2}} w|\in L^1(\Omega_1\times (0,T))\cap
        L^{\theta}(\Omega_1\times (0,T))$ and for all $\theta\le r$
        and
        \begin{equation}\label{eqrrpp}
            ||(-\Delta)^{\frac{\rho}{2}} w||_{L^\theta(\O_1\times
                (0,T))}\le C(\O)T^{\frac{r-\theta}{r\theta}}\bigg\|\frac{w}{\d^s}\bigg\|_{L^r(\O_T)}.
        \end{equation}
        We suppose now that $x\in \Omega_2=\mathbb{R}^N\backslash
        (\Omega_1\cup \Omega)$. Without loss of generality,  we can assume
        that $0<\text{dist}(x, \partial\Omega)\le 2$ for all $x\in \O_2$.

        Let $x\in \O_2$, then for $\a>0$ small enough to be chosen later,
        we have \begin{equation}\label{WR} |(-\Delta)^{\frac{\rho}{2}}
            w(x,t)|\le
            \frac{R(x,t)}{(\text{dist}(x,
                \partial\Omega))^{\rho-s+\alpha}},
        \end{equation}
        where
        $$
        R(x,t)=\int_{\O}\dfrac {w(y,t)}{\delta^s(y)}\frac{1}{|x-y|^{N-\alpha}} dy dt.
        $$
        Recall that, for a.e. $t\in (0,T)$, we have $\dfrac{w(.,t)}{\d^s(.)}\in L^r(\O)$ for all
        $r<\overline{m}_{s}$. Using Theorem \ref{stein1}, it follows that $R(.,t)\in L^{\frac{rN}{N-r\a}}(\ren)\cap L^1(\ren)$
        and
        $$
        ||R(.,t)||_{L^{\frac{rN}{N-r\a}}(\mathbb{R}^N)}\le C(\O)\bigg\|\dfrac{w(.,t)}{\delta^s}\bigg\|_{L^{r}(\O)}.
        $$
        Now, we fix $r<\overline{m}_s$, then by the generalized
        H\"older's
        inequality, we deduce
        that $|(-\Delta)^{\frac{\rho}{2}} w(,t)|\in L^p(\O_2)$
        for all $p<\frac{Nr}{(\rho-s+\a)Nr+N-\a r}$
        and
        $$
        \dyle \bigg(\int_{\Omega_2}|(-\Delta)^{\frac{\rho}{2}} w(x,t)|^p
        dx\bigg)^{\frac 1p}\le\dyle C\bigg\|\frac{w(.,t)}{\d^s}\bigg\|_{L^{r}(\O)}.
        $$
        It is clear that $p<r$, hence
        $$
        ||(-\Delta)^{\frac{\rho}{2}} w||_{L^p(\O_2\times (0,T))}\le C(\O)
        T^{\frac{r-p}{pr}}\bigg\|\frac{w}{\d^s}\bigg\|_{L^{r}(\O_T)}.
        $$
        Choosing $\theta=p$ in \eqref{eqrrpp}, we deduce that
        $|(-\Delta)^{\frac{\rho}{2}} w(x,t)|\chi_{\{x\in
            \mathbb{R}^N\backslash \Omega\}}\in
        L^{p}((\mathbb{R}^N\backslash\Omega))$ for all $p<\overbrace{m_s}$
        and
        $$
        ||(-\Delta)^{\frac{\rho}{2}} w||_{L^{p}((\mathbb{R}^N\backslash
            \Omega)\times (0,T)}\le C(\O)T^{\frac{r-p}{pr}}
        \bigg\|\dfrac{w}{\delta^s}\bigg\|_{L^{r}(\Omega_T)},.
        $$
        Going back to Theorem \ref{u_delta_s}, we conclude.

        Now if $m>\frac{N+2s}{s}$, since $p<\frac{1}{\rho-s}$, we get the
        existence of $\a>0$ small enough such that
        $p<\frac{1}{\rho-s+\a}$.

        Notice that $\dfrac {w(.,t)}{\delta^s(.)}\in L^\infty(\O)$ a.e. for $t\in (0,T)$. Hence
        $R(.,t)\in L^\infty(\O)$ and
        $$
        ||R(.,t)||_{L^{\infty}(\mathbb{R}^N)}\le
        C(\O)\bigg\|\dfrac{w(.,t)}{\delta^s}\bigg\|_{L^{\infty}(\O)}.
        $$
        Thus
        $$
        ||R||_{L^{\infty}(\mathbb{R}^N\times (0,T))}\le
        C(\O)\bigg\|\dfrac{w}{\delta^s}\bigg\|_{L^{\infty}(\O_T)}.
        $$
        Going back to \eqref{WR}, we obtain that
        $|(-\Delta)^{\frac{\rho}{2}} w(x,t)|\chi_{\{x\in
            \mathbb{R}^N\backslash \Omega\}}\in
        L^{p}((\mathbb{R}^N\backslash\Omega))$ for all
        $p<\frac{1}{\rho-s}$ and
        $$
        ||(-\Delta)^{\frac{\rho}{2}} w||_{L^{p}((\mathbb{R}^N\backslash
            \Omega)\times (0,T))}\le C(\O)T^{\frac{1}{p}}
        \bigg\|\dfrac{w}{\delta^s}\bigg\|_{L^{\infty}(\Omega_T)}\le C(\O)
        T^{\frac{1}{p}+\frac 12-\frac{N+2s}{2sm}}||h||_{L^m(\O_T)}.
        $$
    \end{proof}

    \begin{Corollary}\label{CorRR}
        Suppose that $\rho=s$, then

        $\bullet$ If $m\le \frac{N+2s}{s}$, then for all
        $m<p<\overline{m}_s$, choosing $r$ such that $p<r<\overline{m}_s$
        and $\frac{1}{r}=\frac{1}{p}-\eta$, for $\eta>0$ small enough, we
        deduce that
        \begin{equation*}\label{RQ1}
            ||(-\Delta)^{\frac{s}{2}} w||_{L^{p}((\mathbb{R}^N\backslash
                \Omega)\times(0,T))}\le C(\O)T^{-\frac{N+2s}{2s}(\frac{1}{m}-\frac{1}{p})+\frac 12-\frac{\eta N}{2s}}||h||_{L^m(\Omega_T)}.
        \end{equation*}

        $\bullet$   If $m>\frac{N+2s}{s}$, then
        $(-\Delta)^{\frac{\rho}{2}} w\in {L^{p}((\mathbb{R}^N\backslash
            \Omega)\times(0,T))}$ for all $p<\infty$ and
        $$
        ||(-\Delta)^{\frac{\rho}{2}} w||_{L^{p}((\mathbb{R}^N\backslash
            \Omega)\times (0,T))}\le C(\O)T^{\frac{1}{p}+\frac
            12-\frac{N+2s}{2sm}}||h||_{L^m(\O_T)}.
        $$

        $\bullet$ Taking into consideration that estimate \eqref{RQ2}
        holds for $|x|>>1$, we conclude that $(-\Delta)^{\frac{s}{2}} w\in
        L^{p}((\mathbb{R}^N\backslash\Omega)\times(0,T))$ for all
        $p<r<\overline{m}_s$ and
        $$ ||(-\Delta)^{\frac{s}{2}}
        w||_{L^{p}((\mathbb{R}^N\backslash \Omega)\times (0,T))}\le C(\O_T)
        \bigg\|\dfrac{w}{\delta^s}\bigg\|_{L^{r}(\Omega_T)}\le
        C(\O_T)||h||_{L^m(\O_T)},
        $$
        where $C(\O_T)\to 0$ as $T\to 0$.
    \end{Corollary}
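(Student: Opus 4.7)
The plan is to derive Corollary \ref{CorRR} as a direct specialization of Proposition \ref{exttt} to the case $\rho=s$, exploiting the fact that the degenerate factor $\delta^{-(\rho-s+\alpha)}$ reduces to $\delta^{-\alpha}$, which remains integrable for every $\alpha>0$. Thus the parameter $\alpha$ entering the definition of $\overbrace{m_s}$ can be pushed to $0$, and $\overbrace{m_s}$ sweeps out the full range $(m,\overline{m}_s)$. I will treat each bullet separately, relying on the two estimates already established in Proposition \ref{exttt}.

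For the first bullet (the sub-critical case $m\leq\frac{N+2s}{s}$), I would set $\rho=s$ in the definition
$$\overbrace{m_s}=\frac{Nm(N+2s)}{m(N+2s)\bigl(N(\rho-s+\alpha)-\alpha\bigr)+N(N+2s-ms)}$$
to obtain $\overbrace{m_s}=\frac{Nm(N+2s)}{m(N+2s)\alpha(N-1)+N(N+2s-ms)}$, which tends to $\overline{m}_s$ as $\alpha\to 0^+$. Therefore, given any $p<\overline{m}_s$, I pick $\alpha>0$ so small that $p<\overbrace{m_s}$, then pick $r\in(p,\overline{m}_s)$ with $\frac{1}{r}=\frac{1}{p}-\eta$ for $\eta>0$ correspondingly small. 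Plugging this into the second inequality of \eqref{exter00} yields an exponent
$$\frac{1}{p}-\frac{1}{m}+\frac{N}{2s}\Bigl(\frac{1}{r}-\frac{1}{m}\Bigr)+\frac{1}{2}=-\frac{N+2s}{2s}\Bigl(\frac{1}{m}-\frac{1}{p}\Bigr)+\frac{1}{2}-\frac{N\eta}{2s},$$
which is exactly the claimed time exponent, so the bound follows.

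For the second bullet ($m>\frac{N+2s}{s}$), I would invoke the super-critical branch of Proposition \ref{exttt}, where the constraint $p<\frac{1}{\rho-s}$ becomes vacuous at $\rho=s$ and therefore reduces to $p<\infty$. The accompanying time bound transcribes verbatim to $T^{\frac{1}{p}+\frac{1}{2}-\frac{N+2s}{2sm}}\|h\|_{L^m(\Omega_T)}$, which is the stated estimate.

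For the third bullet, I would observe that estimate \eqref{RQ2} is already valid on the far exterior set $\Omega_1=\{x\notin\Omega: \text{dist}(x,\partial\Omega)\gg 1\}$ independently of any choice of $\alpha$; invoking Theorem \ref{u_delta_s} to control $\|w/\delta^s\|_{L^r(\Omega_T)}$ by $\|h\|_{L^m(\Omega_T)}$ and using H\"older's inequality in time as in \eqref{eqrrpp} gives the asserted bound with $C(\Omega_T)\to 0$ as $T\to 0$. The main subtlety across all three cases is the careful tracking of the parameter $\alpha$ (and of $\eta$) in the first bullet so that $\overbrace{m_s}$ can be made to exceed any prescribed $p<\overline{m}_s$ while keeping $r$ within $(p,\overline{m}_s)$; everything else is a direct specialization of estimates already proved.
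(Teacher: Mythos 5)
Your proposal is correct and coincides with the paper's (implicit) argument: Corollary \ref{CorRR} is stated without a separate proof precisely because it is the specialization of Proposition \ref{exttt} to $\rho=s$, where $\overbrace{m_s}\to\overline{m}_s$ as $\alpha\to 0^+$ and the constraint $p<\frac{1}{\rho-s}$ becomes vacuous. Your exponent computation with $\frac{1}{r}=\frac{1}{p}-\eta$ reproduces the stated time power exactly, so nothing is missing.
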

    Now, combining the results of Theorem \ref{regug}, Corollary
    \ref{CorR}, Proposition
    \ref{exttt} and Corollary \ref{CorRR}, we get the next global regularity result.

    \begin{Theorem}\label{global}
        Assume that $s>\frac 14${, let} $h\in L^{m}(\Omega_T)$ with
        $m\geq 1$ and define $w$ to be the unique solution to  Problem
        \eqref{eq_linear_02}. Then{, there} exists a positive constant $C$
        depending on the data and it is independent of {$T$} and $h$ such
        that
        \begin{enumerate}
            \item If $s>\frac{1}{3}$, for $1\leq m\le \frac{N+2s}{s}$, then
            $w\in L^{r}(0,T; \mathbb{L}_0^{s,r}(\Omega))$ for all
            $r<\frac{m(N+2s)}{N+2s-ms}$
            and for all $\eta>0$ small enough. Moreover, we have
            $$||w||_{L^{r}(0,T; \mathbb{L}_0^{s,r}(\Omega))}\le
            CT^{\frac 12-\frac{N+2s}{2s}(\frac{1}{m}-\frac{1}{r})}(1+
            T^{-\frac{\eta}{2s}})||h||_{L^m(\Omega_T)}
            $$

            \item If $s>\frac 13$ and $m>\frac{N+2s}{s}$, then $w\in
            L^{r}(0,T; \mathbb{L}_0^{s,r}(\Omega))$ for all $r<\infty$
            and for all $\eta>0$ small enough, we have

            $$
            ||w||_{L^{r}(0,T; \mathbb{L}_0^{s,r}(\Omega))}\le C(\O)T^{\frac
                12-\frac{N+2s}{2sm}}(1+T^{\frac{1}{r}}+
            T^{-\frac{\eta}{2s}})||h||_{L^m(\Omega_T)}.
            $$
            \item If $\frac 14<s\le \frac 13$ and $1\le m\le
            \frac{N+2s}{4s-1}$, then for $m<r<\frac{m(N+2s)}{N+2s-m(4s-1)}$,
            we have
            $$
            ||w||_{L^{r}(0,T;\mathbb{L}_0^{s,r}(\Omega))} C(\O)
            T^{-\frac{N+2s}{2s}(\frac{1}{m}-\frac{1}{r})+\frac
                12}\bigg(1+T^{\frac{1-3s}{2s}}+T^{\frac{3s-1-\eta}{2s}}+T^{\frac{3s-1}{2s}}+T^{-\frac{\eta}{2s}}\bigg)
            ||h||_{L^m(\Omega_T)}.
            $$
            \item If $\frac 14<s\le \frac 13$ and $m>\frac{N+2s}{4s-1}$, then
            $w\in L^{r}(0,T; \mathbb{L}_0^{s,r}(\Omega))$ for all $r<\infty $
            and
            $$||w||_{L^{r}(0,T;\mathbb{L}_0^{s,r}(\Omega))}\le C(\O)
            (T^{\frac{1-3s}{2s}}+1)T^{\frac{1}{r}-\frac{N+2s}{2sm}}\bigg(T^{\frac
                12} + T^{\frac{4s-1-\eta}{2s}}+ T^{\frac{4s-1}{2s}}\bigg)
            ||h||_{L^m(\Omega_T)} .$$
        \end{enumerate}
    \end{Theorem}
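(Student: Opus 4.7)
The plan is to assemble Theorem~\ref{global} by combining the interior fractional regularity of $(-\Delta)^{s/2}w$ inside $\Omega$ with the exterior regularity outside $\Omega$, and then to invoke the equivalent norm characterization of $\mathbb{L}^{s,r}_0(\Omega)$. More precisely, recall from Subsection~\ref{sub11} that for $\phi\in \mathbb{L}^{s,r}_0(\Omega)$ we have
\[
\|\phi\|_{\mathbb{L}^{s,r}_0(\Omega)}\simeq \|(-\Delta)^{s/2}\phi\|_{L^r(\mathbb{R}^N)}
= \Bigl(\|(-\Delta)^{s/2}\phi\|_{L^r(\Omega)}^{r}+\|(-\Delta)^{s/2}\phi\|_{L^r(\mathbb{R}^N\setminus\Omega)}^{r}\Bigr)^{1/r}.
\]
Applying this in the parabolic setting (for a.e.\ $t\in(0,T)$ and then integrating the $r$-th power in time) reduces the problem to bounding $(-\Delta)^{s/2}w$ separately on $\Omega_T$ and on $(\mathbb{R}^N\setminus\Omega)\times(0,T)$ with the appropriate norm.

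The first step is to specialize the pointwise kernel estimate of Theorem~\ref{regulaP} (equivalently Theorem~\ref{Thm1.1IN}) to the choice $\rho=s$, and feed it into the representation formula \eqref{repres_ solution} with $w_0=0$. This yields the decomposition of $|(-\Delta)^{s/2}w(x,t)|$ into the five integrals $K_1,\dots,K_5$ (if $s\le 1/2$) or $K_1,L_2,\dots,L_5$ (if $s>1/2$) that appeared in the proof of Theorem~\ref{regug}. Applying the hyper‑singular $L^p$-bounds from Theorem~\ref{integrals regularityII} to each of these, with $\rho=s$ (so that $\rho-s=0$, which simplifies the factor $\delta^{s-\rho}=1$ and removes the constraint $\rho-s<\min\{s/(N+2s),(4s-1)/(N+2s-1)\}$), gives the interior estimate: for $s>1/3$ and $1\le m\le (N+2s)/s$, and $m<r<m(N+2s)/(N+2s-ms)$,
\[
\|(-\Delta)^{s/2}w\|_{L^r(\Omega_T)}\le CT^{\frac{1}{2}-\frac{N+2s}{2s}(\frac1m-\frac1r)}(1+T^{-\eta/(2s)})\|h\|_{L^m(\Omega_T)},
\]
together with the three analogous estimates stated in items (2)–(4) of Corollary~\ref{CorR}.

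The second step is to handle the exterior contribution. By Proposition~\ref{exttt} (or equivalently Corollary~\ref{CorRR} for the specific case $\rho=s$), one has, for $m<p<\overline{m}_s$ with $\overline{m}_s=m(N+2s)/(N+2s-ms)$,
\[
\|(-\Delta)^{s/2}w\|_{L^p((\mathbb{R}^N\setminus\Omega)\times(0,T))}\le C(\Omega)\,T^{-\frac{N+2s}{2s}(\frac1m-\frac1p)+\frac12-\frac{\eta N}{2s}}\|h\|_{L^m(\Omega_T)},
\]
while the case $m>(N+2s)/s$ gives a bound of order $T^{\frac1p+\frac12-\frac{N+2s}{2sm}}\|h\|_{L^m(\Omega_T)}$ for every $p<\infty$. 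Since the exterior exponents matching the interior ranges can always be arranged (by taking $p$ slightly smaller than the sharp interior exponent), these two estimates combine to control $\|(-\Delta)^{s/2}w(\cdot,t)\|_{L^r(\mathbb{R}^N)}$ after integrating in time.

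The third and final step is purely bookkeeping: put the interior and exterior bounds together via the equivalent norm above, noticing that $L^r(\Omega_T)$ embeds into itself and that the power of $T$ coming from the exterior piece is no worse than the one from the interior piece. Distinguishing the four cases corresponding to $s>1/3$ versus $\frac14<s\le\frac13$, and $m$ below or above the critical threshold $(N+2s)/s$ (resp.\ $(N+2s)/(4s-1)$), one reads off exactly the four estimates claimed in Theorem~\ref{global}; the extra $T^{(1-3s)/(2s)}$ factor in cases (3)–(4) is the $T^{(1-2s-\rho)/(2s)}$ from Theorem~\ref{regug} at $\rho=s$. The only subtle point, and the place where care is required, is to check that for each $(s,m,r)$ in the admissible range one can indeed find a common exponent $p\ge r$ (or $p=r$) for which both the interior and the exterior estimate apply simultaneously; the restriction $s>1/4$ enters here exactly as in Theorem~\ref{regug}, since the integrability near $\tau=t$ of the singular factor $(t-\tau)^{(2s-1)/(2s)}$ arising in $K_2,K_4$ requires $4s>1$. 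No further argument is needed, and the main obstacle has already been absorbed into the preceding propositions.
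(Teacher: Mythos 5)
Your proposal is correct and follows essentially the same route as the paper: the paper itself gives no separate proof of Theorem~\ref{global}, stating only that it follows by combining Theorem~\ref{regug} and Corollary~\ref{CorR} (interior estimate, specialized to $\rho=s$) with Proposition~\ref{exttt} and Corollary~\ref{CorRR} (exterior estimate), glued together through the equivalent norm $\|\phi\|_{\mathbb{L}^{s,r}_0(\Omega)}\simeq\|(-\Delta)^{s/2}\phi\|_{L^r(\mathbb{R}^N)}$. Your identification of where the restriction $s>\frac14$ enters (integrability of $(t-\tau)^{\frac{2s-1}{2s}}$ near $\tau=t$) and of the origin of the $T^{\frac{1-3s}{2s}}$ factor in cases (3)--(4) matches the paper's argument.
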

    As a consequence and by taking into consideration the relation
    between $\mathbb{L}^{s,r}_0(\O)$ and $\W^{s,r}_0(\O)$, we get the next
    corollary that improves the regularity results obtained in
    \cite{AABP, APPS}.
    \begin{Corollary}
        Suppose that $s>\frac 14$, let $h\in L^{m}(\Omega_T)$ with $1\leq m$ and consider $w$, the unique solution to  Problem \eqref{eq_linear_02}.
        Then,
        we have
        \begin{enumerate}
            \item If $\frac 14<s\le \frac 13$ and $1\le m<\frac{N+2s}{4s-1}$,
            then $w\in L^r(0,T; \W^{s,r}_0(\O))$ for all $r<\frac{m(N+2s)}{N+2s-m(4s-1)}$\textcolor{magenta}    {;}
            \item If $\frac 13<s<1$ and $1\le m\le \frac{N+2s}{s}$, then $w\in L^r(0,T; \W^{s,r}_0(\O))$ for all $r<\frac{m(N+2s)}{N+s-ms}$.
        \end{enumerate}
        In particular, for $m=1$, we have $w\in L^r(0,T; \W^{s,r}_0(\O))$
        for all $r<\min\{\frac{{N+2s}}{N+1-2s},\frac{N+2s}{N+s}\}$.

        Furthermore, in any case, we have
        $$||w||_{L^r(0,T; \W^{s,r}_0(\O))}\le
        C(\O_T)||h||_{L^m(\O_T)}.
        $$
    \end{Corollary}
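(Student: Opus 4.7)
The plan is to derive the fractional Sobolev regularity of $w$ from the parabolic Bessel-potential regularity already established in Theorem \ref{global}, by invoking the embedding properties recorded in Proposition \ref{proper}. The key point is that while point~1 of Proposition \ref{proper} only yields $\mathbb{L}^{s,r}_0(\O)\hookrightarrow W^{s,r}_0(\O)$ when $r\geq 2$, point~2 furnishes the strict inclusion $\mathbb{L}^{s+\e,r}(\ren)\subset W^{s,r}(\ren)$ for every $\e\in(0,s)$ and every $r\in(1,+\infty)$. This is precisely what is needed to cover the small-$r$ regime (for instance when $m=1$, where $r<\tfrac{N+2s}{N+s}<2$) where the direct embedding fails.

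First, I would fix $r$ in the range prescribed by the corollary and then select $\e>0$ small enough so that: (i) the condition $\rho-s=\e<\min\{\tfrac{s}{N+2s},\tfrac{4s-1}{N+2s-1}\}$ required by Theorem \ref{regug} (and hence by Theorem \ref{global}) is satisfied; and (ii) the admissible exponent for $(-\Delta)^{\frac{s+\e}{2}}w$, namely $\tfrac{m(N+2s)}{(N+2s)(m\e+1)-ms}$ in the case $s>\tfrac13$ or $\tfrac{m(N+2s)}{(N+2s)(m\e+1)-m(3s+\e-1)}$ in the case $\tfrac14<s\leq\tfrac13$, still exceeds the chosen $r$. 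Continuity of these thresholds in $\e$ at $\e=0$ recovers exactly the ranges $r<\tfrac{m(N+2s)}{N+2s-ms}$ and $r<\tfrac{m(N+2s)}{N+2s-m(4s-1)}$ announced in the corollary.

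Secondly, I would apply Theorem \ref{global} with $\rho=s+\e$ to obtain $(-\Delta)^{\frac{s+\e}{2}}w\in L^{r}(\Omega_T)$ together with the quantitative bound $\|(-\Delta)^{\frac{s+\e}{2}}w\|_{L^r(\O_T)}\leq C(\O_T)\|h\|_{L^m(\O_T)}$. Coupling this with the $L^r(\O_T)$ bound for $w$ itself (which follows from Theorem \ref{main-exis} composed with the standard Marcinkiewicz/Riesz-potential estimates of Theorem \ref{stein1}, valid throughout the prescribed range of $r$), one concludes $w\in L^{r}(0,T;\mathbb{L}^{s+\e,r}_0(\Omega))$ with the same control by $\|h\|_{L^m(\O_T)}$. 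Applying the embedding $\mathbb{L}^{s+\e,r}_0(\O)\hookrightarrow W^{s,r}_0(\O)$ from Proposition \ref{proper} pointwise in $t$ and then integrating in time yields $w\in L^{r}(0,T;W^{s,r}_0(\O))$ with the claimed estimate; the case $m=1$ is recovered by specialising $r<\min\{\tfrac{N+2s}{N+1-2s},\tfrac{N+2s}{N+s}\}$, corresponding to the $(s,m)$-regions of items (1) and (2) respectively evaluated at $m=1$.

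The main technical obstacle is precisely this need to shift the differentiability order slightly upward: applying Theorem \ref{global} at the critical value $\rho=s$ produces only Bessel-potential regularity which does not embed into $W^{s,r}$ below the Hilbertian threshold $r=2$. The flexibility of Theorem \ref{regug} over the entire range $\rho\in[s,\min\{1,2s\})$ is therefore essential, and the sharp exponents of the corollary are preserved by letting $\e\to 0^+$ while keeping the $\e$-dependent constants uniformly bounded on any compact subrange of $r$. No additional argument is required for the outside-of-$\O$ part, since Proposition \ref{exttt} already delivers the needed $L^r((\mathbb{R}^N\setminus\O)\times(0,T))$ integrability of $(-\Delta)^{\frac{s+\e}{2}}w$ within the same $r$-range, completing the global fractional Sobolev estimate.
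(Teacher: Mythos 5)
Your proposal is correct and follows essentially the same route as the paper: the paper's own justification is the one-line appeal to the relation between $\mathbb{L}^{s,r}_0(\O)$ and $\W^{s,r}_0(\O)$, which is made precise by Remark \ref{mainr1oo} (the $\rho=s+\e$ shift in Theorem \ref{regug} and Proposition \ref{exttt}) combined with the embedding $\mathbb{L}^{s+\e,r}\subset W^{s,r}$ of Proposition \ref{proper}, exactly the mechanism you describe. The only blemish is a transcription slip: with $\rho=s+\e$ the threshold in the case $\frac14<s\le\frac13$ is $\frac{m(N+2s)}{(N+2s)(m\e+1)-m(4s+\e-1)}$ rather than the expression you wrote, but its limit as $\e\to0^+$ is the stated $\frac{m(N+2s)}{N+2s-m(4s-1)}$, so your conclusion stands.
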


    \begin{remarks}\label{mainr1oo}
        Assume that $s>\frac 14$ and let $\rho$ be fixed such that $s\le
        \rho<\max\{2s,1\}$ {and} $\rho-s<\min\{\frac{s}{(N+2s)},
        \frac{4s-1}{(N+2s-1)}\}$.
        {Then, we have \\ \rm (i) For}  $s>\frac 13$ and $r<\frac{m(N+2s)}{N+2s-ms}$, we get the
        existence of $\rho>s$, close to $s$ such that $r<\frac{m(N+2s)}{(N+2s)(m(\rho-s)+1)-ms}${;}\\
{\rm (ii) For}   $\frac 14<s\le \frac 13$ and
$r<\frac{m(N+2s)}{N+2s-m(4s-1)}$,
        we get the existence of $\rho>s$, close to $s$ such that\\ $r<\frac{m(N+2s)}{(N+2s)(m(\rho-s)+1)-m(3s+\rho-1)}$.\\

        Therefore, according to Theorem \ref{regug} and Proposition
        \ref{exttt}, choosing $\rho>s$ as above, we deduce that if $w$ is
        the unique weak solution to Problem \eqref{eq_linear_02}, then
        $w\in L^{r}(0,T; \mathbb{L}_0^{\rho,r}(\Omega))$ and
        $$
        ||w(.,t)||_{L^{\rho}(0,T; \mathbb{L}_0^{\rho,r}(\Omega))}\le
        C(\O_T)||h||_{L^m(\O_T)}.
        $$
    \end{remarks}

    \

    Let now $w$ be the solution to the general Problem $(FHE)$, then
    combing the above regularity results for booth cases: $h=0, w_0\neq 0$ and $h\neq 0, w_0=0$, we get the next consequence:
    \begin{Corollary}\label{mainr1000}
        Assume that $s>\frac 14$ and fix be $\rho$ such that $s\le
        \rho<s+\min\{\frac{s}{(N+2s)},\frac{4s-1}{(N+2s-1)}\}$. Let $h\in
        L^1(\O_T)$ and $w_0\in L^1(\O)$. Define
        \begin{equation}\label{kapp}
            \widehat{\kappa}_{s,\rho}=\min\bigg\{\frac{N+2s}{(N+2s)(\rho-s)+N+s},\frac{N+2s}{(N+2s)(\rho-s)+N+1-s-\rho}\bigg\}.
        \end{equation}

        $\bullet$ If $w$ is a solution to Problem \eqref{eq_linear_02},
        then using Theorem \ref{regug} and Proposition \ref{exttt}, we
        obtain that if $s\le
        \rho<s+\min\{\frac{s}{(N+2s)},\frac{4s-1}{(N+2s-1)}\}$, then
        $(-\Delta)^{\frac{\rho}{2}} w\in L^r(\mathbb{R}^N\times (0,T))$
        for all $r<\widehat{\kappa}_{s,\rho}$ and
        $$
        ||w||_{L^r( 0,T;\mathbb{L}_0^{\rho,r}(\Omega))}\le
        C(\O_T)\bigg(||h||_{L^1(\O_T)}+\bigg\|\dfrac{w}{\delta^s}\bigg\|_{L^{r_0}(\Omega_T)}\bigg)\le
        C(T)||h||_{L^1(\O_T)},
        $$
        where $r_0>1$ is chosen such that $r<r_0<\frac{N+2s}{N+s}$.

        \

        $\bullet$ Now, going back to Problem $(FHE)$, assume that
        $(h,w_0)\in L^1(\O_T)\times L^1(\O)$, combining the regularity
        results in Theorems \ref{reg_frac_u0_t}, Corollary \ref{CRR},
        Theorem \ref{regug} and Proposition \ref{exttt}, we deduce that
        $(-\Delta)^{\frac{\rho}{2}} w\in L^r(\mathbb{R}^N\times (0,T))$
        for all $r<\widehat{\kappa}_{s,\rho}$ and
        $$
        ||w||_{L^r( 0,T;\mathbb{L}_0^{\rho,r}(\Omega))}\le
        C(\O_T)\bigg(||h||_{L^1(\O_T)}+{||w_0||_{L^{1}(\Omega)}}\bigg).
        $$
        In any case we have $C(\O_T)\to 0$ as $T\to \infty$.

        \noindent $\bullet$ If $\rho=s$, then
        $\widehat{\kappa}_{s,s}=\frac{N+2s}{N+s}$ if $s>\frac 13$ and
        $\widehat{\kappa}_{s,s}=\frac{N+2s}{N+1-2s}$ if $s\in(\frac
        14,\frac 13]$.
    \end{Corollary}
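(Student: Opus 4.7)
The corollary is essentially a synthesis of the regularity machinery already assembled in the section, so the plan is to exploit the linearity of $(FHE)$ and carefully reconcile the various exponents. First, I would decompose the unique weak solution as $w = w^{(1)} + w^{(2)}$, where $w^{(1)}$ solves $(FHE)$ with data $(0,w_0)$ and $w^{(2)}$ solves $(FHE)$ with data $(h,0)$. Linearity of the kernel representation \eqref{repres_ solution} guarantees that this splitting is consistent with the very weak / energy framework, so it is enough to establish the $L^r(0,T;\mathbb{L}_0^{\rho,r}(\Omega))$-bound separately for each piece and then sum.

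For the second piece $w^{(2)}$, with $m=1$, I would invoke Theorem~\ref{regug} to control $(-\Delta)^{\rho/2} w^{(2)}$ on $\Omega_T$. Specializing $m=1$ in the two cases of that theorem gives the thresholds
\[
\overline{\overline{m}}_{s,\rho}=\frac{N+2s}{(N+2s)(\rho-s)+N+s}\quad\text{if }2s+\rho\ge 1,
\]
and
\[
\frac{N+2s}{(N+2s)(\rho-s)+N+1-s-\rho}\quad\text{if }2s+\rho<1,
\]
which is exactly the pair appearing in the definition \eqref{kapp} of $\widehat{\kappa}_{s,\rho}$. The smallness condition $\rho-s<\min\{s/(N+2s),(4s-1)/(N+2s-1)\}$ ensures both of these exponents exceed $1$, so there is a nontrivial range $r<\widehat{\kappa}_{s,\rho}$. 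To pass from $\Omega$ to $\mathbb{R}^N$, I would then apply Proposition~\ref{exttt} with $m=1$ and check that the corresponding exterior threshold $\overbrace{m_s}$ tends, as $\alpha\downarrow 0$, to $(N+2s)/((N+2s)(\rho-s)+N+s)$, matching the interior one; hence no loss of integrability occurs outside $\Omega$. The intermediate quantity $\|w^{(2)}/\delta^s\|_{L^{r_0}(\Omega_T)}$ with $r_0\in(r,(N+2s)/(N+s))$ comes straight from Theorem~\ref{u_delta_s}, and is dominated by $\|h\|_{L^1(\Omega_T)}$ with a constant $C(\Omega_T)\to 0$ as $T\to 0$.

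For the first piece $w^{(1)}$, I would appeal to Theorem~\ref{reg_frac_u0_t} (and the following Corollary~\ref{CRR}) with $\sigma=1$ to integrate the pointwise $L^p$ bound of $(-\Delta)^{\rho/2} w^{(1)}(\cdot,t)$ in time. The interior threshold coming from Corollary~\ref{CRR} is $r<(N+2s)/(N+s+N(\rho-s))$, and since $N(\rho-s)\le(N+2s)(\rho-s)$, this threshold is \emph{no worse} than $\widehat{\kappa}_{s,\rho}$; in particular, $r<\widehat{\kappa}_{s,\rho}$ sits comfortably inside the allowed range. The exterior estimate is then obtained by Proposition~\ref{ext1} with $\sigma=1$, producing a bound in $L^r((\mathbb{R}^N\setminus\Omega)\times(0,T))$ with the same power of $T$; combined with the interior piece this yields the control by $\|w_0\|_{L^1(\Omega)}$ with a constant $C(\Omega_T)$ that again vanishes as $T\to 0$.

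Adding the two contributions gives the announced estimate on $\mathbb{R}^N\times(0,T)$. The genuinely delicate point, and the only one that is not a direct citation, is verifying that the minimum in \eqref{kapp} really does control both regimes $2s+\rho\gtreqless 1$ simultaneously: comparing $N+s$ and $N+1-s-\rho$ one sees that the first term in $\widehat{\kappa}_{s,\rho}$ is the binding one exactly when $2s+\rho\ge 1$ and the second when $2s+\rho<1$, so the minimum captures the correct threshold in each case. Finally, to obtain the special value $\widehat{\kappa}_{s,s}=(N+2s)/(N+s)$ for $s>\tfrac13$ and $(N+2s)/(N+1-2s)$ for $s\in(\tfrac14,\tfrac13]$, it suffices to substitute $\rho=s$ in \eqref{kapp} and note that the sign of $2s+s-1=3s-1$ selects which branch achieves the minimum.
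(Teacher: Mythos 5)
Your proposal is correct and follows essentially the same route as the paper, which presents this corollary precisely as the superposition (by linearity of $(FHE)$) of the case $(0,w_0)$ handled by Theorem \ref{reg_frac_u0_t}, Corollary \ref{CRR} and Proposition \ref{ext1}, and the case $(h,0)$ handled by Theorem \ref{regug} and Proposition \ref{exttt}; your exponent bookkeeping (specializing $m=\sigma=1$, identifying the two branches of \eqref{kapp} with the two regimes $2s+\rho\gtrless 1$, and the $\rho=s$ evaluation) is accurate.
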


    \vspace{0.2cm}

    \section{Compactness results.}\label{Compactness_Section}
    Throughout this section{, we} assume that $s>\frac 14${ and $\rho$ be fixed } such that $s\le
    \rho<s+\min\{\frac{s}{(N+2s)},\frac{4s-1}{(N+2s-1)}\}$. Let $h\in
    L^1(\O_T)$ and $w_0\in L^1(\O)$. Recall that $w$ is the unique
    weak solution to the {problem}
    \begin{equation*}\label{linear-comp}
{   (FHE) \qquad    }    \left\{
        \begin{array}{llll}
            w_t+(-\Delta)^s w&=& h
            & \text{in}\quad\Omega_T,\vspace{0.2cm}\\
            w(x,t)&=&0&\text{in} \quad (\mathbb{R}^N\setminus\Omega)\times(0,T),\vspace{0.2cm}\\
            w(x,0)&=&w_0(x)&\text{in}\quad    \Omega.
        \end{array}
        \right.
    \end{equation*}
    According to the regularity result obtained in
    the previous section, we can define the operator\\ $\Phi: L^1(\O_T)\times L^1(\O)\to L^q(0,T; \mathbb{L}^{s,q}_0(\O))$
    for $q<\widehat{\kappa}_{s,\rho}$, defined in \eqref{kapp}, where
    $w=\Phi(h,w_0)$ is the unique solution to Problem
    $(FHE)$. Using the last point in Corollary \ref{mainr1000}, we deduce that
    for $q<\widehat{\kappa}_{s,s}=\min\{\frac{N+2s}{N+s},\frac{N+2s}{N+1-2s}\}$, fixed, there
    exists $\e>0$ small enough such that for all $\rho\in [s,s+\e]$, we
    have $w\in L^{q}(0,T; \mathbb{L}_0^{\rho,q}(\Omega))$ and
    $$||w||_{L^{q}(0,T; \mathbb{L}_0^{\rho,q}(\Omega))}\le
    C(\O_T)(||h||_{L^1(\O_T)}+||w_0||_{L^1(\O_T)}),
    $$
    with $C(\O_T)\to 0$ as $T\to 0$. The goal of this section is to
    show that $\Phi$ is a compact operator{, this }will be achieved by
    improving the previous regularity {that we have shown for} $w$.

    Taking into consideration the deep difference between the cases
    $s>\frac 12$ and $s\le \frac 12$. We will consider separately the
    above cases using different approach.

    The results obtained in this
    section can be seen as an improvement of previous compactness
    results obtained in \cite{CV,APPS,AFTY}.

    In the case where $s>\frac 12$, which is the more regular, we have
    the next proposition.
    \begin{Proposition}\label{s12}
        Assume that $s>\frac 12$ {and} $(h,w_0)\in
        L^{1}(\Omega_T)\times L^1(\O)${, let} $w$ be the unique weak
        solution to
        Problem $(FHE)$, then for all $\varrho>0$, for all $r<\frac{N+2s}{N}$ and for all
        $1<\a<2s$, we have
        \begin{equation}\label{control1}
        {   \varrho \,} \bigg|\bigg\{(x,t)\in \O_T\mbox{ with } |\n w(x,t)|\ge
            \varrho\bigg\}\bigg|^{\frac{r+\a-1}{\a r}}\le
            C\bigg(||h||_{L^1(\O_T)}+||w_0||_{L^1(\O)}\bigg)^{\frac{1}{\a}}||w||^{\frac{
                    (\a-1)}{\a}}_{L^r(\O_T)}. \end{equation}
    \end{Proposition}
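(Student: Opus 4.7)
The plan is to adapt the classical Boccardo-Gallouet truncation strategy to the fractional parabolic setting, combining an energy estimate on $T_k(w)$ with an optimization over the truncation level $k$.

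First, I would obtain a truncated energy estimate by choosing $T_k(w)$ as a test function in the weak formulation of $(FHE)$. Using the Stroock-Varopoulos-type inequality
$$\int_\Omega (-\Delta)^s w \cdot T_k(w)\, dx \geq \|(-\Delta)^{s/2} T_k(w)\|^2_{L^2(\Omega)}$$
together with the parabolic primitive $\Theta_k(\sigma) := \int_0^\sigma T_k(\tau)\,d\tau$, which satisfies $0 \leq \Theta_k(\sigma) \leq k|\sigma|$, one arrives at the fundamental bound
$$\int_{\Omega_T} |(-\Delta)^{s/2} T_k(w)|^2\, dx\, dt + \sup_{t\in(0,T)} \int_\Omega \Theta_k(w(\cdot,t))\,dx \leq C\, k\, \mathcal{D},$$
with $\mathcal{D}:=\|h\|_{L^1(\Omega_T)}+\|w_0\|_{L^1(\Omega)}$. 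In particular, $T_k(w)$ lies in $L^2(0,T;\mathbb{H}^s_0(\Omega))$ with squared norm bounded by $Ck\mathcal{D}$, while $\|T_k(w)\|_{L^\infty(\Omega_T)} \leq k$.

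The crucial second step is to upgrade this fractional energy bound into a local gradient estimate
$$\int_{\Omega_T} |\nabla T_k(w)|^\alpha\, dx\, dt \leq C\, k^{\alpha-1}\, \mathcal{D}, \qquad 1 < \alpha < 2s.$$
For $s > 1/2$, this can be achieved by interpolating the $\mathbb{H}^s_0$-energy bound with the truncation bound $\|T_k(w)\|_{L^\infty} \leq k$, via a fractional Gagliardo-Nirenberg-type inequality of the form
$$\|\nabla u\|_{L^\alpha(\Omega)} \leq C\,\|u\|^{1-\theta}_{L^\infty(\Omega)}\,\|(-\Delta)^{s/2}u\|^{\theta}_{L^{2}(\Omega)},$$
whose dimensional admissibility forces $\alpha$ strictly below $2s$. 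Integrating in time and using the space-time energy bound then delivers the displayed estimate.

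Once this gradient estimate is secured, I would apply the Boccardo-Gallouet decomposition: for any $k, \varrho > 0$, setting $E_\varrho := \{(x,t)\in\Omega_T : |\nabla w(x,t)|\geq \varrho\}$,
$$|E_\varrho| \leq |\{|\nabla T_k(w)| > \varrho\}| + |\{|w| > k\}| \leq \frac{C\,k^{\alpha-1}\,\mathcal{D}}{\varrho^\alpha} + \frac{\|w\|^r_{L^r(\Omega_T)}}{k^r},$$
by Chebyshev combined with the gradient estimate and the integrability $w \in L^r(\Omega_T)$ supplied by Theorem \ref{main-exis}. Minimizing in $k$ yields the critical balance $k^{r+\alpha-1}\sim \varrho^\alpha\|w\|^r_{L^r}/\mathcal{D}$; substituting back gives
$$|E_\varrho| \leq C \, \varrho^{-\alpha r/(r+\alpha-1)}\,\mathcal{D}^{r/(r+\alpha-1)}\,\|w\|^{r(\alpha-1)/(r+\alpha-1)}_{L^r(\Omega_T)},$$
and raising to the $(r+\alpha-1)/(\alpha r)$-th power produces exactly \eqref{control1}.

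The hardest point I expect to face is the passage from the fractional energy bound to the genuine local-gradient estimate in the second step. The difficulty is that for $s < 1$ there is no Sobolev embedding sending $\mathbb{H}^s_0(\Omega)$ into any $W^{1,p}(\Omega)$, so the energy alone cannot provide any local-gradient integrability. What rescues the argument for $s > 1/2$ is the combined use of the $L^\infty$-truncation: interpolation between $L^\infty$ and $\mathbb{H}^s_0$ yields a $W^{1,\alpha}$-bound, but only for $\alpha$ strictly below $2s$---which is precisely the threshold appearing in the statement. This is moreover consistent with the pointwise kernel estimate $|\nabla_x P_\Omega|\in L^{q}$ for $q<(N+2s)/(N+1)$ in Lemma \ref{lemma_P}, corresponding to the endpoint $\alpha\to 2s$, $r\to (N+2s)/N$ in the optimized inequality.
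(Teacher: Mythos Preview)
Your overall architecture is exactly that of the paper: derive an $L^\alpha$-gradient bound on $T_k(w)$ of the form $\iint_{\Omega_T}|\nabla T_k(w)|^\alpha\le Ck^{\alpha-1}\mathcal{D}$, split the super-level set of $|\nabla w|$ via $\{|w|\le k\}\cup\{|w|>k\}$, apply Chebyshev on each piece, and optimize in $k$. Steps 1, 3 and 4 of your proposal coincide with the paper's argument almost verbatim.

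The genuine gap is in your Step~2. The interpolation inequality you invoke,
\[
\|\nabla u\|_{L^\alpha(\Omega)}\le C\,\|u\|_{L^\infty(\Omega)}^{1-\theta}\,\|(-\Delta)^{s/2}u\|_{L^2(\Omega)}^{\theta},
\]
is \emph{false} for every $\theta\in(0,1]$ when $s<1$. The reason is structural: the right-hand side carries at most $s$ derivatives, while the left-hand side requires a full derivative; no amount of $L^\infty$ control can manufacture the missing $1-s$ orders of smoothness. Concretely, on a bounded interval with $s\in(\tfrac12,1)$, set $u_n(x)=n^{-s}\sin(nx)\phi(x)$ for a fixed cutoff $\phi$. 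Then $\|u_n\|_{L^\infty}\sim n^{-s}$, $\|(-\Delta)^{s/2}u_n\|_{L^2}\sim 1$, but $\|u_n'\|_{L^\alpha}\sim n^{1-s}\to\infty$, so the inequality would force $n^{1-s\theta}\le C$, i.e.\ $\theta\ge 1/s>1$. You were right to flag this as the hardest step; the difficulty is not merely technical but an actual obstruction.

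The paper does not attempt any such interpolation. Instead it imports the bound $\iint_{\Omega_T}|\nabla T_k(w)|^\alpha\,dx\,dt\le C k^{\alpha-1}\mathcal{D}$ for $1<\alpha<2s$ directly from \cite[Proposition~3.7]{APPS}, where it is proved by PDE-specific means (ultimately resting on the pointwise kernel gradient estimate $|\nabla_x P_\Omega(x,y,t)|\le C\big(\delta(x)\wedge t^{1/2s}\big)^{-1}P_\Omega(x,y,t)$ recorded in Lemma~\ref{lemma_P}). In other words, the passage from fractional energy to local gradient integrability for $T_k(w)$ uses the equation and the heat kernel, not functional interpolation; once that external input is accepted, the rest of your argument is correct and identical to the paper's.
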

    \begin{proof}
        Since $s>\frac 12$, then using Proposition 3.7 in \cite{APPS}, for
        all $1<\a<2s$ and for all $k>0$, we have
        $$
        ||T_k(w)||_{L^\a(0,T;{\W^{1,\a}_0(\O)})}\le C(\O, T)k^{\a-1}
        (||w_0||_{L^1(\O)}+||h||_{L^1(\O_T)}).
        $$
        Let $\varrho>0$, then
        $$
        \begin{array}{lll}
            & \bigg\{(x,t)\in \O_T\mbox{  with } |\n w(x,t)|\ge \varrho\bigg\}\subset \\
            & \bigg\{(x,t)\in \O_T\mbox{ with } |\n w(x,t)|\ge \varrho\mbox { and
            } |w(x,t)|\le k\bigg\}\cup \bigg\{(x,t)\in \O_T\mbox{ with }
            |w(x,t)|\ge k\bigg\}.
        \end{array}
        $$
        Hence
        $$
        \begin{array}{lll}
            & \bigg|\bigg\{(x,t)\in \O_T\mbox{  with } |\n w(x,t)|\ge \varrho\bigg\}\bigg|\le
            \\ \\& \bigg|\bigg\{(x,t)\in \O_T\mbox{ with } |\n T_k(w(x,t))|\ge \varrho\bigg\}\bigg|+\bigg|\bigg\{(x,t)\in \O_T\mbox{ with }
            |w(x,t)|\ge k\bigg\}\bigg|.
        \end{array}
        $$
        Notice that
        $$
        \bigg|\bigg\{(x,t)\in \O_T\mbox{ with } |\n T_k(w(x,t))|\ge
        \varrho\bigg\}\bigg|\le \iint_{\O_T}\frac{|\n T_k(w(x,t))|^\a}{\varrho^\a} dxdt\le
        \frac{C(\O_T)
            k^{\a-1}}{\varrho^\a}\bigg(||w_0||_{L^1(\O)}+||h||_{L^1(\O_T)}\bigg).
        $$
        On the other hand, since $w\in L^r(\O_T)$, we have
        $$
        \bigg|\bigg\{(x,t)\in \O_T\mbox{ with }
        |w(x,t)|\ge k\bigg\}\bigg|\le \frac{||w||^r_{L^r(\O_T)}}{k^r}.
        $$
        Thus
        $$
        \bigg|\bigg\{(x,t)\in \O_T\mbox{  with } |\n w(x,t)|\ge \varrho\bigg\}\bigg|\le
        \frac{C(\O_T)
            k^{\a-1}}{\varrho^\a}\bigg(||w_0||_{L^1(\O)}+||h||_{L^1(\O_T)}\bigg)+
        \frac{||w||^r_{L^r(\O_T)}}{k^r}.
        $$
        Minimizing the second expression in $k$, we reach that
        $$
        \bigg|\bigg\{(x,t)\in \O_T\mbox{  with } |\n w(x,t)|\ge \varrho\bigg\}\bigg|\le
        \frac{C(\O_T,\a)}{\varrho^{\frac{\a
                    r}{r+\a-1}}}||w||^{\frac{r(\a-1)}{r+\a-1}}_{L^r(\O_T)}
        \bigg(||w_0||_{L^1(\O)}+||h||_{L^1(\O_T)}\bigg)^{\frac{r}{r+\a-1}}.
        $$
        Hence, we conclude.
    \end{proof}

    We consider now the case $s\le \frac 12$, as we have already pointed out, the situation here becomes
    more complicated due to the fact that in general $|\n w|\notin
    L^1(\O_T)$ and $(-\D)^{\frac{s}{2}}w \chi_{\{|w|\le k\}}\neq
    (-\D)^{\frac{s}{2}}T_k(w)$. In order to get the corresponding
    compactness result, we need to improve the previous estimates in
    the corresponding fractional parabolic spaces.

    Fix $q>1$, we define
    $W(x,y,t):=\dfrac{w(x,t)-w(y,t)}{|x-y|^{\frac{N}{q}+s}}$. Then, we
    have the next estimate.

    \begin{Proposition}\label{compat11}
        Assume that $\frac 14<s\le \frac 12$ and fix
        $q<\min\{\frac{N+2s}{N+s},\frac{N+2s}{N+1-2s}\}$,
        $r<\frac{N+2s}{N}$. {Then, we have}
        \begin{equation}\label{control2} \varrho
            \bigg|\bigg\{(x,y,t)\in \O\times \O\times (0,T)\mbox{ with }
            |W(x,y,t)|\ge \varrho\bigg\}\bigg|^{\frac{2r+q}{2qr}}\le
            C\bigg(||h||_{L^1(\O_T)}+||w_0||_{L^1(\O)}\bigg)^{\frac 12}||w||^{\frac
                12}_{L^r(\O_T)}.
        \end{equation}
    \end{Proposition}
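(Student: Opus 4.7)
The approach mirrors Proposition \ref{s12}, using the nonlocal Marcinkiewicz characterization of the Bessel--potential space (Theorem \ref{GUT}, formula \eqref{GUTFB}) in place of the local gradient truncation estimate. For $k>0$ set
$$W_k(x,y,t):=\frac{T_k(w)(x,t)-T_k(w)(y,t)}{|x-y|^{\frac{N}{q}+s}}.$$
Since $T_k(w)\equiv w$ on $\{|w|\le k\}$, the functions $W$ and $W_k$ coincide on the set $\{|w(x,t)|\le k\}\cap\{|w(y,t)|\le k\}$; hence
$$\{(x,y,t)\in\Omega\times\Omega\times(0,T):|W|\ge\varrho\}\subseteq\{|W_k|\ge\varrho\}\cup\{|w(x,t)|>k\}\cup\{|w(y,t)|>k\}.$$
By Theorem \ref{main-exis}, $w\in L^r(\Omega_T)$ for $r<\frac{N+2s}{N}$, so Chebyshev bounds each of the two tail sets by $|\Omega|\,\|w\|_{L^r(\Omega_T)}^r/k^r$.

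For the principal set I would exploit the standard $L^1$-data energy estimate, obtained by testing the approximating problem with $T_k(w)$:
$$\int_0^T\|(-\Delta)^{s/2}T_k(w)(\cdot,t)\|_{L^2(\mathbb{R}^N)}^2\,dt\le Ck\bigl(\|h\|_{L^1(\Omega_T)}+\|w_0\|_{L^1(\Omega)}\bigr).$$
The hypothesis $q<\min\{(N+2s)/(N+s),(N+2s)/(N+1-2s)\}$ combined with $s\in(1/4,1/2]$ forces $q<2$, so for each $t$ the function $T_k(w)(\cdot,t)\in\mathbb{H}^s_0(\Omega)$ belongs to $\mathbb{L}^{s,q}_0(\Omega)$. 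Applying \eqref{GUTFB} with $p=q$ slice-by-slice in $t$ and integrating yields
$$|\{(x,y,t):|W_k|\ge\varrho\}|\le\frac{C}{\varrho^q}\int_0^T\|(-\Delta)^{s/2}T_k(w)(\cdot,t)\|_{L^q(\mathbb{R}^N)}^q\,dt.$$
H\"older's inequality in space (allowed by $q<2$) followed by H\"older in time reduces the right-hand side to $C\,|\Omega|^{1-q/2}T^{1-q/2}\bigl(\int_0^T\|(-\Delta)^{s/2}T_k(w)(\cdot,t)\|_{L^2}^2\,dt\bigr)^{q/2}$, giving
$$|\{|W_k|\ge\varrho\}|\le\frac{C\,k^{q/2}\bigl(\|h\|_{L^1(\Omega_T)}+\|w_0\|_{L^1(\Omega)}\bigr)^{q/2}}{\varrho^q}.$$

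Combining with the tail estimate and optimizing in $k>0$ (the minimum is attained when the two summands stand in ratio $2r/q$) produces
$$|\{|W|\ge\varrho\}|\le C\,\varrho^{-\frac{2qr}{q+2r}}\bigl(\|h\|_{L^1(\Omega_T)}+\|w_0\|_{L^1(\Omega)}\bigr)^{\frac{qr}{q+2r}}\|w\|_{L^r(\Omega_T)}^{\frac{qr}{q+2r}},$$
which is precisely \eqref{control2} after raising to the power $(q+2r)/(2qr)$. The main technical obstacle I anticipate is the passage from the interior $L^2$-energy seminorm to the full $\|(-\Delta)^{s/2}T_k(w)\|_{L^q(\mathbb{R}^N)}$, because $(-\Delta)^{s/2}$ of a compactly supported function is not itself compactly supported. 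The interior piece is handled by plain H\"older from $L^2$, while outside $\Omega$ the pointwise bound $|(-\Delta)^{s/2}T_k(w)(x,t)|\lesssim k\,\delta(x)^{-s}$ near $\partial\Omega$ (and $\lesssim k/|x|^{N+s}$ at infinity) gives an $L^q$-integrable tail precisely because $qs<1$ under our standing hypotheses, with $k$-dependence compatible with the interior bound; the optimization step itself is then a routine application of Young's inequality.
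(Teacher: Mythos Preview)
Your overall strategy matches the paper's: split the superlevel set into a truncated piece and two tail pieces, control the truncated piece via the Marcinkiewicz bound \eqref{GUTFB} and the $L^2$ energy estimate for $T_k(w)$, bound the tails by Chebyshev, and optimize in $k$. Your decomposition is in fact slightly cleaner than the paper's (they introduce an auxiliary weight $|x-y|^\theta$ which they then immediately throw away using boundedness of $\Omega$).

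The genuine gap is in the exterior estimate, and it is exactly where you flagged the difficulty. Your pointwise bound
$|(-\Delta)^{s/2}T_k(w)(x,t)|\lesssim k\,\delta(x)^{-s}$ for $x\notin\Omega$ is correct, and $qs<1$ does make $\delta^{-qs}$ integrable on a neighborhood of $\partial\Omega$, but this yields
\[
\int_0^T\|(-\Delta)^{s/2}T_k(w)(\cdot,t)\|_{L^q(\mathbb{R}^N\setminus\Omega)}^q\,dt \lesssim T\,k^{q},
\]
not $k^{q/2}(\|h\|_{L^1}+\|w_0\|_{L^1})^{q/2}$. With a $k^q$ term present, the minimization in $k$ no longer produces the exponents in \eqref{control2}; the extra term $k^q/\varrho^q$ dominates $k^{q/2}M^{q/2}/\varrho^q$ whenever $k>M$, which happens at the optimizer for large $\varrho$.

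The paper avoids this by not using the crude pointwise bound. For $x$ in a bounded exterior neighborhood $\Omega_2$ of $\partial\Omega$ they write, for small $\alpha>0$,
\[
|(-\Delta)^{s/2}T_k(w)(x,t)|\le \delta(x)^{-\alpha}\int_\Omega \frac{|T_k(w)(y,t)|}{\delta^s(y)}\,\frac{dy}{|x-y|^{N-\alpha}}=:\delta(x)^{-\alpha}R(x,t),
\]
bound the Riesz potential $R(\cdot,t)$ in $L^{2N/(N-2\alpha)}$ by $\|T_k(w)(\cdot,t)/\delta^s\|_{L^2(\Omega)}$ (Theorem~\ref{stein1}), and then apply the Hardy inequality (Theorem~\ref{hardyd}) to get
\[
\bigg\|\frac{T_k(w)}{\delta^s}\bigg\|_{L^2(\Omega_T)}^2\le C\,\|T_k(w)\|_{L^2(0,T;\mathbb{H}^s_0(\Omega))}^2\le C\,k\bigl(\|h\|_{L^1(\Omega_T)}+\|w_0\|_{L^1(\Omega)}\bigr).
\]
This delivers the exterior contribution with the same $k^{q/2}M^{q/2}$ scaling as the interior, after which your optimization argument goes through verbatim. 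So the missing ingredient is precisely the Hardy inequality routed through a Riesz potential, replacing your $L^\infty$-type bound $|T_k(w)|\le k$ by the $L^2$-type bound on $T_k(w)/\delta^s$.
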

    \begin{proof}
        Let $\a>0$, then
        $$
        \begin{array}{lll}
            & \bigg\{(x,y,t)\in \O\times \O\times (0,T)\mbox{  with } |W(x,y,t)|\ge \varrho\bigg\}\subset \\
            & \bigg\{(x,y,t)\in \O\times \O\times (0,T)\mbox{  with }
            |W(x,y,t)|\ge \varrho\mbox { and
            } |w(x,t)|\ge k|x-y|^{\theta}\bigg\}\\
            & \cup
            \bigg\{(x,y,t)\in \O\times \O\times (0,T)\mbox{  with }
            |W(x,y,t)|\ge \varrho\mbox { and
            } |w(y,t)|\ge k|x-y|^{\theta}\bigg\}\\
            & \cup
            \bigg\{(x,y,t)\in \O\times \O\times (0,T)\mbox{  with }
            |W(x,y,t)|\ge \varrho\mbox { and
            } |w(x,t)|\le k|x-y|^{\theta}, |w(y,t)|\le
            k|x-y|^{\theta}\bigg\},
        \end{array}
        $$
        where $k>0$ and $\theta>0$ is chosen such that $\theta
        \frac{N+2s}{N}<<N$. Then
        $$
        \begin{array}{lll}
            & \bigg|\bigg\{(x,y,t)\in \O\times \O\times (0,T)\mbox{  with }
            |W(x,y,t)|\ge \varrho\bigg\}\bigg|\le
            \\ \\& \bigg|\bigg\{(x,y,t)\in \O\times \O\times (0,T)\mbox{  with }
            |W(x,y,t)|\ge \varrho\mbox { and
            } |w(x,t)|\ge k|x-y|^{\theta}\bigg\}\bigg|\\
            &+\bigg|\bigg
            \{(x,y,t)\in \O\times \O\times (0,T)\mbox{  with } |W(x,y,t)|\ge
            \varrho\mbox { and
            } |w(y,t)|\ge k|x-y|^{\theta}\bigg\}\bigg|\\ \\
            &+
            \bigg|\bigg\{(x,y,t)\in \O\times \O\times (0,T)\mbox{  with }
            |W(x,y,t)|\ge \varrho\mbox { and
            } |w(x,t)|\le k|x-y|^{\theta}, |w(y,t)|\le
            k|x-y|^{\theta}\bigg\}\bigg|.
        \end{array}
        $$
        Notice that
        $$
        \begin{array}{lll}
            \bigg|\bigg\{(x,y,t)\in \O\times \O\times (0,T)\mbox{ with }
            |w(x,t)|\ge k|x-y|^\theta\bigg\}\bigg|&\le & \dyle
            \frac{1}{k^{r}}\int_0^T\io\io\frac{|w(x,t)|^r}{|x-y|^{r\theta}}dxdydt\\
            &\le & \dyle \frac{1}{k^{r}}\int_0^T\io |w(x,t)|^r\bigg(\io \frac{dy
            }{|x-y|^{r\theta}}\bigg)dxdt\\
            &\le & \dyle \frac{1}{k^{r}}\int_0^T\io |w(x,t)|^r\bigg(\int_{B_R(x)}
            \frac{dy}{|x-y|^{r\theta}}\bigg)dxdt\\
        \end{array}
        $$
        where $R>0$ is chosen such that $\O\subset \subset B_R(x)$.\\
        Using the fact that $r\theta<<N$, then $\int_{B_R(x)}
        \frac{dy}{|x-y|^{r\theta}}\le C(\O)$. Hence, we deduce that
        \begin{equation}\label{estim1c}
            \bigg|\bigg\{(x,y,t)\in \O\times \O\times (0,T)\mbox{ with }
            |w(x,t)|\ge k|x-y|^\theta\bigg\}\bigg|\le
            C(\O)\frac{||w||^r_{L^r(\O_T)}}{k^r}.
        \end{equation}
        In the same way we obtain that
        $$
        \bigg|\bigg\{(x,y,t)\in \O\times \O\times (0,T)\mbox{ with }
        |w(y,t)|\ge k|x-y|^\theta\bigg\}\bigg|\le
        C(\O)\frac{||w||^r_{L^r(\O_T)}}{k^r}.
        $$
        We estimate now the term
        $$
        \bigg|\bigg\{(x,y,t)\in \O\times \O\times (0,T)\mbox{  with }
        |W(x,y,t)|\ge \varrho\mbox { and
        } |w(x,t)|\le k|x-y|^{\theta}, |w(y,t)|\le
        k|x-y|^{\theta}\bigg\}\bigg|.
        $$
        Since $\O$ is a bounded domain, then for all $(x,t)\in \O\times
        \O$, we have $|x-y|^\theta\le C(\O)$. Thus
        $$
        \begin{array}{lll}
            & \bigg\{(x,y,t)\in \O\times \O\times (0,T)\mbox{  with }
            |W(x,y,t)|\ge \varrho\mbox { and
            } |w(x,t)|\le k|x-y|^{\theta}, |w(y,t)|\le
            k|x-y|^{\theta}\bigg\}\subset\\
            &\bigg\{(x,y,t)\in \O\times \O\times (0,T)\mbox{  with }
            |W(x,y,t)|\ge \varrho\mbox { and
            } |w(y,t)|\le C(\O)k, |w(y,t)|\le
            C(\O)k\bigg\}.
        \end{array}
        $$
        Let $k_1=C(\O) k$, then, under the condition $|w(x,t)|\le C(\O)k, |w(y,t)|\le
        C(\O)k$, we have
        $$
        W(x,y,t)=\frac{T_{k_1}(w(x,t))-T_{k_1}(w(y,t))}{|x-y|^{\frac{N}{q}+s_1}}.
        $$
        Thus
        $$
        \begin{array}{lll}
            &\bigg|\bigg\{(x,y,t)\in \O\times \O\times (0,T)\mbox{  with }
            |W(x,y,t)|\ge \varrho\mbox { and
            } |w(x,t)|\le k|x-y|^{\theta}, |w(y,t)|\le
            k|x-y|^{\theta}\bigg\}\bigg|\le \\ \\
            & \bigg|\bigg\{(x,y,t)\in \O\times \O\times (0,T)\mbox{  with }
            \bigg|\dfrac{T_{k_1}(w(x,t))-T_{k_1}(w(y,t))}{|x-y|^{\frac{N}{q}+s}}\bigg|\ge
            \varrho \bigg\}\bigg|.
        \end{array}
        $$
        Notice that for $t\in (0,T)$ fixed, according to Theorem \ref{GUT}
        (see Theorem 1.2 in \cite{GU}), we reach that
        $$
        \begin{array}{lll}
            H_\varrho(t) &:= & \bigg|\bigg\{(x,y)\in \O\times \O\mbox{ with }
            \bigg|\dfrac{T_{k_1}(w(x,t))-T_{k_1}(w(y,t))}{|x-y|^{\frac{N}{q}+s}}\bigg|\ge
            \varrho \bigg\}\bigg| \\ \\ &\le &
            \dfrac{C}{\varrho^q}\bigg\|(1-\D)^{\frac{s}{2}}T_{k_1}(w(.,t))\bigg\|^q_{L^q(\ren)}\simeq
            \dfrac{C}{\varrho^q}
            \bigg\|(-\D)^{\frac{s}{2}}T_{k_1}(w(.,t))\bigg\|^q_{L^q(\ren)}.
        \end{array}
        $$
        Integrating in time, we reach that
        $$
        \begin{array}{lll}
            &\bigg|\bigg\{(x,y,t)\in \O\times \O\times (0,T)\mbox{ with }
            \bigg|\dfrac{T_{k_1}(w(x,t))-T_{k_1}(w(y,t))}{|x-y|^{\frac{N}{q}+s}}\bigg|\ge
            \varrho \bigg\}\bigg|\\
            &=\dyle \int_0^T H_\varrho(t)dt\le \dfrac{C}{\varrho^q}\int_0^T
            ||(-\D)^{\frac{s}{2}}T_{k_1}(w(.,t))||^q_{L^q(\ren)}dt\\ \\&\le
            \dfrac{C}{\varrho^q}\dyle \bigg(\int_0^T
            ||(-\D)^{\frac{s}{2}}T_{k_1}(w(.,t))||^q_{L^q(\O)}dt+\int_0^T
            ||(-\D)^{\frac{s}{2}}T_{k_1}(w(.,t))||^q_{L^q(\ren\backslash
                \O)}dt\bigg).
        \end{array}
        $$
        Using H\"older's inequality, it holds that
        $$
        \begin{array}{lll}
            \dyle \int_0^T ||(-\D)^{\frac{s}{2}}T_{k_1}(w(.,t))||^q_{L^q(\O)}dt
            &\le & \dyle C(\O)\int_0^T
            ||(-\D)^{\frac{s}{2}}T_{k_1}(w(.,t))||^{q}_{L^2(\O)}dt\\
            &\le & \dyle C(\O_T) \bigg(\int_0^T
            ||(-\D)^{\frac{s}{2}}T_{k_1}(w(.,t))||^{2}_{L^2(\O)}dt\bigg)^{\frac{q}{2}}\\
            &\le & \dyle C(\O_T) \int_0^T
            ||(-\D)^{\frac{s}{2}}T_{k_1}(w(.,t))||^{2}_{L^2(\ren)}dt\bigg)^{\frac{q}{2}}.
        \end{array}
        $$
        Using the fact that
        $$
        \begin{array}{lll}
            \dyle\int_0^T
            ||(-\D)^{\frac{s}{2}}T_{k_1}(w(.,t))||^2_{L^2(\ren)}dt &\simeq &
            \dyle\int_0^T \iint_{\ren\times \ren}
            \frac{|T_{k_1}(w(x,t))-T_{k_1}(w(y,t))|^2}{|x-y|^{N+2s}}dxdydt\\
            &\le & k_1\bigg (||h||_{L^1(\O_T)}+||w_0||_{L^1(\O)})\bigg),
        \end{array}
        $$
        we deduce that
        $$
        \int_0^T ||(-\D)^{\frac{s}{2}}T_{k_1}(w(.,t))||^q_{L^q(\O)}dt\le
        C(\O_T)k^{\frac{q}{2}}_1\bigg
        (||h||_{L^1(\O_T)}+||w_0||_{L^1(\O)}\bigg)^{\frac{q}{2}}.
        $$
        To estimate the term $\int_0^T
        ||(-\D)^{\frac{s}{2}}T_{k_1}(w(.,t))||^q_{L^q(\ren\backslash
            \O)}dt$, we follow closely the computations used in the proof of
        Proposition \ref{ext1}.

        Let $\Omega_1=\{x\in \mathbb{R}^N\backslash \O;\,
        \text{dist}(x,\partial\Omega)>>1\}$, then $|x-y|\ge
        \frac{|x|+1}{2}$ for all $x\in \O_1$ and $y\in \Omega$. Hence, using H\"older's inequality, we
        have
        $$
        \begin{array}{lll}
            \dyle \int_0^T
            ||(-\D)^{\frac{s}{2}}T_{k_1}(w(.,t)||^q_{L^q(\O_1)}dt &\le & \dyle\int_0^T
            \int_{\O_1}\frac{2}{(|x|+1)^{q(N+s)}}\bigg(\int _{\Omega}
            |T_{k_1}(w(y,t))|dy\bigg)^q dxdt\\
            &\le &\dyle C(\O_1)\int_0^T \bigg(\io |T_{k_1}(w(y,t))|dy\bigg)^q
            dt\\
            &\le & \dyle C(\O)\bigg(\int_0^T \int _{\Omega}
            \frac{|T_{k_1}(w(y,t))|^2}{\d^{2s}(y)}dydt\bigg)^{\frac{q}{2}}.
        \end{array}
        $$
        Let now $\Omega_2=\mathbb{R}^N\backslash (\Omega_1\cup \Omega)$.
        Without loss of generality,  we can assume that $0<\text{dist}(x,
        \partial\Omega)\le 2$ for all $x\in \O_2$.
        For $\a>0$ small enough, we have
        $$
        \begin{array}{lll}
            \dyle \int_0^T
            ||(-\D)^{\frac{s}{2}}T_{k_1}(w(.,t)||^q_{L^q(\O_2)}dt &\le & \dyle \int_0^T
            \int_{\O_2}\bigg(\io \frac{|T_{k_1}(w(y,t))}{|x-y|^{N+s}}dy\bigg)^q dxdt\\
            &\le &\dyle  \int_0^T \int_{\O_2} \frac{dx}{\d^{q\a}(x)}\bigg(\io
            \frac{|T_{k_1}(w(y,t))|}{\d^s(y)}\frac{dy}{|x-y|^{N-\a}}dy\bigg)^q dx dt\\
            &\le & \dyle \int_0^T \int_{\O_2}\frac{R^q(x,t)}{\d^{q\a}(x)}dxdt,\\
        \end{array}
        $$
        where $R(x,t)=\dyle \io
        \frac{|T_{k_1}(w(y,t))|}{\d^s(y)}\frac{dy}{|x-y|^{N-\a}}dy$.
        Notice that $\dfrac{T_{k_1}(w(y,t))}{\d^s(y)}\in L^2(\O)$, then {by}
        using Theorem \ref{stein1}, it holds that $R(.,t)\in
        L^{\frac{2N}{N-2\a}}(\O_2)$ and
        $||R(.,t)||_{L^{\frac{2N}{N-2\a}}(\O_2)}\le
        C\bigg\|\frac{T_{k_1}(w(.,t))}{\d^s(.)}\bigg\|_{L^2(\O)}$. Hence, for
        $q<\frac{2N}{N-2\a}$, using {H\"older's } inequality, choosing $\a>0$
        such that $q\a\frac{2N}{2N-q(N-2\a)}<<1$, we get
        $$
        \begin{array}{lll}
            \dyle \int_0^T
            ||(-\D)^{\frac{s}{2}}T_{k_1}(w(.,t))||^q_{L^q(\O_2)}dt &\le & \dyle \int_0^T
            \int_{\O_2}\bigg(R^{\frac{2N}{N-2\a}}(x,t)
            dx\bigg)^{\frac{q(N-2\a)}{2N}}\bigg(
            \int_{\O_2}\frac{dx}{(\d(x))^{q\a\frac{2N}{2N-q(N-2\a)}}}\bigg)^{\frac{2N-q(N-2\a)}{2N}}
            dt\\ \\
            &\le & \dyle C(\O_2)\int_0^T
            \int_{\O_2}\bigg(R^{\frac{2N}{N-2\a}}(x,t)
            dx\bigg)^{\frac{q(N-2\a)}{2N}}dx\\ \\&\le & \dyle C(\O)\int_0^T
            \bigg\|\frac{T_{k_1}(w(.,t))}{\d^s(.)}\bigg\|^q_{L^2(\O)}dt
            \le\dyle C(\O_T) \bigg\|\frac{T_{k_1}w}{\d^s}\bigg\|^q_{L^2(\O_T)}.
        \end{array}
        $$
        Thus, combining the above estimates, we deduce that
        $$
        \dyle \int_0^T
        ||(-\D)^{\frac{s}{2}}T_{k_1}(w(.,t)||^q_{L^q(\ren\backslash
            \O)}dt\le C(\O_T)\bigg\|\frac{T_{k_1}w}{\d^s}\bigg\|^q_{L^2(\O_T)}.
        $$
        Since $T_{k_1}(w)\in L^2(0,T,{\W^{s,2}_0(\O)})$, then using the
        Hardy's
        inequality in Theorem \ref{hardyd}, we reach that
        $$
        \bigg\|\frac{T_{k_1}(w)}{\d^s}\bigg\|^2_{L^2(\O_T)}\le C(\O) \int_0^T
        \iint_{\ren\times \ren}
        \frac{|T_{k_1}(w(x,t))-T_{k_1}(w(y,t))|^2}{|x-y|^{N+2s}}dxdydt\le
        k_1\bigg (||h||_{L^1(\O_T)}+||w_0||_{L^1(\O)}\bigg).
        $$
        In conclusion, we find that

        \begin{equation}\label{estim2}
            \bigg|\bigg\{(x,y,t)\in \O\times \O\times (0,T)\mbox{ with }
            \bigg|\frac{T_{k_1}(w(x,t))-T_{k_1}(w(y,t))}{|x-y|^{\frac{N}{q}+s}}\bigg|\ge
            \varrho \bigg\}\bigg|\le \frac{C(\O_T)k^{\frac{q}{2}}_1}{\varrho^q}\bigg
            (||h||_{L^1(\O_T)}+||w_0||_{L^1(\O)}\bigg)^{\frac{q}{2}}.
        \end{equation}
        Recall that $k_1=C(\O)k$, then,  combining estimates
        \eqref{estim1c} and \eqref{estim2}, we conclude that
        $$
        \bigg|\bigg\{(x,y,t)\in \O\times \O\times (0,T)\mbox{  with }
        |W(x,y,t)|\ge \varrho\bigg\}\bigg|\le
        \frac{C(\O_T)k^{\frac{q}{2}}}{\varrho^q}\bigg
        (||h||_{L^1(\O_T)}+||w_0||_{L^1(\O)}\bigg)^{\frac{q}{2}}+C(\O)\frac{||w||^r_{L^r(\O_T)}}{k^r}.
        $$
        Now minimizing in $k>0$, it holds that
        $$
        \bigg|\bigg\{(x,y,t)\in \O\times \O\times (0,T)\mbox{  with }
        |W(x,y,t)|\ge \varrho\bigg\}\bigg|\le
        \frac{C(\O_T)}{\s^{\frac{2qr}{2r+q}}}\bigg
        (||h||_{L^1(\O_T)}+||w_0||_{L^1(\O)}\bigg)^{\frac{qr}{2r+q}}||w||^{\frac{qr}{2r+q}}_{L^r(\O_T)}.
        $$
    Hence, the result follows.
    \end{proof}

    \begin{remarks}

        \

        $\bullet$ If $s>\frac 12$, according to \eqref{control1}, we
        obtain {that}
        $$
        \sup_{\varrho>0}\s \bigg|\bigg\{(x,t)\in \O_T\mbox{ with } |\n w(x,t)|\ge
        \varrho\bigg\}\bigg|^{\frac{r+\a-1}{\a r}}:=M\le
        C\bigg(||h||_{L^1(\O_T)}+||w_0||_{L^1(\O)}\bigg)^{\frac 12}||w||^{\frac{
                (\a-1)}{2}}_{L^r(\O_T)}.$$ Thus, $|\n w|\in \mathcal{M}^{\frac{\a
                r}{r+\a-1}}(\O_T)$, the Marcinkiewicz space. Since $\O_T$ is a
        bounded domain, we deduce that if $\frac{\a r}{r+\a-1}>1$, then
        for all $1\le b<\frac{\a r}{r+\a-1}$, we have
        \begin{equation*}
            ||\n w||_{L^b(\O_T)}\le C(\O_T,r,b) M\le
            C(\O_T,r,b)\bigg(||h||_{L^1(\O_T)}+||w_0||_{L^1(\O)}\bigg)^{\frac{
                    1}{\a}}||w||^{\frac{ (\a-1)}{\a}}_{L^r(\O_T)}.
        \end{equation*}
Taking into consideration that the above estimate holds for all
$\a<2s$ and for all $r<\frac{N+2s}{N}$, then we deduce that for
all $b<\frac{N+2s}{N+1}$, we have
\begin{equation}\label{local11}
            ||\n w||_{L^b(\O_T)}\le C(\O_T,r,b) M\le
            C(\O_T,r,b)\bigg(||h||_{L^1(\O_T)}+||w_0||_{L^1(\O)}\bigg)^{\frac{
                    1}{\a}}||w||^{\frac{ (\a-1)}{\a}}_{L^r(\O_T)}.
        \end{equation}

        $\bullet$ If $\frac 14<s\le \frac 12$, according to
        \eqref{control2}, we get
        $$
        \sup_{\varrho>0}\varrho \bigg|\bigg\{(x,y,t)\in \O\times \O\times (0,T)\mbox{
            with } |W(x,y,t)|\ge \varrho\bigg\}\bigg|^{\frac{2r+q}{2qr}}:=M\le
        C\bigg(||h||_{L^1(\O_T)}+||w_0||_{L^1(\O)}\bigg)^{\frac 12}||w||^{\frac{
                1}{2}}_{L^r(\O_T)}.$$ Hence, $w\in \mathcal{M}^{\frac{2qr}{2r+q}}(\O\times
        \O\times (0,T))$. Since $\O\times \O\times (0,T)$ is bounded, we
        obtain that, if $\frac{2qr}{2r+q}>1$, then $W\in L^a(\O\times
        \O\times (0,T))$ for all $a<\frac{2qr}{2r+q}$ and
        \begin{equation}\label{globalss}
            ||W||_{L^a(\O\times \O\times (0,T))}\le C(\O_T,r,q) M\le
            C(\O_T,r,p)\bigg(||h||_{L^1(\O_T)}+||w_0||_{L^1(\O)}\bigg)^{\frac{
                    1}{\a}}||w||^{\frac{1}{\a}}_{L^r(\O_T)}.
        \end{equation}
    \end{remarks}

    We are now in position to show that $\G$ is a compact operator.
    \begin{Theorem}\label{compact1}
        Consider $\Phi$, the operator defined previously, then
        $\Phi$ is compact.
    \end{Theorem}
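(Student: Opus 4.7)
The plan is to prove compactness of the linear operator $\Phi$ by extracting from any bounded sequence $(h_n,w_{0,n})$ in $L^1(\Omega_T)\times L^1(\Omega)$ a subsequence of solutions $w_n=\Phi(h_n,w_{0,n})$ that is Cauchy in $L^q(0,T;\mathbb{L}^{s,q}_0(\Omega))$, by combining three independent ingredients: the higher-regularity a priori bounds of Corollary \ref{mainr1000}, a standard Aubin--Simon compactness giving strong $L^r$-convergence, and a Marcinkiewicz interpolation fed by the key estimates of Propositions \ref{s12}--\ref{compat11}.

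First, I would observe that by Corollary \ref{mainr1000} one can fix $\varepsilon_0>0$ so small that for any $\rho\in (s,s+\varepsilon_0)$ the bound $\widehat\kappa_{s,\rho}>q$ still holds, and some exponent $\widetilde q\in(q,\widehat\kappa_{s,\rho})$ can be selected so that $\{w_n\}$ is uniformly bounded in $L^{\widetilde q}(0,T;\mathbb{L}^{\rho,\widetilde q}_0(\Omega))$ by the data norms. Up to a subsequence, $w_n\rightharpoonup w$ weakly in that space and also $w_n$ is bounded in $L^r(\Omega_T)$ for all $r<(N+2s)/N$ by Theorem \ref{main-exis}. From the equation, $\partial_t w_n=h_n-(-\Delta)^s w_n$ is bounded in $L^1(0,T;X^*)$ for a suitable Banach space $X\hookrightarrow\mathbb{L}^{\rho,\widetilde q'}_0(\Omega)$ (its dual contains the $L^1$ data as well as $(-\Delta)^s w_n$ evaluated against smooth compactly supported test functions). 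Feeding these two uniform bounds into the Aubin--Simon compactness lemma (in its fractional version) yields, along a subsequence, strong convergence $w_n\to w$ in $L^r(\Omega_T)$ for every $r<(N+2s)/N$.

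Second, I would pass from the $L^r$-convergence to the Bessel convergence via the quantitative estimates already at hand. Using linearity, $w_n-w_m=\Phi(h_n-h_m,w_{0,n}-w_{0,m})$; applying Proposition \ref{compat11} (and Proposition \ref{s12} in the easier case $s>1/2$) to $w_n-w_m$ gives
\[
\Bigl\|\tfrac{(w_n-w_m)(x,t)-(w_n-w_m)(y,t)}{|x-y|^{N/q+s}}\Bigr\|_{\mathcal{M}^{2qr/(2r+q)}(\Omega\times\Omega\times(0,T))}\le C\,\bigl(\|h_n-h_m\|_{L^1}+\|w_{0,n}-w_{0,m}\|_{L^1}\bigr)^{1/2}\|w_n-w_m\|_{L^r(\Omega_T)}^{1/2},
\]
whose right-hand side vanishes as $n,m\to\infty$ by the previous step. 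In parallel, Theorem \ref{GUT} applied pointwise in $t$ to $w_n\in\mathbb{L}^{\rho,\widetilde q}_0(\Omega)$, together with the $t$-uniform bound of Corollary \ref{mainr1000}, yields a uniform Marcinkiewicz control of the \emph{higher}-order difference quotient; dividing by $|x-y|^{\rho-s+N/\widetilde q-N/q}$, which is bounded on $\Omega\times\Omega$, transfers this to a uniform bound in $\mathcal{M}^{\widetilde q}$ of the same quotient appearing in the left-hand side above. A standard Marcinkiewicz interpolation (boundedness in $\mathcal{M}^{\widetilde q}$ plus convergence to $0$ in $\mathcal{M}^{2qr/(2r+q)}$, with $2qr/(2r+q)<q<\widetilde q$) then upgrades the weak-type vanishing to strong $L^q$-convergence of the difference quotient.

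Finally, I would convert the $L^q$-convergence of the difference quotient into $L^q$-convergence of $(-\Delta)^{s/2}w_n$. Inside $\Omega$ this uses the Bessel-norm equivalence recalled after Proposition \ref{proper} together with a duality/integration-by-parts argument against smooth test functions, while the complementary estimate on $(-\Delta)^{s/2}w_n$ in $\mathbb{R}^N\setminus\Omega$ is obtained by reproducing the argument of Propositions \ref{ext1}--\ref{exttt} on the differences $w_n-w_m$, using the strong $L^r$-convergence to make $\|(w_n-w_m)/\delta^s\|_{L^{r}(\Omega_T)}$ small. Gluing the interior and exterior pieces proves that $\{w_n\}$ is Cauchy in $L^q(0,T;\mathbb{L}^{s,q}_0(\Omega))$, which concludes the compactness of $\Phi$. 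The main obstacle is exactly in Step 3: because Proposition \ref{compat11} produces the Marcinkiewicz exponent $2qr/(2r+q)<q$, the interior estimate alone is never strong enough, and the whole argument hinges on coupling it with the higher regularity $\mathbb{L}^{\rho,\widetilde q}_0$ with $\rho>s$ and on the fact that Theorem \ref{GUT} provides a Marcinkiewicz upper control by the Bessel norm --- this is precisely the ``new representation'' alluded to in the introduction and is what requires the unexpected restriction $s>1/4$ (so that the admissible range of $\rho$ above $s$ is nonempty in Corollary \ref{mainr1000}).
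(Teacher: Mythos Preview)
Your skeleton is the same as the paper's: regularity at a slightly higher exponent $\rho>s$, strong $L^r(\Omega_T)$-compactness, then the Marcinkiewicz estimate of Propositions~\ref{s12}--\ref{compat11} applied to $w_n-w_m$ to upgrade. The one place where you genuinely diverge is the upgrade itself. The paper reads the estimate~\eqref{control2} as ``Cauchy in measure'', extracts an a.e.\ convergent subsequence of the quotient $\frac{w_n(x,t)-w_n(y,t)}{|x-y|^{N/q+s}}$, and then invokes Vitali (the uniform integrability coming from the bound in the better space $L^q(0,T;\mathbb L^{\rho,q}_0)$, $\rho>s$). You instead interpolate in Lorentz spaces: convergence in $\mathcal M^{2qr/(2r+q)}$ together with a uniform $\mathcal M^{\tilde q}$ bound, $\tilde q>q$, gives $L^q$ convergence of the quotient directly. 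This is a legitimate, arguably cleaner alternative. One caveat you gloss over: the weight $|x-y|^{\rho-s+N/\tilde q-N/q}$ is bounded on $\Omega\times\Omega$ only when the exponent is nonnegative, i.e.\ when $\tilde q$ is chosen close enough to $q$ that $N(1/q-1/\tilde q)\le\rho-s$; this must be arranged before invoking the interpolation, and you should say so.

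Your Step~3, however, is where the argument wobbles. Having the $L^q$-convergence of the difference quotient on $\Omega\times\Omega\times(0,T)$ does \emph{not} give $\|(-\Delta)^{s/2}(w_n-w)\|_{L^q(\Omega)}\to 0$ via ``Bessel-norm equivalence plus duality'': for $x\in\Omega$ one has $(-\Delta)^{s/2}v(x)=\int_\Omega\frac{v(x)-v(y)}{|x-y|^{N+s}}\,dy + v(x)\!\int_{\Omega^c}\!\frac{dy}{|x-y|^{N+s}}$, and bounding the first (singular) piece in $L^q$ by the interior Gagliardo seminorm is precisely what fails for $q>2$ and is nontrivial for $q\le 2$. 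The paper bypasses this entirely: it combines the interior quotient convergence with $\|(w_n-w)/\delta^s\|_{L^q(\Omega_T)}\to 0$ to obtain convergence of the \emph{full} $\mathbb W^{s,q}_0(\Omega)$ seminorm (the exterior strip $\Omega\times\Omega^c$ is controlled by the $\delta^{-s}$-weighted norm), and then simply applies the embedding $W^{s,q}(\mathbb R^N)\hookrightarrow \mathbb L^{s,q}(\mathbb R^N)$ for $q\in(1,2]$ from Proposition~\ref{proper}. That is the clean way to close; your detour through Propositions~\ref{ext1}--\ref{exttt} for the exterior of $(-\Delta)^{s/2}$ is then unnecessary. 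Note also that $\|(w_n-w_m)/\delta^s\|_{L^r}\to 0$ does not follow from $\|w_n-w_m\|_{L^r}\to 0$ alone; it comes from a.e.\ convergence of $w_n$ together with the uniform $L^{r'}$-bound on $w_n/\delta^s$ for some $r'>r$ (Vitali again), a point both you and the paper leave implicit.
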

    \begin{proof}
        Let $\{h_n\}_n\subset L^1(\O_T), \{w_{0n}\}_n\subset L^1(\O)$ be
        such that
        $||h_n||_{L^1(\O_T)}+||w_{0n}||_{L^1(\O)}\leq \hat{C}$. Define $w_n=\Phi(h_n,w_{0n})$, then
        $w_n$ solves the Problem:
        \begin{equation}\label{ttt}
            \left\{
            \begin{array}{llll}
                (w_n)_t+(-\Delta)^s w_n&=& h_n
                & \text{in}\quad\Omega_T,\vspace{0.2cm}\\
                w_n(x,t)&=&0&\text{in} \quad (\mathbb{R}^N\setminus\Omega)\times(0,T),\vspace{0.2cm}\\
                w_n(x,0)&=&w_{0n}&\text{in}\quad    \Omega.
            \end{array}
            \right.
        \end{equation}
        According to Corollary \ref{mainr1000} and the previous regularity
        results, for $\rho$ fixed such that $s\le
        \rho<s+\min\{\frac{s}{(N+2s)},\frac{4s-1}{(N+2s-1)}\}$, then for
        all $a<\widehat{\kappa}_{s,\rho}$, we have
        $$
        ||w_n||_{L^a( 0,T;\mathbb{L}_0^{\rho,a}(\Omega))}\le
        C(\O_T)\bigg(||h_n||_{L^1(\O_T)}+||w_{0n}||_{L^{1}(\Omega)}\bigg).
        $$

        Let $   {q}<\min\{\frac{N+2s}{N+s},\frac{N+2s}{N+1-2s}\}$, then we get
        the existence of $\rho>s$ close to $s$ such that
        $q<\widehat{\kappa}_{s,\rho}$ and $ ||w_n||_{{L^q(
            0,T;\mathbb{L}_0^{{\rho,q}}(\Omega))}}\le C(\O_T)$.  Hence, we get the
        existence of $w\in L^{q}(0,T; \mathbb{L}_0^{\rho,q}(\Omega))$ such
        that, up to a subsequence, we have $w_n\rightharpoonup w$ weakly in
        $L^{q}(0,T; \mathbb{L}_0^{s,q}(\Omega))$ and $w_n\to w$ strongly
        in $L^a(\O_T)$ for all $a<\frac{N+2s}{N}$ and $w_n\to w$ a.e. in
        $\O_T$. Moreover, we have also that $\dfrac{w_n}{\d^s}\to
        \dfrac{w}{\d^s}$ strongly in $L^{a_1}(\O_T)$ for all
        $a_1<\frac{N+2s}{N+s}$.

        For $n\ge m$, we set $\mathcal{W}_{n,m}=w_n-w_m$, then
        $\mathcal{W}_{n,m}$ solves the problem
        \begin{equation}
            \left\{
            \begin{array}{llll}
                (\mathcal{W}_{n,m})_t+(-\Delta)^s \mathcal{W}_{n,m}&=& h_n-h_m
                & \text{in}\quad\Omega_T,\vspace{0.2cm}\\
                \mathcal{W}_{n,m}&=&0&\text{in} \quad (\mathbb{R}^N\setminus\Omega)\times(0,T),\vspace{0.2cm}\\
                \mathcal{W}_{n,m}(x,0)&=&0&\text{in}\quad\Omega.
            \end{array}
            \right.
        \end{equation}
        Notice that $\mathcal{W}_{n,m}\to 0$ a.e. in $\O_T$ and
        $\mathcal{W}_{n,m}\to 0$ strongly in $L^a(\O_T)$ for all
        $a<\frac{N+2s}{N}$ as $n,m\to \infty$.

        We spilt the proof into two cases according to the value {of} $s$.

{    {\bf{Case: $s>\frac 12$.}}} In this case we have
$\frac{N+2s}{N+s}<\frac{N+2s}{N+1-2s}$. Fixed
$q<\frac{N+2s}{N+s}$. Let $a<\frac{N+2s}{N+1}$ to be chosen later,
then we get the
        existence of $1<\a<2s$ such that $a<\frac{N+2s}{N+\frac{2s}{\a}}$.
        Hence, there exists $r<\frac{N+2s}{N}$ such that $q<\frac{\a
            r}{r+\a-1}$. Going back to estimate \eqref{local11}, it
holds that
        $$
        ||\n \mathcal{W}_{n,m}||_{L^a(\O_T)}\le C(\O_T,r,a, \hat{C})
        ||\mathcal{W}_{n,m}||^{\frac{ (\a-1)}{\a}}_{L^r(\O_T)}\to 0\mbox{
            as  }n,m\to \infty.
        $$
        Therefore, we conclude that $\{w_n\}_n$ is a Cauchy sequence in the space
        ${L^q(0,T;\W^{1,a}_0(\O))}$ for all $a<\frac{N+2s}{N+1}$.
        Thus,
        $w_n\to w$ strongly in ${L^a(0,T;\W^{1,a}_0(\O))}$. Hence  $w_n\to
        w$ strongly in $L^{a}(0,T; \W^{1,a}(\ren))$. Using an
        interpolation result, see \cite{Leonibook}, it results that, for
        $q<\frac{N+2s}{N+s}$ fixed, we can get the existence of
        $a<\frac{N+2s}{N+1}$ and $r<\frac{N+2s}{N}$ such that
        $\frac{1}{q}=\frac{1-s}{r}+\frac{s}{a}$ with
        $$
        ||w_n(.,t)-w(.,t)||_{\mathbb{L}_0^{s,q}(\Omega)}\le
        ||w_n(.,t)-w(.,t)||^{1-s}_{L^r(\O)}||w_n(.,t)-w(.,t)||^{s}_{\W^{1,a}_0(\O)}\,
   a.e. \mbox{  for } t\in (0,T).$$ Integrating in time, we get
        $$
        ||w_n-w||_{L^q(0,T;\mathbb{L}_0^{s,q}(\Omega))}\le
        ||w_n-w||^{1-s}_{L^r(\O_T)}||w_n-w||^{s}_{{L^a(0,T;\W^{1,a}_0(\O))}}.
        $$
        Therefore, we conclude that, for $q<\frac{N+2s}{N+s}$,
        $$
        ||w_n-w||_{L^q(0,T;\mathbb{L}_0^{s,q}(\Omega))}\to
        0\mbox{  as  }n\to \infty,
        $$
        that is the desired result for $s>\frac 12$.

        {{\bf Case: $s\le \frac 12$. }} Recall that
        $q<\min\{\frac{N+2s}{N+s},\frac{N+2s}{{N+1-2s}}\}$.

        Let $\varrho>0$, then applying estimate \eqref{control2} to the
        function $\mathcal{W}_{n,m}$, it holds that
        $$\bigg|\bigg\{(x,y,t)\in \O\times \O\times (0,T)\mbox{  with }
        \bigg|\frac{\mathcal{W}_{n,m}(x,t)-\mathcal{W}_{n,m}(y,t)}{|x-y|^{\frac{N}{q}+s}}\bigg|\ge
        \varrho\bigg\}\bigg|\le
        \frac{C(\O_T)}{\varrho^{\frac{2qr}{2r+q}}}\bigg\|\mathcal{W}_{n,m}\bigg\|^{\frac{qr}{2r+q}}_{L^r(\O_T)}
        \to 0\mbox{  as }n,m\to \infty.
        $$
        Thus,
        $\bigg\{\frac{\mathcal{W}_{n,m}(x,t)-\mathcal{W}_{n,m}(y,t)}{|x-y|^{\frac{N}{q}+s}}\bigg\}_{n,m}$
        is a Cauchy sequence in the sense of measure. Hence, up to a
        subsequence, for a.e. $(x,y,t)\in \O\times \O\times (0,T)$, we
        have,
        $$
        \frac{|w_{n}(x,t)-w_n(y,t)|^q}{|x-y|^{N+qs}}\to
        \frac{|w(x,t)-w(y,t)|^q}{|x-y|^{N+qs}}\mbox{  as  }n\to \infty.
        $$
        Since $q<\frac{N+2s}{N+s}$, then
        $$\int_0^T\iint_{\O\times \O}
        \frac{|w_n(x,t)-w_n(y,t)|^q}{|x-y|^{N+qs}}dxdydt\le C.$$
        Therefore, by Vitali's {Lemma}, we deduce that
        \begin{equation}\label{GTR}
            \int_0^T\iint_{\O\times \O}
            \frac{|w_n(x,t)-w_n(y,t)|^q}{|x-y|^{N+qs}}dxdydt\to
            \int_0^T\iint_{\O\times \O}
            \frac{|w(x,t)-w(y,t)|^q}{|x-y|^{N+qs}}dxdydt\mbox{ as  }n\to
            \infty.
        \end{equation}
        Recall that $\dfrac{w_n}{\d^s}\to \dfrac{w}{\d^s}$ strongly in
        $L^\beta(\O_T)$ for all $\beta<\frac{N+2s}{N+s}$. Hence, using
        \eqref{GTR}, we deduce that $w_n\to w$ strongly in
        $L^q(0,T;\W^{s,q}_0(\O))$. Now, by Proposition \ref{proper},
        since $\frac{N+2s}{N+s}<2$, we obtain that $w_n\to w$ strongly in
        $L^a(0,T;\mathbb{L}_0^{s,a}(\Omega))$ for all
        $a<\min\{\frac{N+2s}{N+s},\frac{N+2s}{N+1-2s}\}$. {Hence, } the result
        follows.
    \end{proof}

    \begin{remarks}\label{comparr}
        In the case where $s\in [\frac 13, \frac 12]$, then
        $\frac{N+2s}{N+s}\le \frac{N+2s}{N+1-2s}$. {Hence, } the compactness
        result holds for all $a<\frac{N+2s}{N+s}$.
    \end{remarks}

    \begin{remarks}\label{comparr00}
        The arguments used in the case $s\le \frac 12$ are valid also for the case $s>\frac 12$. However,
        we have include the alternative proof in the case $s>\frac 12$ which is based on the regularity of the local gradient.
    \end{remarks}

    \section{{Fractional Kardar-Parisi-Zhang Problem with nonlocal gradient term}.} \label{Application_Section}

    The purpose of this section is to apply the previous
    regularity and compactness results to analyze the existence of solution $u$ to the
{    Problem}
    \begin{equation*}\label{Main_Problem}
        {(KPZ_f)} \quad \quad \left\{
        \begin{array}{llll}
            u_t+(-\Delta)^{s}u&=& |(-\Delta)^{\frac{s}{2}}u|^q+f& \text{in}\quad \Omega_T,\vspace {0.2cm}\\
            u(x,t)&=&0& \text{in}\quad (\mathbb{R}^N\setminus \Omega)\times (0,T),\vspace{0.2cm}\\
            u(x,0)&=&u_0(x) & \text{in }\quad\Omega.
        \end{array}
        \right.
    \end{equation*}
    Here $(f,u_0)\in L^m(\O_T)\times L^\s(\O)$ with $(m,\s)\in
    [1,+\infty)^2$, { $\frac 13<s<1$} and $q\leq 1$. To simplify the
    presentation of our results, we will consider separately two
    cases:

    $i)$ Case: $ f\neq 0$ and $u_0=0$;

    $ii)$ Case: $ f =0$ and $u_0\neq 0$.

    First, in the next definition, we specify the sense of the weak solution to {Problem} ${(KPZ_f)} $.

    \begin{Definition} \label{weak_kpz_solution}
        Suppose $(f,u_0)\in L^1(\O_T)\times L^1(\O)$. We say that $u$ is a weak solution to $(KPZ_f)$
        if $u, |(-\Delta)^{\frac{s}{2}}u|^q \in L^1 (\O_T)$ with $u\equiv 0$ a.e. in $(\mathbb{R}^N\setminus \O) \times (0,T)$, and for all
        $\phi \in \widetilde{\mathcal{P} _s}$, we have
        \begin{equation}
            \iint_{\O_T}\,u\big(-\phi_t\, +(-\Delta)^{s}\phi\big)\,dx\,dt=\\
            \iint_{\O_T}\, (|(-\Delta)^{\frac{s}{2}}u|^q+f)\phi\,dxdt +\int_\Omega{u_0(x)\phi (x,0)\,dx},
        \end{equation}
        with $\phi\in \widetilde{\mathcal{P}_s}$, given by
        $$\widetilde{\mathcal{P}_s}:=\left\{ \phi:\mathbb{R}^N\times[0,T]\rightarrow\mathbb{R} \; ; \; \phi(x,t)=0 \;\;\, \text{in}\;\; (\mathbb{R}^N\setminus \O) \times (0,T)\;\; \text{and} \;\; (-\Delta)^s\phi \in L^{\infty} (\O_T)
        \right\}.$$
    \end{Definition}

    \subsection{{Case:} $ f\neq 0$
        and $u_0=0$.} In this case, we treat mainly the { Problem
}   \begin{equation}\label{exis_prb}
        \left\{
        \begin{array}{llll}

            u_t+(-\Delta)^{s}u&=& |(-\Delta)^{\frac{s}{2}}u|^q+f& \text{in}\quad \Omega_T,\vspace{0.2 cm}\\
            u(x,t)&=&0& \text{in}\quad (\mathbb{R}^N\setminus \Omega)\times (0,T),\vspace{0.2cm}\\
            u(x,0)&=&0 & \text{in} \quad \Omega.
        \end{array}
        \right.
    \end{equation}
    Our existence result is given in the next theorem.

    \begin{Theorem}\label{existence-nancy}
        Suppose $f\in L^m(\O_T)$ with $1\leq m$ and $s\in (\frac 14, 1)$. Assume that one of the following conditions hold

        \begin{enumerate}
            \item $s\in (\frac 13, 1)$ and
            \begin{equation}\label{ConI}
                \left\{\begin{array}{lll}
                    1\leq m\leq \frac{N+2s}{s} \vspace{0.2cm}\mbox{  and } 1<q \leq
                    \frac{N+2s}{N+2s-ms},\\
                    \mbox{  or  }\\
                    m>\frac{N+2s}{s}\vspace{0.4cm}\mbox{  and }q \in (1, \infty).
                \end{array}
                \right.
            \end{equation}

            \item $s\in (\frac 14,\frac 13]$ and
            \begin{equation}\label{ConIII}
                \left\{\begin{array}{lll}
                    1\leq m\leq \frac{N+2s}{4s-1}\vspace{0.4cm}  \mbox{  and } 1<q \leq
                    \frac{N+2s}{N+2s-m(4s-1)},\\
                    \mbox{  or  }\\
                    m>\frac{N+2s}{4s-1}\vspace{0.6cm} \mbox{  and  } q\in(1, \infty).
                \end{array}
                \right.
            \end{equation}
        \end{enumerate}
        Then, there exists $0<T^*<T$ such that Problem \eqref{exis_prb}
        has a solution $u\in L^{\gamma}(0,T^*; \mathbb{L
        }^{s,\gamma}_0(\Omega))$ with
        \begin{enumerate}
            \item $1<\gamma< \overline{m}:=\frac{m(N+2s)}{N+2s-ms}$ if $1\leq
            m<\frac{N+2s}{s}$, and $1<\g<\infty$ if $
            m\geq\frac{N+2s}{s}$, if $s\in (\frac 13,1)$, \\
            or
            \item $1<\gamma< \overline{m}:=\frac{m(N+2s)}{N+2s-m(4s-1)}$ if $1\leq
            m<\frac{N+2s}{4s-1}$, and $1<\g<\infty$ if $
            m\geq\frac{N+2s}{4s-1}$, if $s\in (\frac 14,\frac 13]$.
        \end{enumerate}
    \end{Theorem}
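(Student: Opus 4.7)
The proof rests on a Schauder fixed-point argument applied to the nonlinear map $\mathbb{T}:v\mapsto u:=\Phi\bigl(|(-\Delta)^{s/2}v|^{q}+f,\,0\bigr)$, defined on a closed convex bounded subset of the parabolic Bessel-potential space $\mathcal{X}_{T^{*}}:=L^{\gamma}(0,T^{*};\mathbb{L}^{s,\gamma}_{0}(\Omega))$, with $\gamma$ in the admissible range specified in the theorem and $T^{*}\in(0,T]$ to be fixed sufficiently small. Here $\Phi$ denotes the solution operator for the linear Problem $(FHE)$ studied in Section \ref{Compactness_Section}. A fixed point of $\mathbb{T}$ is, by construction, a weak solution to \eqref{exis_prb} in the sense of Definition \ref{weak_kpz_solution}.

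The first step is the invariance estimate. For $v$ in the ball $E_{R}:=\{v\in\mathcal{X}_{T^{*}}:\|v\|_{\mathcal{X}_{T^{*}}}\leq R\}$ one has $(-\Delta)^{s/2}v\in L^{\gamma}(\Omega_{T^{*}})$, hence $|(-\Delta)^{s/2}v|^{q}\in L^{\gamma/q}(\Omega_{T^{*}})$ with norm controlled by $R^{q}$. The hypotheses \eqref{ConI}, \eqref{ConIII}, reading as $qm\leq\overline{m}$ with $\overline{m}=\tfrac{m(N+2s)}{N+2s-ms}$ when $s>\tfrac13$ and $\overline{m}=\tfrac{m(N+2s)}{N+2s-m(4s-1)}$ when $s\in(\tfrac14,\tfrac13]$, allow us to select $\gamma$ with $qm\leq\gamma<\overline{m}$ in the strict case. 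Then the source $|(-\Delta)^{s/2}v|^{q}+f$ lies in a Lebesgue class covered by Theorem \ref{global} or, for $s\leq\tfrac13$, by Corollary \ref{CorR}, which yield
\begin{equation*}
\|\mathbb{T}(v)\|_{\mathcal{X}_{T^{*}}} \,\leq\, C\,\omega(T^{*})\bigl(R^{q}+\|f\|_{L^{m}(\Omega_{T})}\bigr),
\end{equation*}
where $\omega(T^{*})\to 0$ as $T^{*}\to 0$ thanks to the positive powers of $T^{*}$ that appear in Theorems \ref{regug}--\ref{global}. Since $q>1$, picking $R$ comparable to $\|f\|_{L^{m}(\Omega_{T})}$ and then $T^{*}$ small enough makes the right-hand side $\leq R$, so that $\mathbb{T}(E_{R})\subset E_{R}$.

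Continuity of $\mathbb{T}$ on $E_{R}$ follows from the continuity of the Nemytskii map $v\mapsto|(-\Delta)^{s/2}v|^{q}$ from $L^{\gamma}(\Omega_{T^{*}})$ into $L^{\gamma/q}(\Omega_{T^{*}})$, combined with the linear continuity of $\Phi$ given by the same regularity estimates. Compactness is a direct consequence of Theorem \ref{compact1}: for any sequence $\{v_{n}\}\subset E_{R}$, the sources $\{|(-\Delta)^{s/2}v_{n}|^{q}+f\}$ are bounded in $L^{1}(\Omega_{T^{*}})$, hence compactness of $\Phi:L^{1}(\Omega_{T^{*}})\times L^{1}(\Omega)\to L^{\gamma}(0,T^{*};\mathbb{L}^{s,\gamma}_{0}(\Omega))$ (valid for $\gamma<\widehat{\kappa}_{s,s}$, see \eqref{kapp}) yields a subsequence of $\{\mathbb{T}(v_{n})\}$ converging strongly in $\mathcal{X}_{T^{*}}$. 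Schauder's theorem then delivers a fixed point $u\in E_{R}$, which is the sought solution; its belonging to the indicated class of exponents $\gamma$ follows immediately from $E_{R}\subset\mathcal{X}_{T^{*}}$.

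The hardest point in this scheme is the sharp balancing of exponents: the Nemytskii step requires $\gamma\geq qm$, while the application of Theorem \ref{global} requires $\gamma<\overline{m}$, and these are compatible strictly only when $qm<\overline{m}$. In the borderline situation $q=\overline{m}/m$ allowed by \eqref{ConI}--\eqref{ConIII}, the argument above must be applied to a sequence of approximating problems with $q_{n}\uparrow q$ (and $\gamma_{n}$ accordingly), together with the uniform a priori estimates furnished by Theorems \ref{regug} and \ref{compact1}; the limit is then recovered via the strong convergence in $\mathcal{X}_{T^{*}}$ provided by the compactness of $\Phi$. The restriction $s>\tfrac14$ is inherited from the singular time factor $t^{(2s-1)/(2s)}$ in the kernel bounds of Theorem \ref{regulaP}, which in turn controls integrability in Theorem \ref{first_integrals_regularity}; this is precisely why the two ranges $s\in(\tfrac13,1)$ and $s\in(\tfrac14,\tfrac13]$ yield different upper bounds on the admissible exponents $q$.
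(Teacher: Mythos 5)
Your overall scheme (Schauder fixed point for $v\mapsto\Phi(|(-\Delta)^{s/2}v|^{q}+f,0)$, invariance via the vanishing constant $C(\O,T^{*})\to 0$, compactness inherited from the linear operator $\Phi$) is the same as the paper's. There is, however, a genuine gap in your compactness step. You work in $\mathcal{X}_{T^{*}}=L^{\gamma}(0,T^{*};\mathbb{L}^{s,\gamma}_{0}(\Omega))$ for $\gamma$ anywhere in the admissible range $1<\gamma<\overline{m}$, yet you invoke Theorem \ref{compact1}, which gives compactness of $\Phi$ from $L^{1}$ data only into $L^{a}(0,T;\mathbb{L}^{s,a}_{0}(\Omega))$ for $a<\widehat{\kappa}_{s,s}=\min\{\tfrac{N+2s}{N+s},\tfrac{N+2s}{N+1-2s}\}$. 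Since $\overline{m}=\tfrac{m(N+2s)}{N+2s-ms}>\tfrac{N+2s}{N+s}$ as soon as $m>1$, your claim that $\{\mathbb{T}(v_{n})\}$ converges strongly in $\mathcal{X}_{T^{*}}$ does not follow for the larger values of $\gamma$; you even record the restriction $\gamma<\widehat{\kappa}_{s,s}$ parenthetically without reconciling it with the conclusion. The paper avoids this by decoupling the two roles of the norm: the convex set $E$ is a ball for the strong $L^{r}(0,T;\mathbb{L}^{s,r}_{0})$ norm with $qm<r<\overline{m}_{s}$ (this is what makes the Nemytskii/invariance step work), but Schauder is applied in the much weaker ambient topology $L^{1}(0,T;\mathbb{L}^{s,1+\xi}_{0}(\Omega))$ with $0<\xi<q-1$, where the compactness of Theorem \ref{compact1} genuinely lands; the full regularity $u\in L^{\gamma}(0,T^{*};\mathbb{L}^{s,\gamma}_{0})$ for all $\gamma<\overline{m}$ is then recovered \emph{a posteriori} by applying the linear theory (Theorem \ref{global}) to the fixed point, whose right-hand side lies in $L^{m}$. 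Your argument can be repaired the same way, but as written the fixed-point space and the compactness range are incompatible.

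Two smaller remarks. First, your observation about the borderline case $q=\tfrac{N+2s}{N+2s-ms}$ (i.e.\ $qm=\overline{m}_{s}$) is pertinent --- the paper's proof silently selects $r$ with $qm<r<\overline{m}_{s}$, which requires the strict inequality --- but your proposed remedy by approximating with $q_{n}\uparrow q$ is not developed: the constants in the invariance estimate depend on the gap $r-q_{n}m$ and you would need to show they do not degenerate. Second, your continuity step should be slightly more careful: continuity of the Nemytskii map requires strong convergence of $(-\Delta)^{s/2}v_{n}$ in $L^{qm}$, which the paper obtains by interpolating the $L^{1+\xi}$ convergence against the uniform $L^{r}$ bound on $E$; in your setting the same interpolation is needed and should be stated.
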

    \begin{remarks}
        Taking into consideration the relation between $\W^{s,p}_0(\O)$
        and $\mathbb{L}^{s,p}_0(\O)$, we deduce that the solution to
        Problem \eqref{exis_prb} obtained in the previous theorem
        satisfies:
        \begin{enumerate}
            \item If $s\in (\frac 13,1)$, then \\
            $i)$ $u\in L^{\gamma}(0,T^*; \mathbb{W}^{s,\gamma}_0(\Omega))$ for
            all $1<\gamma<\frac{m(N+2s)}{N+2s-ms}$ if $1\leq
            m<\frac{N+2s}{s}$,\\
            $ii)$ $u\in L^{\gamma}(0,T^*; \mathbb{W}^{s,\gamma}_0(\Omega))$
            for all $1<\g<\infty$ if $ m\geq\frac{N+2s}{s}$.
            \item If $s\in (\frac 14,\frac 13]$, then \\
            $i)$ $u\in L^{\gamma}(0,T^*; \mathbb{W}^{s,\gamma}_0(\Omega))$ for
            all $1<\gamma<\frac{m(N+2s)}{N+2s-m(4s-1)}$ if $1\leq
            m<\frac{N+2s}{4s-1}$,\\
            $ii)$ $u\in L^{\gamma}(0,T^*; \mathbb{W}^{s,\gamma}_0(\Omega))$
            for all $1<\g<\infty$ if $ m\geq\frac{N+2s}{4s-1}$.
        \end{enumerate}

    \end{remarks}

    {\bf Proof of Theorem \ref{existence-nancy}.}

    We give a complete proof under the first set of conditions, the other cases follow in the same
    way using the corresponding regularity tools..\\

    Suppose that $f\in L^m(\O_T)$ with $1\leq m\leq \frac{N+2s}{s}${, let} $q>1$ be such that $1<q \leq
    \frac{N+2s}{N+2s-ms}$, then $qm<\overline{m}_s=\frac{m(N+2s)}{N+2s-ms}$. { Hence, } we get the existence of $r>1$ such that
    $qm<r<\overline{m}_s$. \\
    Fixed $r$ as above, then we define the set
    $$E:=\left\{\varphi\in L^1(0,T;\,\, \mathbb{L}^{s,1+\xi}_0(\O))\,\,; \,\, ||\varphi||_{L^r(0,T;\,\, \mathbb{L}^{s,r}_0(\O))}\leq \ell ^{\frac 1q}\right\},$$
    with {$0<\xi<q-1$}. It is clear that $E$ is a convex and closed set. Now, we consider the operator $\hat{\Phi}$ defined by
    $$
    \begin{array}{lll}
        \hat{\Phi}&:&E\to L^1(0,T;\mathbb{L}^{s,1+\xi}_0(\O))\\
        &&\varphi\to{\hat{ \Gamma}}(\varphi)=u,
    \end{array}$$
    with $u$ being the unique weak solution of the {Problem}
    \begin{equation}\label{exis_prb1}
        \left\{
        \begin{array}{llll}
            u_t+(-\Delta)^{s}u&=& |(-\Delta)^{\frac{s}{2}}\varphi|^q+f& \text{in}\quad \Omega_T,\vspace{ 0.2cm}\\
            u(x,t)&=&0& \text{in}\quad (\mathbb{R}^N\setminus \Omega)\times (0,T),\vspace{0.2cm}\\
            u(x,0)&=&0 & \text{in} \quad \Omega.
        \end{array}
        \right.
    \end{equation}
    To obtain our existence result, we just need to prove that
    $\hat{\Phi}$ has a fixed point. The main idea is to the Schauder's fixed point
    Theorem in a suitable set. The proof will be {given} in
    several steps.

    {\bf{Step 1.}} We show that $\hat{\Phi}$ is well-defined.\\ Indeed,
    since $\varphi\in E$ and $qm<r$, then
    $|(-\Delta)^{\frac{s}{2}}\varphi|^q+f \in L^m( \O_T)\subset
    L^1(\O_T)$. Thus, according to Theorem \ref{main-exis} and the regularity result in Corollary \ref{mainr1000}, we get
    the existence and the uniqueness of a weak solution $u$ to Problem \eqref{exis_prb1} such
    that $u\in L^\s(0,T;\,\, \mathbb{L}^{s,\s}_0(\O))$ for all $\s<\frac{N+2s}{N+s}$. .
  {Hence, } $\hat{\Phi}$ is well defined.

    {\bf{Step 2}}. We show that $\hat{\Phi}(E)\subset E$.

    Let $h=|(-\Delta)^{\frac{s}{2}}\varphi|^q+f,$ then $h\in
    L^m(\O_T)$, using Theorem \ref{global}, we get
    $$\begin{array}{llll}
        ||u||_{L^{\nu}(0,T; \mathbb{L}_0^{s,\nu}(\Omega))}&\le &C (\O,T)||h ||_{L^m(\O_T)}\vspace{0.2cm}\\
        &\leq & C (\O,T)\left(||(-\Delta)^{\frac{s}{2}}\varphi||_{L^{qm}(\O_T)}^q+ ||f||_{L^m(\O_T)}\right)
        \vspace{0.2cm}\\
        &\leq & C(\O,T)\left(||\varphi||_{L^r(0,T;\,\,\mathbb{L}^{s,r}_0(\O) )}^q+||f||_{L^m(\O_T)}\right) \vspace{0.2cm}\\
        &\leq & C(\O,T)\left(\ell+||f||_{L^m(\O_T)}\right),
    \end{array}
    $$
{   for all $\nu<\overline{m}_s$}. Therefore, using the fact that
    $C(\O,T)\to 0$ as $T\to 0$, we get the existence of $T^*>0$ such
    that for $T\le T^*$ fixed, there exists $\ell>0$ with
    $C(\O,T)\left(\ell+||f||_{L^m(\O_T)}\right)\leq \ell ^{\frac 1q}$. Hence
    $$||u||_{L^{\nu}(0,T; \mathbb{L}_0^{s,\nu}(\Omega))} \leq \ell ^{\frac 1q}. $$
    Choosing $\nu=r$, we reach that $u\in E$. {Thus, } $\hat{\Phi}(E)\subset E$.

    {\bf Step 3}. Compactness and continuity of {$\hat{\Phi}$}.

    Let us begin by proving that $\hat{\Phi}$ is continuous. Consider
    $\left\{\varphi_n\right\}_n \subset E$ such that
    $\varphi_n\to \varphi$ strongly in $L^1(0,T;\,\, \mathbb{L}^{s,1+\xi}_0(\O))$.
    \\
    Setting $u_n:=\hat{\Phi}(\varphi_n)$ and $u:=\hat{\Phi}(\varphi)$.
    Then, $(u_n,u)$ satisfies
    \begin{equation}\label{Sssys1}
        \left\{
        \begin{array}{lclll}
            \partial_t u_n+(-\Delta)^{s} u_{n} &= &|(-\Delta)^{\frac s2}\varphi_n|^{q}+ f & \text{ in }&\Omega_T, \\
            \partial_t u+(-\Delta)^{s} u &= &|(-\Delta)^{\frac s2}\varphi|^{q}+ f & \text{ in }&\Omega_T, \\
            u(x,t) =u_n(x,t)&=& 0 &\hbox{ in }& (\mathbb{R}^N\setminus\Omega)\times (0,T),\\
            u(x,0) = u_n(x,0)&=& 0 &\hbox{ in }& \Omega.
        \end{array}%
        \right.
    \end{equation}
    Setting $w_n=u_n-u$, then $w_n$ solves the {Problem}
    \begin{equation}\label{auxi1}
        \left\{
        \begin{array}{lclll}
            \partial_t w_n+(-\Delta)^{s} w_{n} &= &|(-\Delta)^{\frac s2}\varphi_n|^{q}-|(-\Delta)^{\frac s2}\varphi|^{q}& \text{ in }&\Omega_T, \\
            w_n(x,t)&=& 0 &\hbox{ in }& (\mathbb{R}^N\setminus\Omega)\times (0,T),\\
            w_n(x,0)&=& 0 &\hbox{ in }& \Omega.
        \end{array}%
        \right.
    \end{equation}
    Hence, using Corollary \ref{mainr1000}, it holds that
    {\begin{equation}\label{imp1}
            \bigg\||(-\Delta)^{\frac s2} u_n|-|(-\Delta)^{\frac s2} u|\bigg\|_{L^{\alpha}(\O_T)}
            \le||(-\Delta)^{\frac s2} w_n||_{L^{\alpha}(\O_T)} \leq
            C\bigg\||(-\Delta)^{\frac s2}\varphi_n|^{q}-|(-\Delta)^{\frac
                s2}\varphi|^{q}\bigg\| _{L^{1}(\O_T)}\textcolor{magenta}{,}
    \end{equation}}
    for all $\alpha<\frac{N+2s}{N+s}$. {On the other hand, since } $\left\{\varphi_n\right\}_n \subset E$,
    then, for $r$ fixed as above, we have
    $$||\varphi_n||_{L^r(0,T;\,\,\mathbb{L}^{s,r}_0(\O))}, ||\varphi||_{L^r(0,T;\,\,\mathbb{L}^{s,r}_0(\O))}\le C\mbox{  for all }n.$$
    Now, by interpolation, for $\theta\in (0,1)$ with $\frac 1q= \frac {\theta}{1+\zeta}+ \frac {1-\theta}{r}$, it follows that
    \begin{equation*}
        \begin{array}{lll}
            ||(-\Delta)^{\frac s2} (\varphi_n-
            \varphi)||_{L^q(\mathbb{R}^N\times (0,T))} &\le &
            C||(-\Delta)^{\frac s2} (\varphi_n-
            \varphi)||^{\theta}_{L^{1+\zeta}(\mathbb{R}^N\times (0,T))}
            ||(-\Delta)^{\frac s2} (\varphi_n-
            \varphi)||^{1-\theta}_{L^r(\mathbb{R}^N\times (0,T))}\\
            &\leq & C ||(-\Delta)^{\frac s2} (\varphi_n-
            \varphi)||^{\theta}_{L^{1+\zeta}(\mathbb{R}^N\times (0,T))}
            \to 0\mbox{ as }n\to
            \infty.
        \end{array}
    \end{equation*}
{    Thus, } $|(-\Delta)^{\frac s2} (\varphi_n-\varphi)|\to
    0$ strongly in $L^{q}(
    \mathbb{R}^N\times (0,T)))$. {In particular, } $|(-\Delta)^{\frac s2} \varphi_n|\to |(-\Delta)^{\frac s2} \varphi|$ strongly in $L^{q}(\mathbb{R}^N \times (0,T))$.
    Therefore, $(-\Delta)^{\frac s2} u_n\to (-\Delta)^{\frac s2} u$ strongly in $L^{\alpha}(\mathbb{R}^N \times (0,T))$.
    Then, $(-\Delta)^{\frac s2} u_n\to (-\Delta)^{\frac s2} u$ strongly in $L^{1+\zeta}(\mathbb{R}^N\times (0,T))$. {Hence, } $\hat{\Gamma}$ is
    continuous.

    We show now the Compactness of $\hat{\Phi}$. Let
    $\{\varphi_n\}_n\subset E$ be such that
    $\|\varphi_n\|_{L^1(0,T;\,\, \mathbb{L}^{s,1+\xi}_0(\O))}\le C$, with $C $ is a constant independent of $n$ and $u_n=\hat{\Gamma}(\varphi_n)$.
    Since $\{\varphi_n\}_n\subset
    E$, then for $r$ fixed such that $mq<r<\overline{m}_s$, we have $||\varphi_n||_{L^r(0,T;\,\,\mathbb{L}^{s,r}_0(\O))}<C$ for all
    $n$.

    Thus, $h_n=|(-\Delta)^{\frac s2}\varphi_n|^{q}+ f $ is bounded in $ L^{1}(\O_T)$, thanks to the compactness results in
    Theorem \ref{compact1} and Remark \ref{comparr}, we get the existence of a subsequence denoted $\{u_n\}_n$ such that $u_n\to u$
    strongly in $L^{\a}(0, \,T;\,\mathbb {L}^{s,\a}_0(\O))$ for all $\a<\frac{N+2s}{N+s}$.

    In particular, $u_n\to u$ strongly in $L^1(0,\,T;\,\mathbb{L}^{1,1+\zeta}_0(\O))$. Thus, the compactness of $\hat{\Phi}$ follows.\\
    Therefore, since all the conditions of Schauder's fixed point
    Theorem are satisfied, we get the {existence of  }a function $u\in E$ such that $\hat{\Phi}(u)=u$. It is clear that $u$ is a weak solution to
    Problem \eqref{exis_prb}. \cqd

    \

    \subsection{{Case:  $f =0$ and $u_0\neq 0$.}}

    In this subsection, we deal with the case $f =0$ and $u_0\neq
    0$. Notice that the existence result that we are going to present in this situation is not optimal and takes advantage of the existence
    result proved in Theorem \ref{existence-nancy}{, we }refer to the remark below for further comments.

    Consider the {Problem}
    \begin{equation}\label{exis_prb2}
        \left\{
        \begin{array}{llll}
            u_t+(-\Delta)^{s}u&=& |(-\Delta)^{\frac{s}{2}}u|^q& \text{in}\quad \Omega_T,\vspace{0.2 cm}\\
            u(x,t)&=&0& \text{in}\quad (\mathbb{R}^N\setminus \Omega)\times (0,T),\vspace{0.2cm}\\
            u(x,0)&=&u_0 (x)& \text{in} \quad \Omega.
        \end{array}
        \right.
    \end{equation}
    We have the next existence result.
    \begin{Theorem}\label{initialc}
        Suppose $u_0\in L^{\sigma}(\O)$ with $1\le \s<\frac{N}{(1-3s)_+}$. Assume that $1\leq q <\frac{N+ 2s\s}{N+\s s}$. Then, there exists $T^*>0$
        such that Problem \eqref{exis_prb2} has a weak solution $u$ such that
        \begin{enumerate}
            \item if $s>\frac 13$, then  $u \in L^{\theta_1}(0,T^*;
            \mathbb{L}^{s,\theta_1}_0(\Omega))$ for all
            $\theta_1<\min\left\{\frac{q(N+\s s)}{q(N+\s s)-\s
                s},\frac{\s(N+2s)}{N+\s
                s}\right\}$.
            \item if $s\in (\frac 14,\frac 13]$, then $u\in L^{\theta_1}(0,T^*; \mathbb{L}^{s,\theta_1}_0(\Omega))$
            with $\theta_1<\min\left\{\frac{q(N+\s s)}{q(N+\s s)-\s(4
                s-1)},\frac{\s(N+2s)}{N+\s
                s}\right\}$.
        \end{enumerate}
    \end{Theorem}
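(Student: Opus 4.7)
The plan is to apply Schauder's fixed-point theorem to the solution map of the associated linearized problem, in the spirit of the proof of Theorem \ref{existence-nancy}, with the roles of $f$ and $u_0$ exchanged. Fix $\theta_1$ in the admissible range stated in the theorem and choose $0<T^*\le T$ and $\ell>0$ to be determined later. Introduce the closed convex set
\[
E:=\Bigl\{\varphi\in L^{\theta_1}(0,T^*;\,\mathbb{L}^{s,\theta_1}_0(\Omega))\;:\;\|\varphi\|_{L^{\theta_1}(0,T^*;\,\mathbb{L}^{s,\theta_1}_0(\Omega))}\le \ell\Bigr\},
\]
and define $\hat{\Phi}:E\to L^{\theta_1}(0,T^*;\,\mathbb{L}^{s,\theta_1}_0(\Omega))$ by $\hat\Phi(\varphi)=u$, where $u$ is the unique weak solution of the linear problem
\[
u_t+(-\Delta)^s u=|(-\Delta)^{s/2}\varphi|^q\ \text{in}\ \Omega_{T^*},\quad u\equiv 0\ \text{in}\ (\mathbb R^N\setminus\Omega)\times(0,T^*),\quad u(\cdot,0)=u_0.
\]
Since $\theta_1\ge q$ in the admissible range, $\varphi\in E$ gives $h:=|(-\Delta)^{s/2}\varphi|^q\in L^{\theta_1/q}(\Omega_{T^*})\subset L^1(\Omega_{T^*})$; together with $u_0\in L^\sigma(\Omega)\subset L^1(\Omega)$, Theorem \ref{main-exis} ensures that $\hat\Phi$ is well defined.

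The central step is the self-map property $\hat\Phi(E)\subset E$. By linearity, decompose $u=u_1+u_2$, where $u_1$ solves the homogeneous equation with initial datum $u_0$ and $u_2$ solves the equation with zero initial datum and forcing $h\in L^m(\Omega_{T^*})$, $m:=\theta_1/q$. Theorem \ref{globalu_0_t} provides
\[
\|u_1\|_{L^{\theta_1}(0,T^*;\,\mathbb{L}^{s,\theta_1}_0(\Omega))}\le C_1\,(T^*)^{\beta_1}\,\|u_0\|_{L^\sigma(\Omega)},
\]
with $\beta_1>0$ precisely when $\theta_1<\frac{\sigma(N+2s)}{N+\sigma s}$, while Theorem \ref{global} (in the form dictated by $s>\tfrac13$ or $s\le\tfrac13$) yields
\[
\|u_2\|_{L^{\theta_1}(0,T^*;\,\mathbb{L}^{s,\theta_1}_0(\Omega))}\le C_2\,(T^*)^{\beta_2}\,\|h\|_{L^m(\Omega_{T^*})}\le C_2\,(T^*)^{\beta_2}\,\ell^{\,q},
\]
with $\beta_2>0$ as long as $\theta_1$ lies strictly below the threshold $A=\frac{q(N+\sigma s)}{q(N+\sigma s)-\sigma s}$ (or its $\sigma(4s-1)$-analogue for $s\in(\tfrac14,\tfrac13]$). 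The standing hypothesis $q<\frac{N+2\sigma s}{N+\sigma s}$ is exactly what makes the window $\bigl(\,\cdot\,,\min\{A,B\}\bigr)$ for $\theta_1$ non-empty, where $B=\frac{\sigma(N+2s)}{N+\sigma s}$; then, since $q\ge 1$, choosing $\ell$ sufficiently small compared with $\|u_0\|_{L^\sigma}$ and afterwards $T^*$ sufficiently small enforces $\|\hat\Phi(\varphi)\|_{L^{\theta_1}(\mathbb{L}^{s,\theta_1}_0)}\le \ell$, hence $\hat\Phi(E)\subset E$.

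Continuity of $\hat\Phi$ is handled as in the proof of Theorem \ref{existence-nancy}: if $\varphi_n\to\varphi$ in $L^1(0,T^*;\mathbb{L}^{s,1+\xi}_0(\Omega))$ with $\{\varphi_n\}\subset E$, interpolation between this strong convergence and the uniform bound of $\{(-\Delta)^{s/2}\varphi_n\}$ in $L^{\theta_1}(\Omega_{T^*})$ yields $|(-\Delta)^{s/2}\varphi_n|^q\to|(-\Delta)^{s/2}\varphi|^q$ strongly in $L^1(\Omega_{T^*})$, and Corollary \ref{mainr1000} then delivers the required strong convergence of $u_n=\hat\Phi(\varphi_n)$. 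Compactness of $\hat\Phi$ follows from Theorem \ref{compact1} together with Remark \ref{comparr} in the low-$s$ regime: the pairs $(h_n,u_0)$ are bounded in $L^1(\Omega_{T^*})\times L^1(\Omega)$, so $\{u_n\}$ is precompact in $L^\alpha(0,T^*;\mathbb{L}^{s,\alpha}_0(\Omega))$ for every $\alpha<\widehat\kappa_{s,s}$, which is more than enough to extract a convergent subsequence in the topology used for the continuity step.

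The hardest part of the argument is the joint calibration of $\theta_1$: Theorem \ref{globalu_0_t} caps it below $B$ through the time-singularity $t^{-\frac{N}{2s}(1/\sigma-1/\theta_1)-1/2}$, while Theorem \ref{global} caps it below $A$ through the factor $(T^*)^{-(\rho-s)/(2s)}$ together with its $4s-1$ variant, which is precisely why the restriction $s>\tfrac14$ surfaces and the case split at $s=\tfrac13$ is mandatory. Once a $\theta_1$ in the non-empty admissible window is selected, Schauder's fixed-point theorem produces $u\in E$ with $\hat\Phi(u)=u$, which by construction is a weak solution of $(KPZ_f)$ with $f=0$ and initial datum $u_0$ in the required parabolic Bessel potential space.
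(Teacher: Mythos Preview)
Your direct Schauder approach is a legitimate alternative to the paper's, but the self-map step contains a genuine error. With $m:=\theta_1/q$ and target exponent $r:=\theta_1$, Theorem~\ref{global} (take $s>\tfrac13$) requires $r<\frac{m(N+2s)}{N+2s-ms}$, and the time prefactor is $T^{\frac12-\frac{N+2s}{2s}(\frac1m-\frac1r)}(1+T^{-\eta/(2s)})$. Substituting $m=\theta_1/q$ and $r=\theta_1$, both the admissibility condition and the positivity of the exponent reduce to the \emph{lower} bound
\[
\theta_1>\frac{(q-1)(N+2s)}{s},
\]
not an upper bound $\theta_1<A$. In particular $\sigma$ never enters the $u_2$-estimate, so the threshold $A=\frac{q(N+\sigma s)}{q(N+\sigma s)-\sigma s}$ cannot be produced by Theorem~\ref{global} with this choice of $m$. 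The hypothesis $q<\frac{N+2\sigma s}{N+\sigma s}$ is indeed the right one, but for a different reason: it is equivalent to $\frac{(q-1)(N+2s)}{s}<B:=\frac{\sigma(N+2s)}{N+\sigma s}$, i.e.\ it ensures the window $\bigl(\frac{(q-1)(N+2s)}{s},\,B\bigr)$ for $\theta_1$ is non-empty so that $\beta_1>0$ and $\beta_2>0$ can hold simultaneously. (Also, ``$\ell$ sufficiently small'' is unnecessary: any $\ell>0$ works, then $T^*$ is chosen small.)

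For comparison, the paper does not run Schauder on the full problem. It first subtracts off $\phi$, the solution of the homogeneous equation with datum $u_0$, and sets $\tilde u=u-\phi$; then $\tilde u$ has zero initial datum and a right-hand side dominated by $C(q)\bigl(|(-\Delta)^{s/2}\tilde u|^q+|(-\Delta)^{s/2}\phi|^q\bigr)$. Since by Theorem~\ref{globalu_0_t} one has $|(-\Delta)^{s/2}\phi|^q\in L^m(\Omega_T)$ for every $m<\frac{\sigma(N+2s)}{q(N+\sigma s)}$, Theorem~\ref{existence-nancy} applies directly and its output regularity $\gamma<\overline m=\frac{m(N+2s)}{N+2s-ms}$ is what generates the $\sigma$-dependent bound $A$ in the statement. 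Your corrected direct argument would instead yield a fixed point for any $\theta_1\in\bigl(\frac{(q-1)(N+2s)}{s},B\bigr)$ and, after a short bootstrapping step, regularity for all $\theta_1<B$; this actually covers $\theta_1<\min\{A,B\}$, but you have not derived any of this as written.
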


    \begin{proof}
        To simplify our presentation, we give the proof in the case $s\in
        (\frac 14,\frac 13)$. Let $\phi$ be the solution to the {Problem}
        \begin{equation} 
            \left\{
            \begin{array}{rcll}
                \phi_t+(-\Delta)^{s} \phi &= & 0 & \text{ in }\quad \Omega_T, \\
                \phi(x,t) &=& 0 &\hbox{ in }\quad (\mathbb{R}^N\setminus\Omega)\times (0,T),\\
                \phi(x,0) &=& u_0(x) &\hbox { in}\quad \, \,\Omega.\\
            \end{array}%
            \right.
        \end{equation}
        According to Theorem \ref{globalu_0_t} and Corollary \ref{coryy}, we have
        $\phi\in L^p( 0,T; \,\, \mathbb{L}_0^{s,p}(\Omega))$ for all
        $p\leq \widehat{p_s}:=\frac {\s(N+2s)}{N+\s s}$.

        Let $\widetilde{u}=u-\phi$, then $\widetilde{u}$ solves the
    {Problem}
        \begin{equation}\label{S011A1}
            \left\{
            \begin{array}{rcllll}
                {\widetilde{u}}_t+(-\Delta)^{s} \widetilde{u}&= & |(-\Delta)^{\frac s2}{\widetilde{u}}+
                (-\Delta)^{\frac s2}\phi|^{q} & \text{ in }&\Omega_T, \\
                \widetilde{u}(x,t)&=& 0 &\hbox{ in }& (\mathbb{R}^N\setminus\Omega)\times (0,T),\\
                \widetilde{u}(x,0) &=&0 &\hbox{ in}& \Omega.
            \end{array}%
            \right.
        \end{equation}
        Thus, to prove the existence of solutions to Problem
        \eqref{exis_prb2}, it is enough to show that the problem
        \eqref{S011A1} has a positive solution
        $\widetilde{u}$.\\

        Let $1<r<\frac{{\s}(N+2s)}{N+{\s}s}$ be fixed, close to
        $\frac{{\s}(N+2s)}{N+{\s}s}${, we }define the set
        \begin{equation*}
            F:=\Big\{\varphi\in L^1(0,T;\mathbb{L}^{s,1+\eta}_0(\O)) \,; \, \varphi\in L^r(0,\,T;\, \mathbb{L}^{s,r}_0(\O))\mbox{ and } ||\varphi||_{L^ r(0,\,T;\, \mathbb{L}^{s,r}_0(\O))}\leq \Lambda \Big\},
        \end{equation*}
        with $0<\eta <q-1$. Consider the operator
        $$\mathcal{L}: F\longrightarrow L^1(0,T;\mathbb{L}^{s,1+\eta}_0(\O)) \ \ $$
        $$\varphi \longmapsto \mathcal{L}(\varphi)=\widetilde{u},$$
        with $\widetilde{u}$ is the unique solution of the {Problem}

        { \begin{equation}\label{aux022}
                \left\{
                \begin{array}{rcllll}
                    {\widetilde{u}}_t+(-\Delta)^{s_1} \widetilde{u}&= & |(-\Delta)^{\frac s2} {\varphi}+(-\Delta)^ {\frac s2} \phi|^{q} &
                    \text{ in }&{\Omega_T}, \\
                    \widetilde{u}(x,t) &=& 0 &\hbox{ in }& (\mathbb{R}^N\setminus\Omega)\times (0,T),\\
                    \widetilde{u}(x,0) &=&0 &\hbox{ in } &\Omega.
                \end{array}%
                \right.
        \end{equation}}
        Notice that
        $$
        |(-\Delta)^{\frac s2} {\varphi}+(-\Delta)^ {\frac s2} \phi|^{q}
        \le C(q) (|(-\Delta)^{\frac s2} {\varphi}|^q+|(-\Delta)^{\frac s2}
        \phi|^{q}).
        $$
        Define $f= C(q)|(-\Delta)^{\frac s2} {\phi}|^{q}$, then $f\in L^{m}(\O_T)$ for all
        $m<\frac{{\s}(N+2s)}{q(N+{\s}s)}$.\\
        Going back to Theorem \ref{existence-nancy}, using the fact that $\frac{{\s}(N+2s)}{q(N+{\s}s)}<\frac{(N+2s)}{s}$, then,
        according to Theorem \ref{existence-nancy}, we get the existence of a positive solution $\tilde{u}$ to Problem \eqref{aux022} such that
        $\tilde{u}\in L^{\gamma}(0,T^*; \mathbb{L}^{s,\gamma}_0(\Omega))$ for all $1<\gamma<\frac{q(N+\s s)}{q(N+\s s)-\s
            s}$. {Since} $u=\widetilde{u}+\phi$, then $u\in L^{\theta_1}(0,T^*; \mathbb{L}^{s,\theta_1}_0(\Omega))$
        with $\theta_1<\min\left\{\frac{q(N+\s s)}{q(N+\s s)-\s
            s},\frac{\s(N+2s)}{N+\s
            s}\right\}$. It is clear that $u$ solves the Problem \eqref{exis_prb2}.{ Hence,  }we conclude.
    \end{proof}

    \begin{remarks}
        The above argument used to prove Theorem \ref{initialc} forces us
        to limit ourselves to the case where $q<\frac{N+2\s s}{N+\s s}$,
        that is used to show that $m\ge 1$. In a forthcoming work
        \cite{ABH1}, we
        will consider the general case working in weighted spaces of the
        form
{   $$
        \mathbb{X}^{s,p,\g}_0(\O\times (0,\infty)):=\{\phi(t,.)\in
        \mathbb{L}^{s,p}_0(\O)\:\:  \mbox{ for {\textit{a.e.}} }t\in (0,\infty) \mbox{ and
        }\sup_{t\ge 0}t^\g||\phi(.,t)||_{\mathbb{L}^{s,p}_0(\O)}<\infty\}.
        $$}
    \end{remarks}

    \section{Extensions and further results}\label{open}


    In this section, we give some extensions of our
    regularity-existence results to other nonlinear problem with
    different structures.

    $\bullet$ In the case where $s>\frac 12$, the above
    regularity result can be extended to the case where the fractional
    Laplacian is perturbed by a drift term. More precisely, we define
    the operator
    $$
    L(w)=(-\Delta)^{s} w+B(x,t).\n w, $$ where $B$ is a vector fields
    satisfies the condition
    \begin{equation}\label{QQQ}
\int_0^T\bigg(\io |B(x,t)|^\theta
dx\bigg)^{\rho}{\theta}dt<\infty,
\end{equation}
with $1\le \theta, \rho\le \infty$, verified
$\frac{N}{2s}\frac{1}{\theta}+\frac{1}{\rho}<1-\frac{1}{2s}$, see
example 3, page 335 in \cite{JSK}. We refer also to Definition 1
in \cite{JSK} for more general condition related to Kato class of
functions.

Notice that if $|B|\in L^{\s}(\O_T)$ with $\s>\frac{N+2s}{2s-1}$,
then the condition \eqref{QQQ} holds.

    Denote by {$\widetilde{P}_\O$}, the Dirichlet heart kernel
    associate to $L$, then according to \cite{BJak}, \cite{CZ} and
    \cite{JSK}, we obtain that {$\widetilde{P}_\O\simeq P_\O$} and {$|\n_x
    \widetilde{P}_\O(x,y,t)|\le C\frac{P_\O(x,y,t)}{\d(x)\wedge
        t^{\frac{1}{2s}}}$}. Thus, if $\widetilde{w}$ is the solution to
{ the Problem}
    \begin{equation}\label{Drift1}
        \left\{
        \begin{array}{rcllll}
            \widetilde{w}_t+L(\widetilde{w})&= h & \text{ in }&\Omega_T, \\
            \widetilde{w}(x,t) &=& 0 &\hbox{ in }& (\mathbb{R}^N\setminus\Omega)\times (0,T),\\
            \widetilde{w}(x,0) &=&w_0(x) &\hbox{ in } &\Omega.
        \end{array}%
        \right.
    \end{equation}
{Then,} the regularity of $\widetilde{w}$ is the same as the
    regularity of $w$, the solution to Problem $(FHE)$ obtained in
Section \ref{Regularity_heat_Equation_Section}.

    \

    $\bullet$ Related to the Problem ${(KPZ_f)} $, we can replace the term
    $|(-\D)^{\frac{s}{2}} u|^q$ by $|\n^s u|^q$ defined in
    \eqref{frac-g}. Namely, the same existence result holds for the
{    Problem}
    \begin{equation*}
        \left\{
        \begin{array}{llll}
            u_t+(-\Delta)^{s}u&=& |\n^s u|^q+ f& \text{in}\quad \Omega_T,\vspace{0.2 cm}\\
            u(x,t)&=&0& \text{in}\quad (\mathbb{R}^N\setminus \Omega)\times (0,T),\vspace{0.2cm}\\
            u(x,0)&=&u_0 (x)& \text{in} \quad \Omega.
        \end{array}
        \right.
    \end{equation*}
    under the same {conditions on } the parameters.

 $\bullet$ We can also use the same approach to treat the case
    where the fractional gradient term appears as an absorption
    term. Namely, we can consider the Problem

{\begin{equation*}
            \left\{
            \begin{array}{llll}
                u_t+(-\Delta)^{s}u+|(-\Delta)^s u|^q &=&  f& \text{in}\quad \Omega_T,\vspace{0.2 cm}\\
                u(x,t)&=&0& \text{in}\quad (\mathbb{R}^N\setminus \Omega)\times (0,T),\vspace{0.2cm}\\
                u(x,0)&=&u_0 (x)& \text{in} \quad \Omega.
            \end{array}
            \right.
    \end{equation*}}

    The
    existence is proved for small time {values}.

    \noindent Notice that, as observed in \cite{AOP}, in the case of the
    gradient appears as an absorption term, it is possible to show the
    existence for $L^1$ data if $q<2s$. The question of existence of a solution for $L^1$ datum is still open
    for $q>2s$ including for the local case.

    $\bullet$ The question of uniqueness seems to be more difficult.
    In the case where the fractional gradient is substituted by the
    local gradient and $s>\frac 12$, the uniqueness of the good solution is proved in
    \cite{AB01}. The situation is more delicate in our case, since the
    comparison principle for the equation
    $$
    \left\{
    \begin{array}{rcllll}
        v_t+(-\D)^sv &= & C|(-\D)^{\frac{s}{2}}v| &\text{ in }&{\Omega_T}, \\
        v(x,t) &=& 0 &\hbox{ in }& (\mathbb{R}^N\setminus\Omega)\times (0,T),\\
        v(x,0) &=&0 &\hbox{ in } &\Omega,
    \end{array}%
    \right.
    $$
    seems to be not true.

    \section{Appendix}\label{Appendix_Section}
    In this {section},  we prove some technical tools that we have used
    previously throughout the paper.

    \vspace{0.2cm}

    {\bf Proof of Theorem \ref{first_integrals_regularity}.}
    Recall that $$
    G_{\l}(x,t)=\io \frac{g(y)}{(t^{\frac{
                1}{2s}}+|x-y|)^{N+\l}}dy.
    $$
    To get the regularity of $G_{\l}${,  we }will use a duality
    argument. Let $\phi\in \mathcal{C}^\infty_0(\O)$,
    then for $p\ge 1$, we have
    \begin{equation*}
        \begin{array}{lll}
            ||G_{\l}(.,t)||_{L^p(\Omega)}&=&\dyle\sup\limits_{\{|| \phi||_{L^{p'}(\Omega)}\leq 1\}}{\int_{\O}}  G_{\l}(x,t)\phi(x) dx\leq
            \dyle\sup\limits_ {\{|| \phi||_{L^{p'}(\Omega)}\leq 1\}}\int_{\O} |\phi(x)|\int_{\Omega}|g(y)|{H}(x-y,t)dy  dx,\\
        \end{array}
    \end{equation*}
    where $H(x,t):=\dfrac{1}{(t^{\frac{1}{2s}}+\rvert
        x\rvert)^{N+\l}}$. Using Young's inequality, it holds that

    \begin{equation}\label{firsti1}
        \begin{array}{lll}
            ||G_{\l}(.,t)||_{L^p(\Omega)}&\leq& \sup\limits_{\{|| \phi|| _{L^{p'}(\Omega)\leq 1}\}} || \phi ||_{L^{p'}(\Omega)}||g||_{L^{m}(\Omega)}
            || H(.,t)||_{L^{a}(\Omega)},
        \end{array}
    \end{equation}
    {where $\dfrac{1}{a}+\dfrac{1}{p'}+\dfrac{1}{m}=2$}.
    Then by a direct computation, we have {\begin{eqnarray*}
            \io H^a(x,t) dx &=& \io\dfrac{1}{(t^{\frac{1}{2s}}+\rvert
                x\rvert)^{(N+\l)a}}dx=t^{-a\frac{N+\l}{2s}}\io \dfrac{1}{(1+\frac{|x|}{t^{\frac{1}{2s}}})^{(N+\l)a}}dx.\\
    \end{eqnarray*}}
    Setting $z=\frac{x}{t^{\frac{1}{2s}}}$, it holds that
    $$
    \io H^a(x,t) dx \le t^{\frac{N}{2s}-a\frac{N+\l}{2s}} \int_{\mathbb{R}^N}
    \dfrac{1}{(1+|z|)^{(N+\l)a}}dz.
    $$
    The last integral is finite if and only if $(N+\l)a>N$. Thus $\frac{1}{p}-\frac{1}{m}<\frac{\l}{N}$.

    Now, if $\l\ge 0$, then $\frac{1}{p}-\frac{1}{m}<\frac{\l}{N}$
    for all $p>m$. Thus from \eqref{firsti1}, we get
    $$
    ||G_{\l}(.,t)||_{L^p(\O)}\le  C
    t^{\frac{-\l}{2s}-\frac{N}{2s}(\frac
        1m-\frac{1}{p})}||g||_{L^m(\O)}.
    $$
    Suppose that $\l<0$, then
    $$
    G_{\l}(x,t)\le C(t^{-\frac{\l}{2s}}+1)\io
    \frac{g(y)}{(t^{\frac{1}{2s}}+|x-y|)^{N}}dy.
    $$
    Now, repeating the same duality argument as above for the last
    term, it holds that for all $p>m$, we have
    \begin{equation}\label{YY1}
        ||G_{\l}(.,t)||_{L^p(\O)}\le  C(t^{-\frac{\l}{2s}}+1)
        t^{-\frac{N}{2s}(\frac
            1m-\frac{1}{p)}}||g||_{L^m(\O)}.
    \end{equation}

    Notice that, if $-N<\l<0$, we can improve estimate \eqref{YY1}.
    Namely for $m_0$ fixed such that $1\le
    m_0<\min\{m,-\frac{N}{\l}\}$, then $g\in L^{m_0}(\O)$ and
    $-\frac{1}{m_0}<\frac{\l}{N}$. Thus, fixed $p$ such that
    $p>\frac{m_0N}{N+m_0\l}$, then $p>m_0$ and
    $\frac{1}{p}-\frac{1}{m_0}<\frac{\l}{N}$. Repeating the same
    computations as in the case $\l>0$, with $m_0$ instead of
    $m$, we obtain that
    $$
    ||G_{\l}(.,t)||_{L^p(\O)}\le  C
    t^{\frac{-\l}{2s}-\frac{N}{2s}(\frac
        1{m_0}-\frac{1}{p})}||g||_{L^{m_0}(\O)}
    \le  C(\O)t^{\frac{-\l}{2s}-\frac{N}{2s}(\frac
        1{m_0}-\frac{1}{p})}||g||_{L^{m}(\O)}.
    $$
    \cqd

    \

    {\bf Proof of Theorem \ref{integrals regularityII}.}
    Recall that
    $$
    G_{\a,\l}(x,t)=\int_0^t\io
    \frac{g(y,\tau)(t-\tau)^{\a}}{((t-\tau)^{\frac{1}{2s}}+|x-y|)^{N+\l}}
    dy d\tau.
    $$
    We give a detailed proof in the case where  $p\le
    \frac{N+2s}{2s(\a+1)-\l}$. The other cases follow in the same way.

    Recall that $\l<2s(\a+1)$. We will use a duality argument.
    Let $\phi\in \mathcal{C}^\infty_0(\O_T)$, then
    \begin{equation*}
        \begin{array}{lll}
            ||G_{\a,\l}||_{L^{\ell}(\Omega_T)}&=&\dyle\sup\limits_{{|| \phi|| _{L^{\ell'}(\Omega_T)}\leq 1}}{\iint_{\O_T}}  G_{\a,\l}(x,t)\phi(x,t) dx dt,\vspace{0.2cm}\\
            &\leq &\dyle\sup\limits_ {{||\phi||_{L^{\ell'}(\Omega_T)}\leq 1}}\iint_{\O_T} |\phi(x,t)|\int_{0}^{t}\int_{\Omega}|g(y,\tau)|{H}(x-y,t-\tau)dy d\tau dx dt,\\
        \end{array}
    \end{equation*}
    where
    \begin{equation*}
        {H}(x,t):=\dfrac{t^\a}{(t^{\frac{1}{2s}}+\rvert x\rvert)^{N+\l}}.
    \end{equation*}
    Using Young's inequality, it holds that
    \begin{equation}\label{hjj}
        \begin{array}{lll}
            ||G_{\a,\l}||_{L^{\ell}(\Omega_T)}&\leq& \sup\limits_{{|| \phi|| _{L^{\ell'}(\Omega_T)\leq 1}}}\Int_{0}^{T}\Int_{0}^{t}
            || \phi(.,t) || _{L^{\ell'}(\Omega)}|| g(.,\tau)||_{L^{p}(\Omega)} || H(.,t-\tau)|| _{L^{a}(\Omega)}d\tau
            dt,
        \end{array}
    \end{equation}
    where $\dfrac{1}{a}+\dfrac{1}{p}+\dfrac{1}{\ell'}=2$.

    Notice that
    $$
    || H(.,t)||_{L^{a}(\Omega)}\leq
    t^{\a+\frac{N}{2sa}-\frac{N+\l}{2s}}\bigg(\Int_{\ren}\frac{dz}{(1+|z|)^{(N+\l)a}}\bigg)^{\frac{1}{a}}.
    $$
    The last integral is finite if $a(N+\l)>N$ which implies that
    \begin{equation}\label{ell}
        \frac{1}{\ell}-\frac{1}{p}<\frac{\l}{N}. \end{equation} Hence
    assuming the above hypothesis, we obtain that
    \begin{equation}\label{HH0}
        || H(.,t)||_{L^{a}(\Omega)}\leq
        Ct^{\a+\frac{N}{2sa}-\frac{N+\l}{2s}}.
    \end{equation}
    If $\l\ge 0$, then \eqref{ell} holds for all $\ell>p$. Let
    $\gamma=\a+\frac{N}{2sa}-\frac{N+\l}{2s}$. Since $2s\a<N+\l$, then
    $\g<0$. Going back to \eqref{hjj}, we obtain
    \begin{eqnarray*}
        ||G_{\a,\l}||_{L^{\ell}(\Omega_T)} &\leq & \sup_{{|| \phi|| _{L^{\ell'}(\Omega_T)\leq 1}}}
        \Int_{0}^{T}\Int_{0}^{T}|| \phi(.,t) || _{L^{\ell'}(\Omega)}|| g(.,\tau)||_{L^{p}(\Omega)}|t-\tau|^{\gamma}d\tau dt.
    \end{eqnarray*}
    Using again Young's inequality in the time, we reach that
    \begin{equation}\label{ppp}
        \begin{array}{lll}
            || G_{\a,\l}||_{L^{\ell}(\Omega_T)}
            &\leq&C\sup\limits_{{|| \phi|| _{L^{\ell'}(\Omega_T)\leq 1}}} || g||_{L^{p}(\Omega_T)}      || \phi || _{L^{\ell'}(\Omega_T)} \left(\Int_{0}^{T}t^{\bar{a}\gamma}d\tau\right )^{\frac{1}{\bar{a}}},\vspace{0.2cm}\\
        \end{array}
    \end{equation}
    with $\dfrac{1}{\bar{a}}+\dfrac{1}{p}+\dfrac{1}{\ell'}=2$. It is clear that
    $\bar{a}=a=\frac{p'\ell}{p'+\ell}$ and $\g=\frac{2s\a-\l}{2s}-\frac{N}{2s}(\frac 1p-\frac{1}{\ell})$. \\
    Now by \eqref{ppp}, it follows that the last integral is finite if
    $a|\g|<1$. Taking into consideration the values of $a$ and $\g$,
    the above condition holds if
    $\frac{1}{p}-\frac{1}{\ell}<\frac{2s(\a+1)-\l}{N+2s}$. Hence for
    all $\ell$ such that
    $p<\ell<\frac{p(N+2s)}{(N+2s-p(2s(\a+1)-\l))_+}$, we have
    \begin{equation}\label{UUT0}
        || G_{\a,\l}||_{L^{\ell}(\Omega_T)}
        \leq CT^{\g+\frac{1}{a}}||g||_{L^{p}(\Omega_T)}=C T^{\frac{2s(\a+1)-\l}{2s}-\frac{N+2s}{2s}(\frac{1}{p}-\frac{1}{\ell})}||g||_{L^{p}(\O_T)}.
    \end{equation}
    Now, if $\l<0$, then as in the proof of Theorem
    \ref{first_integrals_regularity}, since $\O$ is a bounded domain,
    we get
    $$
    G_{\a,\l}(x,t)\le C(T^{-\frac{\l}{2s}}+1)\int_0^t\io
    \frac{g(y,\tau)(t-\tau)^{\a}}{((t-\tau)^{\frac{1}{2s}}+|x-y|)^{N}}
    dy d\tau. $$ Following the same argument as in the case $\l\ge0$,
    we deduce that for all $\ell$ such that
    $p<\ell<\frac{p(N+2s)}{(N+2s-p(2s(\a+1)))_+}$, we have
    \begin{equation}\label{UUT1}
        || G_{\a,\l}||_{L^{\ell}(\Omega_T)}
        \leq C(T^{-\frac{\l}{2s}}+1)T^{\a+1-\frac{N+2s}{2s}(\frac{1}{p}-\frac{1}{\ell})}||g||_{L^{p}(\O_T)}.
    \end{equation}

    In the case where $\l<0$, we can improve estimate \eqref{UUT1}
    under additional hypothesis. More precisely, assume that $\l<0$
    and $N(\a+1)>|\l|$. We fix $p_0$ such that $1\le
    p_0<\min\{p,-\frac{N}{\l}\}$, then $g\in L^{p_0}(\O_T)$. Since
    $-\frac{1}{p_0}<\frac{\l}{N}$, then for
    $\ell>\frac{p_0N}{N+p_0\l}$, the condition \eqref{ell} is verified
    for the pair $(p_0,\ell)$.

    Notice that
    $\frac{p_0N}{N+p_0\l}<\frac{p_0(N+2s)}{(N+2s-p_0(2s(\a+1)-\l))_+}$.
    Thus, for all $\ell$ such that
    $\frac{p_0N}{N+p_0\l}<\ell<\frac{p_0(N+2s)}{(N+2s-p_0(2s(\a+1)-\l))_+}$,
    we have
    \begin{equation*}
        || G_{\a,\l}||_{L^{\ell}(\Omega_T)}
        \leq
        C(\O)T^{\frac{2s(\a+1)-\l}{2s}-\frac{N+2s}{2s}(\frac{1}{p_0}-\frac{1}{\ell})}
        ||g||_{L^{p_0}(\Omega_T)}\le
        C(\O)T^{\frac{1}{p_0}-\frac{1}{p}+\frac{2s(\a+1)-\l}{2s}-\frac{N+2s}{2s}(\frac{1}{p_0}-\frac{1}{\ell})}
        ||g||_{L^{p}(\Omega_T)}.
    \end{equation*}
    \

    If $N(\a+1)\le |\l|$, then $-1<\a<0$. Choosing $a>0$ such that
    $a(N+\l)<N$, then $\frac{1}{p}-\frac{1}{\ell}<-\frac{\l}{N}$.
    Hence
    $$
    \io H^a(x,t) dx \le t^{\a a}\io\dfrac{1}{|x|^{(N+\l)a}}dx\le
    C(\O)t^{\a a}.$$ Thus the integral in time converges if
    $\a>\-\frac{1}{a}$. Hence $\frac{1}{p}-\frac{1}{\ell}<1+\a$. Then
    for $\ell$ fixed such that $\ell<\min\{\frac{pN}{N+\l
        p},\frac{p}{(1-p(\a+1))_+}\}$, we have
    \begin{equation*}
        || G_{\a,\l}||_{L^{\ell}(\Omega_T)}
        \leq C(\O)T^{(\a+1)-(\frac{1}{p}-\frac{1}{\ell})}
        ||g||_{L^{p}(\Omega_T)}.
    \end{equation*}
    \cqd

    \begin{remarks}\label{lastr}

        \

        If $\l>2s(\a+1)$, then $G_{\a,\l}$ is not well defined in the
        sense that the integral $\dyle\int_0^t\io
        \frac{g(y,\tau)(t-\tau)^{\a}}{((t-\tau)^{\frac{1}{2s}}+|x-y|)^{N+\l}}
        dy d\tau$ can diverge including for regular data. To see that, we
        consider the case where $g=C$, then choosing $x\in \O$ such that
        $B_r(x)\subset \O$, we have
        \begin{eqnarray*}
            G_{\a,\l}(x,t)&\ge &\int_0^t\int_{B_r(x)}
            \frac{(t-\tau)^{\a}}{((t-\tau)^{\frac{1}{2s}}+|x-y|)^{N+\l}} dy
            d\tau=\int_0^t\int_{B_r(x)}
            \frac{\s^{\a}}{(\s^{\frac{1}{2s}}+|x-y|)^{N+\l}} dy d\s\\
            &=&\int_0^t \s^{\a-\frac{N+\l}{2s}}\int_{B_r(x)}
            \frac{dy}{(1+\frac{|x-y|}{\s^{\frac{1}{2s}}})^{N+\l}}dy d\s.
        \end{eqnarray*}
        Setting $z=\frac{(x-y)}{\s^{\frac{1}{2s}}}$, then
        \begin{eqnarray*}
            G_{\a,\l}(x,t)&\ge &\int_0^t
            \s^{\a-\frac{N+\l}{2s}+\frac{N}{2s}}\int_{B_{\frac{r}{\s^{\frac{
                                1}{2s}}}}(0)} \frac{dz}{(1+|z|)^{N+\l}}dz d\s.
        \end{eqnarray*}
        Now, for $t<<1$, we have $\s\le t<<1$, then
        $$
        \int_{B_{\frac{r}{\s^{\frac{1}{2s}}}}(0)}
        \frac{dz}{(1+|z|)^{N+\l}}dz\ge \int_{B_{r}(0)}
        \frac{dz}{(1+|z|)^{N+\l}}dz=C.
        $$
        Thus
        $$
        G_{\a,\l}(x,t)\ge C\int_0^t
        \s^{\a-\frac{N+\l}{2s}+\frac{N}{2s}}d\s=\infty.
        $$
    \end{remarks}

\end{document}